\newtheorem{maintheorem}{Theorem}
\theoremstyle{plain}
\newtheorem{theorem}{Theorem}[section]
\newtheorem{lemma}[theorem]{Lemma}
\newtheorem{definition}[theorem]{Definition}
\newtheorem{corollary}[theorem]{Corollary}
\newtheorem{proposition}[theorem]{Proposition}
\newtheorem{conjecture}[theorem]{Conjecture}
\newtheorem{question}[theorem]{Question}
\newtheorem{remark}[theorem]{Remark}
\newtheoremstyle{derp}
{3pt}
{3pt}
{}
{}
{\upshape}
{:}
{.5em}
{}
\theoremstyle{derp}
\newcommand{\R}{\mathbb{R}}
\newcommand{\Z}{\mathbb{Z}}
\newcommand{\N}{\mathbb{N}}
\newcommand{\poly}{\mathrm{poly}}
\newcommand{\counters}{\mathrm{idx}}
\newcommand\car{\mathbin{\char`\^}}
\newcommand{\Aut}{\mathrm{Aut}}
\newcommand{\PAut}{\mathrm{PAut}}
\newcommand{\Alt}{\mathrm{Alt}}
\newcommand{\id}{\mathrm{id}}
\newcommand{\bin}{\mathrm{bin}}
\newcommand{\unary}{\mathrm{unary}}
\newcommand{\val}{\mathrm{val}}
\newcommand{\Sym}{\mathrm{Sym}}
\newcommand{\init}{\mathrm{init}}
\newcommand{\WPs}{\mathrm{WPs}}
\title{Word problems and embedding-obstructions in cellular automata groups on groups}
\author{
Ville Salo \\
vosalo@utu.fi
}
\begin{document}
\maketitle

\begin{abstract}
We study groups of reversible cellular automata, or CA groups, on groups. More generally, we consider automorphism groups of subshifts of finite type on groups. It is known that word problems of CA groups on virtually nilpotent groups are in co-NP, and can be co-NP-hard. We show that under the Gap Conjecture of Grigorchuk, their word problems are PSPACE-hard on all other groups. On free and surface groups, we show that they are indeed always in PSPACE. On a group with co-NEXPTIME word problem, CA groups themselves have co-NEXPTIME word problem, and on the lamplighter group (which itself has polynomial-time word problem) we show they can be co-NEXPTIME-hard. We show also nonembeddability results: the group of cellular automata on a non-cyclic free group does not embed in the group of cellular automata on the integers (this solves a question of Barbieri, Carrasco-Vargas and Rivera-Burgos); and the group of cellular automata in dimension $D$ does not embed in a group of cellular automata in dimension $d$ if $D > d$ (this solves a question of Hochman).
\end{abstract}

\section{Introduction}

Reversible cellular automata are the continuous $G$-commuting bijections on $A^G$, for a finite alphabet $A$ and a countable group $G$. Equivalently, they are the bijections defined by a local rule (uniform over all $g \in G$), where we only look at cells in a finite neighborhood $gN$ of $g$ to determine the new symbol at $g$, using the local rule. Cellular automata are often seen as a model of parallel computation.

We study the following problem: If we fix a finite set $S$ of cellular automata, how hard is it to determine whether a given word $S^n$ gives a nontrivial cellular automaton, when we compose the generators in this order (as functions)? In other words, we study the complexity of the word problem of the group generated by $S$. For finite $S$ we call such groups \emph{groups of cellular automata}.

In symbolic dynamical terms, $A^G$ is a dynamical system called the full shift (on alphabet $A$ and group $G$), and the reversible cellular automata form its (topological) automorphism group. Many of our results are proved for more general sets (subshifts) $X \subset A^G$ in place of $A^G$.

One may summarize the present paper as follows: if the group $G$ contains subtrees that branch quickly enough (or cones that expand quickly enough), then we can propagate information along these subtrees quickly, and perform fast parallel computation. We obtain several technical statements (and in particular solve two open problems) with this idea, and we now proceed to describe these results.

\subsection{Complexity-theoretic results}

Appendix~\ref{sec:CT} gives the (mostly standard) complexity-theoretic definitions used in the statements.

Our first theorem is barely new, and is stated mainly for context, as it was shown essentially in \cite{Sa20e}. Namely \cite{Sa20e} proves the $\Z$-case, and only a short additional discussion is needed here (see Corollary~\ref{cor:Polynomial}). The co-NP algorithm is essentially from \cite{BoLiRu88,KiRo90}.

\begin{maintheorem}
\label{thm:A}
Let $G$ be an f.g.\ group with polynomial growth, and let $X \subset A^G$ be any subshift of finite type with language in NP. Then every finitely-generated subgroup of $\Aut(X)$ has word problem in co-NP. Furthermore, for any infinite f.g.\ group $G$ with polynomial growth, $\Aut(A^G)$ has a f.g.\ subgroup with NP-complete word problem.
\end{maintheorem}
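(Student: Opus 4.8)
The plan is to establish the two assertions separately, since they are of a very different nature. For the co-$\NP$ bound I would show that the \emph{complement} of the word problem lies in $\NP$. Fix a finite generating set $S$ of the given subgroup $H\le\Aut(X)$ and let $\rho$ bound the radii of the local rules of the elements of $S$ (and of their inverses). Given $w=s_1\cdots s_n$, if $w$ is nontrivial then $w(x)\neq x$ for some $x\in X$, and by $G$-equivariance we may assume $w(x)_e\neq x_e$. Composing the constant-size local rules of $s_n,s_{n-1},\dots,s_1$, the value $w(x)_e$ depends only on the restriction of $x$ to the ball $B(e,\rho n)$; since $G$ has polynomial growth this ball has polynomially many cells and is enumerable in polynomial time (the word problem of a polynomial-growth, hence virtually nilpotent, group is in $\Pol$), and $w(x)_e$ is then computable from that restriction in polynomial time, each application of a local rule shrinking the window by $\rho$ and every intermediate pattern being a subpattern of a point of $X$, hence in the language of $X$, so the local rules apply. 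The nondeterministic algorithm therefore guesses a pattern $p\colon B(e,\rho n)\to A$ together with an $\NP$-witness — available since $L(X)\in\NP$ — that $p$ lies in the language of $X$, verifies the witness, computes the central output value of $w$ on $p$, and accepts iff it differs from $p_e$. This is sound and complete for ``$w\neq 1$'', so the word problem of $H$ is in co-$\NP$; specializing to $X=A^G$ handles $\Aut(A^G)$, and this is essentially the argument of \cite{BoLiRu88,KiRo90}.

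For the hardness half, by the first part it suffices to exhibit, for each infinite f.g.\ group $G$ of polynomial growth, a finitely generated subgroup of $\Aut(A^G)$ with co-$\NP$-hard word problem, which is then co-$\NP$-complete. The case $G=\Z$ is exactly the construction of \cite{Sa20e}, so what remains is to transport a $\Z$-CA group into $\Aut(A^G)$. I would use that $G$, being infinite, f.g.\ and (by Gromov) virtually nilpotent, contains an infinite cyclic subgroup, and that after passing to a finite-index subgroup this copy of $\Z=\langle t\rangle$ may be taken central (an infinite torsion-free f.g.\ nilpotent group has infinite centre). When $\langle t\rangle$ is central, identifying $A^G$ with the product over the cosets $G/\langle t\rangle$ of copies of $A^{\Z}$ and letting a CA $\phi$ on $A^{\Z}$ act coordinatewise yields a reversible CA $\Phi$ on $A^G$ that is genuinely $G$-equivariant — centrality of $t$ is precisely what makes the coset identifications intertwine the left $G$-actions — and $\phi\mapsto\Phi$ is an injective group homomorphism $\Aut(A^{\Z})\hookrightarrow\Aut(A^G)$, so the image of the \cite{Sa20e} subgroup has the same (co-$\NP$-hard) word problem. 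The finite-index reduction and the case of groups with no central copy of $\Z$ (e.g.\ the infinite dihedral group) are what Corollary~\ref{cor:Polynomial} takes care of.

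The step I expect to be the only real obstacle is exactly that last one: for a general infinite polynomial-growth $G$ the left $G$-action on $A^G$ need not split as ``shift along a $\Z$-direction $\times$ inert transverse directions'', so one cannot blindly run the $\Z$-computation coordinatewise. I would handle it either by passing to a finite-index subgroup and re-importing the result (clean when $\langle t\rangle$ is normal with infinite cyclic quotient), or by first symmetrizing the \cite{Sa20e} generators so that they commute with the finitely many transverse symmetries that occur (such as the order-two reflection in the dihedral case) and only then applying the coordinatewise construction; in either case injectivity of the resulting embedding is what makes co-$\NP$-hardness transfer. Everything else — the co-$\NP$ algorithm and the $\Z$-hardness itself — is either routine or imported from \cite{Sa20e}.
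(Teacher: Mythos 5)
Your co-NP half is sound and is the paper's own argument: guess a globally valid pattern on a ball of radius $\rho n$ (polynomial size by growth, computable because the word problem of a virtually nilpotent group is in P), certify membership in the language of $X$ using the assumed NP witness, and apply the composed local rules to detect a change at the identity. No issue there.

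The hardness half has a genuine gap, and it sits exactly where you flag your ``only real obstacle'': the transfer from $\Z$ to $G$. Your premise that the coordinatewise construction along cosets of $\langle t\rangle$ is $G$-equivariant only when $t$ is central is false, and the two workarounds you build on it do not close the argument. Making $\Z$ central inside a finite-index subgroup $G_0$ only produces a hard f.g.\ subgroup of $\Aut(A^{G_0})$; ``re-importing'' it into $\Aut(A^G)$ is precisely the subgroup-embedding question you started from (restricting the action goes the wrong way, giving $\Aut(A^G)\hookrightarrow\Aut((A^T)^{G_0})$ for a finite transversal $T$), and ``symmetrizing the generators'' is neither carried out nor has an evident general form for virtually nilpotent $G$. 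What makes the paper's one-line Corollary~\ref{cor:Polynomial} work is the standard fact that for an \emph{arbitrary} subgroup $H\le G$ one has an embedding $\Aut(A^H)\hookrightarrow\Aut(A^G)$, with no normality or centrality hypothesis: for $H$-equivariant $f$ set $F(x)_g=f(y^g)_{1_G}$ where $y^g\in A^H$ is given by $y^g_k=x_{gk}$; since the $H$-configuration of $gx$ at $k$ is $y^{g^{-1}k}$, one gets $F(gx)=gF(x)$ directly. Concretely in your setting $H=\langle t\rangle$: writing $G=\bigsqcup_\tau \tau\langle t\rangle$ and $x^{(\tau)}_n=x_{\tau t^n}$, for $g\in G$ and $g^{-1}\tau=\tau' t^m$ one computes $(gx)^{(\tau)}_n=x^{(\tau')}_{m+n}$, so translation by $g$ merely permutes the left cosets and shifts the $\Z$-coordinate inside each one — and a cellular automaton commutes with shifts — regardless of whether $\langle t\rangle$ is central or even normal (the twisting you fear would appear with right cosets, which is the wrong decomposition). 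Since $f\mapsto F$ is an injective homomorphism sending generators to generators, word problems transfer verbatim, so the co-NP-hard f.g.\ subgroup of $\Aut(A^\Z)$ from \cite{Sa20e} sits inside $\Aut(A^G)$ for every infinite f.g.\ group $G$ of polynomial growth (virtually nilpotent and infinite, hence not torsion, hence containing $\Z$); the detour through torsion-free finite-index subgroups, centres and the dihedral case is unnecessary.
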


For example $X = A^G$ satisfies the assumptions on $X$. Growth refers to the size of balls. Recall that polynomial growth corresponds to virtual nilpotency by Gromov's theorem \cite{Gr81a}, and that $O(n^D)$ growth implies $\Theta(n^d)$ growth for some $d$.

In particular, groups of cellular automata $H \leq \Aut(A^G)$ always have word problems in co-NP when the group has polynomial growth. Our first main result is about groups with faster growth, namely that if the growth of balls in a group is sufficiently superpolynomial, then the complexity ``jumps'' from co-NP to PSPACE. It is a standard conjecture in complexity theory that PSPACE is strictly larger than co-NP (equivalently, strictly larger than NP).

\begin{maintheorem}
\label{thm:B}
Let $G$ be a f.g.\ group with at least stretched exponential growth, and let $X \subset A^G$ be any SFT where some finite-support point has free shift orbit. Then $\Aut(X)$ has a finitely-generated subgroup with PSPACE-hard word problem.
\end{maintheorem}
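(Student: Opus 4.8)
The plan is to simulate a PSPACE computation inside the automorphism group by using the superpolynomial growth of $G$ to propagate bounded-space computations in parallel along a rapidly branching region. First I would recall the standard encoding by which a PSPACE-hard problem is captured: fix a language $L$ decided by a deterministic Turing machine in space $s(n)$ polynomial in $n$, so that on input of length $n$ the machine uses at most $s(n)$ tape cells and halts within $c^{s(n)}$ steps. The configuration graph has $2^{O(s(n))}$ nodes, so the natural strategy is to build, from the generators, cellular automata whose composition performs one step of this machine on a tape laid out along a geodesic-like segment of $G$, and to iterate it $2^{O(s(n))}$ times using only $\mathrm{poly}(n)$ generators. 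The key is that a group with at least stretched exponential growth — growth at least $\exp(n^\alpha)$ for some $\alpha>0$ — contains, within a ball of radius $\mathrm{poly}(n)$, a subtree that branches enough to host $2^{O(s(n))}$ disjoint copies of a length-$s(n)$ tape on which to run the simulation; equivalently, iterating a ``shift-and-branch'' automaton $\mathrm{poly}(n)$ times reaches exponentially many cells.

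The key steps, in order, would be: (1) Use the finite-support point with free shift orbit to mark out, via a conjugating element of the automorphism group, a ``track'' on which a distinguished configuration can be read, written, and moved; this is the standard device (as in Theorem~A's hardness half and in \cite{Sa20e}) for getting a subshift to carry a binary tape and a head. (2) Show that in a group of at least stretched exponential growth, there is a finitely-generated subgroup (a ``free-ish'' submonoid acting on a branching subtree) along which information can be duplicated: a single generator, applied $k$ times, spreads a datum to $\exp(k^\beta)$ distinct cells. (3) Combine these: encode a Turing machine configuration (of size $s(n)$) as a local pattern, implement the one-step transition as a composition of $O(1)$ generators, and use the branching of step (2) to run the machine for $2^{O(s(n))}$ steps using a word of length $\mathrm{poly}(s(n)) = \mathrm{poly}(n)$ in the generators — the branching lets us ``unroll'' the exponentially long computation into polynomial depth by doing the doubling trick (compute $f^{2^k}$ with $k$ squarings), with the automaton group's composition playing the role of squaring. (4) Arrange that the resulting cellular automaton is the identity if and only if the simulated machine rejects (or accepts) on the relevant input, so that the word problem of the constructed subgroup is PSPACE-hard via a polynomial-time many-one reduction from a PSPACE-complete language such as $\mathrm{TQBF}$.

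The main obstacle I anticipate is step (3): making the doubling/unrolling genuinely work inside a group of cellular automata. Composition of cellular automata does not by itself ``square'' a computation — one needs the geometry of $G$ to provide, in $\mathrm{poly}(n)$ applications of the generators, access to exponentially many cells arranged so that the configurations computed on sub-branches can be synchronized and combined. Stretched exponential growth guarantees the cells exist, but one must actually exhibit a branching structure on which a finite generating set of automorphisms acts to realize the recursion cleanly, handling boundary effects, the non-freeness of $G$, and the bookkeeping needed to route the Turing head correctly; keeping the alphabet and neighborhood finite (uniformly in $n$) while the simulation depth grows is the delicate point. I expect this to be handled by a careful ``conveyor belt'' or ``comb'' construction along geodesics, reducing everything to the combinatorial fact that stretched exponential growth forces a tree-like subset of the Cayley graph with the required expansion, and then invoking the finite-support-free-orbit hypothesis to plant the initial data and read off the output.
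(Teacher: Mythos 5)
Your steps (1) and (2) are in the right spirit — the paper does build markers from the finite-support free-orbit point (Lemma~\ref{lem:Markers}) and does extract, from stretched exponential growth, trees of linear branching depth inside balls of polynomial radius (the ``thicket''/arboreous lemma, proved by a packing plus winning-set counting argument). But step (3), which you correctly identify as the crux, does not work as you describe it, and it is not how the paper argues. Writing $f^{2^{s(n)}}$ by ``repeated squaring'' does not produce a short \emph{word}: in the word problem the instance is a literal string of generators, and $f^{2^k}$ has word length $2^k\cdot|f|$ no matter how you bracket it (squaring compresses straight-line programs or circuits over the group, which is a different, compressed word problem). So you cannot iterate a Turing machine transition exponentially many times within a polynomial-length word, and no amount of spatial branching fixes this, because a word of length $m$ over bounded-radius generators has radius $O(m)$ and its triviality is determined by its action on patterns of that radius.

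The paper's mechanism is genuinely different: it does not \emph{run} the computation at all. Using $\mathrm{PSPACE}=\mathrm{AP}$ it reduces from a problem of the form ``does there exist a depth-$n$ binary tree of Turing machine configurations, branching by two nondeterministic machines, with all leaves accepting'' (Lemma~\ref{lem:PSPACEByTrees}). The entire exponential-size witness tree is carried by the \emph{configuration} $x\in A^G$ (this is where the existential quantifier ``there exists a point on which the automorphism acts nontrivially'' is spent, and why the polynomial-radius ball of a stretched-exponential-growth group is big enough to hold it), and the polynomial-length word only \emph{verifies} the witness, via Barrington-style controlled permutations. The compression of that verification to polynomial word norm is the paper's ``dyadically accelerated funneling'': maps $\psi_{j,i,s,\pi}$ checking correctness over a dyadic interval of depths are built as commutators of the corresponding maps at half the interval, so the norm grows only by a constant factor per doubling, giving $8^{\log n}N_0=\mathrm{poly}(n)$. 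Your proposal lacks this witness-in-the-configuration plus commutator-verification architecture, and the replacement you offer (squaring the transition map) is the step that fails; so as written the argument has a genuine gap at its computational core, even though the geometric ingredients you list match the paper's.
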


For example $X = A^G$ satisfies the assumptions on $X$. Here, by having at least stretched exponential growth we mean that for some finite generating set (thus for any), the $k$-ball of $G$ has cardinality $\Omega(e^{k^\beta})$ for all $k$, for some \emph{degree}~$\beta$.

Under the Gap Conjecture of Grigorchuk \cite{Gr14}, the growth rates in the two theorems above are a dichotomy for f.g.\ groups. Specifically, the version $C^*(\beta)$ (for $\beta > 0$) of the Gap Conjecture states the following:

\begin{conjecture}
\label{con:Gap}
Every f.g.\ group that is not of polynomial growth has at least stretched exponential growth with degree $\beta$. 
\end{conjecture}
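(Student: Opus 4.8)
\emph{A remark on status.} The statement just displayed is Grigorchuk's Gap Conjecture, and it is open; what follows is not a proof but a sketch of the only line of attack I am aware of, together with the point at which it currently stalls. In the present paper the conjecture is invoked only to make Theorems~\ref{thm:A} and~\ref{thm:B} into a clean dichotomy, and is not proved.

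\textbf{Step 1: reduce to just-infinite groups.} By Grigorchuk's reduction \cite{Gr14} it suffices to verify $C^*(\beta)$ for just-infinite groups: starting from a f.g.\ group of non-polynomial growth one passes to an appropriate just-infinite quotient, the delicate point (handled in \cite{Gr14}) being that an arbitrary such quotient may have polynomial growth, so the quotient — or a section of it — must be chosen so as to preserve non-polynomiality, or one argues from a minimal counterexample. One then feeds the resulting just-infinite group into Wilson's trichotomy: it is either (a) a branch group, or (b) virtually a finite direct power of a hereditarily just-infinite (hence residually finite) group, or (c) virtually a finite direct power of a just-infinite group of the remaining, ``near-simple'', type.

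\textbf{Step 2: the residually finite cases (b), (c).} The model case is a group that is residually (virtually nilpotent): its lower central, or dimension, series has finitely generated associated graded algebra, which is either nilpotent — forcing virtual nilpotency of $G$ — or of exponential growth, and that exponential lower bound transfers back through the congruence filtration to give growth at least $\exp(n^{1/2})$, i.e.\ degree $\beta = 1/2$. To settle (b) and (c) in full one would need the analogous transfer when the finite quotients along a residual chain are \emph{not} nilpotent; the plan would be to invoke the classification of finite simple groups to bound from below the growth contributed to $G$ by each composition factor occurring in such a chain, and to show these contributions aggregate to at least stretched-exponential growth with a degree independent of the chain. Quantifying this — ruling out a residual chain whose quotients are so small and so slow that $G$ ends up with sub-stretched-exponential growth — is the first place the argument is genuinely stuck.

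\textbf{Step 3: branch groups (a), the real obstacle.} This is where the extremal examples are expected to live: the first Grigorchuk group is a branch group of growth $\exp(n^{\alpha})$ with $\alpha \approx 0.7674$, strictly above $1/2$, so the conjecture predicts a uniform lower bound on the growth degree of \emph{every} branch group. The natural tool is a contraction inequality $\sum_i |g_i| \le \eta\,|g| + C$ with $\eta < 1$, relating the word length of $g$ to the lengths of its sections $g_i$ at the level-one vertices of the tree — the same device used to prove growth \emph{upper} bounds for individual contracting groups — which, read in reverse with $\eta$ and the branching valence depending only on the branching data, yields a stretched-exponential \emph{lower} bound of some degree $\beta = \beta(\eta)$. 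The obstruction, and the reason the conjecture remains open, is that no uniform bound $\eta < 1$ is known across branch groups — the contraction can be arbitrarily weak, $\eta \to 1$, so this argument's degree degrades to $0$ — and many branch groups are not even known to be contracting in this sense, so the general case is harder still. A genuine proof would have to either isolate a uniform combinatorial invariant of branch actions that forces a definite contraction rate, or replace contraction inequalities by a new growth-detecting invariant; I have no proposal for either, and regard this as the true content of the conjecture (indeed it is not excluded that the conjecture fails in this regime).
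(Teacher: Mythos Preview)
You have correctly identified that this is Grigorchuk's Gap Conjecture $C^*(\beta)$, which the paper states but does not prove; the paper uses it only to frame Theorems~\ref{thm:A} and~\ref{thm:B} as a dichotomy, exactly as you say. Your survey of the reduction to just-infinite groups, the Wilson trichotomy, and the branch-group obstruction is reasonable background but goes well beyond anything the paper itself attempts---there is nothing to compare against, since the paper offers no argument for the conjecture at all.
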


We immediately note that this is a parametrized family of conjectures, and the larger $\beta$ is, the more it is claiming. For large $\beta$, it claims too much: the conjecture is not true for e.g.\ $\beta > \log_{32} 31 = \gamma$, as the Grigorchuk group has growth $O(e^{n^\gamma})$ and is not virtually nilpotent \cite{Gr84} (much more is known about the growth of the Grigorchuk group \cite{ErZh20}). For our purposes, it would suffice that this conjecture is true for a single (possibly very small) $\beta > 0$.

Grigorchuk has several versions of the Gap Conjecture, and the best-known version is perhaps the one with $\beta = 1/2$, but where it is only assumed that $e^{\sqrt{n}}$ is not an upper bound for the growth, rather than that it is a lower bound. For our purposes, it is indeed important to have a stretched exponential lower bound.

Conjecture~\ref{con:Gap} is indeed known to hold in many cases (and for some $\beta > 0$): for $\beta = 1/6$ it holds for residually solvable and left-orderable groups \cite{Gr14}. It also holds rather trivially (for any $\beta < 1$) for many other types of groups, like hyperbolic groups, linear groups and elementary amenable groups. So among these groups, our result shows that the word problem of an automorphism group of a nice enough SFT (e.g.\ a full shift) is always either in coNP or is PSPACE-hard, and the jump happens exactly after polynomial growth. 

In several papers \cite{Sa22b,Sa20e,CaSa22,SaSc25}, the author (with various coauthors) has studied a specific subgroup of $\Aut(A^G)$ called $\PAut_G(A)$, which is generated by partial shifts (which split the aphabet into a Cartesian product and then shift the configuration on one of the resulting components), and symbol permutations (permuting the symbols in cells individually). This group is somewhat mysterious, but it provides a reasonably canonical f.g.\ subgroup, where many behaviors of the typically non-f.g.\ group $\Aut(A^G)$ can already be seen.

On a smaller class of groups, we show that this group can be used in the PSPACE-hardness proof. 

\begin{maintheorem}
\label{thm:C}
Let $G$ be a f.g.\ group containing a free submonoid on two generators, and let $A$ be any alphabet with sufficiently many divisors. Then $\PAut_G(A)$ has a PSPACE-hard word problem.
\end{maintheorem}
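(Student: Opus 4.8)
The plan is to reduce Theorem~\ref{thm:C} to Theorem~\ref{thm:B} as much as possible, and isolate exactly what extra work is needed because we must use the specific generators of $\PAut_G(A)$ rather than an arbitrary finite generating set. First I would recall the structure of the PSPACE-hardness proof behind Theorem~\ref{thm:B}: we simulate a PSPACE computation by propagating a tape configuration along a quickly-branching subtree of $G$ (here, the free submonoid on two generators $a,b$ gives us a binary tree $\{a,b\}^*$ embedded in $G$, whose $k$-ball has $2^k$ elements, so the ``stretched exponential'' hypothesis of Theorem~\ref{thm:B} is satisfied with $\beta=1$). The key point is that along this tree we can do fast parallel broadcasting and gathering of information, which lets a word of length $n$ in the generators implement roughly $2^n$ steps of computation, yielding PSPACE-hardness. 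So the first step is to re-examine that argument and record precisely which elementary operations on the tape-carrying tracks it requires: shifting a track by $a$ or by $b$, and applying cell-local permutations to the symbols.

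Second, I would show that partial shifts and symbol permutations — the generators of $\PAut_G(A)$ — already suffice to build all those operations, provided $A$ has enough divisors. The point of ``sufficiently many divisors'' is that we need to carve the alphabet $A$ into a Cartesian product of several tracks: some tracks to hold the Turing-machine tape contents, some to hold head position / state information, and some auxiliary tracks used as scratch space for the broadcasting gadgets (conveyor-belt style constructions as in the $\PAut$ papers \cite{Sa22b,Sa20e,CaSa22,SaSc25}). Each partial shift moves one designated track in the $a$- or $b$-direction while leaving the others fixed; composing partial shifts with symbol permutations (which can, e.g., conditionally swap contents between tracks, or apply a local transition rule) gives exactly the conveyor-belt and copying primitives used in the Theorem~\ref{thm:B} construction. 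I would make explicit how many tracks (hence how many divisors, i.e. which $|A|$) the construction consumes; ``sufficiently many'' will be an absolute constant coming from the gadget count.

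Third, there is a genuine subtlety distinguishing this from Theorem~\ref{thm:B}: a free \emph{submonoid} is not a free subgroup, so the embedded binary tree need not be a tree as a subgraph of a Cayley graph — distinct words in $\{a,b\}^*$ are distinct group elements, but there may be ``shortcut'' relations, and more importantly the inverse generators $a^{-1},b^{-1}$ need not behave freely. So I must check that the broadcasting/gathering gadgets only ever push information \emph{forward} along $a$ and $b$ (never relying on $a^{-1}$ or $b^{-1}$ to trace back), and that the only points that appear are finite-support points whose shift orbit under the monoid action is free — which is exactly the hypothesis ``some finite-support point has free shift orbit'' in Theorem~\ref{thm:B}, here guaranteed by freeness of the submonoid. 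The reduction from a fixed PSPACE-complete problem (say, a linear-space Turing machine acceptance) then goes: given an input of length $n$, produce in polynomial time a word $w$ in the generators of $\PAut_G(A)$ of length $\mathrm{poly}(n)$ such that $w$ acts nontrivially on $A^G$ iff the machine accepts; correctness follows by tracking the simulated configuration through the conveyor-belt identities.

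The main obstacle I anticipate is the third step — ensuring the gadgets are ``one-directional'' and interact correctly with the fact that we only have a submonoid, not a subgroup, so that no relation among $a,b$ (or cancellation with inverses) can corrupt the simulated tape. Once the construction is phrased purely in terms of forward shifts along $a,b$ and finite-support configurations with free monoid orbit, it should plug into the Theorem~\ref{thm:B} machinery; the alphabet-size bookkeeping (counting tracks, hence divisors) is then routine.
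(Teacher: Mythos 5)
There is a genuine gap, and it sits exactly where your proposal defers to ``the Theorem~\ref{thm:B} machinery.'' In the paper the logical order is the reverse of what you assume: the free-submonoid case (Theorem~\ref{thm:MonoidCase}) is the basic construction, and Theorem~\ref{thm:B} is later deduced from an abstract version of it (Theorem~\ref{thm:AbstractPSPACEComplete}, whose proof is explicitly ``analogous to that of Theorem~\ref{thm:MonoidCase}''). So you cannot cite Theorem~\ref{thm:B} or its proof as a black box here without circularity; and even ignoring that, the \emph{statement} of Theorem~\ref{thm:B} only produces \emph{some} finitely generated subgroup, so the entire content of Theorem~\ref{thm:C} is the construction itself, carried out inside $\PAut_G(A)$. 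Your sketch never supplies the one idea that makes this possible: a word of length $\mathrm{poly}(n)$ over the $\PAut$ generators must verify a condition on a tree with $2^{\Theta(n)}$ nodes, and Barrington-style controlled permutations (Theorem~\ref{thm:BarringtonVariant}) only reach polynomially many cells. The paper bridges this with \emph{dyadically accelerated funneling}: a family of conditional-permutation automorphisms $\psi_{j,i,s,\pi}$ indexed by dyadic depth-intervals, built by a commutator recursion of the shape $\psi_{j,i,s,\pi} = [\psi_{2j,i-1,0,\pi_1},\, \psi_{2j,i-1,1,\pi_2} \car \psi_{2j+1,i-1,s,\pi_3}]$, which doubles the verified depth while multiplying the word norm by a constant. ``Fast parallel broadcasting and gathering'' gestures at this but does not constitute an argument, and without it the proof does not go through.

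A second, related mismatch is your choice of starting problem. Reducing from linear-space Turing machine acceptance and ``implementing roughly $2^n$ steps of computation'' does not fit the geometry: an exponentially long deterministic computation history has nowhere to live on a depth-$n$ tree in a way that makes consecutive-step consistency a local (hence commutator-checkable) condition. The paper instead uses the alternating-time characterization of PSPACE packaged as Lemma~\ref{lem:PSPACEByTrees}: the witness is a labelling of the embedded tree by nondeterministic TM configurations (existential choices guessed on the configuration, universal branching realized by the two children), so that correctness is a conjunction of parent--child checks, each handled by the controlled-permutation calculus, and the final automorphism is nontrivial iff a witnessing tree exists. You do have the right overall reduction format (nontriviality iff acceptance), and your point that partial shifts plus symbol permutations suffice is in the spirit of the paper (it proves $\phi_{\pi,s,g}\in\PAut$ via a borrowed-trit commutator identity rather than conveyor belts, which it reserves for the general-alphabet marker arguments elsewhere); also note that ``sufficiently many divisors'' and the free-orbit condition are easy side issues (for a full shift the characteristic function of $\{1_G\}$ always has free orbit, independently of the submonoid), and your worry about inverses in the submonoid is not where the difficulty lies. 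The missing core is the polynomial-norm verification of an exponential-size witness.
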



The main idea in all the theorems above is to build permutations of one part of the alphabet which only happen if some circuit evaluates to a true value on the other parts of the alphabet; or in terms of a probabilistic analog, ``a particular event happens''. For conditioning on polynomial-support events, this is known as Barrington's theorem after \cite{Ba89}, and it is basically the only ingredient needed in the proof Theorem~\ref{thm:A} in \cite{Sa20e}.

The main new idea needed is the use of fast information propagation along trees. Specifically, we can also work with events of exponential (or otherwise fast-growing) support, by using the parallelism of cellular automata. Unlike in Barrington's theorem, it seems we cannot do this while preserving a perfect separation of control and permuted data. The technique we use for this, and in fact in all our constructions, is something we call \emph{ripple catching}. 


For the general result Theorem~\ref{thm:B} (PSPACE-hardness on all groups with fast growth), we have to use some of combinatorics to show that all groups with stretched exponential growth contain large trees in an appropriate sense. This idea originates from a joint work in progress with Ilkka Törmä (though the computational model is different). We also need some symbolic dynamical arguments to construct markers on the nodes of these trees.

In a few cases we show the corresponding upper bound:

\begin{maintheorem}
\label{thm:D}
Let $G$ be a free group or a surface group, and let $X \subset A^G$ be any subshift of finite type with the finite extension property. Then every finitely generated subgroup of the automorphism group of $X$ has word problem in PSPACE.
\end{maintheorem}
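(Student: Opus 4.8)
The plan is to show that for $G$ a free or surface group and $X\subset A^G$ an SFT with the finite extension property, any automorphism $\varphi$ generated by a fixed finite set $S$ has a \emph{radius} that grows at most exponentially in the length $n$ of the defining word $w\in S^n$, and that, moreover, a PSPACE machine can recover enough of the local rule of $w$ to test whether $w$ acts nontrivially. The key structural fact I would use is that free and surface groups have a natural notion of ``half-space'' or convex geodesic ball whose boundary is small (linear in the radius) compared to the ball, and that the finite extension property lets one fill in partial configurations on such balls: given a legal pattern on an annulus one can legally extend it inward, and — crucially — the automorphism $\varphi$ is determined by its action on finite patterns whose size is controlled by the radius. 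Composing $n$ maps of radius $\le r$ gives a map of radius $\le rn$, but since each of $S$ has constant radius the composite $w$ has radius $O(n)$; the real issue is not the radius of $w$ but \emph{evaluating} $w$ on a pattern, which a priori requires simulating $n$ CA steps on a region of size exponential in $n$.

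Here is where I would exploit the geometry. To decide nontriviality of $w$ it suffices to find \emph{some} finite-support (or periodic) configuration $x$ and some cell $g$ with $w(x)_g\ne x_g$, and by the finite extension property and equivariance we may assume $g=e$ and that $x$ is supported in a ball $B_R$ with $R=O(n)$. Evaluating $w(x)_e$ means running the $n$-fold composition; naively this touches $|B_{O(n)}|$ cells, which is exponentially many in a free group. The standard trick is that we do not need to store the whole intermediate configuration: a PSPACE algorithm can recursively compute $w(x)_g$ for a single $g$ by a depth-$n$ recursion, at step $i$ querying the values $\,(s_{i+1}\cdots s_n)(x)_{g'}\,$ for the finitely many neighbours $g'\in gN$. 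The recursion tree has depth $n$ and branching bounded by $|N|$, so a single root-to-leaf branch is described by $O(n)$ group elements each of length $O(n)$, i.e.\ polynomial space; re-using space across branches, the whole evaluation of one output cell is in PSPACE. Quantifying over the (exponentially many, but polynomially describable) candidate cells $g\in B_{O(n)}$ and the finitely many symbols costs only a polynomial-space outer loop. That $X$ (not just the full shift) causes no trouble is exactly the role of the finite extension property: it guarantees that the partial configurations arising in the recursion extend to genuine points of $X$, and that the SFT constraint on a pattern of radius $O(n)$ is checkable in polynomial space.

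The main obstacle I anticipate is making the recursive evaluation \emph{legal} inside $X$ rather than in a full shift: in a full shift one freely assigns symbols to cells outside the support, but in an SFT one must certify that the configuration fed to the local rule of each $s_i$ is globally extendable, and naive recursion might query cells in an order that paints the algorithm into an inconsistent corner. I would handle this by using the finite extension property to fix, once and for all at the start, a legal ``background'' point $y\in X$ agreeing with the finite-support data, chosen so that its restriction to any ball $B_m$ is computable in space $\mathrm{poly}(m)$ — for free groups one can take an eventually-periodic-along-rays point, and for surface groups a periodic point, both of which the finite extension property supplies — and then evaluate $w(y)$ rather than $w(x)$. A secondary, more bookkeeping-level obstacle is bounding the radius of the generators and of $w$ uniformly and confirming $R=O(n)$; this is routine for a fixed finite $S$. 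Once these are in place, the depth-$n$, polynomially-describable recursion gives the PSPACE bound, uniformly over all f.g.\ subgroups since only the radii of the finitely many generators enter the estimates.
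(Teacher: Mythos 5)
There is a genuine gap, and it sits exactly where the real difficulty of the theorem lies. Your reduction correctly observes that a witness for nontriviality of a word $w$ of length $n$ can be taken to be a (globally valid) pattern on a ball $B_{O(n)}$, and your depth-$n$ ``recompute instead of store'' recursion does evaluate the composed local rule at a single cell in polynomial space \emph{given oracle access to a fixed configuration}. But in a free or surface group the ball $B_{O(n)}$ has exponentially many cells, so the witness pattern is an exponential-size object: you can neither guess it up front in polynomial space, nor guess its symbols on the fly inside the recursion, because the same cell is queried along many different branches and the guesses must be mutually consistent --- inconsistent on-the-fly guesses can make the composed rule appear to act nontrivially even when $w=\id$, so the algorithm would not be sound. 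Your proposed fix, fixing a single poly-space-computable ``background'' point $y$ and evaluating $w(y)$, does not repair this: nontriviality of $w$ requires searching over \emph{all} candidate patterns, and an automorphism can be nontrivial while acting trivially on any one prescribed point (and the ``finite-support data'' you would superimpose on $y$ is itself the exponentially large unknown). So the existential quantification over the witness, which in the polynomial-growth/co-NP setting is harmless, is precisely what your argument leaves unresolved.

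The paper resolves this with a different mechanism that your proposal never touches: it works in alternating polynomial time (using $\mathrm{PSPACE}=\mathrm{AP}$) and exploits a geometric property of free and surface groups, namely \emph{polynomial splitting schemes} --- balls can be recursively split into two pieces of at most a constant fraction of the size by cut sets of only polynomial cardinality (single geodesic vertices in a tree; geodesic-type cuts in the pentagon model of the hyperbolic plane, transported by quasi-isometry to surface groups). The algorithm existentially guesses the space-time pattern (all $O(n)$ time slices) only on such a cut, then branches \emph{universally} over the resulting components and recurses; consistency across the cut is automatic because the cut data is explicitly stored, and the recursion terminates after linearly many levels. The finite extension property (via the PEPB formulation) is then used to certify global validity of the guessed pattern from local validity on a polynomially larger ball. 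Without some device of this kind --- small separators plus alternation, or an equivalent way to certify an exponential-size witness with polynomial-size shared interfaces --- the PSPACE upper bound does not follow from the radius bound and single-cell evaluation alone.
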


For example $X = A^G$ satisfies the assumptions on $X$. We give more generally a sufficient geometric condition for such an upper bound in terms of ``polynomial splitting schemes'' for balls. 

Two remarks are in order. First, note that the word problem of $\PAut_G(A)$ contains the word problem of $G$, so of course if $G$ itself has PSPACE-hard word problem (or harder) then the PSPACE-hardness theorem is trivial. The word problem is, however, decidable in polynomial time in a large class of groups, and for example for a free group or a surface group it can be solved by Dehn's algorithm \cite{De11,Bo88a}.

Second, it is known that automata groups (whose definition we omit) can have PSPACE-complete word problems \cite{WaWe23} (and they always have PSPACE word problem \cite{St15}). Barrington's theorem is also a key component of the proof in \cite{WaWe23}.

For general groups, the natural upper bound for the complexity of the word problem of the automorphism group of a full shift is not PSPACE, but rather co-NEXPTIME. We show that at least one group (and thus every group containing it) reaches the upper bound.


\begin{maintheorem}
\label{thm:E}
On the lamplighter group $\Z_2 \wr \Z$, for every full shift on at least two letters, the automorphism group has a finitely-generated subgroup with co-NEXPTIME-complete word problem. For some alphabets $A$, we can use the group $\PAut(A)$.
\end{maintheorem}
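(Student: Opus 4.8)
The plan is to combine two ingredients: (i) the "dyadically accelerated funneling" technique developed for Theorems~\ref{thm:B} and~\ref{thm:C}, which lets a composition of $n$ generators condition a permutation on an event of support exponential in $n$; and (ii) a hardness reduction from a canonical co-NEXPTIME-complete problem, namely whether a nondeterministic Turing machine running in time $2^{p(n)}$ on a given input rejects on all computation paths (equivalently, the tautology problem for exponentially long circuits, or the succinct-circuit version of a co-NP-complete problem). First I would describe the lamplighter group $\Z_2 \wr \Z$ concretely as $\Z$ together with a copy of $\Z_2$ at each integer, and observe that while its word problem is in $\Pol$, it contains a free submonoid on two generators, so Theorem~\ref{thm:C} already gives PSPACE-hardness; the point of Theorem~\ref{thm:E} is to push the lower bound up to co-NEXPTIME and to match it with the trivial co-NEXPTIME upper bound for automorphism groups of full shifts on recursively presented groups.

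For the upper bound I would argue as follows. A word of length $n$ in a fixed generating set of $\Aut(A^{\Z_2 \wr \Z})$ is a cellular automaton with neighborhood contained in the $n$-ball of $\Z_2 \wr \Z$, whose cardinality is exponential in $n$ (since the lamplighter group has exponential growth). Non-triviality of the composed automorphism can be witnessed by a finite configuration: there is a point differing from its image, and by a standard compactness/locality argument the discrepancy is already visible on a window of size exponential in $n$. A nondeterministic machine can guess such a window together with the values of the (exponentially many) lamp-and-head coordinates it touches, evaluate each generator locally, and check that the output differs; the guessed object has size $2^{O(n)}$ and the verification is polynomial in that size, so the complement of the word problem is in NEXPTIME, i.e.\ the word problem is in co-NEXPTIME. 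This part is essentially bookkeeping and I expect no difficulties.

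The substance is the lower bound. The idea is to use the abelianized structure of $\Z_2 \wr \Z$: the lamp coordinates indexed by $\Z$ form a "line" along which we can propagate, and along this line the funneling scheme of Section (the dyadically accelerated funneling used in Theorem~\ref{thm:C}) lets a word of length $n$ address $2^{\Theta(n)}$ distinct lamp positions and set up a computation tape of exponential length there. Concretely I would: (1) split the alphabet $A$ using its prime factors into a "data" track carrying a tape/circuit description and a "control" track; (2) use partial shifts along the $\Z$-direction of $\Z_2\wr\Z$, together with the lamp-flipping generators, to implement dyadically accelerated funneling so that after $O(n)$ generators we have distributed a description of the NEXPTIME instance across an exponentially long interval; (3) invoke Barrington's theorem (as in the proof of Theorem~\ref{thm:A}) on each exponential-size block to evaluate the circuit/transition relation, so that a final symbol permutation fires iff the guessed exponentially long certificate is accepting; (4) conclude that deciding triviality of the resulting word decides the co-NEXPTIME-complete problem. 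The reduction must be polynomial-time computable, so all generator names used must be drawn from a fixed finite set and the word must be producible from the instance in polynomial time — this forces the exponential blow-up to come entirely from the geometry of $\Z_2\wr\Z$ and the funneling, not from writing the word down, which is exactly what accelerated funneling buys us. The claim about $\PAut(A)$ follows because all generators used — partial shifts and symbol permutations — already lie in $\PAut(A)$, provided $A$ has enough prime factors to host the data track, the control track, and the tracks needed to run Barrington's construction in parallel.

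The main obstacle I anticipate is step (2)–(3): making the funneling along the lamplighter line interface cleanly with Barrington's circuit evaluation on each exponential block, while keeping everything reversible and inside $\PAut(A)$. Unlike in the polynomial-support setting of Barrington's theorem, we cannot maintain a perfect separation between control and permuted data across an exponentially long region — the same difficulty flagged in the introduction — so I would expect to spend most of the work setting up the synchronization between the traveling "front" of the funneling process and the local circuit gadgets, and verifying that the net permutation is trivial exactly when the machine rejects on all paths. A secondary technical point is constructing the markers/coordinates along the $\Z$-line inside a full shift (no SFT constraints are available here, which actually helps), and ensuring the certificate-guessing is genuinely reflected as nondeterminism in the verifier rather than being smuggled into the word.
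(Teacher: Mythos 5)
Your upper bound sketch is fine and is essentially the paper's Theorem~\ref{thm:CoNEXPTIMEUpperBound} (the lamplighter group has polynomial-time word problem, balls are computable, and a nondeterministic machine can guess an exponential-size pattern and verify the local rules). The gap is in the lower bound: your plan -- funnel along the lamp line to address an exponentially long one-dimensional ``tape'' and then ``invoke Barrington's theorem on each exponential-size block'' -- does not produce polynomial-length words. Barrington's theorem gives word norm polynomial in the \emph{support} of the condition being checked, so on an exponential-size block it gives exponential norm; and dyadically accelerated funneling along a single tree, as in Theorem~\ref{thm:MonoidCase}, only lets you verify a tree of \emph{polynomial-length} Turing machine configurations, which is exactly alternating polynomial time, i.e.\ PSPACE, not NEXPTIME. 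Worse, a time-$2^n$ computation serialized on the lamp line has its transition constraints relating cells at distance roughly $2^n$ along $\Z$, and the controlled-permutation generators that compare a cell with its translate by $a^{2^n}$ themselves have exponential word norm, so there is no polynomial-norm mechanism in your setup for checking consistency between consecutive exponentially long configurations.

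The paper's proof hinges on a genuinely two-dimensional geometric ingredient that your proposal does not supply: following \cite{BaSa24}, the lamplighter group is a horocyclic product of two binary trees and therefore contains, inside a ball of radius $O(n)$, a grid $f_g([0,2^n-1],[0,2^n-1])$ of commuting lamp-elements; each row of this grid is the frontier $f(0,i)a^{-n}\{a,b\}^n$ of a depth-$n$ binary tree hanging below it, and each column is similarly connected by a tree above (via the automorphism $a \mapsto a^{-1}, b \mapsto b^{-1}$). The accepting length-$2^n$ computation is encoded as a Wang tiling of this grid, coordinates are maintained by binary counters written along disjoint $c$-cosets inside an index-$2$ copy of $\Z_2^2 \wr \Z$, and funneling is applied \emph{twice in each direction}: once along the trees below/above to check consistency of a single row/column ($\psi_1,\psi_2$), and once more to aggregate over all rows and all columns ($\psi_3,\psi_4$), with the final element essentially $[\psi_3,\psi_4]$ further conditioned on the initial row. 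Without some such mechanism for certifying an exponential-by-exponential spacetime diagram with a polynomial-length word, the reduction stalls at PSPACE-hardness, which the lamplighter group already has by Theorem~\ref{thm:MonoidCase}.
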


The group $\Z_2 \wr \Z$ itself has word problem decidable in polynomial time. We believe the analog of Theorem~\ref{thm:E} can be proved similarly on solvable Baumslag-Solitar groups $\mathrm{BS}(1,n) = \Z[\frac1n] \rtimes \Z$, although we do not show this. 

The fact that we show the word problems to be this difficult for ``natural'' groups may be of interest in itself. There are certainly many general constructions that allow making the word problem of an f.g.\ group as difficult as one likes while preserving its decidability. A strong result in this direction is \cite{KhMySa17}.

Our results are of a different flavor in that they are simply a chapter in the study of properties of automorphism groups of subshifts, rather than constructed specifically for the purpose of hard word problems.

It seems that in most cases, when the complexity of the word problem of a group of general interest is known, it is known to be solvable in polynomial time. In fact, we only know the following cases where the word problem ended up being complete for a class conjecturally not solvable in polynomial time:
\begin{itemize}
\item the higher dimensional Brin-Thompson group $2V$ has co-NP-complete word problem \cite{Bi20};
\item on $\Z$, the groups $\PAut(A)$ have co-NP-complete word problems for suitable alphabets $A$ \cite{Sa20e};
\item automata groups can have PSPACE-complete word problems \cite{WaWe23}.
\end{itemize}

In the third item, the group was not a specific automata group that was studied before the paper, but rather one constructed for the proof. However, since one such group exists, one can likely find reasonably natural supergroups (generated, for example, by all automata that can be defined with a fixed number of states).

In the second item, the group explicitly used is $\PAut(A)$. One may wonder if this group is really natural: the generators are natural and study of the group emerged from questions unrelated to computability or complexity, but we do not even know, for example, whether it is decidable whether a given $f \in \Aut(A^\Z)$ belongs to this group. However (unlike in e.g.\ Theorem~\ref{thm:C} of the present paper), the proof in \cite{Sa20e} in fact only uses the subgroup of $R(A, B)$ of $\PAut(A \times B)$ where the $B$-track cannot affect the $A$-track in symbol permutations, which is a tamer and more natural group in many ways, and for example is elementarily amenable (see \cite{Sa22b}).

A final remark we make here is that in each of the three items above, it is only conjectural (even if widely-believed) that there is no polynomial-time algorithm for the word problem, as PSPACE $\neq$ P is not a proven fact. However, Theorem~\ref{thm:E} \emph{unconditionally} shows that there is no polynomial-time algorithm for the word problem of $\PAut(A)$ on the lamplighter group, and for some alphabets $A$.

\subsection{Non-embeddability results}

Besides complexity theoretic results, we obtain some non-embeddability results from the same construction.

These are based on an observation that we believe is entirely new (although in retrospect, rather obvious), namely that residual finiteness of $\Aut(A^{\Z^d})$ admits a quantitative variant when working with finitely-generated subgroups, based on the fact that a nontrivial automorphism with radius $n$ acts nontrivially on points with period $O(n)$ in each cardinal direction. See Lemma~\ref{lem:ResiduallyFinite}. This idea bears resemblance to the use of simple groups in \cite{Sc22,EpSc24,SaSc25}. 

The following was asked by Hochman in the conference ``Current Trends in Dynamical Systems and the Mathematical Legacy of Rufus Bowen'' organized in Vancouver in 2017. (A similar question, but allowing an arbitrary SFT in place of $\{0,1\}^{\Z^d}$ was asked in \cite{Ho10}.)

\begin{question}
Does $\Aut(\{0,1\}^{\Z^D})$ embed in $\Aut(\{0,1\}^{\Z^d})$ for $d < D$?
\end{question}

We prove the following result:

\begin{maintheorem}
\label{thm:F}
Suppose that $D > d$. Then $\Aut(A^{\Z^D})$ has a finitely-generated subgroup that does not embed in $\Aut(B^{\Z^d})$ for any nontrivial alphabets $A, B$.
\end{maintheorem}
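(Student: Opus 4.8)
The plan is to use the quantitative residual finiteness idea (Lemma~\ref{lem:ResiduallyFinite}) together with the fast-computation construction behind Theorem~\ref{thm:B} (or Theorem~\ref{thm:E}) to engineer a contradiction on group-word-problem complexity. The key point is that $\Z^D$ has large enough growth (indeed exponential "cone" behaviour once we restrict to an appropriate sub-semigroup, and polynomially many periodic points of each period) that we can embed a finitely generated subgroup $H \leq \Aut(A^{\Z^D})$ whose word problem is ``harder'' than what any finitely generated subgroup of $\Aut(B^{\Z^d})$ can realize, for the quantitative reason that a nontrivial automorphism of $B^{\Z^d}$ with radius $n$ must act nontrivially on some point of period $O(n)$, of which there are only $|B|^{O(n^d)}$. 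So the strategy is: (1)~extract from the quantitative residual finiteness lemma a concrete upper bound on the ``separation complexity'' of finitely generated subgroups of $\Aut(B^{\Z^d})$ — namely, a word of length $n$ in the generators that is nontrivial can be detected on a configuration supported on a box of side $O(n)$, hence in space $O(n^d \log|B|)$, or equivalently the word problem lies in a complexity class whose ``witness size'' scales like $n^d$; (2)~construct, inside $\Aut(A^{\Z^D})$, a finitely generated subgroup whose word problem requires witnesses of size $\Omega(n^{D'})$ for some $D' > d$, using dyadically accelerated funneling along the $D$ coordinate directions to encode a computation whose natural configuration support is a box of side $\Theta(n)$ in $\Z^D$ but which genuinely ``uses'' all $D$ dimensions; (3)~observe that any embedding $H \hookrightarrow \Aut(B^{\Z^d})$ would send a generating set of $H$ to finitely many automorphisms of $B^{\Z^d}$, i.e.\ each of some finite radius, so that a word of length $n$ in $H$'s generators maps to a word of length $n$ in automorphisms of radius $O(1)$, hence of radius $O(n)$, contradicting step (1) against step (2) when $D > 3d+1$.

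The exponent $3d+1$, i.e.\ the requirement $D \geq 3d+2$, is where the technical heart lies and where I would expect the bookkeeping to be delicate. The factor $3$ should come from the cost of the funneling/marker construction: to propagate a bit of information to a cell at distance $n$ and have it participate in a reversible computation that returns, one pays roughly a constant-factor blow-up in each spatial direction, and the conversion between ``radius $n$ in $\Z^d$'' and ``can be simulated/detected on an $n$-box'' versus ``genuinely needs an $n^{D}$-box's worth of periodic data'' loses further constant and dimensional factors. Concretely, a radius-$n$ automorphism of $B^{\Z^d}$ is determined by its action on the $(2n{+}1)$-box, and two such that differ must differ already on a point of period $\leq cn$; so the ``information content'' that a length-$n$ word can carry is $\Theta(n^d)$ up to constants, while the construction in $\Z^D$ should let a length-$n$ word depend on $\Theta(n^{D})$ bits of the periodic configuration (or, after the $3\times$ loss inherent in making the markers, on $\Theta((n/3)^{D}) = \Theta(n^{D})$ bits, still with $D > d$). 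Balancing these — carefully tracking that the simulation overhead is a bounded power, and that it is at most $3$ per step of the reduction from dimension $D$ down to dimension $d$ — is exactly what forces the hypothesis $D > 3d+1$.

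The main obstacle, I expect, is step~(2): producing a single finitely generated subgroup of $\Aut(A^{\Z^D})$ that is simultaneously (a)~rich enough that its word problem provably cannot be ``compressed'' onto $O(n^d)$-sized periodic witnesses, and (b)~robust under arbitrary embeddings, not just under the ``geometric'' embeddings induced by block maps. Point (b)~is why the argument must go through an intrinsic, embedding-invariant invariant — here, a lower bound on the function $n \mapsto$ (minimal period on which the $n$-th ``diagonal'' word acts nontrivially) that is preserved, up to a polynomial of controlled degree, by any group embedding into $\Aut(B^{\Z^d})$. I would set this up by choosing in $H$ an explicit sequence of words $w_n$ of length $\Theta(n)$ realizing a permutation of periodic points that is supported on and genuinely entangled across a $\Theta(n)$-box of $\Z^D$ — built so that $w_n$ acts trivially on every point of period $< \epsilon n^{D/d}$ in any $\Z^d$-subshift coordinatization — and then invoking Lemma~\ref{lem:ResiduallyFinite} on the $\Z^d$ side to derive the contradiction. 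Verifying that the funneling construction indeed yields words with this ``no small periodic witness'' property, uniformly in $n$, is the crux; everything else (finite generation, the radius bookkeeping, the final counting argument) is routine once that is in place.
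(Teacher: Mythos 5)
You have correctly identified the two ingredients the paper uses (the quantitative residual finiteness of Lemma~\ref{lem:ResiduallyFinite} and a dyadically accelerated funneling construction in $\Aut(A^{\Z^D})$), but the central mechanism that makes the argument work under an \emph{arbitrary abstract} embedding is missing, and this is exactly the point your step (2)/(b) leaves open. Your proposed invariant --- ``the minimal period on which the $n$-th word acts nontrivially,'' or equivalently a lower bound on witness size for the word problem --- is not something you can control through an abstract group embedding: an embedding $H \hookrightarrow \Aut(B^{\Z^d})$ gives you no handle on \emph{how} the image acts on configurations, so a word ``built so that it acts trivially on every point of small period in any $\Z^d$-coordinatization'' is not a property you can engineer from the $\Z^D$ side; and an unconditional lower bound on word-problem complexity (witness size) is precisely the kind of statement one cannot prove without resolving complexity-class separations (the paper itself notes that the complexity route only works conditionally). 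The paper's solution is group-theoretic: the DAF element $\psi_{0,k,s,\pi}$ is arranged to lie in a \emph{finite simple} subgroup $H$ (even permutations of the $\Z_7$-data over the marker set) of order $7^{\Theta(n^{D-1})}!/2$, while having word norm only $\tilde{O}(n^3)$ in the finitely generated group $\PAut_{\Z^D}(A)$. Both the isomorphism type (hence cardinality) of $H$ and the word norm of its nontrivial element are preserved by any embedding, and on the $\Z^d$ side simplicity forces the action on periodic points of period $O(N)$ to be faithful (the kernel is a proper normal subgroup of a simple group), which is what converts Lemma~\ref{lem:ResiduallyFinite} into the cardinality bound $|H| \le C^{N^d}!$ and yields the contradiction. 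Without the simple-subgroup device, your step (3) does not go through.

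Your accounting of the threshold $D > 3d+1$ is also off. The factor $3$ is not a ``constant-factor blow-up per spatial direction'' or a marker overhead (a constant factor per direction would not affect any exponent); it is the exponent in the word-norm of the funneling element, $N_k \le 8^k N_0 = \tilde{O}(n^3)$, coming from the eightfold blow-up at each of the $k = \log_2 n$ dyadic levels. The $D-1$ (rather than $D$) arises because one coordinate direction ($e_D$) is spent writing the counters, so the simple group has order $7^{\Theta(n^{D-1})}!/2$, not $7^{\Theta(n^{D})}!/2$. Comparing double-exponential degrees, $n^{D-1}$ versus $(n^3\log^t n)^d$, gives exactly the hypothesis $D-1 > 3d$; this is also why the paper remarks that a more careful construction should recover $D \ge 3d+1$.
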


There are no previously known nonembeddability results for the groups $\Aut(A^{\Z^d})$, indeed previously the only embeddability obstructions that have been observed or applied even in the case of $\Aut(A^\Z)$ are residual finiteness (i.e.\ all subgroups must be residually finite) and complexity of the word problem (i.e.\ all f.g.\ subgroups must have coNP word problem).

We note that from just $\Aut(A^{\Z^D})$ not embedding in $\Aut(B^{\Z^d})$, it would not be obvious that not all finitely-generated subgroups do not embed. For example, it is known that $\Aut(A^\Z)$ admits a f.g.\ subgroup that contains an isomorphic copy of every f.g.\ subgroup of $\Aut(A^\Z)$, yet it is not known whether there is an f.g.\ subgroup of $\Aut(A^\Z)$ that contains an embedded copy of $\Aut(A^\Z)$ \cite{Sa22b}.

As noted in \cite{BaCaRi25}, the groups $\Aut(A^{\Z^d})$ and $\Aut(B^{\Z^D})$ are non-isomorphic for $d \neq D$, since their centers are the respective shift groups $\Z^D$ and $\Z^d$. This is a result of Hochman \cite{Ho10} (generalizing Ryan's theorem \cite{Ry72}, and further generalized by Barbieri, Carrasco-Vargas and Rivera-Burgos in \cite{BaCaRi25}). However, it seems impossible to obtain any results about embeddability using this kind of method.

Hochman asked in personal communication whether Theorem~\ref{thm:F} is also true modulo the centers. Indeed, this is the case, by a minor modification of the proof.

\begin{maintheorem}
\label{thm:G}
Suppose that $D > d$, $\mathcal{G}_d = \Aut(A^{\Z^d}), \mathcal{G}_D = \Aut(B^{\Z^D})$ for any nontrivial alphabets $A, B$. Then $\mathcal{G}_D$ has a finitely-generated subgroup such that $\mathcal{G}_D/K$ does not embed in $\mathcal{G}_d/K'$ for any central subgroups $K \leq \mathcal{G}_D, K' \leq \mathcal{G}_d$.
\end{maintheorem}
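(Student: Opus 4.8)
The plan is to push the proof of Theorem~\ref{thm:F} through, keeping track of what happens modulo central subgroups. The key structural input must be Lemma~\ref{lem:ResiduallyFinite} (the quantitative residual finiteness of $\Aut(A^{\Z^d})$ via action on periodic points), together with whatever finitely-generated subgroup $H \leq \mathcal{G}_D$ is constructed for Theorem~\ref{thm:F} by the dyadically-accelerated-funneling machinery. That subgroup is engineered so that the ``distance to triviality'' of a word $w$ in the generators of $H$ — the period of the smallest periodic point it moves — grows faster than any quantitative residual finiteness bound available for $\Aut(B^{\Z^d})$ when $D > 3d+1$. The first step is therefore to recall, for the reader, exactly which group-theoretic invariant the Theorem~\ref{thm:F} argument really exploits: it is (a lower bound on) the \emph{depth function}, i.e.\ the function $\rho_H(n) = \min\{\,[H:N] : N \trianglelefteq H,\ N \cap B_H(n) = \{1\}\,\}$, or more precisely a growth comparison between the word length of elements and the smallest index of a finite-index subgroup detecting their nontriviality. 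Embeddings can only shrink this, so the obstruction is an inequality between the depth function of $H$ and an upper bound on the depth function of (finitely-generated subgroups of) $\mathcal{G}_d$.

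The second step is to observe that passing to a central quotient affects this invariant in a controlled way. If $K \trianglelefteq \mathcal{G}_D$ is central and $\bar H$ is the image of $H$ in $\mathcal{G}_D/K$, then $\bar H = H/(H\cap K)$ with $H \cap K$ central in $H$. A homomorphism $\bar H \hookrightarrow \mathcal{G}_d/K'$ lifts, after pulling back along $\mathcal{G}_d \to \mathcal{G}_d/K'$, to a homomorphism $H \to \mathcal{G}_d$ whose kernel is central (it maps into $K'$, which is central, intersected with the image — one has to check the image's preimage is virtually the right thing, but centrality of $K'$ is exactly what makes the kernel central). So the whole question reduces to: \emph{$H$ admits no homomorphism to $\mathcal{G}_d = \Aut(A^{\Z^d})$ with central kernel.} This is genuinely stronger than non-embedding, but the periodic-point argument delivers it: a homomorphism $\varphi\colon H \to \mathcal{G}_d$ with central kernel $Z$ sends a short word $w$ to an automorphism $\varphi(w)$ of bounded radius (radius linear in $|w|_H$, since each generator has a fixed radius), hence $\varphi(w)$ is either trivial or moves some periodic point of small period; but if $\varphi(w)$ is trivial then $w \in Z$, and one must then separately arrange that $Z$ cannot absorb the elements witnessing the lower bound. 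This last point is where the construction of $H$ for Theorem~\ref{thm:F} has to be inspected: the elements $w_n$ giving the lower bound should be chosen so that they do not lie in \emph{any} central subgroup of $H$ — e.g.\ by exhibiting, for each $n$, a conjugate of $w_n$ that fails to commute with $w_n$, or more robustly by noting that $w_n$ itself acts nontrivially on a periodic point of $\mathcal{G}_D$ of period super-polynomial in $|w_n|$, and that this action is not an inner/central phenomenon. In fact the cleanest route is: the center of any finitely-generated subgroup of $\Aut(B^{\Z^D})$ is itself detected by periodic points with a comparable radius bound (this is where ``$3d+1$ vs.\ $3d$'' slack is spent, matching the remark after Theorem~\ref{thm:F}), so central elements of $H$ have \emph{small} depth, while the $w_n$ have \emph{large} depth, hence the $w_n$ are not central, and the reduction closes.

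The third step is bookkeeping: assemble the inequality. On the $\mathcal{G}_D$ side, exhibit words $w_n$ of length $O(n)$ in the generators of $H$ with: (i) $\varphi(w_n)\neq 1$ for every homomorphism $\varphi$ to $\mathcal{G}_d$ with central kernel — via step two plus the non-centrality of $w_n$ in $H$; and (ii) a matching lower bound forcing any such $\varphi(w_n)$ to have depth (in $\mathcal{G}_d$) at least some function $f(n)$. On the $\mathcal{G}_d$ side, upper-bound the depth of any radius-$r$ automorphism of $B^{\Z^d}$ by something like $C^{r^d}$ (it acts on the full shift of $r$-periodic points, a finite set of size $|B|^{r^d}$, so a symmetric group of that order detects it), and track the central-quotient correction: a central $K'$ can at worst compose this with a bounded-index adjustment, which does not change the exponential type. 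Since $D > 3d+1$ makes the expansion rate of $\Z^D$-cones strictly outpace the $d^{\text{th}}$-power growth on the $\Z^d$ side — exactly as in Theorem~\ref{thm:F} — we get $f(n)$ beating $C^{r(n)^d}$ for $r(n)=O(n)$, a contradiction.

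The main obstacle I expect is step two's subtle point: ensuring that the lower-bound witnesses $w_n$ are not swallowed by the central subgroup $H \cap K$, uniformly in $K$. Non-embedding only needs injectivity, which is automatic from any single $w_n$ surviving; but killing \emph{every} central kernel requires knowing the $w_n$ are genuinely non-central in $H$, and ideally that they remain non-central (and large-depth) in $H/(H\cap K)$ for \emph{all} central $K$ at once. The ``minor modification'' the paper alludes to is presumably exactly this — choosing the construction of $H$ so that the relevant elements have, say, nontrivial commutators with explicitly named generators, witnessed again on periodic points — and verifying that the funneling construction can be arranged to guarantee it. I would isolate this as a lemma (``the depth-witnesses $w_n$ are not central in $H$, and $Z_{\mathrm{inner}}$-type elements have polynomially bounded depth'') and then the rest is the same counting as in Theorem~\ref{thm:F}.
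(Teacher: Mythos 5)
There is a genuine gap, in two places. First, your reduction in step two --- lifting an embedding $\bar H \hookrightarrow \mathcal{G}_d/K'$ along $\mathcal{G}_d \to \mathcal{G}_d/K'$ to a homomorphism $H \to \mathcal{G}_d$ with central kernel --- is not available: the preimage in $\mathcal{G}_d$ of the image of $\bar H$ is a central extension of $\bar H$ by a subgroup of $K'$, and such extensions need not split, so no homomorphism $H \to \mathcal{G}_d$ is produced and the proposed reduction (``$H$ admits no homomorphism to $\mathcal{G}_d$ with central kernel'') does not follow. The paper never lifts; it works directly in the quotient, using that $\mathcal{G}_d/K'$ has a well-defined action on $K'$-orbits of periodic points. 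To make that quantitative one first normalizes $K'$: by Lemma~\ref{lem:Center} the center of $\mathcal{G}_d$ is the shift group $\Z^d$, and after a recoding (replacing $\Z^d$ by a finite-index subgroup and $B$ by a larger alphabet) one may assume $K' = \langle e_1,\ldots,e_{d'}\rangle$. Then the real work, which your phrase ``a bounded-index adjustment does not change the exponential type'' skips over, is to show that a representative $h' \in \mathcal{G}_d$ of a nontrivial element of $\mathcal{G}_d/K'$ of radius at most $rn$ moves some point of period $O(rn)$ \emph{out of its $K'$-orbit}; this needs the paper's case analysis ($h'$ a shift by a vector outside $K'$, versus $h'$ not a shift, handled with the characteristic function of the origin and with a pattern witnessing $h' \neq \sigma_{\vec v}$). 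Without it, the image of a witness could act on all small-period points by a $K'$-shift and the depth bound you want collapses.

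Second, the obstacle you flag at the end --- that the witnesses might be swallowed by $H \cap K$ or be central --- is resolved in the paper not by a commutator-witness lemma but structurally: the witnesses from the proof of Theorem~\ref{thm:F} lie in a finite \emph{nonabelian simple} subgroup $H \leq \mathcal{G}_D$ of order $7^{\Theta(n^{D-1})}!/2$ (the controlled permutations of marker contents). For any central $K$, the intersection $K \cap H$ is an abelian normal subgroup of the nonabelian simple $H$, hence trivial, so by the second isomorphism theorem $HK/K \cong H$ and the entire simple group, together with its nontrivial element of small word norm, survives in $\mathcal{G}_D/K$; simplicity is also exactly what forces the action on $K'$-orbits of periodic points to be faithful on the target side, giving $|H| \leq C^{n^d}!$ and the same counting contradiction as in Theorem~\ref{thm:F} since $D-1 > 3d$. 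Your depth-function framing is a reasonable heuristic, but as written the proposal neither produces the homomorphism it relies on nor proves the non-centrality and faithfulness facts; both are precisely where simplicity and the periodic-orbit case analysis must enter. (Also, the ``$3d+1$ versus $3d$'' slack is not spent on detecting centers of subgroups of $\mathcal{G}_D$; it is the same slack as in Theorem~\ref{thm:F}, coming from the counter dimension used in the construction.)
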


In particular, the groups are not isomorphic modulo their centers.

Next, denote by $F_2$ the free group on two generators. The following is a question of Barbieri, Carrasco-Vargas and Rivera-Burgos \cite{BaCaRi25}:

\begin{question}
\label{q:F2Z}
Does $\Aut(\{0,1\}^{F_2})$ embed in $\Aut(\{0,1\}^\Z)$?
\end{question}

We solve this question as well.

\begin{maintheorem}
\label{thm:H}
Let $G$ be a f.g.\ group with at least stretched exponential growth. 
Then $\Aut(A^G)$ has a finitely generated subgroup that does not embed into $\Aut(B^{\Z^d})$ for any alphabet $B$ and $d \in \N$. 
\end{maintheorem} 

Note that this is an immediate corollary of Theorem~\ref{thm:A} and Theorem~\ref{thm:B} if PSPACE $\neq$ NP. However, our result is unconditional.

\section*{Acknowledgements}

We thank Ilkka Törmä for working through our first proof of Theorem~\ref{thm:HochmanSolution} in full detail. Of course, any mistakes are mine. The connection between the Gap Conjecture and a PSPACE frontier originated from the study of another computational model with Ilkka Törmä (apart from the construction of markers). We thank Sebasti\'an Barbieri for sharing Question~\ref{q:F2Z}, and Scott Schmieding for several inspiring discussions. We thank Laurent Bartholdi for helping to streamline the surface group argument (Lemma~\ref{lem:Surface}), and other useful comments. We thank Michael Hochman for asking about Theorem~\ref{thm:G}.

\section{Definitions, conventions and some standard observations}

In this section, we explain our definitions and conventions. In the appendix, we have included an index (Section~\ref{sec:Index}), which may help keep track of our notations.

Complexity-theoretic definitions are not found in this section, as they are quite lengthy, and mostly standard. We have instead written these in an appendix, which also contains some basic results on complexity theory. The reader may find it helpful to also consult a textbook such as \cite{ArBa09}.

Readers familiar with complexity theory will not need to read Section~\ref{sec:BasicCT} and Section~\ref{sec:AP} in the appendix, but may find it helpful to check out Lemma~\ref{lem:PSPACEByTrees} (an explicit tree formulation of a computation of an alternating Turing machine in terms of two nondeterministic Turing machines). We also give a proof of a version of Barrington's theorem in Section~\ref{sec:Barrington}. A reader familiar with previous work of the author may mostly skip Section~\ref{sec:Barrington}, while for others (even readers familiar with Barrington's theorem) we recommend taking a look, as we use similar notation and arguments in automorphism groups.

\subsection{Preliminaries}

First, our least standard definitions. To avoid excessive use of subscripts and superscripts, we will usually write $A|B$ instead of the more usual $A|_B$ in all of our notations. For the exponentiation notation $a^b$ in the context of conjugation, we use a caret symbol $\car$ and write $a \car b = b^{-1} \circ a \circ b$. We also set the convention that the $|$-symbol binds more tightly than $\car$, meaning $a|b\car c = (a|b)\car c$. Set theoretic operations like $A \cap B$ in turn bind more tightly than $|$, so for example $a|A \cap B \car c|D \cup E = (a|(A \cap B))\car (c|D \cup E)$. Composition $\circ$ binds the least tightly, after all of the above.

By $\bar{X}$ we mean the complement of the set $X$ when the containing set is clear from context. By $A \sqcup B$ we denote a union of two sets $A, B$, when $A \cap B = \emptyset$ (i.e.\ this is a domain-restriction of the union operation). In a cartesian product $A \times B$, we refer to $A$ and $B$ as \emph{components}, but especially in the context of sets $(A \times B)^G$, we refer to $A$ and $B$, as well as the projections $A^G$ and $B^G$, as \emph{tracks}. Intervals $[i, j]$ are interpreted on $\Z$, i.e.\ $[i, j] = \{k \in \Z \;|\; i \leq k \leq j\}$, unless explicitly specified otherwise. By $\Z_k$ we denote the cyclic group with $k$ elements. By $1_G$ we denote the identity element of a group $G$.

In many cases, we will have either a graph $G$ or a configuration $x$, and on its nodes $u \in G$ or positions $x_g$, we have colors (or symbols). We often think of the graph or configuration as being fixed, and speak of the color at $u$ or $g$. The color in turn is often a Cartesian product of the form $C \times B$. Then \emph{$B$-value} of a node $u$ or element $g$ refers to the $B$-component of $x_g$ or of the color of $u$, and similarly for $C$.

For a function $g : \N \to \R$ write $O(g)$ (resp.\ $\Omega(g)$) for functions $f$ such that for some $C > 0$, $f(n) \leq Cg(n)$ (resp.\ $f(n) \geq Cg(n)$) for large enough $n$. Write $\Theta(g)$ for the intersection. It is customary to use the notation $f = O(g), f = \Omega(g), f = \Theta(g)$ instead of inclusion.

For nondecreasing functions $f, g : R \to \R$ with $R \subset \R$, write $f \lesssim g$ if there exist $a, b > 0$ such that for some $r_0$, for all $r_0 \leq r \in R$ we have $br \in R$ and $f(r) \leq ag(br)$. Write $f \gtrsim g \iff g \lesssim f$. If $f \lesssim g$ and $f \gtrsim g$, we write $f \asymp g$ and say that $f, g$ are \emph{asymptotically equal}. Note that all functions in $f(\Theta(n))$ are asymptotically equal. The term \emph{growth} refers to a class of functions closed under asymptotic equality.

\begin{lemma}
\label{lem:Invert}
If $f(n \log^d n) \lesssim g(n)$, then $f(n) \lesssim g(n/\log^d n)$. Similarly for $\gtrsim$.
\end{lemma}

\begin{proof}
Setting $m = n / \log^d n$ we have
\[ m \log^d m = (n / \log^d n) \log^d (n / \log^d n) = n - n \log^d \log^d n / \log^d n = \Theta(n) \]
Thus if $f(n \log^d n) \lesssim g(n)$, we have
\[ f(n) \asymp f(\Theta(m \log^d m)) \lesssim g(m) = g(n/\log^d n). \]
The case of $\gtrsim$ is similar.
\end{proof}

In the context of permutation groups on some set (clear from context), we write the permutation $\pi$ mapping $\pi(a) = b, \pi(b) = c, \pi(c) = a$ and fixing all other elements as $(a \; b \; c)$ (or $(a; \; b;\; c)$ if $a, b, c$ are longer expressions). Such a permutation is called a \emph{$3$-rotation}.

An \emph{alphabet} is a finite set of \emph{symbols}. A \emph{word} $u$ is an element of the free monoid $A^*$ on an alphabet $A$, and its \emph{length} is the number of $A$-symbols (free generators) it is composed of. We by default \emph{$0$-index} words meaning $u = u_0u_1\ldots u_{|u|-1}$, but this can be overridden by notation, or if we explicitly say we \emph{$1$-index} the word i.e.\ $u = u_1u_2\ldots u_{|u|}$. We write $A^{\leq n}$ for the words of length at most $n$.

For words $u, v$ we denote the free monoid operation, i.e.\ concatenation, by $uv$. The \emph{empty word} (unique word of length $0$) is written $\epsilon$, and we write $A^+ = A^* \setminus \{\epsilon\}$. A set of words $W \subset A^*$ is \emph{mutually unbordered} if, whenever $u, v \in W$, $i, j \in \Z$, and $x \in A^\Z$ satisfies $x|[i, i+|u|-1] = u$, $x|[j, j+|v|-1] = v$, we have $[i, i+|u|-1] \cap [j, j+|v|-1] = \emptyset$, unless $i = j$ and $u = v$. A word $w$ is \emph{unbordered} if $\{w\}$ is mutually unbordered.

We use the usual notations $O(f(n)), \Theta(f(n)), \Omega(f(n))$: $f'(n) = O(f(n))$ if $f'(n) \leq Cf(n)$ for large enough $n$ and some $C > 0$; $f'(n) = \Omega(f(n))$ if $f'(n) \geq Cf(n)$ for large enough $n$ and some $C > 0$; $f'(n) = \Theta(f(n))$ if both are true.

Vertices of a graph of any kind are written $V(G)$ and edges as $E(G)$. We consider undirected graphs whose edges are sets $\{u, v\}$ of cardinality $2$ with $u, v \in V(G)$, as well as directed graphs (defined later).

In a metric space $X$, we say $Y \subset X$ is \emph{$r$-dense} if every $r$-ball intersects $Y$. We say $Y$ is $r$-separated if $d(y, y') < r$ for $y, y' \in Y$ implies $y = y'$. The following is immediate in finite metric spaces (in general, axiom of choice is needed):

\begin{lemma}[Packing Lemma]
In any metric space $X$, for any $r$ there exists an $r$-dense $r$-separated subset $Y \subset X$.
\end{lemma}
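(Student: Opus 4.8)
The plan is to take $Y$ to be a \emph{maximal} $r$-separated subset of $X$ and then to check that maximality already forces $r$-density. First I would consider the family $\mathcal{F}$ of all $r$-separated subsets of $X$, partially ordered by inclusion; it is nonempty since $\emptyset \in \mathcal{F}$. The key point is that the union of any chain $\mathcal{C} \subseteq \mathcal{F}$ is again $r$-separated: if $y, y' \in \bigcup \mathcal{C}$ satisfy $d(y, y') < r$, then, $\mathcal{C}$ being totally ordered, both $y$ and $y'$ already lie in a single member $Z \in \mathcal{C}$, and $r$-separatedness of $Z$ gives $y = y'$. Hence every chain has an upper bound in $\mathcal{F}$, and Zorn's Lemma produces a maximal element $Y$. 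This is the only place the axiom of choice enters: in a finite metric space one instead builds $Y$ greedily, at each step adjoining any point whose distance to all previously chosen points is $\geq r$, and the process terminates by finiteness, yielding a maximal $r$-separated set with no choice principle.

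It remains to see that a maximal $r$-separated set $Y$ is automatically $r$-dense. Suppose not: then some $r$-ball misses $Y$, i.e.\ there is $x \in X$ with $d(x, y) \geq r$ for every $y \in Y$ (so in particular $x \notin Y$, since $d(x,x) = 0 < r$ would put $x$ in its own $r$-ball). I claim $Y \cup \{x\}$ is still $r$-separated. The only new pairs to check are those involving $x$, and for each of these the distance is at least $r$, so the defining implication ($d < r$ forces equality) holds vacuously; the remaining pairs are covered by $r$-separatedness of $Y$. Thus $Y \cup \{x\}$ strictly enlarges $Y$ within $\mathcal{F}$, contradicting maximality. Therefore every $r$-ball intersects $Y$, so $Y$ is simultaneously $r$-dense and $r$-separated, as required.

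I do not expect a real obstacle here; this is the standard ``a maximal packing is a covering'' argument. The only points needing minor care are aligning the strict inequality in the definition of $r$-separated with whichever (open or closed) $r$-ball convention is used in the definition of $r$-dense — both work, since ``$d(x,y) \geq r$ for all $y \in Y$'' is exactly what one extracts from a missed ball and is exactly what is needed to extend the packing — and being explicit that the general case rests on Zorn's Lemma while finiteness is precisely what makes the greedy construction halt.
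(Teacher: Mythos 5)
Your proposal is correct and follows essentially the same route as the paper: take a maximal $r$-separated set (via Zorn's Lemma in general, greedily in the finite case) and observe that maximality forces $r$-density, since any point missed by all $r$-balls could be adjoined while preserving separation. The paper's proof is just a terser version of the same argument.
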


\begin{proof}
If $Y$ is maximal $r$-separated, then we cannot add any new points, so for every $x \in X$, $\{x\} \cup Y$ is not $r$-separated, i.e.\ $d(x, y) < r$ for some $y \in Y$. We conclude that any such maximal $Y$ is $r$-dense.
\end{proof}

\subsection{Groups}

Our groups $G$ always have a fixed generating set $S$ which is \emph{symmetric} $S = \{s^{-1} \;|\; s \in S\}$ and $1_G \in G$, and $G$ is considered as a metric space with the corresponding word metric
\[ d(g, h) = \min\{n \;|\; h = gs_1s_2\ldots s_n \mbox{ for some } s_i \in S\}, \]
and \emph{($k$-)balls} are $B_k = B_k^G = \{g\in G \;|\; d(1_G, G) \leq n\}$ or their translates $gB_k$. A \emph{geodesic} in a group is a sequence of group elements $g_1, \ldots, g_n$ such that $d(g_i, g_j) = |i-j|$ for $i, j$. It is obvious that geodesics of arbitrary length exist in f.g.\ groups.

The \emph{commutator} of group elements $a, b$ is $[a, b] = a^{-1}b{-1}ab$; $[G, G]$ is the group generated by commutators $[a, b]$ where $a, b \in G$. Also define longer commutators by $[a_1, a_2, \ldots, a_k] = [a_1, [a_2, \ldots, a_k]]$. A group is \emph{abelian} if any two elements $g, h$ commute. A group $G$ is \emph{nilpotent} if the \emph{lower central series} $G, G_1 = [G, G], G_2 = [G, G_1], G_3 = [G, G_2], \ldots$ reaches the trivial group in finitely many steps. A group $G$ is \emph{solvable} if the \emph{derived series} $G, G^1 = [G, G], G^2 = [G^1, G^1], \ldots$ reaches the trivial group in finitely many steps. The \emph{center} of a group $G$ is the set of elements $g \in G$ that commute with all other elements.

Two groups $G,H$ are \emph{isomorphic} if there is a bijection between them that preserves the multiplication relation, and we write this as $G \cong H$. The groups $G, H$ are \emph{commensurable} if there exist subgroups $L \leq G, L' \leq H$ of finite index such that $L \cong L'$. The \emph{index} $[G : H]$ of a subgroup $H \leq G$ is the cardinality of $G/H = \{gH \;|\; g \in G\}$.

A group $G$ is \emph{virtually P} (where P is any group, property of groups or family of groups) if $G$ has a subgroup of finite index which is P. It is \emph{residually P} if for all $g \in G$ there is a quotient in $P$ where $g$ maps nontrivially. A group $G$ is \emph{amenable} if for all $\epsilon>0$ and $S \subset G$ finite, there exists finite $F \subset G$ such that $\frac{FS}{F} < 1+\epsilon$. A group $G$ is \emph{left-orderable} if there exists a total order $<$ on $G$ such that $a < b \implies ga < gb$ for all $a, b, g \in G$.

The \emph{word problem} of a group $G$, with respect to a finite generating set $S$, is the language $\{w \in S^* \;|\; w =_G 1_G\}$, where $=_G$ denotes equality when evaluated in the group $G$.

The symmetric group (of all permutations, under composition) on a finite set $A$ is $\Sym(A)$, and $\Alt(A)$ is the simple index-$2$ subgroup of even permutations. For integer $n$, we write $\Sym(n), \Alt(n)$ for $\Sym([1,n]), \Alt([1,n])$ respectively.

\subsection{Symbolic dynamics}

In the context of a group $G$ and alphabet $A$, a \emph{pattern} is $p \in A^D$. If $q \in A^E$ for some $E \subset G$, $q$ \emph{contains a translate of} $p$ if there exists $g \in G$ such that $\forall h \in D: q_{gh} = p_h$.

The set $A^G$ for finite alphabet $A$ and countable group $G$ is given its product topology (making it a Cantor space), and it has a natural continuous action of $G$ by $gx_h = x_{g^{-1}h}$ for $g, h \in G, x \in A^G$. The \emph{full shift} is $A^G$ as a dynamical system under the $G$-action. A \emph{subshift} is a $G$-invariant closed subset of $A^G$ (i.e.\ a subsystem of a full shift). A \emph{subshift of finite type} or \emph{SFT} is $X \subset A^G$, such that for some clopen set $C \subset A^G$, we have $X = \{x \in A^G \;|\; \forall g \in G: gx \notin C\}$ (such a set is indeed a subshift).

A subshift has the \emph{finite extension property}, or is \emph{FEP} if there exists a finite set $D \subset G$, a set of patterns $P \subset A^D$ and a \emph{look-ahead} $r\in\N$ such that for all $E \subset G$ and $x \in A^E$, we have $x \in X|E$ (i.e.\ $x$ is \emph{globally valid}) if and only if there exists $y \in A^{EB_r}$ such that $y$ does not contain any translate of a pattern from $P$ (i.e.\ $y$ is \emph{locally valid}). An FEP subshift is always an SFT. 
Generalizing FEP, we say a subshift has the \emph{polynomial extension property for balls} or \emph{PEPB} if, with respect to some finite set of forbidden patterns $P$, and for some \emph{degree} $t$, a pattern $p \in A^{B_n}$ is globally valid if and only if some extension $q \in A^{B_{n^t}}$ is locally valid. 

The elements $x \in A^G$ are called \emph{configurations} or \emph{points}. The \emph{support} of a configuration is $\{g \in G\;|\; x_g \neq 0\}$, where $0 \in A$ is some agreed-upon zero-symbol (usually precisely $0$).

An \emph{endomorphism} of a subshift $X$ is a $G$-equivariant continuous $\phi : X \to X$. If there is a two-sided inverse (which is also an endomorphism), $\phi$ is an \emph{automorphism}. In the case $X = A^G$, endomorphisms are called \emph{cellular automata} and automorphisms \emph{reversible cellular automata}. An endomorphism always has a \emph{radius} $r \in\N$ and a \emph{local rule} $f_{\mathrm{loc}} : A^{B_r} \to A$ such that $f(x)_g = f_{\mathrm{loc}}(x|gB_r)$, where we see $x|gB_r$ as an element of $A^{B_r}$ in the obvious way.

Two subshifts $X \subset A^G, Y \subset B^G$ are \emph{(topologically) conjugate} if there is a homeomorphism $\phi : X \to Y$ that commutes with the $G$-actions; $\phi$ is called a \emph{conjugacy} and it \emph{conjugates} $X$ with $Y$. Generalizing endomorphisms and conjugacies, a continuous shift-commuting map $\phi : X \to Y$ is called a \emph{block map}. When such a map is injective, it conjugates $X$ with its image. The SFT property is known to be preserved under conjugacy. Also block maps (and thus conjugacies) have local rules, defined in an analogous way as for cellular automata.


\subsection{$\PAut$}

We define a reasonably natural (or at least canonical) f.g.\ subgroup of the automorphism group that we use in some of the results.

\begin{definition}
Let $G$ be a group, and let $A_1, ..., A_k$ be finite sets and $A = \prod_i A_i$. Then $\PAut_G(A_1, ..., A_k)$ is the group generated by
\begin{itemize}
\item partial shifts $\sigma_{g, i}$ for $g \in G, i \in [1, k]$, which on input $x = (x_1, \ldots, x_k) \in A^G$ map $\sigma_{g, i}(x) = (x_1, \ldots, x_{i-1}, y, x_{i+1}, \ldots, x_k)$ where $y_h = x_{hg}$ for $h \in G$; and
\item symbol permutations $\eta_\pi$ for $\pi \in \Sym(A)$, which on input $x \in A^G$ are defined by $\eta_\pi(x)_h = \pi(x_h)$ for $h \in G$.
\end{itemize}
For any finite alphabet $A$, define $\PAut(A)$ to be $\PAut(A_1, ..., A_k)$ where $A \cong A_1 \times \cdots \times A_k$ through any bijection, where $|A_i|$ is prime.
\end{definition}

When $G$ is clear from context, we write just $\PAut(A) = \PAut_G(A)$, or even just $\PAut$ if also the alphabet is clear from context.

\begin{lemma}
\label{lem:PAutWD}
$\PAut(A)$ is well-defined.
\end{lemma}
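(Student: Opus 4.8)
The issue is that the definition of $\PAut(A)$ depends on two arbitrary choices: first, a factorization $A \cong A_1 \times \cdots \times A_k$ into prime-order factors, and second, the specific bijection realizing this isomorphism. We must show the resulting subgroup of $\Aut(A^G)$ does not depend on either choice. The plan is to show that for any two such presentations, the generating sets of one are expressible as words in the generators of the other (and vice versa), so the two subgroups coincide.

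First I would dispose of the dependence on the bijection for a fixed tuple of prime sizes. If $\beta, \beta' : A \to A_1 \times \cdots \times A_k$ are two bijections, then $\gamma = \beta' \circ \beta^{-1}$ is a permutation of $A_1 \times \cdots \times A_k$, and conjugating the $\beta$-generators by the symbol permutation $\eta_\gamma$ (which lies in both groups, since symbol permutations for \emph{all} of $\Sym(A)$ are generators regardless of presentation) carries the $\beta$-partial-shifts to the $\beta'$-partial-shifts: indeed $\sigma_{g,i}$ is determined by the track decomposition, and $\eta_\gamma \car \sigma_{g,i}^{(\beta)}$ acts as $\sigma_{g,i}^{(\beta')}$. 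Since the symbol permutations already generate all of $\Sym(A)$ in each case, the two groups agree. So it remains to handle two factorizations into primes of possibly different shapes.

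For the factorization, note that $k$ and the multiset $\{|A_i|\}$ are \emph{not} unique only in the sense of reordering — by uniqueness of prime factorization of $|A|$, the multiset of prime sizes is an invariant, and reordering the factors $A_i$ is just a special case of composing with a bijection of the product, already handled above. Thus the only genuine content is that the \emph{grouping} is forced once $|A|$ is fixed, which it is. The one subtlety: if $|A|$ is prime (or $1$), the "factorization" is trivial ($k=1$ or $k=0$), and $\PAut(A)$ is generated by the single full shift $\sigma_g = \sigma_{g,1}$ together with all of $\Sym(A)$; this is unambiguous. I would also remark that the map $g \mapsto \sigma_{g,i}$ must be checked to be a genuine (anti)homomorphism from $G$ so that words in the generators make sense, but this is immediate from the definition $y_h = x_{hg}$ giving the right-regular-type action, and is not really part of well-definedness of the \emph{group} (only of the chosen generating set).

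The main obstacle — and it is a mild one — is simply being careful that the symbol permutations available are the same in both presentations: this is what makes the conjugation argument go through, and it works precisely because the definition grants \emph{all} of $\Sym(A)$ (not just permutations respecting a track structure) as generators. Once that is observed, well-definedness follows by the conjugation computation sketched above, with no reliance on the structure of $G$.
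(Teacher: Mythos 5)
Your proposal is correct and matches the paper's argument: the paper likewise observes that the only dependence on the choices lies in the partial shifts, that uniqueness of the prime factorization pins down the track sizes, and that the partial shifts for two presentations are conjugate by a symbol permutation, which is a generator in either case. The only cosmetic point is that your conjugator should be the permutation of $A$ obtained by transporting $\gamma$ back through $\beta$ (i.e.\ $\beta^{-1}\circ\beta'$), rather than $\eta_\gamma$ itself, but this does not affect the argument.
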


\begin{proof}
This is entirely analogous to \cite[Lemma~2.1]{Sa22b}. The point is the change between the groups for different choices is in the partial shifts. Since the prime decomposition is unique, these are conjugated by a symbol permutation, which is in the group in any case.
\end{proof}


Suppose $A = C \times B$, $G$ is a finitely-generated group, $\pi \in \Sym(C)$, $c \in C$ is not in the support of $\pi$, and $s \in G \setminus \{1_G\}$. Then we define $\phi_{\pi, c, s}$ as the automorphism of $A^G$ that applies $\pi$ to the $C$-component of the symbol at $g$ if the $C$-symbol at $gs$ is equal to $c$.

\begin{lemma}
\label{lem:PhisInPAut}
Let $|C| \geq 4, |B| \geq 2$ be alphabets and $\pi \in \Alt(C)$. Suppose $c$ is not in the support of $\pi$. Then $\phi_{\pi, c, s} \in \PAut_G(B \times C)$.
\end{lemma}

\begin{proof}
Assume $B = \Z_h$ for some $h \geq 2$. Use Ore's theorem to obtain for each $\pi \in \Alt(C)$ a decomposition $\pi = [\pi_0, \pi_1]$ where neither $\pi_i$ has $c$ in its support. We define automorphisms by describing their effect on the symbol $x_g$ of the input $x \in A^G$, for $g \in G$. Define an automorphism $\theta_b$ that, if the $B$-value at $x_{gs}$ is $b$, permutes the first component of $x_g$ by $\pi_b$. Clearly this is in $\PAut$. Next define an automorphism $\theta$ that permutes the $B$-component of $x_g$ by $(0 \; 1)$ if the $C$-component of $x_g$ is $c$. This is conjugate to a symbol permutation by a partial shift, so again in $\PAut$. Finally, let $\theta'$ be the automorphism that permutes the $B$-component of $x_g$ by a full length $|B|$ rotation in any case.

We claim that
\begin{equation}
\label{eq:eq}
\phi_{\pi, c, s} = \prod_{i = 0}^{|B|-1} [\theta_0, \theta_1 \car \theta] \car (\theta')^i
\end{equation}
To see this, we first analyze $\bar\theta = [\theta_0, \theta_1 \car \theta]]$. Observe that permutations applied to the $C$-component do not affect which cells contain $c$ since $c$ is not in the support of the $\pi_i$, so $\theta$ affects the $B$-component of a particular cell $g$ either in every application, or not at all.

In particular, in $\theta_1 \car \theta$ the possible changes from applying $\theta$ of the $B$-values are temporary, and are only visible when applying $\theta_1$. It follows that each of $\theta_0$, $\theta_1$ applies at some cell $g$ either both times it appears in the commutator (i.e.\ both the positive and negative application), or neither time. If one of them does not apply at $g$, then $\bar\theta$ has no action at $g$. If the conditions do hold, then precisely $\pi = [\pi_0, \pi_1]$ is applied to the $C$-value.

We see that the conditions for $\theta_0$ applying is precisely that the $B$-value at $gs$ is $0$. If this is the case, then $\theta_1 \car \theta$ applies if and only if the $C$-value at $gs$ is $c$. We conclude that $\bar\theta$ applies $\pi$ to the $C$-color at $g$ if and only if $gs$ has $B$-value $0$ and $C$-value $c$. Composing with all $(\theta')^i$-conjugates, we instead check that the $B$-value is $-i$. Thus, the composition only checks that the $C$-value is $c$, as then exactly one of the conjugates applies. This proves Equation~\ref{eq:eq}.
\end{proof}

For finite $S \Subset G$, define $\phi_{\pi, c, S}$ as the automorphism of $A^G$ that applies $\pi$ to the $C$-component of the symbol at $g$ if the $C$-symbol at $gs$ is equal to $c$ for all $s \in S$.

\begin{lemma}
\label{lem:PhiInPAut}
Let $S \Subset G$ be finite, let $|C| \geq 6, |B| \geq 2$ be alphabets and $\pi \in \Alt(C)$. Suppose $c$ is not in the support of $\pi$. Then $\phi_{\pi, c, S} \in \PAut_G(B \times C)$.
\end{lemma}

\begin{proof}
We may write $S = \{s_1, s_2, \ldots, s_\ell\}$. Since $|C| \geq 6$, we may write $\pi = [\pi_1, \pi_2, \ldots, \pi_\ell]$ where none of the $\pi_i \in \Alt(C)$ have $c$ in their support. Then 
\[ \phi_{\pi, c, S} = [\phi_{\pi_1, c, S}, \phi_{\pi_2, c, S}, \ldots, \phi_{\pi_\ell, c, S}] \]
since the condition for some $\phi_{\pi_i, c, S}$ (equivalently, all of them) applying the permutation is only a function of which nodes the symbol $c$ appears as the $C$-value, which is not affected by the permutations $\pi_i$.
\end{proof}

\section{Ripple catching}
\label{sec:RippleCatching}

In this section, we present the main new technology of the paper, namely ripple catching. This is a simple trick that allows us to use parallelism to ``check'' that a large cone exists at a position, in the positive direction in a graph, and that certain successor relations (which can be anything checkable in NC$^1$) are respected. Specifically, a particular permutation is applied at a node if and only if the cone rooted at that element exists and everywhere has locally valid content, in some desired sense.

To realize this, we consider a ``ripple of permutations'' down from the rim of the cone to its root, and we condition a permutation at the root of the cone on the ripple reaching the it. Using a commutator of two such ripples, we can ensure that only a ``complete'' ripple results in a nontrivial permutation.

Specifically, we will consider a graph whose nodes have colors in $C \times B$, and we will perform a permutation on the $C$-component, controlled by an essentially arbitrary condition on the colors $B$ (defined by some successor relation), and also the condition that $C$-colors are all $0$ outside the root of the cone.

\begin{definition}
Let $S$ be a finite set (one may think of it as the positive generating set of a free group). An \emph{$S$-labeled $C$-colored graph} is a directed graph $G$ where each edge has a label from $S$ or $S^{-1}$, and we write $u \overset{s}\rightarrow v$ if there is an $s$-labeled edge from $u$ to $v$; and each vertex $u$ has a color in $C$. If $C$ is irrelevant (or taken to be a singleton set) we speak of simply $S$-labeled graphs. We say an $S$-labeled graph is \emph{good} if
\begin{itemize}
\item whenever $u \overset{s}\rightarrow v$ is an edge, there is also an edge $v \overset{s^{-1}}\rightarrow u$;
\item for each $u$ and $s \in S$, there is a most one $v$ with $u \overset{s}\rightarrow v$.
\end{itemize}
\end{definition}

In the present section, we will not need $s$-edges for $s \in S^{-1}$, but we need them in Section~\ref{sec:Arboreous} so we include them in this definition.

If an $S$-labeled graph is good and $u \overset{s}\rightarrow v$, we call $v$ the \emph{$s$-successor} of $u$, and we write it as $us$. The $s$-successor of a node either is unique, or does not exist. Define inductively the $w$-successor of $u$, where $w = st \in S^+$ (with $s \in S$, $t \in S^*$) by $u(st) = (us)t$, again if it exists. A good $S$-labeled graph can be thought of as a partial group action of the free group $F_S$. We also write $us$ for the unique neighbor of $u$ where the edge has label $s$ (for $s \in S^\pm$), if this exists, and otherwise $us$ is undefined. 

Let $C$ and $S$ be finite sets, let $B$ be a countable set, and suppose we have associated to each $s \in S$ a \emph{successor relation} $R_s \subset B \times B$. If $(b, b') \in R_s$, we say $b'$ is an \emph{$s$-successor} of $b$. Often, the successor relation is deterministic, in that there is at most one $s$-successor for each color $b \in B$. In this case, we can also describe the successor relations by giving a partial action of $S$ (and thus $S^*$).

Further suppose $B$ is \emph{ranked} meaning there is a \emph{rank function} $\rho : B \to \N \cup \{\bot\}$ such that $\rho(b') = \rho(b) + 1$ or $\rho(b') = \bot$ for all $b, b' \in B$ such that $b'$ is an $s$-successor of $b$ for some $s \in S$. Consider the set $\mathcal{Q}_{S, C, B}$ of all $S$-labeled $(C \times B)$-colored good graphs on a fixed vertex set.\footnote{The fixed vertex set is just for set-theoretic reasons, we may assume it is large enough for all practical purposes. Since the group is countable, there is in any case a faithful action on a countable subset.}

Say a node $u$ in a graph is \emph{successful} if its color is $(c, b)$, and for each $s \in S$, its $s$-successor $v$ exists, and the $B$-component of the color of $v$ is an $s$-successor of $b$ for each $s$.

\begin{definition}
Let $S, C, B$ be finite sets, with $|C| \geq 6$. Define the \emph{ripple group} $\mathcal{G}_{S, C, B}$ by the following generators acting on $\mathcal{Q}_{S, C, B}$:
\begin{itemize}
\item If $\pi \in \Alt(C)$, $s \in S$, and $c \in C$ is not in the support of $\pi$, the generator $\phi_{\pi,c}$ applies $\pi$ on the $C$-component $u$ if whenever $us$ exists, the $C$-color of $us$ is $c$.
\item If $\pi \in \Alt(C)$, the generator $\gamma_\pi$ applies $\pi$ on the $C$-component of the color of a node $u$, if $u$ is successful.
\item If $\pi \in \Alt(C)$ and $n \in \N$, the generator $\beta_{\pi, \ell}$ applies $\pi$ on the $C$-component of the color of a node $u$ if the $B$-component of $u$ contains $b$ such that $\rho(b) = \ell$. 
\end{itemize}
For each $n$, the group $\mathcal{G}_{S, C, B}$ is endowed with the left-invariant word $n$-metric given by these generators, with $\ell$ bounded from above by $n$. 
\end{definition}

Some simple remarks are in order:
\begin{enumerate}
\item None of the generators affect the shape of the graph or the $B$-components of colors, only the $C$-components.
\item All of $\phi_{\pi, c}, \gamma_{\pi}, \beta_{\pi, \ell}$ obviously define bijections: they perform permutations on $C$-components of some nodes, and the conditions under which these permutations are not affected by the automorphism itself, so they have finite order.
\item The name of $\phi_{\pi,c}$ is very close to $\phi_{\pi,c,s}$ and $\phi_{\pi,c,S}$ from the previous section, but there is no direct clash, and $\phi_{\pi,c}$ can be seen as a generalization (or abstraction) of $\phi_{\pi,c,S}$.
\item The word $n$-metric is proper for all $n$ in the sense that balls are finite, but since these are not a generating set for the entire group $\mathcal{G}_{S, C, B}$, distances can be infinite (though one can alternatively consider the metric in the group generated by the finitely many $n$-bounded generators).
\end{enumerate}

We now define an important element of this group, which ``combines'' the three types of generators into one.

\begin{definition}
Let $S, C, B$ be finite sets, with $|C| \geq 6$. If $\pi \in \Alt(C)$, then the map $\psi_{\pi, \ell, c}$ acts on $\mathcal{Q}_{S, C, B}$ as follows: it applies $\pi$ on the $C$-component of a node $u$ if
\begin{itemize}
\item $u$ is successful,
\item the $C$-component of the color of $us$ is $c$ for all $s \in S$, and
\item $\rho(b) = \ell$, where $b$ is the $B$-value of $u$.
\end{itemize}
\end{definition}


\begin{lemma}
Let $S, C, B$ be finite sets with $|C| \geq 6$. If $\pi \in \Alt(C)$ and $c$ is in the support of $\pi$, then $\psi_{\pi, \ell, c}$ is a permutation, and $\psi_{\pi, \ell, c} \in \mathcal{G}_{S, C, B}$.
\end{lemma}

\begin{proof}
Suppose first that $c$ is not in the support of $\pi$. Then it is easy to see that again the set of nodes where the $C$-value is changed is invariant under the action, and again the action of $\psi_{\pi, \ell, c}$ has finite order, thus is a permutation. Namely, $\psi_{\pi, \ell, c}$ does not change the nodes where the $C$-value is $c$ whether or not $\pi$ is applied (since $c$ is not in the support of $\pi$), and the other two conditions speak only about $B$-values, which are never affected.


Since $|C| \geq 6$, we can write any permutation $\pi$ without $c$ in its support as $\pi = [\pi_1, \pi_2, \pi_3, \pi_4]$. We now show that
\[ \psi_{\pi, \ell, c} = [\phi_{\pi_1,s,c}, \gamma_{\pi_2}, \beta_{\pi_3, \ell}]. \]

For this, observe that none of the maps $\phi_{\pi_1,s,c}, \gamma_{\pi_2}, \beta_{\pi_3, \ell}$ affect the set of cells where the others act: The only dependence of any of them on $C$-values is that whether $\phi_{\pi_1,s,c}$ applies the permutation $\pi_1$ depends on the set of nodes with $C$-value $c$. Since $c$ is not in the support of any permutation, so this set never changes.

This means that in any particular cell, some of the $\phi_{\pi_1,s,c}, \gamma_{\pi_2}, \beta_{\pi_3, \ell}$ apply whenever they appear in the commutator (with positive or negative power), and some never apply. If even one of them never applies, then the commutator cancels. If all of them always apply, then the permutation applied on the $C$-value is $[\pi_1, \pi_2, \pi_3] = \pi$.

This happens if and only if the conditions of all three maps hold, i.e.\ precisely if 
\begin{itemize}
\item each existing $us$ for $s \in S$ has $C$-value $c$ (the condition for $\phi_{\pi_1,c}$);
\item $u$ is successful (the condition for $\gamma_{\pi_2}$);
\item $\rho(u) = \ell$ (the condition for $\beta_{\pi_3, \ell}$).
\end{itemize}
This is precisely the definition of $\psi_{\pi, \ell, c}$.

Next, suppose $c$ is in the support of $\pi$. We again first explain why $\psi_{\pi, \ell, c}$ is at least a bijection. Since we only consider ranked graphs, if node $u$ is successful and has color $(c, b)$ with $\rho(b) = \ell$, then $us$ has $B$-component $bs$ in its color, with $\rho(bs) = \ell+1$. Therefore, the $C$-component of this node is not affected by $\psi_{\pi, \ell, c}$, and it is easy to see that $\psi_{\pi, \ell, c}$ is again of finite order, thus a permutation.

To get this in the group, we write $\pi$ as a product of $3$-cycles, and observe that $\psi_{\pi, \ell, c}$ can be written as the corresponding product (since the set of cells where the permutation applies in the $C$-component is never changed, by the previous paragraph). Thus, we may assume $\pi$ is already a $3$-cycle. Since $|C| \geq 4$, we can take $\pi' \in \Alt(C)$ such that the support of $\pi \car (\pi')^{-1}$ does not contain $c$.

We have $\psi_{\pi \car (\pi')^{-1}, \ell, c} \in \mathcal{G}_{S, C, B}$ by the first half of the proof. It is now easy to verify that
\[ \psi_{\pi, \ell, c} = \psi_{\pi \car (\pi')^{-1}, \ell, c} \car \gamma_{\pi', \ell}. \]
Namely, if $\rho(u) \neq \ell$, then clearly neither automorphism acts on the $C$-component of the color at $u$.

If $\rho(u) = \ell$, then if $u$ is not successful, neither side acts. If $u$ is successful and has color $(c, b)$, then each $us$ exists, and the $B$-component $b'$ of the color of $us$ is an $s$-successor of $b$, thus has rank $\rho(b)+1$. Thus, the $C$-component of $us$ is not affected by the element $\psi_{\pi \car (\pi')^{-1}, \ell, c} \car \gamma_{\pi', \ell}$. If the $C$-component of the color of each $us$ is $0$, then $\psi_{\pi, \ell, c}$ applies $\pi$ and $\psi_{\pi \car (\pi')^{-1}, \ell, c} \car \gamma_{\pi', \ell}$ applies $(\pi \car (\pi')^{-1}) \car \pi' = \pi$; in other cases, neither automorphism affects the $C$-component of the color at $u$.
\end{proof}

\begin{definition}
The \emph{cone} $G(u)$ at the node $u$ in a graph $G \in \mathcal{Q}_{S, C, D}$ is the smallest set $N$ of such that $u \in N$, and if $v \in N$ is successful, then $vS \subset N$. The node $u$ is the \emph{root} of the cone. Write $G_i(u) = G(u) \cap uS^i$. The \emph{cone of depth $n$} $G_{\leq n}(u)$ is $\bigcup_{j = 0}^n G_j(u)$. We say the cone is \emph{full} if $G_{\leq n}(u) = uS^{\leq n}$.
\end{definition}

Note that the nodes in $G_i(u)$ need not themselves be successful.


\begin{lemma}[Ripple catching lemma]
\label{lem:RippleCatching}
Consider the group $\mathcal{G}_{S, C, B}$ with $B, C$ finite sets with $|C| \geq 4$. Let $\pi \in \Alt(C)$. Then there is an element in $\mathcal{G}_{S, C, B}$ of word $n$-norm linear in $n$, whose behavior can be described as follows: in the $C$-component of a node $u$, we apply $\pi$ if and only if there is a full $n$-cone at $u$, 
 and for all $u \neq v \in G_{\leq n}(u)$, the $C$-value is $0$.
\end{lemma}

\begin{proof}
We may assume $0, 1, 2 \in C$. Let $\pi = (0 \; 1 \; 2)$, and define
\[ \psi_1 = \left(\prod_{\ell = 1}^{n-1} \psi_{\pi, \ell, 1} \right) \circ \gamma_{\pi, n} \]
and
\[ \psi_2 = \left(\prod_{\ell = 1}^{n-1} \psi_{\pi^{-1}, \ell, 2} \right) \circ \gamma_{\pi^{-1}, n}, \]
(where we recall that the product puts the $i = 0$ value first, so $\psi_{\pi, 1, 0}$ is applied last when we act from the left).

Now, let $\pi_1, \pi_2$ be arbitrary permutations such that $[\pi_1, \pi_2] \neq 1$, and consider
\[ \psi = [\psi_{\pi_2, 0, 2} \car \psi_2, \psi_{\pi_1, 0, 1} \car \psi_1]. \]
We observe first that $\psi_{\pi_2, 0, 1}$ and $\psi_{\pi_2, 0, 1}$ only affect the $C$-values of nodes $u$ where the $B$-value $b$ satisfies $\rho(b) = 0$.

On the other hand, if $\ell, n \geq 1$, the set of nodes $v$ where $\psi_{\pi, \ell, 1}$, $\gamma_{\pi, n}$, $\psi_{\pi^{-1}, \ell, 2}$ or $\gamma_{\pi^{-1}, n}$ applies does not depend on the $C$-values of such nodes $u$. Thus, in the actions of $\psi_{\pi_2, 0, 2}^{\psi_1}$ and $\psi_{\pi_1, 0, 1} \car \psi_1$, the actions of $\psi_2$ and $\psi_1$ cancel, and thus the only possible nodes where $\psi$ may have an effect are those where $\rho(b) = 0$ for the $B$-value $b$, and the only possible effect is that $[\pi_2, \pi_1]$ is applied to the $C$-value.

Consider now a successful node $u$ where the $B$-value $b$ satisfies $\rho(b) = 0$. We claim that $[\pi_2, \pi_1]$ is applied to the $C$-value if and only if all nodes $ut$ exist and are successful for $t \in S^\ell$ for $0 \leq \ell < n$, and the $C$-value is $0$ in all nodes $ut$ with $t \in S^{\ell}$ for $0 < \ell < n$.

Note immediately that both $\psi_{\pi_2, 0, 2} \car \psi_2$ and $\psi_{\pi_1, 0, 1} \car \psi_1$ being applied at $u$ means precisely that the $C$-value at each $us$ with $s \in S$ is equal to $2$ after applying $\psi_2$, and equal to $1$ after applying $\psi_1$.


For $j \in \{1, \ldots, n-2\}$, we say $v \in G_j(u)$ is \emph{active} in \emph{ripple} $p \in \{1, 2\}$ if after applying
\[ (\prod_{\ell = 1}^{i-1} \psi_{\pi, \ell, p})^{-1} \psi_p \]
(i.e.\ just after applying the first action that affects the nodes at the level of $v$), the $C$-value at $v$ is $p$. Note that $\psi$ acts nontrivially at $u$ if and only if all nodes in $uS$ are active in both ripples (note that $uS = G_1(u)$ by the assumption that $u$ is successful).

We now observe that if $v \in G_{j-1}(u)$ and $vs$ is not active in ripple $p$, then if $j > 1$, the node $v$ can not be active in both of the ripples $p, 3-p$. Namely, suppose $v$ is active in ripple $1$. Then necessarily we have either that
\begin{itemize}
\item $v$ has $C$-value $0$ and $vs$ is active in ripple $1$, or
\item $v$ has $C$-value $1$ and $vs$ is inactive in ripple $1$.
\end{itemize}
Similarly, if $v$ is active in ripple $2$, then either
\begin{itemize}
\item $v$ has $C$-value $0$ and $vs$ is active in ripple $2$, or
\item $v$ has $C$-value $2$ and $vs$ is inactive in ripple $2$.
\end{itemize}

If $v$ has $C$-value $0$, then since $vs$ is not active in both ripples, $v$ will not be either. If the $C$-value of $v$ is $1$ or $2$ originally, then it has the same value when either ripple reaches it, so it cannot be $1$ during one ripple and $2$ in the other.

We conclude that the only way $\psi$ can modify the $C$-value at $u$ is indeed that all $v \in G_j(x)$ are active for $1 \leq j \leq n-1$, which means precisely that $u$ starts a full valid cone full of $0$s.
\end{proof}

As already suggested in the beginning of the section, when we apply this, we will typically encode graphs in configurations in various ways, and then look for cones that additionally satisfy some conditions (that are sufficiently quick to check). Often, the colors will actually be read in multiple nodes of another group (usually, the group $G$ on which configurations are laid).

In particular, in the case of complexity-theoretic results, by carefully choosing whether or not to include a node in the encoded graph, and choosing the meaning of ``successfulness'' so that it means that some type of computation is propagated, we can ensure that successful cones also satisfy some constraints, which will allow us to encode Turing machine computations on them.

\section{Nonembedding results}

We start with the nonembedding results, as they involve a simpler version of the construction. 

The following lemma is the basis of our nonembedding results. It is a simple way to express that the size of quotients needed in the definition of residual finiteness of $\Aut(A^{\Z^d})$ has an upper bound in terms of the radii of automorphisms.

\begin{lemma}
\label{lem:ResiduallyFinite}
Let $A$ be a finite alphabet, let $\mathcal{G} = \langle S \rangle \leq \Aut(A^{\Z^d})$, where $S$ is a finite generating set defining the word norm of $\mathcal{G}$. Then there exists $C \in \N$ such that whenever $\mathcal{H} \leq \mathcal{G}$ is a simple group containing a nontrivial element of norm at most $n$, we have $H \leq \Sym(C^{n^d})$. 
\end{lemma}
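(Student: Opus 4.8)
The idea is to exploit the well-known fact that a nontrivial cellular automaton of radius $r$ acts nontrivially on the (finite) set of spatially periodic points of a sufficiently large period, together with simplicity to turn this faithful action into an actual embedding into a finite symmetric group. First I would fix the finite generating set $S$ and observe that every element $f$ of norm at most $n$ in $\mathcal{G}$ has radius at most $rn$, where $r = \max_{s \in S \cup S^{-1}} \mathrm{radius}(s)$; this is immediate since radii add under composition. Set $C$ to be a constant depending only on $A$ and $r$, to be pinned down at the end (morally $C = |A|^{(2r+1)^d}$ or similar).

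Next, recall the standard observation that for a cellular automaton $f$ on $A^{\Z^d}$ of radius $\rho$, the sublattice $p\Z^d$ of $p$-periodic points (for any $p > 2\rho$, say) is $f$-invariant, and $f$ restricted to $(A^{\Z^d})^{p\Z^d} \cong A^{[0,p-1]^d}$ is a well-defined permutation of this finite set; moreover if $f \neq \ID$ then there is some $x$ with $f(x) \neq x$, and by a standard compactness/locality argument one can modify $x$ on a bounded region and make it $p$-periodic for suitable $p$ bounded linearly in $\rho$, so that $f$ already acts nontrivially on $A^{[0,p-1]^d}$. Thus, for each $n$, the map $\rho_n$ sending $f \in \mathcal{G}$ to its action on $A^{[0,p-1]^d}$ with $p = p(n) = \Theta(rn)$ is a group homomorphism $\mathcal{G} \to \Sym(A^{[0,p-1]^d})$ whose kernel contains no nontrivial element of norm at most $n$.

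Now take $H \leq \mathcal{G}$ simple with a nontrivial element $h$ of norm at most $n$. Restrict $\rho_n$ to $H$: its kernel $\Ker(\rho_n) \cap H$ is a normal subgroup of $H$ not containing $h$, hence not all of $H$; by simplicity of $H$ it is trivial, so $\rho_n|_H$ is injective and $H$ embeds into $\Sym(A^{[0,p-1]^d})$. Since $|A^{[0,p-1]^d}| = |A|^{p^d}$ and $p = \Theta(rn) = \Theta(n)$ (with constants depending only on $S$), we get $|A|^{p^d} \leq C^{n^d}$ for a suitable constant $C$ depending only on $S$ (absorbing $|A|$ and the implied constant in $p = O(n)$ into the exponent); hence $H \leq \Sym(C^{n^d})$ as claimed.

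The only genuinely delicate point is the claim that a nontrivial automorphism of radius $\rho$ already acts nontrivially on $p$-periodic points for some $p$ bounded linearly in $\rho$ — i.e.\ that one cannot have $f \neq \ID$ while $f$ is the identity on all periodic points of all periods up to $O(\rho)$. This follows because if $f(x)_0 \neq x_0$ for some $x$, then this disagreement depends only on $x|B_\rho$, and one can extend the finite pattern $x|B_\rho$ periodically (with period $p = 2\rho+1$, say, filling with any fixed symbol outside a fundamental domain if needed) to a genuinely $p$-periodic point $x'$ with $x'|B_\rho = x|B_\rho$, so $f(x')_0 = f(x)_0 \neq x_0 = x'_0$; care is only needed that the periodicization does not re-introduce the original pattern in a way that breaks the disagreement, but since $f$ only reads $B_\rho$ around the origin and $p > 2\rho$ the value at the origin is unaffected. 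I expect this to be the step the author spells out in most detail, while the simplicity argument and the counting are routine.
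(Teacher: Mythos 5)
Your proposal is correct and follows essentially the same route as the paper: bound the radius by $rn$ via the word norm, observe that a nontrivial automorphism of that radius acts nontrivially on points of period $(2r+1)n$ (the paper, like you, just periodicizes a pattern on $B_{rn}$, which works because $(2r+1)n \geq 2rn+1$), and then use simplicity of $H$ to make the action on these finitely many periodic points faithful, giving $H \leq \Sym(C^{n^d})$ with $C = |A|^{(2r+1)^d}$. The step you flag as delicate is exactly the one the paper also spells out, and your handling of it is fine.
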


In particular, $H \leq \Sym(C^{n^d})$ implies that $H$ has cardinality at most $|C|^{n^d}!$.

\begin{proof}
Let $r$ be the maximal radius among generators in $S$. Suppose $h \in H$ has norm $n$. Then the radius of $h$ is at most $rn$. Since $h$ is nontrivial, the local rule of $h$ acts nontrivially on some pattern $p \in A^{B_{rn}}$. Thus, $h$ acts nontrivially on the set of points with period $(2r+1)n$ in each direction (since $(2r+1)n \geq 2rn + 1$ and $B_{rn} \subset [-rn, rn]^d$).

Thus, $h \in H$ has nontrivial image in the natural quotient $H \to \Sym(A^{((2r+1)n)^d})$ arising from the action on periodic points of such period. Since $H$ is simple, and the kernel of this map is not $H$, the kernel must be trivial, so $H$ embeds in $\Sym(|A|^{((2r + 1)n)^d}) = \Sym(C^{n^d})$ where $C = |A|^{(2r + 1)^d}$.
\end{proof}

Note that if $H$ has at least three elements, then $H$, being simple, must embed in the simple group $\Alt(C^{n^d})$.

We are mainly interested in the cardinality bound obtained for the simple group $\mathcal{H}$. For easier comparison of such bounds, we introduce the following definition.

\begin{definition}
The \emph{simplistic growth} of a finitely-generated group $\mathcal{G}$ is $f : \N \to \N$ where $f(n)$ is the maximal cardinality of a simple group $\mathcal{H} \leq \mathcal{G}$ containing a nontrivial element of norm $n$. 
\end{definition}

We sometimes say that a not necessarily finitely-generated group $\mathcal{G}$ has simplistic growth at most $f$. This means that every finitely-generated subgroup of $\mathcal{G}$ has simplistic growth at most $f$.

As with most growth-type functions, this definition depends on the generating set of $\mathcal{G}$, but the growth type does not. The following is immediate from Lemma~\ref{lem:ResiduallyFinite}. We also note that we could just as well use alternating groups in place of simple groups in this definition.

\begin{lemma}
\label{lem:UpperBound}
For any alphabet $A$, $\Aut(A^{\Z^d})$ has simplistic growth at most $e^{n^d}!$.
\end{lemma}

We now prove an almost-matching lower bound.


\begin{lemma}
\label{lem:LowerBoundTechnical}
Let $A$ be a nontrivial finite alphabet. Then $\Aut(A^{\Z^d})$ has a finitely-generated subgroup such that for large enough $n$, there is a nontrivial element of word norm $O(n \poly(\log(n))$ which belongs to an alternating group on at least $e^{\Omega(n^d)}$ elements.
\end{lemma}

\begin{proof}
We show first that if $D = d + 1$ and $A = C \times B$ where $B = \{0,1\}^d$ and $|C| \geq 6$, then the finitely-generated group $\PAut(A) \leq \Aut(A^{\Z^D})$ satisfies that for large enough $n$, there is a nontrivial element of word norm $O(n \poly(\log(n)))$ which belongs to an alternating group on $|C|^{\Omega(n^d)}$ elements. Identify $\Z^d$ with $\Z^d \times \{0\} \leq \Z^D$ in the obvious way.

For the following discussion, fix $n = 2^k \in \N$ and a configuration $x \in A^{\Z^D}$. The \emph{index} of $\vec v \in \Z^D$, written $\counters_x(\vec v)$ is the tuple of $d$ many numbers whose binary representations of length $k+1$ are written on the $B$-component (each cell contains one bit of each number), in the consecutive positions $(\vec v, \vec v + e_D, \ldots, \vec v + ke_D)$ of the configuration $x$. The components in this vector must all have a value in $[0, n] = [0, 2^k]$, or otherwise the index is not defined, i.e.\ we always have $\counters_x(\vec v) \in [0, n]^{d}$ if $\counters_x(\vec v)$ is defined. A coordinate $\vec v$ is \emph{successful} if its index $\counters_x(\vec v) \in [0, n]^d$ exists, and for each generator $e_i$ of $\Z^d$, the index of $\vec v + e_i$ exists and $\counters_x(\vec v + e_i) = \vec u + e_i$.

Let $S = \{1, \ldots, d\}$. 
To each configuration $x$, we can associate a good $S$-labeled $(C \times B)$-colored graph whose nodes are those nodes of $\Z^D$ where the index is defined (in the fixed configuration $x$), and for $1 \leq i \leq d$ we have an edge from $\vec v$ to $\vec v + e_i$ with label $i$ if both $\vec v$ and $\vec v + e_i$ are nodes of the graph (for goodness, we must formally have an inverse $i^{-1}$-edge backward, but we do not use it).

The successor relation of $i$ on $B$ is to increment the $i$th coordinate (we increment if the value is below $n$, and map to $\bot$ otherwise), which gives a partial action of $S^*$ on $B$. We have a corresponding rank function $\rho : B \to \N$ by simply summing the coordinates.

\textbf{Ripple catching.} The elements $\psi_{\pi, \ell, c}$ and $\gamma_{\pi, \ell}$ of the group $G_{[1, n-1], C, B}$ from Section~\ref{sec:RippleCatching} act in an obvious way on configurations of $A^{\Z^D}$: If $c$ is not in the support of $\pi \in \Alt(C)$, then at a node $\vec v$, $\psi_{\pi, \ell, c}$ will permute the $C$-component of $x_{\vec v}$ by $\pi$ if
\begin{itemize}
\item the index exists at $\vec v$,
\item $\rho(\vec) = \sum \counters_x(\vec v) = \ell$,
\item the node $\vec v$ is successful, and 
\item the $C$-value at $\vec v + e_i$ is $c$ for $i \in [1, d-1]$.
\end{itemize}
Note that the node $\vec v$ being successful means precisely that the indices of neighbors $\vec v + e_i$ exist, and are obtained from the index of $\vec v$ by incrementing the corresponding coordinates.

The generator $\gamma_{\pi, \ell}$ in turn applies $\pi$ if the index exists at $\vec v$, and $\rho(\counters_x(\vec v)) = \ell$. This gives indeed an action of $G_{S, C, B}$, because $G_{S, C, B}$ is defined by its action on any good graphs, and we are acting here by the same action on particular graphs (interpreted in configurations).


Lemma~\ref{lem:RippleCatching} tells us that there is an element $\psi$ in $\mathcal{G}_{S, C, B}$ whose word $n$-norm (with respect to the generators $\psi_{\pi, \ell, c}$ and $\gamma_{\pi, \ell}$ with $\ell \leq n$) is linear in $n$. The behavior of this element can be described as follows: The shape of the graph, or the $B$-component of a node, are never altered (because the generators never alter them). In the $C$-component of a node $u$, we apply $\pi$ if and only if $G_{\leq n}(u) = uS^{\leq n}$ (the cone of depth $n$ is full) and for all $u \neq v \in G_{\leq n}(u)$, the $C$-value is $0$.

In our specific situation, this translates to the following: Define the \emph{geometric cone} $T_n$ by 
\[ \vec u \in T_n \iff \vec u \in \N^d \wedge \sum \vec u \leq n. \]
We refer also to translates of $T_n$ as geometric cones. Then $\pi$ is applied to the $C$-component of the color at $\vec v$ if and only if the geometric cone $\vec v + T_n$ is \emph{self-indexing} meaning $\counters(\vec v) = 0^d$, and for any $\vec u \in T_n$, $\counters(\vec v + \vec u) = \vec u$; furthermore the $C$-value of $\vec v + \vec u$ must be $0$ whenever $\vec u \in T_n \setminus \{0^d\}$.

It now suffices to show two facts: $\psi$ belongs to an alternating group $\mathcal{H} \leq \Aut(A^{\Z^D})$ which acts faithfully on $|C|^{\Omega(n^d)}$ elements; and the lengths of $\psi_{\pi, \ell, c}$ and $\gamma_{\pi, \ell}$ are polynomial in $k$ in the group $\PAut(A)$ when $0 \leq \ell \leq n$. Namely, then the linear word norm of $\psi$ amounts to word norm $n \poly(k)$ in $\PAut(A)$, giving the claim.

\textbf{The simple group.} We start by describing $\mathcal{H}$: we observe that two self-indexing geometric cones $\vec v + T_i$ and $\vec u + T_i$ can only intersect for $\vec v = \vec u$, since the counter values determine the root (element with minimal coordinate sum) of the geometric cone uniquely. Thus, there is a well-defined action of the alternating group on the set $C^{T_n}$, which acts by even permutations on the configurations overlaid on self-indexing geometric cones. Clearly, $\psi$ above is an element of this group, and we have $|C^{T_n}| = |C|^{\Omega(n^d)}$.

\textbf{Small word norm from Barrington's theorem.} All that remains is to show that the lengths of $\psi_{\pi, \ell, c}$ and $\gamma_{\pi, \ell}$ are polynomial in $k$ in the group $\PAut(A)$. By the definition of $G_{S, C, B}$, it suffices to show that the generators $\phi_{\pi, c}, \beta_\pi, \gamma_{\pi,\ell}$ have such length.

For $\phi_{\pi, c}$, note that we could simply change our finitely-generated group and add it to $\PAut(A)$. However, as shown in Lemma~\ref{lem:PhiInPAut}, it is already in $\PAut(A)$ (with, of course, word norm independent of $n$).

For $\gamma_{\pi, \ell}$ and $\beta_\pi$ we proceed by finding an action of $\mathcal{G}_{md,C,\{0,1\}}$ on $A^G$ by elements of $\PAut(A)$ for suitable $m$, and then apply appropriate lemmas from the appendix. For the choice of $m$, we note that $\gamma_{\pi, \ell}$ and $\beta_\pi$ apply the permutation $\pi \in \Alt(C)$ on the $C$-value at $\vec u$ depending on some condition on the $B$-values of nodes $\vec u + \vec v$ where
\[ \vec v \in V = \{e_i + je_D \;|\; 1 \leq i \leq d, 0 \leq j \leq k \}. \]
We set $m = d (k+1)$. Each $1 \leq i \leq md$ corresponds bijectively to a tuple $\xi(i) \in V \times [1, d]$, where the $V$-component identifies the offset vector $\vec v$ and the last component is the coordinate of $B$ where the bit is read. 

Consider a generator $\pi_{b',i}$ of $\mathcal{G}_{md,C,B}$ with $\xi(i) = (\vec v, j)$. It acts as follows: if $x_{\vec u + \vec v}$ has $B$-value whose $j$th component is equal to $b'$, then apply $\pi$ to the $C$-value of $x_{\vec u}$. This (i.e.\ the automorphism by which it acts) is in $\PAut$: consider first the symbol permutation $\eta_{\pi'} \in \PAut(A)$, which $\pi' \in \Sym(C \times B)$ applies $\pi$ in the $C$-component if and only if the $j$th bit in the $B$-component has value $b'$. Then $\eta_{\pi'}$ and $\pi_{b',i}$ are conjugate by the partial shift by $\vec v$. Thus, $\pi_{b',i} \in \PAut(A)$ as claimed, and its word norm is clearly linear in $k$.

By Lemma~\ref{lem:Sum}, the permutation $\pi|\{\bin_m(n_1) \bin_m(n_2) \cdots \bin_m(n_d) \;|\; \sum_i n_i = \ell \}$ has word norm polynomial in $m = O(k)$, in the group $\mathcal{G}_{md,C,\{0,1\}}$. Up to ordering the coordinates (which trivially preserves word norm) this permutation acts precisely by the automorphism $\beta_{\pi, c}$. Since as elements of $\PAut(A)$ the $\pi_{b',i}$ have word norm linear in $k$, we conclude that $\beta_{\pi, c}$ has word norm polynomial in $k$.

For $\gamma_\pi$, the argument is similar, but using Lemma~\ref{lem:Increment}.

\textbf{General alphabet and no extra dimension.} We will now explain the changes needed to get the result for general alphabet $\Sigma$ and dimension $d$. To reduce the dimension to $d$, the idea is to lay down the coordinates in dimension $e_1$ rather than in a $(d+1)$th dimension. For the alphabet, the idea is to encode the larger alphabet $A$ in words over the (possibly) smaller alphabet. In both cases, we run into the issue that some configurations may not contain good encodings of a configuration. 

We may assume $0, 1 \in \Sigma$. Let $m$ be large, and let $W \subset \{0,1\}^m$ be a set of words of the form $110w0$ such that $w$ does not contain $11$ as a subword. Note that this set of words is mutually unbordered. For $m$ large enough, we can have a bijection $W \cong C \times B \times C$, $|C| \geq 6$, $B = \{0,1\}^d$.

Our generating set is an analog of the generators of $\PAut$, but using ``rotations of conveyor belts'' interpreted in maximal runs of the form $W^\ell$ in place of partial shifts. More precisely, let $x \in \Sigma^{\Z^d}$. Then for each $\vec v \in \Z^d$, the row $\vec v + \langle e_1 \rangle$ splits uniquely into nonoverlapping occurrences of words $W$. Whenever a word of $W$ appears in $\{\vec v, \vec v + e_1, \ldots, \vec v+m-1\}$, $\vec v$ is a \emph{base} in $x$. Then $\vec v + je_1$ is not a base for any $0 < |j| \leq m$.

Define $\bar e_1 = me_1$, $\bar e_j = e_j$ for $j \geq 2$. If $\vec v$ and $\vec v + \bar e_i$ are bases, we interpret a forward edge with label $i$. Assign backward edges accordingly. This gives $x$ the structure of a graph. Each base corresponds to an occurrence of a word from $W$, thus a tuple $(c, b, c') \in C \times B \times C$.

Seeing $(c, b, c')$ as a column vector $\begin{pmatrix} c \\ b \\ c'\end{pmatrix}$, there is a natural conveyor belt action in each direction: Starting from the top element $c$, follow forward and backward edges labeled $i$ from the base $\vec v$, in a dense set of configurations we obtain a finite word
\[ \begin{pmatrix} c_0 & c_1 & c_2 & \cdots & c_{k-1} & c_k \\ b_0 & b_1 & b_2 & \cdots & b_{k-1} & b_k \\ c_0' & c_1' & c_2' & \cdots & c_{k-1}' & c_k' \end{pmatrix} \]
(again, these are interpreted in the words $W$ at the bases). Rewrite this to (the words in $W$ corresponding to symbols)
\[ \begin{pmatrix} c_0' & c_0 & c_1 & \cdots & c_{k-2} & c_{k-1} \\ b_0 & b_1 & b_2 & \cdots & b_{k-1} & b_k \\ c_1' & c_2' & c_3' & \cdots & c_k' & c_k \end{pmatrix} \]
This gives a shift-commuting uniformly continuous function on the dense set of configurations where these words are always finite, which has a two-sided inverse (the corresponding backward rotation), and thus it extends to a shift-commuting continuous function $\sigma_{i,C}$ on all of $\Sigma^{\Z^d}$, with a two-sided inverse. 

As generators of our group, we can take the $\{\sigma_{i,C} \;|\; 1 \leq i \leq d\}$ and all symbol permutations $\eta_\pi$.

Now let us explain how to modify the previous proof. We will never permute the $C$s on the bottom track. If we have a base at each of $\vec v, \vec v + \bar e_1, \ldots, \vec v + k \bar e_1$, then we can interpret a $d$-dimensional vector by reading the bits on the $B$-component. This vector forms the \emph{index} of $\vec v$. Write $\hat e_1 = k\bar e_i (kme_i)$ and $\hat e_i = \bar e_i (= e_i)$ for $i \geq 2$.

We now want to perform controlled permutations on the $C$'s on the top track. We concentrate on a single $C$-symbol on the top track, which we think of as moving around on conveyor belts as we apply powers of $\sigma_{i, C}$. We have in our generating set a controlled permutation that permutes $C$ (by any even permutation) if and only if it is on the top track. Conjugating such a controlled permutation by partial shifts, we can check that a certain power of $\sigma_{i, C}$ keeps it on the top track.

We can then use the techniques from Appendix~\ref{sec:Barrington}, to control the permutation by polynomially many such checks, by a divide-and-conquer commutator construction. This gives us an element of word norm $\poly(k)$ which permute a particular $C$-symbol nontrivially if and only if it is on the top track of its conveyor belt, and the belt continues by a  sufficient length in the forward direction. In particular from the case $i = 1$, we obtain that we can condition a permutation on the coordinate having an index.

We can also check that the $i$th edge exists in the graph described above, between two bases, by checking (by commutatoring with yet another controlled permutation) that after applying $\sigma_i$, our $C$-symbol is still in a base. 

We can then perform a permutation analogous to $\phi_{\pi, c}$, which applies a permutation $\pi$ on the $C$-symbol at base $\vec v$ if and only if the $i$-edge exists, and the $C$-symbol at $\vec v + \hat e_i$ is $c$. Namely, permutations of the $B$-symbol conditioned on the $C$-symbol (or indeed any of our generators) do not change the structure of the graph, so we can use the scheme of Lemma~\ref{lem:PhisInPAut}. The main subtlety is that it may happen that the $\hat e_i$-rotation brings $C$ back to its starting position, for example, but then we will simply then permute it by $\pi$ controlled by its own value (and thus will not modify its value). This is because ultimately the logic of the lemma is in terms of the set of positions that see a $c$-symbol when the partial shift is applied.

We can also perform similar checks on the indices as in the previous scheme. Note that if the graph is not of the correct form, the ``intermediate calculations'' (controlled permutations from which the final one is composed of) may be totally meaningless. For example, it might be that $\vec v$ has an index, but $\vec v+\bar e_2$ does not even have a full conveyor belt. However, these meaningless controlled permutations will ultimately cancel in the commutator formulas unless all of the checks succeed (in which case the structure must have been meaningful, so all the value checks are meaningful as well).

Now define a new graph structure, where we take as nodes only those bases where the index exists, and has value in $[0, 2^k]^d$,\footnote{Note that indices can overlap, in that both $\vec v$ and $\vec v + \bar e_1$ may well have a well-defined index. We could use an additional marker symbol to ensure that this does not happen. However, we are only checking some property of the configuration of indices, and not changing their values (except temporarily for the implementation of the $\phi_{\pi, c}$-analog above), so this does not matter, and although it is perhaps not intuitive to allow them to overlap, it is formally simpler.} and the induced edges. In this new graph, we have natural $C \times [0, 2^k]^d$-colors for the nodes, taking the $C$-color at $\vec v$ (in a single cell) and the index at $\vec v$ (in the $W$-words starting at $\bar e_1, \ldots, k \bar e_\ell$).

As explained above, we now have all the tools to implement scheme from above, using $\sigma_{i, C}$ and symbol permutations.
\end{proof}

\begin{lemma}
\label{lem:LowerBound}
Let $A$ be a nontrivial finite alphabet. Then $\Aut(A^{\Z^d})$ has a finitely-generated subgroup with simplistic growth at least $e^{n^d/\poly\log(n)}$.
\end{lemma}

\begin{proof}
Let $f$ be the simplistic growth function. By the previous lemma we have
$f(Cn \log^t(n)) \geq e^{\Omega(n^d)}$ for some fixed $C$ and all $n$, so by Lemma~\ref{lem:Invert} we have $f(n) \gtrsim e^{n^d/\poly\log(n)}$ ($\gtrsim$ swallows $\Omega$, and $\poly$ swallows $d$).
\end{proof}

\begin{theorem}
\label{thm:HochmanSolution}
Suppose that $D > d$, and let $A,B$ be nontrivial finite alphabets. Then $\Aut(A^{\Z^D})$ has a finitely-generated subgroup that does not embed in $\Aut(B^{\Z^d})$.
\end{theorem}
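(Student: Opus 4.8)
The strategy is to leverage Lemma~\ref{lem:ResiduallyFinite}: any finitely generated simple subgroup $H$ of $\Aut(B^{\Z^d})$ whose nontrivial elements reach down to norm $n$ must have size at most roughly $\exp(n^d)$. So I want to exhibit, inside $\Aut(A^{\Z^D})$, a finitely generated subgroup $\mathcal{G}$ containing a sequence of finite simple subgroups $H_n$ (e.g.\ alternating groups $\Alt(N_n)$) such that the cardinality of $H_n$ grows much faster than $\exp(\mathrm{poly}(n))$ relative to the word norm — concretely, $H_n$ should have an element of norm $O(\mathrm{poly}(n))$ but cardinality at least $\exp(\exp(\Omega(n^{c})))$ for a $c$ large enough that no such embedding into $\Sym(C^{m^d})$ is possible when the norms in the target are also polynomially related to those in the source. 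The mismatch $D > 3d+1$ is exactly what is needed to make the double-exponential-versus-single-exponential gap survive: in dimension $D$ we can fit a tree/cone that branches fast enough to act on $\exp(n^{D-O(1)})$-sized sets of periodic points via a word of norm $\mathrm{poly}(n)$, while in dimension $d$ the residual-finiteness bound only permits $\exp(m^d)$ for norm $m$.

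**Key steps.** First I would invoke the DAF construction from the proof of Theorem~\ref{thm:MonoidCase} (and its cone variant alluded to as Theorem~\ref{thm:HochmanSolution}'s proof tool): inside $\Aut(A^{\Z^D})$, build a finitely generated subgroup $\mathcal{G}$ and, for each $n$, an automorphism $\psi_n \in \mathcal{G}$ of word norm $\mathrm{poly}(n)$ that performs a controlled $3$-rotation on a designated track conditioned on an event supported on a cone of radius $n$ in $\Z^D$ — so the event touches $\Theta(n^D)$ cells. By varying the rotation over generators of an alternating group and using Barrington-style encoding, I get a copy of $\Alt(N_n)$ with $N_n = \exp(\Theta(n^{D-1}))$ (or some fixed large power of $n$ in the exponent), all of whose nontrivial elements have norm $\mathrm{poly}(n)$; these sit in a single finitely generated $\mathcal{G} \le \Aut(A^{\Z^D})$. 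Second, suppose for contradiction $\iota : \mathcal{G} \hookrightarrow \Aut(B^{\Z^d})$ is an embedding; then $\iota(\mathcal{G})$ is finitely generated, so Lemma~\ref{lem:ResiduallyFinite} applies to the simple subgroups $\iota(\Alt(N_n))$: since $|\Alt(N_n)| \ge 3$, we get $\iota(\Alt(N_n)) \le \Alt(C^{m_n^d})$ where $m_n$ is the minimal norm of a nontrivial element of $\iota(\Alt(N_n))$ in $\iota(\mathcal{G})$. Third, bound $m_n$: the image of $\psi_n$ under $\iota$ is a word of length $\mathrm{poly}(n)$ in the fixed generating set of $\iota(\mathcal{G})$ (images of $\mathcal{G}$'s generators), hence has norm $\mathrm{poly}(n)$ in $\iota(\mathcal{G})$; so $m_n = \mathrm{poly}(n)$. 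Combining, $N_n!/2 = |\Alt(N_n)| \le |C|^{m_n^d}! $, which forces $N_n \le C^{m_n^d} = \exp(\mathrm{poly}(n)^d) = \exp(\mathrm{poly}(n))$. But $N_n = \exp(\Theta(n^{D-1}))$, and since $D-1 > 3d \ge d$ (indeed any fixed degree in the source cone construction can be made to beat the polynomial-in-$n$ exponent coming from the target), this is a contradiction for large $n$.

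**Main obstacle.** The delicate point is the bookkeeping of \emph{which} polynomial degrees appear where, and ensuring the cone construction in $\Z^D$ genuinely achieves an alternating group on a set of size exponential in a power of $n$ that strictly exceeds $d$ times the degree of the norm blow-up. The factor $3$ in $D > 3d+1$ comes from this accounting: one dimension is spent on the ``time'' direction of the simulated computation (the depth-$n$ funnel), roughly $d$ dimensions' worth of room is needed so that the residual-finiteness exponent $m_n^d$ on the target side stays below the source growth, and there is further slack from the fact (noted after the statement of Theorem~\ref{thm:F}) that the proof ``wastes'' a dimension in laying out markers/tree structure via symbolic-dynamical arguments. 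Making the cone variant of DAF rigorous in $\Z^D$ — in particular constructing the markers on the nodes of the discrete cone using an SFT, and checking that conditioning on the cone event is still expressible by a polynomial-norm word via dyadic induction on the cone's radial slices — is where the real work lies; the residual-finiteness counting argument itself is then a short computation. I would also need to handle the minor point that $\iota$ need not send the fixed generators of $\mathcal{G}$ to a prescribed generating set of the whole $\Aut(B^{\Z^d})$, but it does send them into a finitely generated subgroup, which is all Lemma~\ref{lem:ResiduallyFinite} requires.
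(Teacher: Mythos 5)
Your proposal follows essentially the same route as the paper: build, via the cone/simplex variant of dyadically accelerated funneling in $\Z^D$, a fixed finitely generated subgroup containing alternating groups on sets of size $\exp(\Theta(n^{D-1}))$ whose nontrivial elements have word norm $O(n^3\log^t n)$, then contradict the quantitative residual finiteness bound of Lemma~\ref{lem:ResiduallyFinite} in dimension $d$; this is exactly where $D-1>3d$ enters (source exponent $n^{D-1}$ versus $d$ times the cubic norm blow-up of the DAF recursion, rather than your heuristic dimension count). The parts you defer are precisely where the paper spends its effort — the counters/markers and the cone DAF induction, plus the reduction from an arbitrary nontrivial alphabet $A$ via mutually unbordered code words and the conveyor-belt trick — but the outline and the final counting coincide with the paper's argument.
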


\begin{proof}
By Lemma~\ref{lem:LowerBound}, $\Aut(A^{\Z^D})$ has a finitely-generated subgroup with simplistic growth at most $e^{n^D/\poly\log(n)}!$. If $\Aut(A^{\Z^D}) \leq \Aut(B^{\Z^d})$, then this finitely-generated subgroup would have simplistic growth at least $e^{n^d}!$ by Lemma~\ref{lem:UpperBound}, a contradiction since
\[ e^{n^D/\poly\log(n)}! \not\lesssim e^{n^d}!. \] 
\end{proof}


\subsection{Dropping the centers}

By completely analogous arguments, one can prove variants of the results where the center is quotiented out. We omit a technical version of Lemma~\ref{lem:ResiduallyFinite}, and explain the modifications needed directly.

The following is due to Hochman \cite{Ho10} in general dimension. The one-dimensional result is due to Ryan \cite{Ry72}.

\begin{lemma}
\label{lem:Center}
The center of $\Aut(A^{\Z^d})$ is the group of shifts.
\end{lemma}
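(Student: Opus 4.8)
The plan is to prove both inclusions. The easy direction is that every shift lies in the center: for $\vec w \in \Z^d$, the shift map $\sigma^{\vec w}$ satisfies $\sigma^{\vec w} \circ f = f \circ \sigma^{\vec w}$ for every block map $f$, since this is exactly the $\Z^d$-equivariance that every cellular automaton is required to satisfy by definition. So the group of shifts is contained in the center. The content is the reverse inclusion: if $g \in \Aut(A^{\Z^d})$ commutes with every automorphism, then $g$ is a shift.

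For the reverse inclusion, the main idea I would use is that the center must in particular commute with a rich enough supply of ``local'' automorphisms to be pinned down. First I would fix a nontrivial symbol permutation or, better, a well-chosen finitary permutation: for each finite pattern, there is an automorphism (a ``partial symbol permutation'' conditioned on the contents of a large box, or a simple involution swapping two patterns that appear in disjoint translates) that acts as a known, spatially localized permutation. Commuting with all such maps forces $g$ to respect the combinatorial structure they detect. The cleanest route: consider, for two distinct symbols $0, 1 \in A$ and a site $\vec 0$, the automorphism $\tau$ which swaps the symbol at each coordinate $\vec v$ between $0$ and $1$ whenever a suitable ``flag pattern'' is present in a fixed finite neighborhood of $\vec v$ (so that $\tau$ is a well-defined involutive automorphism with controllable support). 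From $g \tau = \tau g$ one extracts that $g$ maps flag-patterns to flag-patterns in a shift-commuting way, and by varying the flag one shows $g$ acts as a (possibly position-dependent) bijection on symbols that must in fact be a single shift.

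Concretely, the key steps in order: (1) reduce to showing $g$ has a representation $g(x)_{\vec v} = x_{\vec v + \vec w}$ for a fixed $\vec w$, i.e.\ produce the candidate shift vector; (2) for this, look at the image under $g$ of a configuration $x$ that is constant $0$ except for a single $1$ at the origin --- since $g$ commutes with the involutions $\tau_{\vec u}$ that toggle the symbol at site $\vec u$ (conditioned on enough of the surrounding $0$'s, so as to be a genuine automorphism), $g(x)$ must again be a ``single defect'' configuration, say with its $1$ at position $\vec w$; (3) using commutation with shifts already known to be central, the location of this defect is translation-covariant, so the map $\vec u \mapsto (\text{location of defect in } g(\sigma^{\vec u} x))$ is $\vec v \mapsto \vec v + \vec w$; (4) now bootstrap from single defects to arbitrary configurations: commuting with enough finitary automorphisms that build up arbitrary finite patterns out of single-symbol changes forces $g(y)_{\vec v} = y_{\vec v + \vec w}$ first for all finite-support $y$, then by continuity and density for all $y \in A^{\Z^d}$; (5) conclude $g = \sigma^{\vec w}$.

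The main obstacle is step (2)–(3): making precise a family of ``toggle'' automorphisms that (a) are genuinely well-defined reversible cellular automata (one must condition the toggle on a locally checkable, shift-generic pattern so the map is a bijection --- a bare ``swap $0 \leftrightarrow 1$ at one site'' is not shift-commuting), yet (b) are localized enough that commutation with them yields purely local information about $g$, and (c) collectively separate points well enough to force $g$ onto a single shift rather than merely a ``shift composed with a symbol permutation'' or some other commuting map. This is exactly where Ryan's original argument for $d=1$ and Hochman's for general $d$ do the real work, and I would follow that template --- in fact, since the lemma is attributed to \cite{Ho10} and \cite{Ry72}, I expect the paper simply to cite those proofs rather than reproduce them, and the honest ``proof'' here is the citation together with the trivial forward inclusion noted in the first paragraph.
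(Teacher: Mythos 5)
Your closing expectation is exactly what the paper does: Lemma~\ref{lem:Center} is stated with no proof at all, only the attribution to Hochman \cite{Ho10} (and Ryan \cite{Ry72} for $d=1$), so the ``proof'' is the citation together with the easy inclusion you note, that shifts are central because commuting with shifts is precisely the equivariance built into the definition of an automorphism. Your sketch of the reverse inclusion is in the spirit of the marker/toggle arguments those papers actually carry out, but since you yourself defer the real work to them, your proposal and the paper's treatment coincide.
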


\begin{theorem}
\label{thm:Center}
Suppose that $D > d$, $\mathcal{G}_d = \Aut(A^{\Z^d}), \mathcal{G}_D = \Aut(B^{\Z^D})$ for any nontrivial alphabets $A, B$. Then $\mathcal{G}_D$ has a finitely-generated subgroup such that $\mathcal{G}_D/K$ does not embed in $\mathcal{G}_d/K'$ for any central subgroups $K \leq \mathcal{G}_D, K' \leq \mathcal{G}_d$.
\end{theorem}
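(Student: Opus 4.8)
The plan is to reuse the construction from Theorem~\ref{thm:HochmanSolution} essentially verbatim, and only adjust the final counting argument so that it survives quotienting by central subgroups. Recall that in that proof we built, inside $\PAut_{\Z^D}(A)$ (hence inside a finitely-generated subgroup $\mathcal{H} \le \mathcal{G}_D$), an element $\psi_{0,n,\pi,0}$ of word norm at most $C'' n^3 \log(n)^t$ lying in a simple subgroup $H_n$ of order $7^{M}!/2$ with $M = C' n^{D-1}$. The key observation is that the simple group $H_n$, being simple and of order $> 2$, intersects any central subgroup $K \le \mathcal{G}_D$ trivially (a central subgroup is abelian, and the only normal subgroups of $H_n$ are $1$ and $H_n$, so $H_n \cap K$ is $1$ or $H_n$; the latter is impossible since $H_n$ is nonabelian). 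Hence the quotient map $\mathcal{H} \to \mathcal{G}_D/K$ restricts to an \emph{injection} on $H_n$, and moreover the image of $\psi_{0,n,\pi,0}$ in $\mathcal{G}_D/K$ still has word norm at most $C'' n^3 \log(n)^t$ with respect to the image of the generating set. So $\mathcal{G}_D/K$ contains, for every $n = 2^k$, a simple subgroup of order $7^{C' n^{D-1}}!/2$ generated together with everything else by boundedly many elements, one of which has polynomial norm.

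The second half is to show this is incompatible with an embedding into $\mathcal{G}_d/K'$. Suppose $\mathcal{G}_D/K$ embeds in $\mathcal{G}_d/K'$; fix the finitely-generated subgroup $\mathcal{H}' \le \mathcal{G}_d$ whose image covers the (finitely-generated) image of $\mathcal{H}$, and let $S'$ be a finite generating set for $\mathcal{H}'$. The element of $\mathcal{G}_d/K'$ corresponding to $\psi_{0,n,\pi,0}$ has norm at most $C_1 n^3 \log(n)^t$ in terms of $S'$ (composing the given embedding with the norm comparison for finitely-generated subgroups), and it lies in a simple subgroup $\bar H_n \le \mathcal{G}_d/K'$ of order $7^{C' n^{D-1}}!/2$. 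Now I need a version of Lemma~\ref{lem:ResiduallyFinite} valid \emph{modulo a central subgroup} $K' \le \mathcal{G}_d$: a simple group $\bar H \le \mathcal{G}_d/K'$ containing a nontrivial element of $S'$-norm at most $N$ has order at most $C^{N^d}!$. To get this, lift a nontrivial $\bar h \in \bar H$ to some $h \in \mathcal{H}'$ of norm $\le N$, so $h$ has radius $\le rN$; since $\bar h \ne 1$, $h \notin K'$, and by Lemma~\ref{lem:Center} the center $K'$ consists of shift maps, so $h$ is not a shift. A non-shift automorphism of radius $\le rN$ acts nontrivially on the action of $\Aut(B^{\Z^d})$ on $((2r+1)N)$-periodic points modulo the corresponding shift action — i.e.\ the image $\bar h$ has nontrivial image in the finite group $\Sym(B^{((2r+1)N)^d})$ quotiented by the (finite, cyclic-of-each-coordinate) action of the finite shift group; simplicity of $\bar H$ then forces $\bar H$ to embed into this finite group, whose order is at most $C^{N^d}!$.

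Finally, plug in $N = C_1 n^3 \log(n)^t$: we would get $7^{C' n^{D-1}}!/2 \le C^{(C_1 n^3 \log n^t)^d}!$, and comparing the degrees of the double exponentials (the inner exponents are $\Theta(n^{D-1})$ on the left and $\Theta(n^{3d}\,\mathrm{polylog})$ on the right) gives a contradiction for large $n$ as soon as $D - 1 > 3d$, i.e.\ $D > 3d+1$. This yields the statement, and as in Theorem~\ref{thm:HochmanSolution} the general-alphabet case is handled by the same mutually-unbordered-marker coding, which does not interact with the central-quotient bookkeeping. \textbf{Main obstacle.} I expect the delicate point to be the modular version of Lemma~\ref{lem:ResiduallyFinite}: one must check that ``acting nontrivially on periodic points'' descends correctly to ``acting nontrivially on periodic points modulo shifts'', i.e.\ that a non-shift automorphism of bounded radius cannot become a shift after restricting to a suitable finite periodic subsystem — this is where Lemma~\ref{lem:Center} (the center is exactly the shifts) is essential, together with a careful choice of period so that (a) the local rule's nontriviality is witnessed and (b) the finite shift group acting on that periodic subsystem is small enough to be absorbed into the $C^{N^d}!$ bound. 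The rest is the bookkeeping already done in Theorem~\ref{thm:HochmanSolution}.
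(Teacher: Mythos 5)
Your first half matches the paper exactly: the simple subgroup $H_n$ meets any central $K\leq\mathcal{G}_D$ trivially (central is abelian, $H_n\cap K\trianglelefteq H_n$), so by the second isomorphism theorem $H_nK/K\cong H_n$ survives in $\mathcal{G}_D/K$ with the same polynomial norm bound, and the concluding comparison of double exponentials for $D-1>3d$ is also the paper's. The problem is the part you yourself flag as the ``main obstacle'': the version of Lemma~\ref{lem:ResiduallyFinite} modulo a central subgroup is precisely what the paper's proof actually consists of, and your sketch of it contains a genuine error. From $\bar h\neq 1$ you infer $h\notin K'$, and then ``by Lemma~\ref{lem:Center} the center $K'$ consists of shift maps, so $h$ is not a shift.'' This is a non sequitur: $K'$ is an \emph{arbitrary} central subgroup, i.e.\ an arbitrary subgroup of the shift group $\Z^d$, so $h\notin K'$ is perfectly compatible with $h$ being a shift by a vector outside $K'$. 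The paper devotes a separate case to exactly this possibility, constructing an explicit $2(rn+1)$-periodic point (the characteristic function of $K'$ in a fundamental domain) that such a shift moves off its $K'$-orbit. In your specific application one can rescue the conclusion differently --- a lift of a nontrivial element of a nonabelian simple subgroup cannot be a shift, since shifts map to central elements of $\mathcal{G}_d/K'$ and a nonabelian simple group has trivial center --- but you do not give this argument, and your stated lemma (for arbitrary simple $\bar H$) would still need the paper's case.

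Even after excluding the shift case, the remaining claim --- that a non-shift automorphism of radius at most $rN$ acts nontrivially on suitable periodic points \emph{modulo} the residual shift action --- is asserted, not proved, and it is not automatic: $h$ may fail to be a shift globally yet still move the chosen periodic point within its shift orbit. The paper proves it by a two-step construction (test $h$ on the characteristic function of the origin; if $h$ shifts it by some $\vec v\in K'$, find a pattern $P$ with $h(z)\neq\sigma_{\vec v}(z)$ on $[P]$ and place $P$ and the origin-marker far apart in one $10rn$-periodic point), after first normalizing $K'$ to a coordinate subgroup $\langle e_1,\ldots,e_{d'}\rangle$ by recoding. Note also that your proposal to quotient by the action of the \emph{full} finite shift group (rather than by $K'$-orbits, as the paper does) would make any shift outside $K'$ act trivially on the quotient, so that finite quotient cannot in general detect nontriviality in $\mathcal{G}_d/K'$; you must either work with $K'$-orbits or justify separately why the offending elements cannot occur. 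So the plan is the right one and the bookkeeping is correct, but the central technical lemma is left unestablished and the one reduction you offer for it is invalid as stated.
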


\begin{proof}
The proof is analogous to Theorem~\ref{thm:HochmanSolution}. Let $\mathcal{G} = \langle S \rangle$ be the group used in that proof. Then in $\mathcal{G}_D/K$ we take the the subgroup $\mathcal{G}/K$.

A we showed, there is a subgroup $\mathcal{H}$ of $\mathcal{G}_D$ which is nonabelian simple of order $e^{\Omega(n^D)}!$, and contains an element of word norm $n' = O(n \poly(\log(n))$. Now, $\mathcal{H}$ must have trivial intersection with $K$,\footnote{Here note that since $\Z^d$ is abelian, shift maps are automorphisms, and we identify them with a subgroup of $\Aut$.} since otherwise $K \cap \mathcal{H}$ is a nontrivial simple subgroup of $\mathcal{H}$. Thus by the second isomorphism theorem, $\mathcal{H}K/K \cong \mathcal{H}$ as a subgroup of $\mathcal{G}/K$, and we conclude that $\mathcal{G}/K$ still has a simple subgroup of order $e^{\Omega(n^D)}!$ and with an element of word norm $n'$.

Next we observe that the set of groups embeddable in some $\mathcal{G}_d/K'$ (over all alphabets $B$) does not decrease if we assume $K'$ is just the subgroup $\langle e_1, \ldots, e_{d'} \rangle$ of $\Z^d$ for some $d' \leq d$. Namely, we observe first (e.g.\ by linear algebra) that there is a subgroup $L$ such that $G = K' \oplus L$ is of finite index in $\Z^d$. Any subgroup of $\mathcal{G}_d/K' = \Aut(B^{\Z^d}, \Z^d)/K'$ is certainly a subgroup of $\Aut(B^{\Z^d}, G)/K'$. Then we recall that the $G$-action $(B^{\Z^d}, G)$ itself is topologically conjugate to a full shift on symbols $B^{[\Z^d : G]}$. By replacing the alphabet $B$ by this set and observing that $G \cong \Z^d$ (by the fundamental theorem of abelian groups), we may replace the original group $\Z^d$ by $G$. Since $K' \times L$ is now an internal direct product in $\Z^d$, by the fundamental theorem of abelian groups $K' \cong \Z^{d'}$ and $K' \cong \Z^{d''}$ for some $d' + d'' = d$, and thus by changing the basis we may assume $K'$ is just the subgroup $\langle e_1, \ldots, e_{d'} \rangle$.

Assume now that $K'$ is of this form, and let $L = \langle e_{d'+1},\ldots, e_{d''} \rangle$. Consider a finitely-generated subgroup $H'$ of $\mathcal{G}_d/K'$. We may take representatives for the generators $\mathcal{S}' \subset \mathcal{G}_d$, and suppose $r$ is a bound on their radii. Suppose that some nontrivial element $h \in \mathcal{H}'$ of this group has word norm $n'$ and belongs to a nonabelian simple group of order $e^{\Omega(n^D)}!$. Let $h'$ be the corresponding representative in $\mathcal{G}_d$ (of the same word norm). Since $h$ is nontrivial, $h'$ is not contained in $K'$. Note that the radius of $h'$ is at most $rn'$.

We observe that $\mathcal{G}_d/K'$ has a well-defined action on the set of $K'$ orbits of points, and thus so does $\mathcal{H}'$. We claim that we can find a point with periods at most $10rn'$ in each direction, such that $h'$ does not act trivially on its $K'$-orbit. Since $\mathcal{H}'$ is simple, as in the proof of Lemma~\ref{lem:ResiduallyFinite} we see that $H'$ must act faithfully on the set of $K'$-orbits with periods at most $10rn'$, and we conclude that again $|\mathcal{H}'| \leq e^{O((10rn')^d)}! = e^{O((n')^d)}!$ (the invisible constant is only a function of $r$ and the alphabet $B$). This concludes the proof by again observing that $e^{O((n')^d)}!$ is far smaller than $e^{\Omega(n^D)}!$.

We just need to prove the claim about the action on periodic points. Suppose $B \ni 0, 1$. There are two cases to consider. First, suppose $h'$ is a shift by an element not in $K'$, then it is a shift by a vector of radius at most $rn$ from outside $K'$. We can easily find a point with periods $(2rn'+1)$ where $h'$ does not act by a $K'$-shift, for example we can pick the point $x \in \{0,1\}^{\Z^d}$ with periods $2(rn'+1)\Z^d$, where in the fundamental domain $[0, 2rn']^d$ we have $x_{\vec v} = 1 \iff \vec v = \vec 0$, and then $x$ is not mapped into its $K'$-shift orbit.

Suppose then that $h'$ is not a shift map. Let first $y \in \{0,1\}^{\Z^d}$ be the characteristic function of $\{\vec 0\}$, i.e.\ $y_{\vec v} = 1 \iff \vec v = \vec 0$. Then we have two subcases. First, if $h'(y) \notin K' \cdot y$, then we can pick the same point $x \in \{0,1\}^{\Z^d}$ as above.

Suppose then $h'(y) \in K' \cdot y$, so $h'(y) = \sigma_{\vec v}(y)$ for some $\vec v \in B_{rn'} \cap K'$. Since $h'$ is not a shift by an element of $K'$, we can find a pattern $P \in B^{B_{rn'}}$ such that $h'(z) \neq \sigma_{\vec v}(z)$ when $z \in [P]$. Positioning $z|B_{rn'}$ and a central pattern of $y$ far from each other in the same $10rn\Z^d$-periodic point $x$, we obtain a point which $h'$ does not shift by an element of $K'$. 
\end{proof}

\subsection{Embeddings into other groups}

A version of Lemma~\ref{lem:ResiduallyFinite} can be proved more generally for residually finite groups.

We recall the definition of systolic growth of a group \cite{BoCo16}.

\begin{definition}
Let $G$ be a group. Its \emph{systolic growth} with respect to a generating set $S$ is the function $f : \N \to \N$, where $f(n)$ is the minimal index of a subgroup $H$ such that the cosets $Hg$ are different for distinct $g \in B_n$.
\end{definition}

Usually, one defines the systolic growth instead by taking $f'(n)$ the minimal index of a subgroup $H$ such that $H \cap B_n = \{1_G\}$. However, it is easy to check that 
\[ f'(n) \leq f(n) \leq f'(2n) \]
so the functions are asymptotically equal.

\begin{definition}
Let $G$ be a group and $X \subset A^G$ a subshift. Its \emph{periodization growth} with respect to a generating set $S$ is the function $f : \N \to \N$ such that for any $p \in X|B_n$, there exists a point $x \in X$ with $x|B_n = p$ whose shift orbit has cardinality at most $f(n)$.
\end{definition}

\begin{lemma}
If $G$ has systolic growth $f$, then $A^G$ has periodization growth $f$.
\end{lemma}

\begin{proof}
Let $p \in A^{B_n}$ be arbitrary. Let $H \leq G$ be a subgroup with index $f(n)$ such that the cosets $Hg$ for $g \in B_n$ are distinct. Extend $B_n$ to a set of right representatives $R$ for cosets of $H$, and pick any $q \in A^R$ with $q|B_n = p$. Define $x \in A^G$ by $x_{hk} = q_k$ for $h \in H$, $k \in R$. Then clearly $x|B_n = p$ and $x$ is $H$-periodic because $hx_g = x_{h^{-1}g} = x_g$ for $h \in H$. Therefore, $|Gx| \leq [G : H] = f(n)$.
\end{proof}


\begin{lemma}
\label{lem:ResiduallyFiniteGeneral}
Let $G$ be a group, $X \subset A^G$ a subshift, and $\mathcal{G} = \langle S \rangle \leq \Aut(X)$, wheret $S$ is a finite generating set defining the word norm of $\mathcal{G}$. Suppose $X$ has periodization growth $f$. Then there exists $r \in \N$ such that whenever $\mathcal{H} \leq \mathcal{G}$ is a simple group containing a nontrivial element of norm at most $n$, we  have $\mathcal{H} \leq \Sym(|A|^{f(rn)})$.
\end{lemma}

\begin{proof}
Let $r$ be the maximal radius among generators in $S$. Suppose $h \in \mathcal{H}$ has norm $n$. Then the radius of $h$ is at most $rn$. Since $h$ is nontrivial, the local rule of $h$ maps some pattern $p \in X|B_{rn}$ to a symbol distinct from $p_e$.

Let $x \in X$ be a point with $x|B_n = p$ whose shift orbit has cardinality at most $f(n)$, and let $H$ be the stabilizer (of index at most $f(n)$) of $x$. Let 
\[ P = \{y \in X \;|\; \forall h \in H: hy = y\}. \]
Since the natural action of $\Aut(X)$ commutes with the shift action, in particular $\mathcal{H}$ stabilizes $P$ (as a set). We have $hx \neq x$ since $x|B_n = p$ and by the assumption on the local rule of $h$. Since $H$ is simple, its action on $P$ is faithful. We conclude that $H \leq \Sym(P)$, where of course $|P| \leq |A|^{f(rn)}$.
\end{proof}

Of course, this can be improved by taking into account the growth of the number of patterns in $X$ that appear in $H$-periodic points, where $H$ ranges over the group used to realize periodization growth.

We again state the previous lemma in terms of simplistic growth.

\begin{lemma}
\label{lem:GeneralUpperBound}
Let $G$ be a group, and let $X$ have periodization growth $f$. Then every finitely-generated subgroup of $\Aut(X)$ has simplistic growth at most $e^{f(n)}!$.
\end{lemma}

We show as an application that the group of $5$-dimensional cellular automata does not embed in the cellular automata group on the Heisenberg group.

\begin{proposition}
\label{prop:Heisenberg}
Let $H_3$ be the Heisenberg group of upper triangular $3$-by-$3$ integer matrices with unit diagonal. then $\Aut(A^{\Z^D})$ does not embed in $\Aut(B^{H_3})$ for $D \geq 5$.
\end{proposition}

\begin{proof}
The group $H_3$ has $n^4$ systolic growth \cite{BoCo16}. Thus, $B^{H_3}$ has periodization growth $n^4$ by Lemma~\ref{lem:PeriodizationGrowth}. By Lemma~\ref{lem:LowerBound}, $\Aut(A^{\Z^D})$ has a finitely-generated subgroup with simplistic growth at least $e^{n^D/\poly\log(n)}$ with $D \geq 5$. If $\Aut(A^{\Z^D}) \leq \Aut(B^{H_3})$, then this finitely-generated subgroup would have simplistic growth at most $e^{n^4}!$ by Lemma~\ref{lem:GeneralUpperBound}, a contradiction since
\[ e^{n^D/\poly\log(n)}! \not\lesssim e^{n^4}!. \]
\end{proof}

More generally, every nilpotent group has polynomial systolic growth. In fact, in \cite{BoMc11} it is shown that even the residual girth is polynomial, and this function bounds the systolic growth from above.

\section{Groups with free submonoids}
\label{sec:FreeSubmonoids}

Groups with free submonoids admit explicit cones: binary trees along two generators. Furthermore, since the free submonoid on two generators contains the one on three generators, we have a free direction where we can store any additional information. We show that this implies a lower bound on simplistic growth. We also show that by a simple modification of the notion of successfulness, the same proof gives PSPACE-hardness of the word problem of the group of cellular automata. Both of these results are generalized later, after we interpret graphs in configurations more abstractly in Section~\ref{sec:Arboreous}.

\subsection{Simplistic growth lower bound}

\begin{theorem}
\label{thm:SimplisticLowerBoundGeneral}
Let $G$ be a f.g.\ group containing a free submonoid on two generators, and let $A$ be any alphabet. Then $\Aut(A^G)$ has simplistic growth at least $e^{2^{\sqrt[\ell]{n}}}!$ for some $\ell$. Thus it has an f.g.\ subgroup that does not embed into $\Aut(B^{\Z^d})$ for any alphabet $B$ and $d \in \N$. 
\end{theorem}

\begin{proof}
We consider the case $A = C \times \{a, b, ?\}$ with $|C| \geq 6$. In the case of a general alphabet, the marker/conveyor belt argument is similar as in Lemma~\ref{lem:LowerBoundTechnical}. Note also that in the interesting special case of a free group $G$, $\Aut(A^G)$ actually embeds in $\Aut(B^G)$ for nontrivial alphabets $A, B$ \cite{BaCaRi25}.

We give a lower bound on simplistic growth far surpassing that of any $\Aut(B^{\Z^d})$. If there is a free monoid on two generators, there is also a free submonoid on three generators $a, b, c$. Given a configuration $x \in A^G$, we construct a graph for ripple catching. At $g \in G$ read the word $\{a, b, ?\}^n$ at $\{g, gc, gc^2, \ldots, gc^{n-1}\}$. If the word is of the form $u?^k$ where $u \in \{a, b\}^{n-k}$. Then we take $g$ as a node with color $u$. Note that this color is an analog of the index from the $\Z^d$ case, and allows a unique color for each position in a binary tree.

We have a $t$-labeled edge from $g$ to $gt$ if both nodes exist, for $t \in \{a, b\}$. We have a natural partial action of $\{a, b\}^*$ which appends the generator at the end of the vertex color, and we use the actions of $a, b$ as their successor relations. The corresponding rank function is $\rho(w) = |w|$. Note that a node $g \in G$ being successful means that its color is of the form $w$ for $w \in \{a, b\}^k$, and for each $t \in \{a, b\}$, the color of $gt$ is precisely $w?^{k-1}$ (nodes $k = 0$ are not successful).

Again, $\PAut$ implements the generators of the ripple group for this graph (with respect to the rank function and partial action of $\{a, b\}^*$), so that the generators of the ripple group have word norm $\poly(n)$. The only difference is that instead of incrementing numbers (or more precisely, checking that some number is the successor of another), we now append a symbol at the end of a word. The latter is much easier to implement (indeed, it is a matter of comparing individual symbols).

Then by Lemma~\ref{lem:RippleCatching}, there is an element of word norm $\poly(\poly(n)) = \poly(n)$ which acts nontrivially exactly on full cones, which contain $0$ everywhere except at the root. Again, cones are self-indexing in the sense that the vertex color uniquely identifies a root of the cone, so this is an element of the simple group of all permutations of $C$-configurations on self-indexing cones.

Now the cones have $2^{n+1}-1$ nodes, so the cardinality of this simple group is
$|C|^{2^{n+1}}!/2$, so the simplistic growth $f$ of $\Aut(A^G)$ satisfies $f(n) \gtrsim e^{2^{\sqrt[\ell]{n}}}!$, far surpassing the simplistic growth $e^{n^d}!$ in the $d$-dimensional case.
\end{proof}

\subsection{PSPACE-hardness of the word problem}

\begin{theorem}
\label{thm:MonoidCase}
Let $G$ be a group containing a free monoid on two generators. Then for some $A$, the group $\PAut(A)$ has PSPACE-hard word problem.
\end{theorem}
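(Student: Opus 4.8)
The plan is to give a polynomial-time many-one reduction from (the complement of) $\mathrm{TQBF}$ --- equivalently, from acceptance of a fixed polynomial-time alternating Turing machine, packaged as in Lemma~\ref{lem:PSPACEByTrees} --- to the word problem of a single finitely generated subgroup $H \leq \PAut(A)$; since a language family is declared $\mathrm{PSPACE}$-hard as soon as one of its members is, this proves the theorem. Fix $a, b \in G$ generating a free submonoid, so that $\{a,b\}^* \hookrightarrow G$ and the depth-$m$ binary tree $\{a,b\}^{\leq m}$ consists of pairwise distinct group elements; this tree carries the computation tree of the alternating machine (a node $w$ holds a machine configuration, its children $wa, wb$ the two successor configurations, the root being $1_G$). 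Because a configuration has polynomial length, we do not store it in one cell: using freeness we carve out of $\{a,b\}^*$ a sub-skeleton of pairwise-disjoint ``nodes'' together with, at each node, a disjoint ``tape ray'' of polynomial length, arranged so that the parent-to-children and along-the-tape motions are realized by the partial shifts $\sigma_{a,i}^{\pm1}, \sigma_{b,i}^{\pm1}$ on suitable tracks. We then take $H = \langle \sigma_{a,i}, \sigma_{b,i}, \eta_\pi \rangle$ with $\pi$ ranging over a generating set of $\Sym(A)$; by Lemma~\ref{lem:PAutWD} this is a well-defined finitely generated subgroup of $\PAut(A)$, and we will show its word problem is $\mathrm{PSPACE}$-hard.

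The automorphism we build for a formula $\Psi = Q_1 x_1 \cdots Q_m x_m\, \phi$ is a \emph{conditional} automorphism in the style of Barrington's theorem (Section~\ref{sec:Barrington}): it has the form $U \circ \eta \circ U^{-1}$, where $U$ is a word that, on any configuration carrying a correctly planted skeleton, deposits in the root cell a bit equal to the truth value of $\Psi$; $\eta$ is a single conditional symbol permutation that permutes a dedicated ``data'' track at a cell exactly when that cell is a valid root with answer bit $1$; and $U^{-1}$ cleans everything up. On a configuration whose skeleton is \emph{not} correctly planted, the answer bit is never set, $\eta$ acts as the identity there, and $U \circ \eta \circ U^{-1}$ is the identity; on the special configuration with a correctly planted skeleton it acts nontrivially on the data track exactly when $\Psi$ is true. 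Hence the word $U\,\eta\,U^{-1}$ represents the identity of $H$ iff $\Psi$ is false; since $\mathrm{PSPACE}$ is closed under complement, this is a valid reduction from a $\mathrm{PSPACE}$-complete problem, provided the word has polynomial length and is produced in polynomial time.

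It remains to describe $U$, which decomposes into a top-down pass and a bottom-up ``funnel''. In the top-down pass we proceed level by level: at stage $k$, a constant number of partial shifts followed by Barrington-conditioned symbol permutations copies the partial assignment (or, in the alternating-machine formulation, applies one reversible machine step) from every depth-$k$ node into both of its children; after $m$ stages all $2^m$ leaves carry their full data, the point being that the branching of the free submonoid lets a polynomial-length word reach exponentially many cells --- this is the \emph{dyadically accelerated funneling}. We then evaluate $\phi$ at every leaf by Barrington's theorem applied to its tape ray (the evaluation map is in $\mathrm{NC}^1$, hence a bounded-width product of conditional symbol permutations), and finally funnel the resulting bits upward: for $k = m, m-1, \dots, 1$ we bring the two children's bits of each depth-$(k-1)$ node up via $\sigma_{a,i}, \sigma_{b,i}$ and combine them by $\wedge$ or $\vee$ according to $Q_k$ (again a conditional symbol permutation); after $m$ stages the root holds the value of $\Psi$. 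Every ingredient is visibly a composition of generators of $H$, reversible, and of polynomial length, and the marker tracks (a periodic or finite-support pattern recording ``I am a node of such-and-such depth and type'') are easy to install because we work on the full shift, with no SFT constraint.

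The main obstacle is not the global architecture --- which is the Barrington-plus-parallelism scheme --- but making every step genuinely realizable with \emph{only} partial shifts and symbol permutations, reversibly: in particular (i) the reversible simulation of a local machine step along a polynomial-length tape ray, handled by simulating a reversible machine (so that $\mathrm{PSPACE}$ is unchanged) or by keeping a history later uncomputed by $U^{-1}$, and (ii) the geometric bookkeeping inside the free submonoid, ensuring that the single global operations $\sigma_{a,i}, \sigma_{b,i}$ move the per-node data of \emph{all} nodes simultaneously and correctly, with collisions ruled out purely by the finite-alphabet markers. The Barrington encoding is precisely what lets us condition on circuits whose intermediate values would not fit in a finite alphabet, and the free-submonoid geometry is what converts polynomial circuit depth and tree depth into a polynomial-length word.
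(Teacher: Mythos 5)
There is a genuine gap at the core of your architecture. You build the reduction word as a conjugate $U \circ \eta \circ U^{-1}$, where $\eta$ is a single conditional symbol permutation firing at any cell that is ``a valid root with answer bit $1$'', and you claim the composite is trivial iff $\Psi$ is false. But a conjugate $U\eta U^{-1}$ is the identity of the group if and only if $\eta$ is; equivalently, $U\eta U^{-1}(x) \neq x$ exactly when $\eta$ acts nontrivially at $U^{-1}x$, and $U^{-1}$ is a bijection of the full shift. Since $\eta$'s firing condition is a \emph{local} (finite-window) pattern on the alphabet tracks, and on a full shift every finite pattern occurs in some configuration, an adversarial configuration can simply come with the answer bit already forged at a cell that locally looks like a valid root. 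Hence your word is nontrivial for every $\Psi$, and the reduction fails. The same issue undermines the top-down pass: ``copying the partial assignment into both children'' is not a reversible operation on arbitrary configurations (children carry arbitrary content), and even a reversible XOR-style variant only produces garbage that your answer-bit mechanism cannot police. The underlying conflation is between reversible computation on a chosen input (where compute/uncompute by conjugation is fine) and triviality of an automorphism, which is a universally quantified statement over all configurations.

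The paper's proof avoids this by never computing and storing an answer: the final element is a permutation conditioned on the configuration \emph{itself already containing} an entire correct witness tree (Turing-machine configurations on the $B$-track, tree indices/counters on the $\{0,1,2\}$-track, per Lemma~\ref{lem:PSPACEByTrees}, with the existential choices guessed by the configuration and the universal branching enforced by requiring both children to follow $T_1$, $T_2$). Nontriviality then means exactly ``a witness exists''. The exponential-size condition is expressed by a polynomial-length word not via conjugation by a computing word, but via nested commutators: the dyadic induction on the maps $\psi_{j,i,s,\pi}$, of the form $\psi_{j,i,s,\pi} = [\psi_{2j,i-1,0,\pi_1},\, \psi_{2j,i-1,1,\pi_2} \car \psi_{2j+1,i-1,s,\pi_3}]$, together with a case analysis showing that all controlled permutations cancel on every configuration in which any local goodness condition fails. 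That cancellation analysis (the same track serving as control and as permuted data, with $s$ kept out of the supports of the $\pi_i$, plus the $\gamma$- and $\phi$-maps and the $\PAut$-membership argument of Equation~\ref{eq:eq}) is precisely the content your proposal is missing; ``dyadically accelerated funneling'' in the paper is this commutator induction, not a mechanism for broadcasting data to exponentially many cells.
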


\begin{proof}
We work exactly as above, but now in addition to knowing their coordinates in the cone, we include a configuration for a pair of Turing machines (the PSPACE-complete problem we reduce involves two Turing machines), and we include in the notion of successfulness of a node (in other words, in the definition of the successor relations of $a, b$ on the color) that the Turing machines are performing computations. (Note that the Turing machines are nondeterministic, so the successor relations do \emph{not} come from a partial action of $\{a, b\}^*$ in this case.)

Ripple catching is give us an element that acts nontrivially exactly on the roots of full computation trees. Such an element is nontrivial if a full computation tree exists, which corresponds exactly to a positive instance of the problem we are reducing.

We explain this in more detail. If $G$ contains a free monoid on two generators, it contains a free monoid on three generators, say powers of $a, b, c \in G$ satisfy no nontrivial positive relations.

We will use alphabet $A = C \times B \times (S \cup (Q \times S))$ for our f.g.\ group, where $B = \{a, b, ?\}$. Here, $Q$ with $|Q| = t$, and $S$ with $|S| = s$ are respectively the common sets of states and tape symbols of the nondeterministic Turing machines $T_a, T_b$ from Lemma~\ref{lem:PSPACEByTrees}. The lemma states roughly that the language of words $u_{\init} \in S^n$ such that there exists a binary tree of depth $n$ of configurations of the machines, where in one direction we take steps by $T_a$, and in the other by $T_b$, the root of the tree contains the initial configuration corresponding to $u_{\init}$, and both machines accept at all leaves.

Our reduction takes a word $u_{\init}$ of length $n = 2^k$ (note that one should think of this as a polynomial size input whose size just happens to be of nice form, rather than an exponential length word), and constructs an automorphism that performs a permutation $\pi$ on the $C$-component of $A$ whenever it sees a tree of the form described in Lemma~\ref{lem:PSPACEByTrees} on the configuration, encoded in words read on the $B$-components, and with $q_{\init} u_{\mathrm{init}} \in Q S^n$ as the initial configuration on the tree (where $q_{\init}$ is the common initial state of $T_a, T_b$).

We construct this by ripple catching, so we need to describe a graph where full cones correspond to valid computations trees. At $g \in G$, read words $(u, w)$ in coordinates $(g, gc, \ldots, gc^{n-1})$ where $u \in B^n$, $w \in (S \cup (Q \times S))^n$ (ignoring the $C$-components). We include the node in the ripple catching graph if the following hold:
\begin{itemize}
\item $u$ is as in the previous proof, i.e.\ all the $?$-symbols are at the end,
\item $w$ is a valid Turing machine configuration (has exactly one head),
\item if $u = ?^n$, then $w$ is the initial configuration corresponding to $u_{\init}$,
\item if $u \in \{a, b\}^n$, then $w$ is a final configuration.
\end{itemize}
We include an edge from $g$ to $ga$ with label $a$ if both are nodes.

We have $t \in \{a, b\}$ act as in the previous proof on the word $u$ (i.e.\ by appending itself), and in the Turing machine component $w$, $t \in \{a, b\}$ acts by taking a step of the machine $T_t$.

We now implement successfulness in $\PAut$ exactly as before. The only new component is that we need to check that the Turing machine steps are performed correctly, to determine whether a node is successful. This is possible by Lemma~\ref{lem:TuringStep}. All in all, checking successfulness requires word length polynomial in $n$. Combined with ripple catching, we have an element of word norm $\poly(\poly(n)) = \poly(n)$, which acts nontrivially if and only if a full cone exists, in other words if and only if $u_{\init}$ is in the language $L$.
\end{proof}

\section{Arboreous graph families}
\label{sec:Arboreous}

In Section~\ref{sec:GAP}, we show that assuming the Gap Conjecture, nice enough SFTs on groups with superpolynomial growth have automorphism groups with PSPACE-hard word problem. The idea is to interpret trees (i.e.\ free monoids) in these SFTs. We cannot quite find full binary trees (this would be impossible in any group of subexponential growth), but from the growth rate assumption we can at least deduce the existence of polynomial-diameter trees that have a linear diameter binary tree as a path contraction (i.e.\ up to replacing paths with edges, we do have full binary trees).

In this section, we define the desired shapes of such trees in more detail, define an analog of $\PAut$ on graph families, and show that as soon as a graph family contains the right kind of trees, $\PAut$ groups can have PSPACE-hard word problems.

Recall the definition of an $S$-labeled graph from Section~\ref{sec:RippleCatching}. We need some definitions for $S$-labeled graphs. Such a graph is \emph{complete} if all $S^{\pm}$-successors exist. A \emph{weakly connected component} of an $S$-labeled graph is just a connected component of the underlying undirected graph (with an undirected edge for each directed edge), and an $S$-labeled graph is weakly connected if it has just one weakly connected component.

\begin{definition}
For $|S|\geq 2$, a \emph{depth-$k$ $S$-ary word tree} is an $S$-labeled graph obtained as follows: Start with the graph with vertices $S^{\leq k}$ and an $s$-labeled edge $(u, us)$ for $s \in S, u \in S^{\leq k-1}$. Then for some $U \subset S^{\leq k}$, identify the vertices $\{uav, ubv, ucv\}$ whenever $u \in U, v \in S^*$, i.e.\ we quotient the tree by the minimal equivalence relation making these identifications. We say such a tree is an \emph{$(n,S)$-thicket} if for each $u \in S^k$, there are at least $n$ prefixes $v \leq u$ such that $|\{vs \;|\; s \in S\}| = |S|$ in the resulting tree.
\end{definition}

We phrase the definition in this constructive way rather than descriptively, since otherwise it is difficult to state concisely that the tree ``should not have any identifications other than ones arising from $ua = ub = uc$'' (as certainly such an identification leads to many other identifications, but we only want to have the obvious consequences). One indeed obtains a full ternary tree from an $(n,\{a,b\})$-thicket by contracting the paths along vertices $u$ where $ua = ub$.

\begin{definition}
Suppose $S \supset \{a, b, c\}$. A family of $S$-labeled graphs $\mathcal{G}$ is \emph{arboreous} if there exist $C,d$ such that for all $n$, there exists $H \in \mathcal{G}$ and $h \in H$ such that the subgraph of $H$ induced by the vertices $h\{a, b, c\}^{\leq C n^d}$ is an $(n, \{a, b, c\})$-thicket.
\end{definition}



Next we define our $\PAut$ analog.

\begin{definition}
Let $\mathcal{G}$ be a family of complete $S$-labeled graphs, let $F_S$ be the free group generated by $F_S$. For finite sets $A_i$, the group $\PAut_{\mathcal{G}}(A_1, \ldots, A_k)$ is defined by its action on $\bigsqcup_{G \in \mathcal{G}} A^G$ where $A = A_1 \times \cdots \times A_k$. The generators are as follows:
\begin{itemize}
\item For each $i \in [1, k]$ and $g \in F_S$, we have the partial shift $\sigma_{g, i}$ that maps $x = (x_1, \ldots, x_k) \in A^G$ to $(x_1,\ldots, x_{i-1}, y, x_{i+1}, \ldots, x_k)$, where $y_h = (x_i)_{hg}$ for $h \in V(G)$.
\item For each $\pi \in \Alt(A)$, we have the symbol permutation $\eta_\pi$ that maps $x \in A^G$ to $y$ defined by $y_h = \pi(x_h)$ for $h \in V(G)$.
\end{itemize}
\end{definition}

We do not require that $\mathcal{G}$ has a unique copy of each graph up to isomorphism, but requiring this would not change the definition.

\begin{theorem}
\label{thm:AbstractPSPACEComplete}
Let $\mathcal{G}$ be any arboreous path family. Then for some $A$, the group $\PAut_\mathcal{G}(A)$ has PSPACE-hard word problem.
\end{theorem}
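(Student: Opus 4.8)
The plan is to adapt the proof of Theorem~\ref{thm:MonoidCase} almost verbatim, replacing the free submonoid on $\{a,b,c\}$ inside $G$ by the abstractly embedded $(n,\{a,b,c\})$-thickets provided by the arboreousness hypothesis, and replacing the full ternary tree $\{a,b\}^{\leq 2^j}$ by the contracted ternary tree living inside such a thicket. Concretely, given an input $u_{\init}$ of length $n = 2^k$ to the language $L$ of Lemma~\ref{lem:PSPACEByTrees}, we fix (by arboreousness) a graph $H \in \mathcal{G}$ and a root $h \in H$ whose induced subgraph on $h\{a,b,c\}^{\leq Cn^d}$ is an $(n,\{a,b,c\})$-thicket. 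We use the alphabet $A = \Z_7 \times B \times \{0,1,2\}$ with $B = Q \times S$ exactly as before, encoding Turing-machine configurations of length $n+1$ on the $c$-rays and a ternary counter/index on the $\{0,1,2\}$-track; the counter at a node will record how many of its prefixes (in the free-group word sense, i.e.\ along $\{a,b,c\}^*$) are genuine branch points of the thicket, which is exactly the depth in the \emph{contracted} ternary tree. We then build the analogues of $\beta_\pi$, $\phi_{\pi,s,g}$, $\gamma_{\pi,\ell}$ and the funneling maps $\psi_{j,i,s,\pi}$ as elements of $\PAut_{\mathcal G}(\Z_7,B,\{0,1,2\})$, and the reduction sends $u_{\init}$ to (a word representing) $\psi_{0,k,s,\pi}$, which is nontrivial on $\bigsqcup_{G} A^G$ iff some $G \in \mathcal G$ carries a marked thicket on which a valid $T_1,T_2$-computation tree starting from $q_{\init}u_{\init}$ can be written.

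The key steps, in order: (i) Verify that the three families of generators from the proof of Theorem~\ref{thm:MonoidCase} all lie in $\PAut_{\mathcal G}(A)$ with constant (resp.\ polynomial) word norm; the membership of $\phi_{\pi,s,g}$ uses the borrowed-trit identity~\eqref{eq:eq} and goes through unchanged since it only manipulates the $\Z_7$- and $\{0,1,2\}$-tracks at a cell and its image under a partial shift, and partial shifts and $\Alt$-symbol-permutations are generators of $\PAut_{\mathcal G}$ by definition. (ii) Define ``goodness'' of a vertex $v$ in terms of the local data at $v, va, vb, vc$ (TM-goodness, transition relations of $T_1, T_2$ along two of the children, consistency of indices, and the boundary condition $q_0 u_{\init}$ when the counter is $0$), check these are $\mathrm{NC}^1$-conditions as before, and build $\beta_\pi$ via Lemma~\ref{lem:Increment}/Lemma~\ref{lem:TuringStep} and $\gamma_{\pi,\ell}$ via Theorem~\ref{thm:BarringtonVariant}. (iii) Run the DAF induction: the base case $i=0$ is the commutator identity $\psi_{j,0,s,\pi} = [\beta_{\pi_0},\phi_{\pi_1,s,a},\phi_{\pi_2,s,b},\gamma_{\pi_3,j}]$ (with the $s$-in-support case handled by conjugating by $\gamma_{\pi'^{-1},j}$ exactly as before), and the inductive step is Equation~\eqref{eq:psi}, giving $N_i \le 8 N_{i-1}$ and hence $N_k = O(n^3 n^t)$. (iv) Conclude that $\psi_{0,k,s,\pi}$ has polynomial norm and is nontrivial iff the required computation tree exists, completing the many-one reduction of $L$ to $\WP(\PAut_{\mathcal G}(A))$, hence PSPACE-hardness. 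For a general alphabet one then adds the mutually-unbordered marker-word layer exactly as in the other proofs.

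The main obstacle — and the only genuinely new point relative to Theorem~\ref{thm:MonoidCase} — is that a thicket is not literally a free monoid: a node $u$ with $u a = u b = u c$ is a ``pass-through'' node, and the funneling argument must be phrased so that permutations travel down the \emph{contracted} ternary tree, i.e.\ the intervals $[j2^i,(j+1)2^i]$ index \emph{branch depths} (counter values), not raw word lengths. I need to check that goodness correctly detects whether $v$ is a branch point (all three children distinct) versus a pass-through node (children identified), that the counter is incremented exactly at branch points and copied at pass-through nodes, and that the $(n,\{a,b,c\})$-thicket guarantee — at least $n$ proper branch prefixes along every depth-$Cn^d$ ray — ensures the counter actually reaches $n$ before we run out of graph, so the target interval $[0,n]$ is realized. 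A secondary subtlety is that different roots $h, h'$ carrying marked thickets might a priori have overlapping vertex sets; but since the counter values (written explicitly and never altered by the generators) encode the position of each cell within its own thicket, the argument that $\psi_{0,k,s,\pi}$ is a well-defined even permutation of the $\Z_7$-data under markers goes through as in Theorem~\ref{thm:HochmanSolution}. Everything else is a transcription of the $\mathcal G$-indexed disjoint-union bookkeeping, which changes no logic since each generator acts within a single $A^G$.
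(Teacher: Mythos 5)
Your high-level plan coincides with the paper's: reduce the language of Lemma~\ref{lem:PSPACEByTrees} by running dyadically accelerated funneling along the thickets, viewing them as contracted ternary trees, with the counter/index recording contracted depth. But the proposal stops exactly at the point where the new work lies, and defers it ("I need to check that goodness correctly detects whether $v$ is a branch point\dots"). Whether $va$, $vb$, $vc$ are distinct vertices is a \emph{graph-structural} fact, not something readable from the symbol data, so it is not an NC$^1$ condition in the sense of Theorem~\ref{thm:BarringtonVariant}, and your step (ii) claim that the goodness conditions "are NC$^1$-conditions as before" does not cover it. The paper supplies a concrete gadget for this: build an automorphism that permutes the $\Z_7$-component at $v$ when the $\{0,1,2\}$-tracks at $vs$ and $vs'$ carry distinct values, then flip the value at $vs$ and test again; if $vs=vs'$ the two tracks always agree and both tests fail, while if $vs\neq vs'$ at least one test succeeds, and combining the two attempts (via Equation~\ref{eq:eq}-style formulas) yields a controlled permutation conditioned on $vs\neq vs'$. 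Without such a gadget, none of your goodness clauses ("counter incremented exactly at branch points, copied at pass-through nodes") can be realized inside $\PAut_{\mathcal G}(A)$.

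The second missing piece is the transport of permutations and checks across the non-branching segments. In Theorem~\ref{thm:MonoidCase} the maps $\phi_{\pi,s,g}$ are fixed-offset generators; in a thicket the next branch point lies at a variable distance, up to $n^d$, so a single constant-norm analogue does not exist and the base case of your DAF (which must inspect the children \emph{in the contracted tree}) is not justified by "build the analogues of $\phi_{\pi,s,g}$". The paper handles this by constructing, for each $\ell \le n^d$, a polynomial-norm automorphism obtained from a modification of Equation~\ref{eq:eq} (replacing the conjugating partial shifts by their $\ell$-th powers), commutatored against divide-and-conquer checks — built from the branching-detection gadget — that the intermediate nodes $va^{\ell'}$, $\ell'<\ell$, do not branch while $va^{\ell}$ does; these maps have pairwise disjoint supports and are simply composed over all $\ell$. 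These two constructions (vertex-equality detection and variable-length $\phi$-analogues) are precisely the content of this theorem beyond Theorem~\ref{thm:MonoidCase}, so as written your proposal has a genuine gap rather than being a routine transcription. (Your remaining points — soundness under non-injective embeddings via the index checks, polynomial norm of the DAF, and the reduction format — do match the paper; also note that no marker layer is needed here, since the statement only asserts the result for some alphabet $A$.)
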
 

\begin{proof}
The proof is analogous to that of Theorem~\ref{thm:MonoidCase}. We use $A = C \times B$ with $|C| \geq 6$ and $B = \{a, b, ?\}$.

First, consider the case where $G$ contains a graph where for some vertex $h$, $h\{a, b, c\}^*$ is a copy of the ternary free monoid. In this case, we can directly use the element constructed in the proof of Theorem~\ref{thm:MonoidCase}, and perform ripple catching along the encoded free monoids $\{a, b\}^*$, interpreting indices in initial segments of the cosets $\langle c \rangle$. For inputs $u$ where a witnessing tree exists (in the sense of Lemma~\ref{lem:PSPACEByTrees}, proving $u \in L$), such a tree exists in some graph, and thus the $\PAut$-element is nontrivial.

On the other hand, if a witnessing tree does not exist abstractly, one cannot be embedded in graphs of any shape. Thus, as long as the ripple group is correcly implemented, we will have a correct reduction. There is now a possible source of worry, namely we could certainly have $uc^i = vc^j$ for distinct $u, v \in \{a, b\}^*$ and $i, j$. Such reuse does not matter as long as we are only permuting $C$-symbols as a function of the $B$-track. For the implementation of $\phi_{\pi, c}$, the $B$-symbols need to be changed as a function of $C$. In this case, we are interested in permuting the $C$-component of the color at a node $h \in H$ (where $H \in \mathcal{G}$) if the color at some relative position $ht$ for $t \in \{a, b\}^*$ contains $c \in C$. The proof of Lemma~\ref{lem:PhiIsInPAut} goes through verbatim for implementing this by partial shifts and symbol permutations.


We conclude that the controlled permutation constructed in the end acts nontrivially precisely in the presence of a tree embedded on the graph. 

Second, consider the general case, i.e.\ that we only have thickets with possibly polynomial paths between branchings in the trees. We still perform ripple catching, but now along the graph where long non-branching paths are contracted to single edges, and we need to again show that the ripple group can be implemented in a finitely-generated subgroup.

The first thing we need is to have a way to check whether some node $h \in H$ is branching, or more generally whether $hs = hs'$ for $s, s' \in F_S$. We can construct with the usual commutator techniques first an automorphism that permutes the $C$-component at $h$ if the $B$-tracks at $hs$ and $hs'$ have distinct values. Then we change the value at $hs$ and try again, and finally change the value back. One of these permutations succeeds if and only if $hs$ and $hs'$ are distinct. The details are similar to Equation~\ref{eq:eq}. The usual commutator formulas can then be used to obtain an automorphism that permutes the $C$-component at $h$ by a particular even permutation if and only if $hs \neq hs'$. 


Note that the contracted paths are of length $n^d$ at most. We explain first how to deal with a single length $\ell$. Another variant of Equation~\ref{eq:eq} gives an automorphism $f$ that performs $\pi$ in the $C$-component at $h$ if and only if $ha^\ell$ contains the symbol $c$. Namely, we simply replace the conjugating partial shifts of the third track by their $\ell$th power.

However, we need to check that the intermediate nodes on the path, i.e.\ $ha^{\ell'}$ for $\ell' < \ell$, do not branch, and that $ha^{\ell}$ does branch. This can be done since as we showed above, we have in our group automorphisms that permute $C$ in an arbitrary (even) way depending on whether a given relative node is branching. Specifically, one can use commutator formulas and divide-and-conquer to obtain a polynomial-norm automorphism that cancels the effect of $f$ if the branching is not correct.

The resulting automorphisms can be simply composed for all distinct $\ell$ to obtain the desired analogs of the $\phi$-maps, as their supports are distinct (the $\ell$th map can only act nontrivially when the non-branching prefix of the tree starting from $h$ is of length exactly $\ell$).

To allow successfulness checks to jump over paths, the argument is intuitively direct from Barrington's theorem (taking the maps that check for branching nodes to be to be among the generators), in the sense that NC$^1$ is certainly strong enough a computational model to allow skipping such polynomial length paths. Alternatively, one can work analogously as in the construction of the $\phi$-automorphisms, and for each $\ell$ and $0 \leq j \leq n$ construct a polynomial word norm controlled permutation that checks a value in $h a^\ell c^j$, and with commutator formulas have it cancel out unless the non-branching path from $h$ has length exactly $\ell$, and finally compose this over all the polynomially many $\ell$.
\end{proof}

\begin{theorem}
\label{thm:AbstractSimplisticGrowthLowerBound}
Let $\mathcal{G}$ be any arboreous path family. Then for some $A$, the group $\PAut_\mathcal{G}(A)$ has simplistic growth at least $e^{2^{\sqrt[\ell]{n}}}!$ for some $\ell$.
\end{theorem} 

\begin{proof}
The proof is essentially a direct combination of the above proof (for communication over long paths), and Theorem~\ref{thm:SimplisticLowerBoundGeneral}.

The simple group we implement can be described as follows: For every $(n, \{a, b, c\})$-thicket encoded in the configuration, in the self-indexing form from the lemma, we act on the bits on the \emph{branching positions}. It is crucial here that the group does not act on all of the bits on the encoded thicket, as otherwise it could be a direct product of actions on thickets of different shapes (which can have different numbers of nodes), and not simple at all. The element of this group that we construct is the one that performs a particular even permutation at the root of such a thicket, assuming all the branching positions contain the bit $0$.

This is immediately obtain from the above proof, by simply dropping the Turing machines and only checking the indices.
\end{proof}

\section{Arboreous groups}
\label{sec:GAP}

A sequence $b_k$ is at most \emph{polynomial} if $b_k = O(k^d)$ for some $d$. A function $b_k$ is at least \emph{stretched exponential} if we have $b_k = \Theta(e^{k^\beta})$ for some $\beta > 0$, and $\beta$ is called the \emph{degree}. We again recall the version of Gap Conjecture relevant to us, so let $\beta \in [0, 1)$.

\begin{conjecture}[Conjecture $C^*(\beta)$ of Grigorchuk]
A group either has at most polynomial growth, or at least stretched exponential growth with degree $\beta$.
\end{conjecture}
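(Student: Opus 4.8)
The plan is not to settle this famous problem outright but to lay out the case-by-case route by which it is known in the classes of groups relevant here, and to pinpoint where the genuine difficulty lies. Fix $\beta \in [0,1)$; observe first that one must have $\beta \leq \gamma = \log_{32} 31$ for the statement to have any chance, since the Grigorchuk group has growth $O(e^{k^\gamma})$ and is not virtually nilpotent \cite{Gr84,ErZh20}. By Gromov's theorem \cite{Gr81a}, a finitely generated group has at most polynomial growth precisely when it is virtually nilpotent, so the content to be proved is: \emph{every finitely generated group $G$ that is not virtually nilpotent satisfies $|B_k| = \Omega(e^{k^\beta})$}. I would split into cases according to the algebraic type of $G$.

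First, if $G$ contains a free submonoid on two generators (in particular, if it contains a non-abelian free subgroup), then distinct positive words name distinct elements, so $|B_k| \geq 2^k$ and $G$ has genuinely exponential growth, which is stretched exponential of degree $1 > \beta$. By the Tits alternative, a finitely generated linear group either contains $F_2$ or is virtually solvable, and in the virtually solvable case the Milnor--Wolf theorem gives either virtual nilpotency or exponential growth; so linear groups are covered. A word-hyperbolic group is either virtually cyclic (hence virtually nilpotent) or non-elementary (hence contains $F_2$), so hyperbolic groups are covered. For elementary amenable groups one invokes the structure theorem that an elementary amenable group of subexponential growth is already virtually nilpotent, so a non-virtually-nilpotent elementary amenable group again has exponential growth. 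The substantive known cases are residually solvable and left-orderable groups: here Grigorchuk's argument in \cite{Gr14} passes to solvable quotients (respectively, uses the order) and extracts, from the iterated commutator structure of a non-virtually-nilpotent group, a subtree of positive elements spread out enough to force $|B_k| \geq e^{k^{1/6}}$. This yields degree $\beta = 1/6$ for these classes, which is all the present paper actually invokes.

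The hard part --- and the reason the conjecture remains open --- is every group outside these classes: amenable but not elementary amenable, not residually solvable, not linear, not hyperbolic, and not virtually nilpotent, the prototypical examples being self-similar or branch groups of intermediate growth. For such groups there is no known mechanism converting the bare failure of virtual nilpotency into a stretched-exponential lower bound on $|B_k|$, and any genuine proof would simultaneously have to determine the sharp threshold $\beta$ (conjecturally close to $\gamma$), since the Grigorchuk group already rules out anything past $\gamma$ and the exact optimum is unknown \cite{ErZh20}. Concretely, one would need a new growth-type invariant that vanishes exactly on the virtually nilpotent groups and otherwise certifies $|B_k| = \Omega(e^{k^\beta})$; producing such an invariant is the central obstruction, and I do not see how to supply it. For the purposes of this paper this is acceptable, since the partial cases above --- all already in the literature --- make Theorem~\ref{thm:B} unconditional for linear, hyperbolic, elementary amenable, residually solvable, and left-orderable groups.
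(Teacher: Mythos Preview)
This statement is a \emph{conjecture}, not a theorem: the paper does not prove it and explicitly presents it as the open Gap Conjecture $C^*(\beta)$ of Grigorchuk, used only as a hypothesis under which the dichotomy of Theorems~\ref{thm:A} and~\ref{thm:B} becomes exhaustive. You have correctly recognised this and, rather than attempting a proof, given an accurate survey of the known partial cases (linear, hyperbolic, elementary amenable via exponential growth; residually solvable and left-orderable via \cite{Gr14} with $\beta=1/6$) together with a clear identification of where the genuine obstruction lies. This matches the paper's own discussion in the introduction essentially point for point, so there is nothing to correct.
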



We next show that in any group with stretched exponential growth in the above sense, for sufficiently large $k$ we can find a subtree that fits in a ball of radius polynomial in $k$, has sufficient branching, and furthermore we can encode this tree in a finite-support configuration of a particular SFT $X$.

We are mainly interested in full shifts, so we will work with SFTs that contain points of finite support, and our first result constructs finite-support points which we will use as markers. This does not require any assumptions on the group. Similar marker constructions can be performed under other (including some strictly weaker) assumptions. See e.g.\ \cite{BaCaRi25,Ho10} for such marker constructions.

\begin{lemma}
\label{lem:Markers}
Let $X$ be a subshift of finite type where some finite-support point has free orbit. Let $C \in \N$ be arbitrary. Then we can find $k$ and a finite family $F$ of points with support contained in $B_k$, such that
\begin{itemize}
\item There is a block map from $X$ to $\{0, 1\}^G$ which maps every $x \in F$ to the characteristic function $\chi$ of $\{1_G\}$.
\item $|F| \geq C |B_{Ck}|^C$.
\end{itemize}
\end{lemma}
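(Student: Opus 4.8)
\emph{Proposal.} The plan is to build the points of $F$ out of disjoint, widely spaced translates of a fixed finite-support point with free orbit, superimposed on an all-zero background, with one distinguished ``double copy'' anchored at $1_G$ to make the origin locally recognizable. Throughout we assume $G$ is infinite, which is the case of interest. Fix $y\in X$ with finite support and free orbit; translating, assume its support lies in $B_m$, so $y\neq 0^G$. Since $X$ is an SFT, fix a finite window $E$ of diameter $e$ whose forbidden patterns define $X$. The elementary observation we use repeatedly is: if translates $g_1y,\dots,g_Ny$ have pairwise support distance $>e$, then the configuration superimposing them on the zero background lies in $X$, because every translate of $E$ meets at most one support, so every $E$-window is either an $E$-window of some $g_iy\in X$ or equals $0^E$, which occurs in $y$ far from its support and hence is not forbidden. (In particular $0^G\in X$, being a limit of translates of $y$.) Next, using that $y$ has free orbit, fix a finite window $D\supseteq B_m$ with the property that $y|D$ occurs in $y$ only at $1_G$: if $(g^{-1}y)|B_m = y|B_m$ then $gB_m\cap B_m\neq\emptyset$, so $g$ ranges over the finite set $B_{2m}$, and for each $g\in B_{2m}\setminus\{1_G\}$ freeness provides a coordinate at which $g^{-1}y$ and $y$ differ; let $D$ collect $B_m$ together with these finitely many coordinates.

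Now choose a constant $c$ (depending only on $X$ and $y$) large compared to the diameters of $E$, $B_m$ and $D$, and fix $v_0$ with $e+2m<d(1_G,v_0)<c/2$. For large $k$, let $Y_0\subseteq B_{k-m}$ be a maximal $c$-separated set containing $1_G$ (Packing Lemma); then $v_0\notin Y_0$ and $|Y_0|\geq |B_{k-m}|/|B_c|$. For each $T\subseteq Y_0\setminus\{1_G\}$ let $z_T$ be the point consisting of a copy of $y$ at $1_G$, a copy of $y$ at $v_0$, and a copy of $y$ at each $g\in T$. All these supports are pairwise at distance $>e$, so $z_T\in X$ by the observation above, and its support lies in $B_{k-m}B_m=B_k$. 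Put $F=\{z_T : T\subseteq Y_0\setminus\{1_G\}\}$, so $|F|=2^{|Y_0|-1}$. Let $\mu\in A^{D\cup v_0D}$ be the pattern with $\mu|D=y|D$ and $\mu|v_0D$ the corresponding translate, and define the block map $\phi:X\to\{0,1\}^G$ by $\phi(x)_g=1\iff (g^{-1}x)|(D\cup v_0D)=\mu$.

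One checks that $\mu$ occurs in each $z_T$ exactly at $1_G$: an occurrence at $g$ would force a translate of $y|D$, hence (using $D\supseteq B_m$, the defining property of $D$, and the isolation of the copies) a full copy of $y$, at both $g$ and $gv_0$, while the only ordered pair among the placed copies whose positions differ by $v_0$ is $(1_G,v_0)$ --- this is exactly where the spacing of $1_G$, $v_0$ and $Y_0$ is used. Hence $\phi(z_T)=\chi$, the characteristic function of $\{1_G\}$, for every $T$. For the count, submultiplicativity gives $|B_{Ck}|\leq |B_k|^C$, so $\log_2\bigl(C|B_{Ck}|^C\bigr)\leq \log_2 C+C^2\log_2|B_k|$, whereas $\log_2|F|=|Y_0|-1\geq |B_{k-m}|/|B_c|-1\geq |B_k|/(|B_c||B_m|)-1$. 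Since $G$ is infinite, $|B_k|\to\infty$, so for all sufficiently large $k$ the right-hand side exceeds $\log_2 C+C^2\log_2|B_k|$, giving $|F|\geq C|B_{Ck}|^C$; we pick such a $k$.

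The main obstacle is the correctness of $\phi$: forcing the single local rule to fire at $1_G$ and nowhere else, despite the presence of arbitrarily many further copies of $y$. This is precisely what the free-orbit hypothesis buys (through the uniqueness window $D$), and pinning it down requires a somewhat fussy choice of the separation constants $c$, $d(1_G,v_0)$ and the radius of $D$ so as to exclude spurious occurrences of $\mu$ both inside a single copy of $y$ and straddling two different copies. Everything else --- the verification that each $z_T\in X$, and the final counting inequality --- is soft, the latter using only $|B_k|\to\infty$ and submultiplicativity of ball sizes.
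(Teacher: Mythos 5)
Your route is genuinely different from the paper's and is essentially viable. The paper takes a single finite-support free-orbit point, places copies of its central pattern $3m$-densely in an annulus of $B_k$, and modifies the detecting local rule so that it fires only at a copy that is \emph{isolated} at scale $10m$ (the central copy is the unique isolated one, each annulus copy sees a neighbour); exponentially many marker points come from jiggling the annulus copies by one step, and the final count compares $\alpha^{|B_k|}$ with $C|B_{Ck}|^C$ via $|B_{Ck}|\leq s^{Ck}$, which forces a separate treatment of linear growth (virtually $\Z$). You instead mark the origin by a \emph{pair} of copies at a prescribed offset $v_0$, encode information by presence or absence of copies at a $c$-separated set, and count via submultiplicativity $|B_{Ck}|\leq|B_k|^C$; the last point is actually cleaner, since it removes the paper's special case for slow-growth groups, and your uniqueness window $D$ is a fine substitute for the paper's observation that a repeat of the central $B_m$-pattern forces $g^{-1}x=x$. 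The superposition lemma and the counting are fine.

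There is, however, one concrete hole in the uniqueness claim. The statement ``the only ordered pair among the placed copies whose positions differ by $v_0$ is $(1_G,v_0)$'' is not a consequence of spacing alone: if $v_0^2=1_G$, then $(v_0,v_0\cdot v_0)=(v_0,1_G)$ is a second such pair (all other cases are excluded by the metric constraints, since $|v_0^2|<c$ while elements of $Y_0\setminus\{1_G\}$ have norm at least $c$), and then $\mu$ occurs in every $z_T$ at $v_0$ as well as at $1_G$: the window at $v_0$ sees the copy at $v_0$ on $D$ and the copy at $1_G=v_0v_0$ on $v_0D$, so $\phi(z_T)\neq\chi$. You must additionally require $v_0^2\neq 1_G$; this can always be arranged (an infinite f.g.\ group has non-involutions of arbitrarily large norm --- if every element outside a ball were an involution, any two long elements with long product would commute and $G$ would be an f.g.\ elementary abelian $2$-group, hence finite --- so enlarge $c$ and pick such a $v_0$), or you can use an asymmetric three-copy anchor, or the paper's isolation mechanism, which avoids the issue altogether. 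Separately, some of your stated numerical constraints are not sufficient as written: $d(1_G,v_0)>e+2m$ does not guarantee that the copy at $v_0$ misses the window $D$ (whose radius may be about $3m$), nor that the two anchor copies do not interfere at the scale $\mathrm{diam}(D)+2m$ needed in the ``occurrence forces a full copy'' step; you acknowledge this fussiness and the fix is routine constant-chasing, but the involution case is a genuine, if easily patched, gap.
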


\begin{proof}
Let $b_k = |B_k|$ and suppose $G$ has at most $s$ generators. Suppose $x \in X$ has support contained in an $m$-ball, and has free orbit, assume $m$ is a window size for $X$, and assume $m \geq 10$. It is reasonably easy to show that there is a block map $\phi$ which maps $x$ to the characteristic function $\chi$ of $\{1_G\}$. Namely, consider the block map whose local rule identifies the central $B_m$-pattern of $x$ by writing $1$, and otherwise writes $0$. If some shift of $gx$ also has this pattern in $B_m$, then the nonzero symbols of $x|g^{-1}B_m$ are in bijection with those of $x$ in $B_m$. Thus actually $g^{-1}x = x$ because these must contain all the nonzero symbols, thus $g = 1_G$.

We claim that there exists $\alpha > 1$ such that for all $k$ we can find a family $F$ of points with support contained in $B_k$, and with cardinality at least $\alpha^{b_k}$, and such that some block map maps all of them to $\chi$.

To see this, first we may modify the local rule of $\phi$ so that it only outputs $1$ if it sees content from $x$ in the $10m$-ball. Thus, we can use the same block map, if we only use points where there is a copy of the central pattern of $x$ separated by at least $10m$ from other such copies, and these other copies are $3m$-dense in the annulus $B_k \setminus B_{10m}$ (which easily implies that each of them sees another one in the annulus $B_k \setminus B_{10m}$ for large enough $k$).

By the Packing Lemma we can indeed find a subset of the annulus which is $3m$-separated and $3m$-dense, and apart from the boundaries we can freely center copies of left translates of $m$-balls in these positions. Then we can encode information in the point by varying the exact positions by $1$ in various directions (we can move each pattern by at least $m/3$ before any two may intersect). In particular, we have a positive density of positions where we can write a bit of information, which easily implies the claim in the previous paragraph.

Now we claim that if $\alpha > 1$, then $\alpha^{b_k} > C b_{Ck}^C$ for some $k$, which is the claim about the size of $F$. Suppose not. Then for all $k$ we have
\[ \alpha^{b_k} < C b_{Ck}^C. \]
Taking logarithms we have
\[ b_k \log \alpha < \log C + C \log b_{Ck} \leq \log C + C^2 k \log s \]
so $b_k < (\log C + C^2 k \log s) / \log \alpha$. Thus the growth of balls is bounded by a linear function of $k$. It is well-known that this implies that $G$ is virtually $\Z$, and in this case, the claim is clear.
\end{proof}

We now want to formalize the idea that we can interpret trees and other graphs on configurations of an SFT on a group, and furthermore on the nodes of these graphs we can interpret symbols that can freely be changed to others. 


The following is a variant of \cite[Definition~1]{Sa20e}; in that definition, the class of graphs was specialized to be a family of cyclic groups, but additionally a subshift was allowed on these groups. A subshift could be allowed on the graphs here as well, but the definition is already quite complicated, and we only need full shifts in our application.

\begin{definition}
\label{def:Simulation}
Let $X \subset A^G$ be a subshift, $B$ a finite alphabet, and let $\mathcal{G}$ be a set of good $S$-labeled graphs. We say $X$ \emph{simulates} $(B, \mathcal{G})$ if, possibly after applying a topological conjugacy to $X$, the following three paragraphs hold.

\begin{enumerate}
\item To each $a \in A$ is associated a number $m(a) \in \N$ (of internal nodes), a word $w(a) \in B^{m(a)}$ (the simulated symbols), an element $v(a) \in ((G \times \N) \cup \{\bot\})^{S^{\pm} \times m(a)}$ (the offsets), and an additional piece of information $\i(a) \in I$ (for some finite set $I$). Furthermore, $a$ is uniquely determined by the tuple $(m(a), w(a), v(a), \i(a))$. To simplify the following discussion, we identify $A$ directly by its image and assume $A \subset \N \times B^* \times (G \times \N)^* \times I$ (of course, finite).

\item To each $x \in X$ we can associate an $S$-labeled graph by taking nodes $V(X) = \{(g, j) \;|\; g \in G, 0 \leq j < m(x_g)\}$, and for each $(g, j) \in V(x)$ with $v(x_g)_{s, j} = (h, j')$ adding an $s$-labeled edge from $(g, j)$ to $(gh, j')$ (so we must have $(gh, j') \in V(x)$). We require that this graph is good for all $x \in X$.

\item The weakly connected components of the graphs are in $\mathcal{G}$, and furthermore, any $H \in \mathcal{G}$ appears as a weakly connected graphs.

\item We require that the simulated symbols can be chosen arbitrarily. Specifically, suppose $x, y \in A^G$ and $x, y$ only differ in position $1_G$, and $m(x_{1_G}) = m(y_{1_G}), v(x_{1_G}) = v(y_{1_G}), \i(x_{1_G}) = \i(y_{1_G})$ (so the only possible difference is $w(x_{1_G}) \neq w(y_{1_G})$). Then we require that $x \in X \iff y \in X$.
\end{enumerate}
\end{definition}

\begin{lemma}
Let $G$ have at least stretched exponential growth, and let $X \subset A^G$ be any SFT such that some finite-support point has free orbit. Then for any $B$ there exists an arboreous family of complete graphs $\mathcal{G}$ such that $X$ simulates $(B, \mathcal{G})$.
\end{lemma}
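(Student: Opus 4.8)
The plan is to combine the marker construction of Lemma~\ref{lem:Markers} with a combinatorial argument showing that groups of stretched exponential growth contain polynomial-diameter thickets, and then to encode the thicket into a finite-support point of $X$ whose marker symbols can be freely altered on the non-marker tracks. Concretely, I would take the alphabet of the simulating subshift to be (a conjugate copy of) $X$ with an extra track carrying the symbols of $B$ together with a marker/offset bookkeeping track. The marker track will use the finite family $F$ from Lemma~\ref{lem:Markers}: recall that lemma gives, for any constant $C$, a radius $k$ and a family $F$ of points supported in $B_k$ with $|F| \geq C|B_{Ck}|^C$, all mapping under a block map to the characteristic function $\chi$ of $\{1_G\}$. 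These markers are what will sit at the nodes of our graph, so that ``node present at $g$'' becomes a locally checkable (SFT) condition, and the simulated $B$-symbols can vary independently of everything else — which is exactly requirement~(3) of Definition~\ref{def:Simulation}.

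The heart of the argument is the combinatorial claim: if $G$ has at least stretched exponential growth (so $|B_k| = \Omega(e^{k^\beta})$ for some fixed $\beta > 0$), then for all sufficiently large $n$ there is a vertex $h$ and a radius $R = O(n^d)$ (for some $d$ depending only on $\beta$ and the number of generators) such that the subgraph of the Cayley graph on $hB_R$ — with appropriately chosen $\{a,b,c\}$-labeled edges — contains an $(n,\{a,b,c\})$-thicket in the sense of Definition in Section~\ref{sec:GAP}. The idea (attributed in the introduction to work in progress with Törmä) is a counting/pigeonhole argument: one builds the thicket greedily, branching in three directions whenever possible; the only obstruction to branching is collisions, i.e.\ two partial branches reaching the same vertex, and stretched exponential growth forces the number of vertices at distance $\leq r$ to grow fast enough that one can always route around collisions and achieve $n$ genuine branchings along every root-to-leaf path while keeping the total depth polynomial in $n$. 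I would make this precise by induction on the number of required branchings: maintaining a set of ``active'' leaves that is exponentially growing in the number of branchings so far, and using the growth bound $|B_r| = \Omega(e^{r^\beta})$ to show that after polynomially many additional steps one can always branch each active leaf into three without exceeding the ambient ball size. Then one translates this abstract tree into an $S$-labeled graph by contracting the non-branching paths to edges labeled $a,b,c$ — this matches exactly the ``thicket obtained by path contraction'' picture described before the definition of arboreous families.

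Once the thicket is found inside $hB_R$ with $R = O(n^d)$, I would place a marker (a translate of a point from $F$) at each node of the thicket, taking $k$ in Lemma~\ref{lem:Markers} small enough (a fixed constant, independent of $n$) that the markers at adjacent nodes do not overlap and their supports stay $3k$-separated; the block map from that lemma then recognizes exactly the node positions. The offset data $v(a)$ in Definition~\ref{def:Simulation} records, at each marker, where its neighboring markers sit (this is a bounded amount of information since the contracted-path lengths are $\leq R$, hence coded by group elements in a fixed finite... — wait, $R$ grows, so one instead codes each edge of the contracted tree by the \emph{actual} generator sequence of length $\leq R$, realized as a path of intermediate nodes each carrying a ``path marker'' with offsets $\pm e_i$ to its two path-neighbors; this keeps each symbol's offset data bounded). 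The SFT condition is: markers and path-markers may only appear in locally consistent configurations (a node marker's recorded neighbor offsets must match actual path-markers, etc.), which is a finite-type constraint since all the local patterns involved have bounded size. Finally one checks completeness: we only ever need the graph to be complete in the directions $a,b,c$ actually used, and on the rest of $S^\pm$ we can add self-loops or dummy components to satisfy the formal completeness requirement without affecting anything. The weakly connected components are then precisely our thickets (plus trivial components), giving an arboreous family $\mathcal{G}$ with $X$ simulating $(B,\mathcal{G})$.

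\textbf{Main obstacle.} The delicate step is the combinatorial existence of polynomial-diameter thickets: one must show that stretched exponential growth — a statement about the \emph{sizes} of balls — actually produces \emph{trees} with the required branching, controlling collisions so that branching is never permanently blocked, and doing so with only polynomial blow-up in diameter as $n$ grows. The naive greedy construction could in principle get ``trapped'' in a region where every branch runs into previously used vertices; ruling this out requires quantitatively exploiting that the number of fresh vertices available within distance $r$ of the current frontier dominates the number already used, which is where the $\Omega(e^{r^\beta})$ bound enters. A secondary subtlety is the symbolic-dynamics bookkeeping: ensuring that all the marker, path-marker, and offset constraints are genuinely of finite type and that requirement~(3) (free choice of simulated $B$-symbols) is respected — this is routine but must be set up carefully so that the $B$-track is truly independent of the marker and offset tracks.
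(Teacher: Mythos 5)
Your outer architecture (markers from Lemma~\ref{lem:Markers}, a combinatorial step producing polynomial-diameter thickets, then an encoding with offset data and a completeness fix) matches the paper's, but the crucial combinatorial step is where your proposal has a genuine gap. You propose to grow the thicket greedily, branching each active leaf into three and ``routing around collisions'' using the fact that fresh vertices within distance $r$ of a leaf outnumber the vertices already used. That counting heuristic does not suffice: after $m$ branchings you have $3^m$ active leaves that must all extend simultaneously into pairwise disjoint, $3k$-separated marker positions connected by bounded-length offsets, and the obstruction is not the number of fresh vertices but a simultaneous disjoint-routing problem --- a leaf's constant-size neighborhood can be saturated, its escape paths can be forced through regions claimed by other branches, and nothing in the ball-growth bound (a statement about cardinalities only) guarantees that three disjoint fresh continuations exist for every leaf at every stage, nor that the detours needed to find them stay polynomially bounded in total depth. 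This is exactly the point you flag as the ``main obstacle,'' and the induction you sketch (``after polynomially many additional steps one can always branch each active leaf into three'') is asserted, not proved; as stated it is not clear it can be pushed through.

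The paper avoids incremental construction entirely. It places a $3k$-separated, $3k$-dense net of marker balls in a ball of radius $n^{2d}+k$ (with $\beta d>1$, so the net has at least $D^{n^2}$ elements), joins net points within distance $7k$, takes a spanning tree of polynomial depth, and codes each net point by its root path as a word of length $2n^{2d}$ over an alphabet of size $D$. The existence of a subtree with many branchings along \emph{every} root-to-leaf path is then extracted purely from cardinality: the winning set $\hat L$ of this word set (from \cite{Sa21e}, \cite{SaTo14c}) is downward closed and has the same size as $L$, and a Stirling-type estimate shows some $v \in \hat L$ has $\Omega(n^{1.5})$ non-$1$ coordinates, i.e.\ the spanning tree contains a uniformly branching subtree with that many branchings on every branch; ternary branching and the $(n,\{a,b,c\})$-thicket are then obtained by the $2$-jump trick. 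So no collision avoidance is ever needed --- the tree already exists, and the branching structure is forced by counting. Two smaller points: the paper achieves completeness via the conveyor-belt trick with $m(c)=2$ rather than self-loops (self-loops would add edges to the induced subgraph on $h\{a,b,c\}^{\leq Cn^d}$ and spoil the requirement that it literally be a thicket), and the marker family must be taken with $C = \max(|B|^2,15)$ so that each marker can carry the four neighbor offsets in $B_{14k}$ plus two $B$-symbols; your sketch leaves the size of $F$ unquantified.
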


\begin{proof}
Let $k$ be as in Lemma~\ref{lem:Markers} for $C = \max(|B|^2, 15)$. Let $D = C b_{Ck}^C$, and let $d$ be such that $\beta d > 1$ where $\beta$ is the degree of stretched exponential growth. Then for some constant $c$,
\[ |B_{n^{2d}}| \geq c e^{(n^{2d})^{\beta}} = c e^{n^{2 \beta d}} \geq (D+1)^{n^2} \]
for large enough $n$ (since $2\beta d > 2$).

Pick inside $B_{n^{2d}+k}$ disjoint balls $B^i = g_i B_k$, so that the set of centers $\{g_i\}$ is $3k$-dense and $3k$-separated. We may assume $g_1 = 1_G$ so that $B^1 = B_k$. Connect each ball $B^i$ to all $B^j$ such that $d(g_i, g_j) \leq 7k$, by a two-directional edge. The resulting graph is connected and in fact the distance from $B^1$ to any other $B^i$ is at most $2n^{2d}$: we can follow a geodesic from $1_G$ to $g_i$ in the group $G$, and the sequence of $B^j$'s whose centers are closest to the geodesic (breaking ties arbitrarily) is connected by the edges of our new graph due to $3k$-denseness, and gives a path $B^1$ to $B^i$.

Since the packing is $3k$-dense, we have at least
\[ |B_{n^{2d}}|/|B_{3k}| \geq (D+1)^{n^2}/|B_{3k}| \geq D^{n^2} \]
such centers for large $n$ (note that to fit balls inside $B_{n^{2d}+k}$, the centers should be in $B_{n^{2d}}$). 

Next, take a spanning tree of our new graph, rooted at $B^1$, and with distance from $B^1$ to any $B^i$ at most $2n^{2d}$. For example, go through spheres of larger and larger radius, and arbitrarily connect (in the tree) every element of the $r$-sphere to an element of the previous sphere to which it is connected in the graph.

Next, associate to each node $B^i$ the length at most $2n^{2d}$ path from the root $B^1$ to it. We can code this path by the sequence of offsets between the centers of the balls $B^j$ on the path (as elements of $G$). This associates to $B^i$ a word of length at most $2n^{2d}$ over as set of size much smaller than $D$. We may then code this as a word over $\hat D = \{1, \dots, D\}$, using a tail over one of the symbols (say we use $D$ for this) to mark that the endpoint has been reached.

So at the moment, we have a set $L$ of words in $\hat D^{2n^{2d}}$, with $|L| \geq D^{n^2}$. Geometrically, this corresponds to a superexponentially large tree of polynomial depth, and we want to first conclude that such a tree must have a subtree where there are at least linearly many branchings along every path from the root to a leaf, and then from this to construct an $(n, \{a, b, c\})$-thicket.

One way to conclude the existence of subtrees that branch a lot is to use the winning set $\hat L$ of $L$, as defined in \cite{Sa21e} (based on \cite{SaTo14c}). This is a set of words of the same length and over the same alphabet that can be associated to $L$, which is \emph{downward-closed}, meaning if $v \in \hat L$ and $u \leq v$ in the sense that $u_i \leq v_i$ for all $i \in [0, 2n^{2d}-1]$, then $u \in \hat L$.

The winning set $\hat L$ encodes ``branching structures of uniformly branching trees'' in $L$, meaning that if $v \in \hat L$, then letting $U = \{u \;|\; u \leq v\}$, there is an assignment $T_v : U \to L$ such that for all $u, u' \in U$, we have
\[ \min \{i \;|\; u_i \neq u'_i\} = \min \{i \;|\; T_v(u)_i \neq T_v(u')_i\}. \]
Thus, geometrically, in the spanning tree $v \in \hat L$ corresponds to a subtree where whenever $v_i \geq 2$, the tree branches in at least two in each branch at depth $i$.

Now the claim about the tree branching linearly many times on each path amounts to showing that we can find $v \in \hat L$ with support size at least $n$. We show that indeed this is immediate from the cardinality of $\hat L$ (which is that of $|L|$), by some simple combinatorics.

Let $s$ be the maximal number of non-$1$ coordinates in a word $v \in \hat L$. Then to describe words in $\hat L$ it suffices to list the $s$ positions of non-$1$ symbols (or a superset of them) and describe their contents. Thus we have
\[ D^{n^2} \leq |L| = |\hat L| \leq D^s \binom{2n^{2d}}{s} \leq D^s (2n^{2d} e / s)^s \]
from Stirling's approximation. Taking logarithms we have
\[ n^2 \log D \leq s (\log D + \log (2n^{2d} e / s)) \]
Since $\log (2n^{2d} e / s)$ grows much slower than $\sqrt{n}$, we even have $s = \Omega(n^{1.5})$ for large $n$, meaning we actually have much more than a linear amount of branching in some subtree.

To get an $(n, \{a, b, c\})$-thicket, we actually need the brachings to be ternary, i.e.\ along every path to a leaf, we should have many nodes with three children. This is not automatic, but by using paths of length at most $2$ of the tree (call these \emph{$2$-jumps}) instead of the original edges, 
it is possible to mimic the usual proof that $\{a, b, c\}^*$ embeds in $\{a, b\}^*$ as follows. First, as some preprocessing, step into a subtree so that every branching is binary, and drop every second branching so that there is a path of length at least $2$ between any consecutive branchings.

In the resulting tree, go down the tree from the root, and at the first branching (let us say to a ``left'' and a ``right'' child), use the left branch as such with label $a$, but in the right branch, have an edge with label $b$ to the original right neighbor, and a $2$-jump with label $c$ to the unique child down the tree from the right neighbor. This gives the node three successors connected by $2$-jumps. In the left branch, proceed to modify the tree recursively. In the right branch, continue down the original path with $2$-jumps to simulate two paths on one actual path. At the next branching, the two paths can merge into the successors, and we can recursively continue on these two paths. This again at most halves the number of branchings in the tree, and now every branching is ternary.

Since $s = \Omega(n^{1.5})$, certainly $s/4 \geq n$, so we have shown the existence of an $(n, \{a, b, c\})$-thicket, the labels being given as in  the previous paragraph. Note that now the branchings are no longer uniform (unlike in the tree given by the winning set argument), but this is not required in the definition of a thicket.

By the choice of $k$, we can now position finite $k$-support patterns, namely central patterns of the set $F$ from Lemma~\ref{lem:Markers}, in the balls $B^i$ of the tree, where $|F| \geq C |B_{Ck}|^C$ for $C = \max(|B|^2, 15)$. Distances in the tree of $B^i$s correspond to elements of $G$ with word norm at most $7k$, so double jumps correspond to elements with word norm at most $14k$, so it suffices to code four elements of $B_{14k}$ in each $B^i$, for which $|F| \geq C |B_{Ck}|^C \geq |B_{14k}|^4$ is plenty. We also still have room to store two symbols from the alphabet $B$, which is possible since $C \geq |B|^2$. Recall that a local rule can detect the positions of these patterns, and they do not overlap (by $3k$-separation), so automorphisms can freely change the $B$-symbol without messing up the encodings.

To fit this more explicitly into Definition~\ref{def:Simulation} above, we can apply a topological conjugacy to $X$ which, when a central pattern of one of the points $F$ is visible (meaning the local rule $\phi$ outputs $1$) and at distance at least $3k$ from other such patterns, writes down the index of this point as a number between $\{1, \ldots, |F|\}$, which we assume disjoint from the alphabet $A$ of $X$, and maps the rest of the ball to a new symbol $\#$.

Clearly writing back the $F$-patterns gives a left inverse for this block map, so the map is injective, and thus the image is an SFT $Y$. In this $Y$, some positions of some configurations have a number from $\{1, \ldots, |F|\}$ that can be freely changed. These are the positions where we will take $m(c) \geq 1$ in the simulated graph.

To get not necessarily complete graphs, we could use $m(c) = 1$ always. However, we want the graphs to be complete, while not removing any subgraphs, thus not removing the $(n,\{a,b,c\})$-thickets whose existence we showed.

For this we again can use the conveyor belt trick: we take instead $m(c) = 2$ always (so there are two simulated nodes in each $B^i$), and we wrap incomplete $s$-labeled paths at the ends into a conveyor belt. This is indeed why we store two symbols in each node instead of one. 
Note also that in the conveyor belt construction, we do not need to store more edges, as the two simulated nodes use the same edges, only flipping which edges is incoming and which is outgoing.

Indeed the $(n,\{a,b,c\})$-thickets still appear on the encoded graphs, on the first simulated nodes where edges go forward.
\end{proof}

Combining the above lemma with Theorem~\ref{thm:AbstractPSPACEComplete} gives:

\begin{theorem}
Let $G$ have at least stretched exponential growth, and let $X \subset A^G$ be any SFT such that some finite-support point has free orbit. Then $\Aut(X)$ has a finitely-generated subgroup with PSPACE-hard word problem.
\end{theorem}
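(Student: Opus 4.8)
The plan is to assemble this final theorem directly from the two results immediately preceding it in the excerpt. First I would invoke the lemma stating that if $G$ has at least stretched exponential growth and $X \subset A^G$ is an SFT in which some finite-support point has free orbit, then for any finite alphabet $B$ there exists an arboreous family of \emph{complete} $S$-labeled graphs $\mathcal{G}$ such that $X$ simulates $(B, \mathcal{G})$. Concretely I would fix $B = \Z_7 \times \{0,1,2\}$ (or whatever finite alphabet the hardness construction of Theorem~\ref{thm:AbstractPSPACEComplete} requires on the simulated nodes), obtaining such a $\mathcal{G}$.

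The key step is then to transport the PSPACE-hardness of $\PAut_{\mathcal{G}}(A)$ furnished by Theorem~\ref{thm:AbstractPSPACEComplete} into $\Aut(X)$ itself. Since the simulation is complete, each generator of $\PAut_{\mathcal{G}}(A)$ — the partial shifts $\sigma_{g,i}$ and the symbol permutations $\eta_\pi$ — acts on the simulated symbols $w(x_g) \in B^{m(x_g)}$ sitting inside configurations of (a conjugate copy of) $X$. The third clause of Definition~\ref{def:Simulation} guarantees that these simulated symbols can be altered independently, so each such generator lifts to a genuine element of $\Aut(X)$: the block-map structure of the simulation (after the fixed topological conjugacy) lets a local rule detect the relevant markers and edge-offsets, perform the prescribed permutation or partial shift on the $B$-symbols, and leave everything else — in particular the graph structure encoded by $m, v, \i$ — untouched; reversibility is immediate since each generator is a reversible operation on a finite product alphabet. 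One must check that distinct graphs in $\mathcal{G}$ appear as distinct weakly connected components within single configurations of $X$ (or across configurations) so that the action on $\bigsqcup_{G \in \mathcal{G}} A^G$ factors through the action on $X$; this is exactly what clause two of the simulation definition provides. Thus the finite generating set of $\PAut_{\mathcal{G}}(A)$ yields a finite generating set $S'$ of a subgroup $\Gamma \leq \Aut(X)$, and the map from $\PAut_{\mathcal{G}}(A)$ onto $\Gamma$ preserves word norms up to a constant factor.

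Finally I would conclude the word-problem reduction. A word over $S'$ is trivial in $\Aut(X)$ if and only if the corresponding word over the $\PAut_{\mathcal{G}}(A)$-generators acts trivially on every configuration that encodes one of the relevant thickets — and arboreousness of $\mathcal{G}$, together with the fact that the thickets actually occur on encoded graphs of configurations of $X$, means nontriviality of the $\PAut_{\mathcal{G}}(A)$-element is witnessed inside $\Aut(X)$. Hence the PSPACE-hard language reduced to $\WP(\PAut_{\mathcal{G}}(A))$ in Theorem~\ref{thm:AbstractPSPACEComplete} reduces (via the same polynomial-size words, now read over $S'$) to $\WP(\Gamma)$, so $\Gamma$ is a finitely-generated subgroup of $\Aut(X)$ with PSPACE-hard word problem.

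The main obstacle I anticipate is the bookkeeping in the lift: verifying that the $\PAut_{\mathcal{G}}$-generators really do extend to well-defined reversible automorphisms of $X$ that do not disturb the simulated graph structure, and that no spurious identifications among simulated nodes (from the non-injective path-contractions inside thickets, or from the conveyor-belt completion used to make the graphs complete) cause the lifted action to become ill-defined or to lose nontriviality. This is precisely the subtlety already flagged in the proof of Theorem~\ref{thm:AbstractPSPACEComplete} — that control data and permuted data share the $\Z_7$-track — and the resolution is the same: all permutations are commutatored against index/goodness checks, so they act nontrivially only when the encoded tree is genuinely as prescribed, keeping control and data effectively separated.
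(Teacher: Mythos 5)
Your proposal is correct and follows essentially the same route as the paper: the paper's own proof consists precisely of combining the simulation lemma (stretched exponential growth plus a free-orbit finite-support point yields a simulation of $(B,\mathcal{G})$ for an arboreous family of complete graphs) with Theorem~\ref{thm:AbstractPSPACEComplete}, transporting the hardness of $\PAut_{\mathcal{G}}(A)$ into $\Aut(X)$ via the lifted generators exactly as you describe. The additional care you take over the lift (clause three of Definition~\ref{def:Simulation} for independence of simulated symbols, and the control/data separation via commutatored index checks) is precisely the implicit content the paper's one-line combination relies on.
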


\begin{corollary}
Let $G$ be a finitely-generated group with at least stretched exponential growth. Then $\Aut(A^G)$ has a finitely-generated subgroup with PSPACE-hard word problem whenever $A \geq 2$.
\end{corollary}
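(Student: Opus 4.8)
The plan is to obtain this corollary as an immediate specialization of the preceding theorem (PSPACE-hardness of $\Aut(X)$ for any SFT $X \subset A^G$ admitting a finite-support point with free shift-orbit) to the case $X = A^G$. Since the full shift is trivially an SFT — it is cut out by the empty clopen set $C = \emptyset$, equivalently by an empty set of forbidden patterns — the only hypothesis that needs checking is that $A^G$ contains a finite-support point whose shift-orbit is free, and this must be done under the sole assumption $|A| \geq 2$.

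To verify the hypothesis I would simply exhibit such a point. Fix two distinct symbols $0, 1 \in A$ (possible precisely because $|A| \geq 2$), and let $x \in A^G$ be the configuration with $x_{1_G} = 1$ and $x_g = 0$ for every $g \neq 1_G$. Its support is $\{1_G\}$, so $x$ has finite support. Recalling that the shift action is $gx_h = x_{g^{-1}h}$, the configuration $gx$ has support $\{g\}$; hence $gx = x$ implies $\{g\} = \{1_G\}$, i.e.\ $g = 1_G$, so the orbit of $x$ is free. Applying the theorem to $X = A^G$ now yields that $\Aut(A^G)$ has PSPACE-hard word problem, which is exactly the claim (reading ``$A \geq 2$'' in the statement as ``$|A| \geq 2$'', as clearly intended, and recalling that $\Aut(A^G)$ being PSPACE-hard is shorthand for $\WPs(\Aut(A^G))$ being PSPACE-hard in the sense fixed in the conventions).

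There is essentially no obstacle here: the entire content of the corollary is in the theorem it follows from, and the ``main step'' is the one-line observation that the indicator function of the identity is a finite-support point with free orbit in any full shift on at least two symbols. The only thing a careful writer should flag is the harmless notational slip ``$A \geq 2$'' for ``$|A| \geq 2$''; everything else is a direct invocation.
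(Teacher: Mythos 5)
Your proposal is correct and matches the paper's (implicit) argument exactly: the corollary is just the preceding theorem applied to $X = A^G$, with the observation that the characteristic function of $\{1_G\}$ is a finite-support point with free orbit whenever $|A| \geq 2$.
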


Combining the above lemma with Theorem~\ref{thm:AbstractSimplisticGrowthLowerBound} gives:

\begin{theorem}
Let $G$ have at least stretched exponential growth, and let $X \subset A^G$ be any SFT such that some finite-support point has free orbit. Then $\Aut(X)$ has simplistic growth at least $e^{2^{\sqrt[\ell]{n}}}!$ for some $\ell$. In particular, $\Aut(X)$ has a finitely-generated subgroup that does not embed in $\Aut(B^{\Z^d})$ for any $d$.
\end{theorem}

\begin{proof}
The lower bound on simplistic growth is from Theorem~\ref{thm:AbstractSimplisticGrowthLowerBound}, and then we observe that $e^{2^{\sqrt[\ell]{n}}}!$ exceeds the simplistic growth upper bound $e^{n^d}!$ for $\Aut(B^{\Z^d})$.
\end{proof}

\begin{corollary}
Let $G$ be a finitely-generated group with at least stretched exponential growth. Then $\Aut(A^G)$ has simplistic growth at least $e^{2^{\sqrt[\ell]{n}}}!$ for some $\ell$, whenever $A \geq 2$. In particular, it has a finitely-generated subgroup that does not embed in $\Aut(B^{\Z^d})$ for any $d$.
\end{corollary}

We can similarly obtain a lower bound on simplistic growth.

\section{The lamplighter group}

Let $C$ be a finite nontrivial abelian group. The \emph{lamplighter group} is $L = C \wr \Z = \bigoplus_\Z C \rtimes \Z$, where in the latter representation, $\Z$ acts by shifting the coordinates in $\bigoplus_\Z C$.

\begin{theorem}
\label{thm:Lamplighter}
Let $C$ be a finite nontrivial abelian group, let $A$ be a nontrivial finite alphabet, $n \geq 2$, and $L = C \wr \Z$. Then $\Aut(A^G)$ has a finitely-generated subgroup with co-NEXPTIME-hard word problem.
\end{theorem}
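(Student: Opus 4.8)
The plan is to establish the two directions separately: co-NEXPTIME-hardness via a construction mirroring the ``dyadically accelerated funneling'' arguments of Theorems~\ref{thm:MonoidCase} and~\ref{thm:HochmanSolution}, but now run on the two-dimensional ``Cayley graph'' of $L = C \wr \Z$; and membership in co-NEXPTIME by a direct argument from residual finiteness. For the hardness direction, the key geometric observation is that $L$ is not just of exponential growth but contains, around a typical element $g$, a region that looks like a full binary tree of \emph{linear} depth sitting inside a ball of \emph{linear} radius: moving by the generator of $\Z$ doubles the number of available lamp-configuration ``slots,'' so a ball of radius $m$ already contains on the order of $2^m$ distinguishable positions arranged tree-like. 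Concretely, I would fix a finite-support point of the full shift $A^L$ with free orbit (trivially available), apply Lemma~\ref{lem:Markers} to obtain markers, and then interpret on these markers an $(n,\{a,b,c\})$-thicket — or rather its natural analog — where the two branching directions correspond to the two values one can place in a lamp coordinate as the $\Z$-coordinate is incremented. One then checks that the resulting family of labeled graphs is arboreous in the sense of Theorem~\ref{thm:AbstractPSPACEComplete}; but note the thicket here has depth linear in $n$ while sitting in a ball of radius linear in $n$, so the computation tree checked by the funneling has size $2^{\Theta(n)}$ in a word of length $\mathrm{poly}(n)$. This is exactly the regime where one gets co-NEXPTIME rather than PSPACE: an alternating Turing machine running in exponential space (equivalently, the relevant complete language, formatted via a variant of Lemma~\ref{lem:PSPACEByTrees} with an exponential tree) is simulated by checking correctness of an exponentially-deep binary computation tree, and the funneling argument produces a polynomial-norm automorphism that is nontrivial iff such a tree exists.

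In more detail, I would mimic the proof of Theorem~\ref{thm:MonoidCase} almost verbatim: use alphabet $\Z_7 \times B \times \{0,1,2\}$ (with $B$ encoding Turing-machine states and tape symbols), define the automorphisms $\beta_\pi$, $\phi_{\pi,s,g}$, $\gamma_{\pi,\ell}$ checking local correctness of a simulated exponential-space computation, verify they lie in $\PAut$ and have polynomial word norm via Barrington's theorem (Section~\ref{sec:Barrington}) and the NC$^1$ gadgets (Lemmas~\ref{lem:TuringStep}, \ref{lem:Increment}, \ref{lem:Sum}), and then run the DAF induction $\psi_{j,i,s,\pi}$ along dyadic subintervals of $[0,n]$, with the crucial recursion $\psi_{j,i,s,\pi} = [\psi_{2j,i-1,0,\pi_1}, \psi_{2j,i-1,1,\pi_2}\car\psi_{2j+1,i-1,s,\pi_3}]$ giving $N_k \leq 8^k N_0 = \mathrm{poly}(n)$. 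The final automorphism $\psi_{0,k,s,\pi}$ is nontrivial iff a witnessing exponential-depth tree exists on the configuration, which happens iff the input is in the co-NEXPTIME-complete language. The ``sufficiently many prime factors'' hypothesis lets us realize the generators $\phi_{\pi,s,g}$ inside $\PAut(A)$ exactly as in Theorem~\ref{thm:MonoidCase} (borrowed trit in a $\Z_3$-factor); for a general alphabet $A$ one runs the marker/unbordered-word encoding from the end of the proof of Theorem~\ref{thm:HochmanSolution}, using the conveyor-belt trick for the partial shifts.

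For the upper bound, co-NEXPTIME membership, the argument is the standard quantitative residual-finiteness bound on $L$: a word of length $n$ over a fixed generating set of a f.g.\ subgroup of $\Aut(A^L)$ gives an automorphism of radius $O(n)$, hence one that acts nontrivially (if nontrivial at all) on some $L$-periodic point with a subgroup of index at most $\exp(\mathrm{poly}(n))$, i.e.\ on a point whose orbit-closure is finite of size $\exp(\mathrm{poly}(n))$ — because $L$ has finite-index subgroups of every index and, crucially, its balls of radius $O(n)$ are covered by cosets of a subgroup of index $2^{O(n)}$ (the ``lamp window'' of width $O(n)$ together with a period in the $\Z$-direction). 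On such a periodic point the automorphism is a permutation of a set of size $\exp(\mathrm{poly}(n))$, and one can nondeterministically guess such a periodic point and a witnessing position, then verify in exponential time that the composition of the $n$ generators acts nontrivially there; complementing gives a co-NEXPTIME algorithm for the word problem. I expect the main obstacle to be the hardness direction, specifically making precise the geometry of $L$ as hosting a linear-depth binary thicket inside a linear-radius ball and constructing the symbolic-dynamical markers and edge-labelings so that the $\PAut$-construction of Theorems~\ref{thm:MonoidCase}/\ref{thm:AbstractPSPACEComplete} applies with the tree depth (rather than the tree size) controlling the word norm — once that is set up, the funneling machinery and the Barrington gadgets transfer directly, and the co-NEXPTIME upper bound is a routine adaptation of Lemma~\ref{lem:ResiduallyFinite}.
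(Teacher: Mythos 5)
Your upper bound is fine (the paper gets it from Theorem~\ref{thm:CoNEXPTIMEUpperBound}: since $C \wr \Z$ has polynomial-time word problem and the ball of radius $O(n)$ has only exponentially many elements, one guesses a pattern on that ball and verifies the action in exponential time; your periodic-point variant of Lemma~\ref{lem:ResiduallyFinite} would also work but is not needed). The genuine gap is in the hardness direction, and it is conceptual, not technical. The regime you describe --- a binary thicket of depth linear in $n$ inside a ball of radius $O(n)$, carrying a tree of Turing-machine configurations of polynomial length, verified by DAF with polynomial word norm --- is \emph{exactly} the regime of Theorem~\ref{thm:MonoidCase} (the free monoid $\{a,b\}^{\leq n}$ is already such a tree), and what it captures is alternating polynomial time, i.e.\ PSPACE-hardness, not NEXPTIME-hardness. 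Your identification of the target class is also off: an alternating machine with exponential space characterizes 2-EXPTIME, not NEXPTIME, and NEXPTIME has no such alternating characterization; its natural complete problem is acceptance of a nondeterministic machine in time $2^n$, whose witness is a $2^n \times 2^n$ spacetime diagram. You cannot reach that by deepening the funnel either, since DAF over a funnel of depth $D$ costs word norm $\Theta(D^3 N_0)$, which is exponential for $D = 2^n$.

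The missing idea, and the heart of the paper's proof, is that the lamplighter group is a horocyclic product of two binary trees and therefore contains \emph{grids}: commuting elements $f_g(\ell,m)$, $\ell,m \in [0,2^n-1]$, forming a copy of $\Z_2^{(2^n)^2}$ inside a ball of radius $O(n)$, with each row $f_g([0,2^n-1],i)$ reachable from a single vertex by the depth-$n$ tree $\{a,b\}^n$ below the grid, and each column similarly reachable by a tree above the grid. The NEXPTIME computation is encoded as a Wang tiling of this grid (the spacetime diagram of a universal nondeterministic machine, with counters written along the cosets $g\langle c\rangle$ fixing grid coordinates), and DAF is run only along the linear-depth connecting trees: one builds $\psi_1,\psi_2$ checking consistency of a single row resp.\ column, funnels these into $\psi_3,\psi_4$ checking all rows resp.\ all columns, and takes a commutator (plus a final conditioning on the input row). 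Without this two-dimensional structure --- which is what distinguishes $C\wr\Z$ from a free monoid --- your construction can only certify PSPACE-hardness, which is strictly weaker than the claimed co-NEXPTIME-completeness (and in particular loses the unconditional ``no polynomial-time algorithm'' consequence stressed in the introduction).
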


\begin{proof}
We will assume $C = \Z_2$, because all the cases are similar (or rather, $\Z_2$ is the hardest case, as it needs a preprocessing step due to insufficient branching). We will also assume that $A$ is of specific form, as in the general case one can argue as in the proof of Theorem~\ref{thm:HochmanSolution}, i.e.\ work with a set of unbordered words representing symbols from a larger alphabet (observing that the lamplighter group has plenty of self-embeddings).

Let $T$ be a universal nondeterministic Turing machine, and recall from Section~\ref{sec:BasicCT} the standard representation of Turing machines by Wang tiles $W$ such that tilings by $W$ (with certain form) correspond to computations of $T$. We refer to Wang tile directions by north, south, west and east. 
It is helpful to make two small modifications to this tile set. 
First when the machine accepts, this is visible on the entire top row of the tiling (when tiling rectangles). This is easily obtained by using a signal on the Wang tiles (in the west-east colors), which is transmitted by the head on each row. Second, there is a tile $\square$ (that can be the blank symbol of the Turing machine) that has $\square$ as its west and east colors, and is the only tile with $\square$ as its west color. This can be used to force a row to end in $\square$s.

Now as the alphabet we take $A = \Z_5 \times \Z_6 \times \Z_7 \times W \times \{0,1,2\}^2 \times B$, where $B = C^3$ and $C$ is the set of colors of Wang tiles.

The basis of the proof is the same as in \cite{BaSa24}. The lamplighter group contains large grids. More specifically, geometrically this group is a horocyclic product of two full binary trees, which means that we can obtain it as a connected component of a direct product of such trees, where one tree is inverted, and we have to move up and down simultaneously in the two trees. (Thus when in one tree a node branches, in the other we move to a parent.)

Now, a full binary tree of depth $n$ naturally has $2^n$ elements on its frontier, and we can identify them with the numbers $[0,2^n-1]$. In the direct product, there is then a natural copy of a square $[0,2^n-1]^2$. The square happens to be in a single connected component of the horocyclic product, which will allow us to use automorphisms to ensure computation on the massive grid obtained by connecting ($\Z^2$-)neighboring elements of the square.

It is useful to step into a subgroup before describing the grids formally. Let $G$ be a subgroup of $L$ isomorphic to $\Z^2_2 \wr \Z$. Write $L'$ for the natural copy $\Z_2 \times \{0\} \wr \Z$ of $L$ in $G$. Let $t$ be the natural generator of $\Z$ in $G$ (i.e.\ the ``shift map''), and write $a = t$, $b = t \cdot (1, 0)$, $c = t \cdot (0, 1)$ where $(1, 0), (0, 1)$ are the natural generators of $\Z_2^2$. So $a, b$ are a standard set of generators of $L'$, and $a, b, c$ one for $G$. The point of this is that for each $g \in L'$, $g + \langle c \rangle$ is a copy (coset) of $\Z$ where we can encode indices, and for distinct $g, g' \in L'$ these cosets are disjoint. We will not need to consider a generating set for $L$, and could work directly in $G$ from now on, but we can equally well work in $L$ directly, independently in each left $G$-coset.

Let $g \in L$. The \emph{grid (of size $2^n$ at $g$)} is the function $f : \Z_{2^n} \times \Z_{2^n} \to gL'$, which for $u = \bin(\ell), v = \bin(m)$ maps
\[ f_g(\ell, m) = g \prod_{\{i \;|\; u_i = 1\}} a^{-i} b a^{i-1} \prod_{\{j \;|\; v_j = 1\}} a^j b^{-1} a^{-j+1}. \]
Write also $f(\ell, m) = f_1(\ell, m)$ so $f_g(\ell, m) = gf(\ell, m)$. Here, words are $1$-indexed and binary representations are written with least significant bit first. Sometimes, we call the image of this function the grid, and $g$ is the \emph{root} of the grid. 

Note that $f(\ell, m)$ for different $\ell, m$ commute as elements of $L'$ (the total shift of such an element is $0$, so they belong in the $\bigoplus_\Z \Z_2$-part of $G = \Z_2^2 \wr \Z = \bigoplus_\Z \Z_2^2 \rtimes \Z$), and form a copy of $\Z_2^{(2^n)^2}$ inside it, whose action can be described (through the identifying map $f$) as swapping adjacent rectangles at different levels. Note that in particular that if $h$ is in the grid of $g$, then the grids of $g$ and $h$ agree as sets, but define different ``orientations'' for it.

When a root is fixed, we think of the grid as being drawn on a horizontal plane $\R^2$ with north, south, west and east as the four directions. The elements $a, b$ move ``up'' (out of the plane) in this point of view, For a picture of a piece of this group (drawn as described here), and a highlighted grid, see \cite{BaSa24}.

The \emph{grid coordinates} of $g \in L$ is the pair of numbers read in the two first components of the $\{0,1,2\}^2$-component of the alphabet, read in positions $g, gc, \ldots, gc^{n-1}$, and interpreted in binary (if there are $2$s present, the grid coordinates are undefined).

The basic idea (which we will have to modify a bit) is to write an element of $\PAut(A)$ of polynomial word norm in $n$, which works as follows: For $g \in G$, it performs a nontrivial $\Z_7$-permutation if and only if on the grid of size $2^n$ at $g$,
\begin{itemize}
\item there is a valid tiling of $W$ in the $W$-components of the alphabet;
\item the grid coordinates of $f_g(\ell, m)$ are $(\ell, m)$;
\item on the southmost row, i.e.\ in $f_g(0, 0), f_g(1, 0), \ldots f_g(2^n-1, 0)$, we have a specific initial configuration (the initial configuration we are reducing to the word problem of $\PAut(A)$) for the Turing machine being simulated by $W$ (with polynomially many initial cells having specified content, and then only padding);
\item on the northmost row we have a configuration representing an accepting computation of $T$.
\end{itemize}

If we can write a formula for such an element in polynomial time, then we have a reduction from the problem of checking whether a length $2^n$-computation of $T$ accepts the given input of size $n$, to the co-word problem of $\Aut(A^L)$. The problem being reduced is indeed NEXPTIME-complete since $T$ is a universal Turing machine, and because Wang tiles simulate the Turing machine explicitly, i.e.\ they directly code a spacetime diagram of the Turing machine with the (south-north) height of the tiling corresponding exactly to computation time, and (west-east) width exactly to space usage. Note also that (as in the case of the PSPACE-hardness proof in Lemma~\ref{lem:PSPACEByTrees}) while we allow only an exponential time rather than $2^{O(n^d)}$ as in the definition of NEXPTIME, we can always produce a polynomial padding in the reduction.

It is difficult to construct an element as described above directly, since the grid is not directly connected inside $L$, and we have to do all the checking outside the grid. Thus, besides cheching the above conditions on the grid, we have auxiliary requirements on connecting trees. Again, we will use ripple catching. 

More specifically, observe that for fixed $i \in [0, 2^n-1]$ we have
\[ f([0, 2^n-1], i) = f(0, i) a^{-n} \{a, b\}^n. \]
Thus, to connect $f(0, i)$ to the entire row $f([0, 2^n-1], i)$, we can move by a right translation by $a^{-n}$, and then simply traverse up the full binary tree $\{a, b\}^n$. Note that in our geometric setting, this connecting takes place below the grid.

Similarly, there is a tree above the grid connecting each column, but we will not give the formulas and argue instead by symmetry, noting that the lamplighter group admits the automorphism $a \mapsto a^{-1}, b \mapsto b^{-1}$ that inverts the point of view.

Just like in the proof of Theorem~\ref{thm:MonoidCase} we read words over $(0, 1)^*2^*$ (playing the role of the \emph{index}) written on $(hc^0, hc^1, \ldots, hc^{n-1})$ for $h \in \{a, b\}^{\leq n}$ in the first $\{0,1,2\}$-component, and construct an automorphism $\psi_1'$ of polynomial norm in $n$ that performs a nontrivial permutation $\pi$ at position $g$ in the $\Z_7$-component, if the positions $g \{a, b\}^n$ encode a valid (west-east row of a) $W$ tiling. 

To implement this with ripple catching, we firstly include in the notion of successfulness of nodes that the index is incremented as in the proof of Theorem~\ref{thm:MonoidCase}, i.e.\ at the root the index is $2^n$, and when moving up the tree, the generators are appended at the first $2$-symbol. In the second $\{0,1,2\}$-component of the alphabet, as we go up the tree we check that the values stay the same throughout the tree (and $2$ does not appear). All in all, once we land on the grid, the index is form $\{0,1\}^n$ and the second components also form a word in $\{0,1\}^n$, thus we have valid grid coordinates.

For the verification of correctness of the tiling, we have in each node of the tree just a single $B$-symbol, which codes $C^3$ where $C$ is the set of colors of $W$. When going up the tree, if at some node we have the triple $(c, c'', c')$, then in the successor relations we verify that we have $(c, c''', c'')$ in the $a$-branch and $(c'', c'''', c)$ in the $b$-branch, for some choices of $c''', c''''$.

The interpretation of $(c, c'', c')$ in a node is that $c$ (resp.\ $c'$) is the west (resp.\ east) color of the leftmost leaf eventually produced from that node. The role of the middle color $c''$ is technical: in our definition of successfulness through relations, we cannot directly synchronize the same $c''$ for the $a$- and $b$-successor (of course, we could allow this by slightly changing the formalism).

The effect of this is to make the west and east colors consistent on the leaves. On the final $n$th level (which is where the grid of interest is), we check that the colors $c, c'$ of the triple $(c, c'', c')$ are the left and right color of the Wang tile present in that node (it does not matter what the $W$-components contain outside the grid, or what the $c''$-color is).

We should also ensure at this point that the final row has an accepting configuration. This is easy to include in the check performed in the nodes that land on grid coordinates.

The automorphism $\psi_1'$ now performs some permutation ($\pi$) $n$ steps below valid grid rows whenever the grid coordinates and Wang tiles are consistent on that row, and the tree below has suitable content. We want to ``move'' these permutations to the left border on the grid, i.e.\ by $a^n$. We will move them to the $\Z_6$-component: use a partial shift of the $\Z_7$-track to move the (possibly) permuted cell of $\psi_1'$ by $a^n$, conditionally permute the $\Z_6$-track, and shift back. Doing this twice, analogously to Equation~\ref{eq:eq}, we can detect nontrivial permutations of $\Z_7$ and convert them to a nontrivial permutation of $\Z_6$ under the same conditions (but now happening on the grid).


Note that for this to work, it is important that the $\Z_6$-content is not read or affected by $\psi'$ (which is the case); and that the $\Z_7$-cells on a valid grid are never affected by $\psi_1'$. The latter holds because when $\psi_1'$ acts nontrivially, counters on the grid contain binary numbers in the first counter, while any cell permuted by $\psi_1'$ has has $2$ in its first $\{0,1,2\}$-component (being the root of a connecting tree, thus having index $2^n$).

From this we obtain an automorphism $\psi_1$ that performs a nontrivial permutation on the $\Z_6$-component at $g$ if the tiles $f_g([0, 2^n-1], 0)$ match (west-east) horizontally, the counter values are consistent in the sense that the value at $f_g(i, 0)$ is $(i, j)$ (and $j$ stays constant on the row), the nodes on the tree below have correct indices, and $\Z_7$-data (all zero, except the root having some element from the support of $\pi$).

By symmetrically using trees that connect the grid from ``above'', we have elements $\psi_2$ that only act nontrivially at $g$, in the $\Z_6$-component, if the ``column'' $f_g(0, j)$ has consistent colors in the Wang tiles, in the south and north directions, and the counters are also consistent.  

In the argument giving $\psi_1$, the basis is that we can perform a controlled permutation on the $\Z_7$-component based on coordinates being successful (though here we only implicitly defined the notion of successfulness). These formulas are ultimately commutator formulas, and the successfulness checks are ultimately of the form ``perform a permutation $\pi'$ if the node is successful''. Thus, we can equally well use the map $\psi_1$ in place of the notion of successfulness, but now using the $\Z_6$-components, and use the trees above the grid connecting columns from above, analogously to the definition of $\psi_2$.

Then we can use partial shifts (as in the transition from $\psi_1'$ to $\psi_1$) to move this permutation to the $\Z_5$-component of a node on the grid. This gives us an automorphism $\psi_3$ that performs a nontrivial permutation in the $\Z_5$ component at $g$ if and only if every west-east row of the grid $f_g([0,2^n-1], [0,2^n-1])$ rooted at $g$ is consistent (individually). Symmetrically from $\psi_2$ we can build, analogously to the definition of $\psi_1$, an automorphism $\psi_4$ that acts nontrivially if and only if all the columns are valid.

Now the commutator $[\psi_3, \psi_4]$ has word norm polynomial in $n$ in $\PAut(A)$, and performs a nontrivial permutation at $g$ if and only if every west-east row and every south-north column of the grid $f_g([0,2^n-1], [0,2^n-1])$ has consistent tiling and consistent Wang tiles.

Finally, by taking another commutator, we can additionally condition the permutation by the initial configuration. Using the tile $\square$, we can in NC$^1$ force the exponentially long tail to be $\square$ after the input.

Thus, we have reduced the NEXPTIME-complete problem of existence of $2^n$-time $T$-computations on polynomial-sized inputs (through the existence of tilings) to the complement of the word problem of $\PAut(A)$.
\end{proof}

\section{Upper bounds and polynomial splitting schemes}

In this section we show the co-NP, PSPACE and co-NEXPTIME upper bounds. In the case of co-NP, although it does not thematically fit, completeness is also shown in this section; the reason is that the proof requires only a single sentence.

\subsection{The co-NP upper bound (and lower bound)}

\begin{theorem}
\label{thm:Polynomial}
For an f.g.\ group $G$ with polynomial growth, the automorphism group of an subshift with NP language has co-NP word problem.
\end{theorem}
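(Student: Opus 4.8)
The plan is to show that the complement of the word problem of any finitely-generated subgroup $H = \langle S \rangle \leq \Aut(X)$ is in NP, by exhibiting a short witness for nontriviality. Given a word $w \in S^n$, the automorphism $\phi_w$ it represents has radius at most $rn$, where $r$ bounds the radii of the generators in $S$; since $G$ has polynomial growth, the ball $B_{rn}$ has size $O((rn)^D) = \mathrm{poly}(n)$. The key point is that a nontrivial endomorphism of $X$ with radius $\rho$ acts nontrivially on \emph{some} point, hence (by a compactness/local-rule argument) on some \emph{globally valid pattern} supported on a set not much larger than $B_\rho$: we can take a point $x \in X$ with $\phi_w(x)_{1_G} \neq x_{1_G}$, and restrict attention to a finite window $W \supset B_{\rho}$ large enough that validity of $x|W$ certifies extendibility to a point of $X$. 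Because $X$ has NP language, the certificate that $x|W \in X|W$ is itself of polynomial size and polynomial-time checkable.

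The steps, in order: (1) compute $\phi_w$'s local rule by composing the local rules of the $n$ generators — this takes polynomial time and yields a local rule on $B_{rn}$; (2) guess a pattern $p$ on a polynomial-size window $W$ (of diameter $O(rn)$ plus the SFT's window size), together with the NP-certificate that $p$ is globally valid; (3) verify in polynomial time both that the certificate is valid and that applying $\phi_w$'s local rule to $p$ changes the symbol at $1_G$ (or at some specified cell of $W$). If such a $p$ exists, $\phi_w \neq \id$; conversely if $\phi_w \neq \id$ then by the local-rule description it differs from the identity already on points, hence on some globally valid finite pattern of the stated size, so a witness exists. This gives $\overline{\WP(H)} \in \NP$, i.e.\ $\WP(H) \in \mathrm{co\text{-}NP}$, and since $H$ was an arbitrary f.g.\ subgroup, $\WPs(\Aut(X)) \subseteq \mathrm{co\text{-}NP}$.

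The main obstacle — and the place where polynomial growth is essential — is step (2): one must be sure that nontriviality of $\phi_w$ is witnessed on a window of size polynomial in $n$, which requires that balls in $G$ grow polynomially (in exponential-growth groups the analogous window would be exponential, which is exactly why Theorem~\ref{thm:B} can push the complexity up to PSPACE-hard). A secondary technical point is confirming that ``nontrivial on a point'' upgrades to ``nontrivial on a globally valid finite pattern of controlled size'': this is where we use that $X$ is an SFT (so validity is a local condition with a fixed window) together with the fact that $\phi_w$ is determined by a local rule, so the single cell where $\phi_w(x)$ and $x$ disagree depends only on $x|B_{rn}$, and any point of $X$ agreeing with $x$ on $B_{rn}$ witnesses the same disagreement. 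The NP language hypothesis on $X$ then supplies the polynomial certificate for global validity. No further ingredients are needed; this is essentially the standard co-NP algorithm of \cite{BoLiRu88,KiRo90} phrased for f.g.\ subgroups and general NP-language SFTs on polynomial-growth groups.
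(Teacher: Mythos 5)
Your proposal is correct and follows essentially the same route as the paper: guess a globally valid pattern on the polynomial-sized ball $B_{rn}$ together with the NP certificate supplied by the language hypothesis, apply the composed local rule, and check for a disagreement at the identity. The only detail you leave implicit, which the paper states explicitly via Gromov's theorem, is that polynomial growth gives a virtually nilpotent group with polynomial-time word problem, so the structure of $B_{rn}$ (i.e.\ which products of generators coincide) can actually be computed in polynomial time when composing the local rules.
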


\begin{proof}
Polynomial growth is equivalent to virtually nilpotent, by Gromov's theorem. It is known that these groups have polynomial-time word problem. The word problem of any f.g.\ subgroup can be solved by guessing a witness (which fits in a polynomial-sized ball due to the growth rate). Checking the witness is in the language of the subshift is in NP by assumption, and checking that the automorphism acts nontrivially on it can be done in polynomial time by applying the local rules.
\end{proof}

\begin{corollary}
\label{cor:Polynomial}
For an f.g.\ infinite group $G$ with polynomial growth, $\Aut(A^G)$ has a finitely-generated subgroup with co-NP-complete word problem.
\end{corollary}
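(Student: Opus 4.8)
The plan is to combine the co-NP upper bound of Theorem~\ref{thm:Polynomial} with the co-NP-hardness of the word problem for $\Aut(A^\Z)$ proved in \cite{Sa20e}, which we transfer from $\Z$ to $G$ by running an automorphism of $A^\Z$ in parallel along the $\Z$-cosets sitting inside $G$. Throughout we take $|A|\ge 2$ and assume $G$ is infinite (for finite $G$ every finitely generated subgroup of $\Aut(A^G)$ is finite, so the statement is only intended in the infinite case). For the upper bound, observe that $A^G$ is an SFT (take the empty set of forbidden patterns), its language is trivially decidable and hence in NP, and $G$ is virtually nilpotent by Gromov's theorem \cite{Gr81a}, hence has polynomial-time word problem. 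Thus Theorem~\ref{thm:Polynomial} applies with $X=A^G$ and yields that every finitely generated subgroup of $\Aut(A^G)$ has word problem in co-NP, i.e.\ $\WPs(\Aut(A^G))$ is in co-NP.

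For hardness, the key observation is that an infinite finitely generated group of polynomial growth is virtually nilpotent, hence contains an element $t$ of infinite order, so $H_0=\langle t\rangle\cong\Z$. Decompose $G$ into \emph{left} cosets $G=\bigsqcup_j g_j H_0$. Right multiplication by powers of $t$ preserves each $g_j H_0$, while the left $G$-action merely permutes the cosets among themselves; hence the decomposition $A^G\cong\prod_j A^{g_j H_0}$, with each factor $A^{g_j H_0}\cong A^\Z$, is respected by the $G$-action (note $H_0$ need not be normal — this is exactly why we use left cosets together with right partial shifts). Given $\phi\in\Aut(A^\Z)$ of radius $r$ with local rule $f^\phi_{\mathrm{loc}}$, define $\tilde\phi\in\Aut(A^G)$ by $\tilde\phi(x)_h=f^\phi_{\mathrm{loc}}\big((x_{h t^{-r}},\ldots,x_{h t^{r}})\big)$, i.e.\ by applying $\phi$ inside each coset $g_j H_0$. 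One checks routinely that $\tilde\phi$ is a reversible cellular automaton with neighbourhood $\{t^{-r},\ldots,t^r\}$, that it is $G$-equivariant, that $\phi\mapsto\tilde\phi$ is an injective group homomorphism $\Aut(A^\Z)\hookrightarrow\Aut(A^G)$ (injective because $\tilde\phi$ already acts as $\phi$ on any single coset), and that $\tilde\phi$ has radius at most $r\cdot d(1_G,t)$.

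Consequently, if $H\le\Aut(A^\Z)$ is finitely generated by a symmetric set $S$, then $\tilde H:=\langle\,\tilde s : s\in S\,\rangle\le\Aut(A^G)$ is isomorphic to $H$ via the map $\tilde s\mapsto s$, and since this isomorphism respects the chosen generating sets, the word problem of $\tilde H$ over $\tilde S$ and that of $H$ over $S$ are \emph{literally the same language}; in particular $\tilde H$ has co-NP-hard word problem whenever $H$ does. Now by \cite{Sa20e} the group $\Aut(A^\Z)$ has a finitely generated subgroup with co-NP-hard word problem for a suitable alphabet, and for an arbitrary nontrivial $A$ one reduces to this case by the standard device used also in the proofs of Theorems~\ref{thm:MonoidCase} and \ref{thm:HochmanSolution}, namely coding the symbols of a richer alphabet by mutually unbordered words over $A$. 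Transporting such a subgroup along the embedding above gives a finitely generated subgroup of $\Aut(A^G)$ with co-NP-hard word problem, which together with the upper bound shows that $\WPs(\Aut(A^G))$ is co-NP-complete.

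The only genuine points requiring care are the verification that $\phi\mapsto\tilde\phi$ preserves radii up to the constant factor $d(1_G,t)$ (so that the transfer is a bona fide polynomial-time — here in fact exact — reduction between word problems) and the reduction to the alphabet used in \cite{Sa20e}; everything else is routine, and the conceptual content is simply that an automorphism of $A^\Z$ can be executed cell-for-cell along the $\Z$-cosets of any infinite polynomial-growth group without increasing its radius by more than a constant.
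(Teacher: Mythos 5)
Your proposal is correct and takes essentially the same route as the paper: the co-NP upper bound via Theorem~\ref{thm:Polynomial}, and hardness transferred from the $\Z$-case of \cite{Sa20e} using a copy of $\Z$ inside the (infinite) virtually nilpotent group. You simply make explicit the coset-wise embedding $\Aut(A^\Z)\hookrightarrow\Aut(A^G)$, the alphabet-coding step, and the implicit assumption that $G$ is infinite, all of which the paper's one-line proof leaves to the reader.
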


\begin{proof}
By the previous theorem, the word problem is in co-NP, and since virtually nilpotent groups contain copies of $\Z$, the hardness result for $\Z$ shown in \cite{Sa20e} implies the same for $G$.
\end{proof}

\subsection{The PSPACE upper bound}
\label{sec:PSPACEUpperBound}

We next state a theorem that implies PSPACE upper bounds for word problems of automorphism groups of full shifts in the case of free groups and surface groups. The idea is that in these groups, when looking for a witness for the co-word problem (i.e.\ a pattern where the given element acts nontrivially), we can split the ball of the group in two pieces of roughly equal size, and continue recursively on the two halves. In the case of a free group, we can use the fact that the Cayley graph is a tree (so we can use constant-size cut sets), and in the case of surface groups, we can use geodesics to obtain linear cuts. To make the idea precise and to (perhaps) allow applying it to other groups, we give a general definition of polynomial splitting schemes.

Let $S$ be a fixed symmetric generating set containing $1_G$ for the finitely-generated group $G$.

\begin{definition}
A set $A \subset G$ admits an \emph{$\alpha$-splitting scheme by $k$-cuts} if $A = \emptyset$, or there is a set $C$ of cardinality at most $k$ such that $A \setminus C = A_1 \cup A_2$ such that $A_1S \cap A_2S = \emptyset$, $|A_1| \leq \alpha|A|$, $|A_2| \leq \alpha|A|$ and each $A_i$ admits an $\alpha$-splitting scheme by $k$-cuts. We say $G$ admits polynomial splitting schemes if there exists $d$ and $\alpha < 1$ such that $B_n$ admits an $\alpha$-splitting scheme by $(n^d+d)$-cuts for all $n$.
\end{definition}

It is easy to see that this does not depend on the generating set (by thickening the splitting sets defined with respect to one generator), more generally see Lemma~\ref{lem:QI}.

\begin{theorem}
\label{thm:PSPACEUpperBound}
Let $G$ be an f.g.\ group with PSPACE word problem. If $G$ admits a polynomial splitting scheme, the automorphism group of any PEPB subshift has PSPACE word problem.
\end{theorem}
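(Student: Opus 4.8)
The plan is to describe an alternating polynomial-space algorithm (equivalently, by Savitch-type arguments, an ordinary polynomial-space algorithm, since $\mathrm{PSPACE} = \mathrm{APSPACE}$) for the co-word problem of a finitely-generated subgroup $H = \langle S' \rangle \leq \Aut(X)$, where $X \subset A^G$ is a PEPB subshift. Given a word $w \in (S')^n$, let $r = O(n)$ be a bound on the radius of the automorphism $\phi_w$ it defines; $\phi_w$ is nontrivial if and only if there is a globally valid pattern $p$ on some ball $B_m$ (with $m = r^{t} + O(r)$, using the PEPB degree $t$, so that local validity on $B_m$ witnesses global validity on the smaller ball where $\phi_w$ acts) such that $\phi_w$ changes the central symbol. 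So it suffices to nondeterministically guess a ``frame'' (the values of a locally valid configuration near the center, enough to evaluate the local rule of $\phi_w$ and see a violation), and then verify that this frame extends to a globally valid configuration on all of $B_m$; global validity is checked by verifying local validity of an extension to $B_{m}$ (already built into our choice of $m$ via PEPB), i.e.\ that no forbidden pattern from the finite set $P$ occurs.

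The core of the argument is that verifying ``there exists a locally valid $q \in A^{B_m}$ agreeing with the guessed frame'' can be done in polynomial space using the polynomial splitting scheme. First I would fix the $\alpha$-splitting scheme for $B_m$ by $(m^d+d)$-cuts; recursively unfolding it gives a splitting tree of depth $O(\log m) = O(\log n)$ (since each side has size at most $\alpha$ times the parent and $\alpha < 1$), in which every internal node is labeled by a cut set of size polynomial in $m$, hence polynomial in $n$. The recursive procedure $\textsc{Valid}(A, \rho)$ — ``is there a locally valid assignment to $A \subset B_m$ extending the partial assignment $\rho$ already fixed on $(AS \cup \partial) \setminus A$?'' — works as follows: if $A = \emptyset$, check directly that $\rho$ (on the relevant boundary) is consistent and return; otherwise take the cut $C$, existentially guess the values on $C$ (this is the alternating/nondeterministic step, or is simulated by cycling through all $|A|^{|C|} = 2^{\mathrm{poly}(n)}$ possibilities with a polynomial-space counter), check that no forbidden pattern is violated within $C$ together with the already-fixed neighboring values, and then \emph{universally} recurse into $\textsc{Valid}(A_1, \rho')$ and $\textsc{Valid}(A_2, \rho')$ where $\rho'$ extends $\rho$ by the guessed values on $C$. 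Because $A_1 S \cap A_2 S = \emptyset$, the two halves interact only through $C$ (and the already-fixed boundary), so no forbidden pattern can straddle the two halves without being detected at the $C$-level, which is exactly what makes the divide-and-conquer sound.

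The space accounting is the crux and the main thing to get right. At any point during the recursion the algorithm only needs: (i) a pointer describing which node of the depth-$O(\log n)$ splitting tree it is at — this is $O(\log n)$ bits, and the cut set at that node is \emph{recomputed} from the splitting-scheme description rather than stored along the stack; (ii) the values guessed on the cut set of the current node, which is $\mathrm{poly}(n)$ bits; and crucially (iii) on the recursion stack, at each of the $O(\log n)$ levels we must retain the values on that level's cut set in order to supply the boundary condition $\rho$ to the current subproblem — this is $O(\log n) \cdot \mathrm{poly}(n) = \mathrm{poly}(n)$ bits total. (Elements of $G$ appearing in cuts are named by geodesic words of length $\le m = \mathrm{poly}(n)$, and arithmetic/membership questions about them reduce to the word problem of $G$, which is in $\mathrm{PSPACE}$ by hypothesis; this is where that assumption enters.) Hence the whole computation runs in alternating polynomial space, and $\mathrm{APSPACE} = \mathrm{PSPACE}$ gives the claim. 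I expect the main obstacle to be precisely this bookkeeping: one must argue that the splitting scheme is \emph{uniformly computable} in polynomial space (so that cut sets need not be stored down the whole stack, only the guessed values on them), that the depth is genuinely logarithmic, and that naming and manipulating group elements throughout stays within $\mathrm{PSPACE}$ by reduction to the word problem of $G$ — together with the routine but slightly fiddly verification that ``locally valid on $B_m$'' genuinely certifies that $\phi_w$ acts nontrivially, via the PEPB property and the choice of $m$.
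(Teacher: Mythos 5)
Your plan has the right general shape (PEPB to pass from local to global validity, divide-and-conquer along the splitting scheme, alternation), but there is a genuine gap at its core: your witness structure cannot certify that $\phi_w$ acts nontrivially. The automorphism defined by a word of length $n$ over generators of radius $r$ has radius $rn$, so evaluating it at the identity requires the configuration on all of $B_{rn}$; in the groups for which this theorem is interesting (free and surface groups, exponential growth) this ball has exponentially many cells in $n$. Hence the ``frame near the center, enough to evaluate the local rule of $\phi_w$'' is itself an exponential object: it can neither be stored in polynomial space nor written down along a polynomial-time branch, and your recursion as described only verifies that a locally valid subshift pattern extends the frame — it has no mechanism at all for verifying the $n$ successive applications of the generators across the ball. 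The paper's proof resolves exactly this by making the witness the full space-time pattern in $A^{B_k\times[0,k]}$ (layers $c_0,\dots,c_k$, each obtained from the previous one by one generator), guessing at every cut $C$ the values of \emph{all} layers on $C\times[0,k]$, and checking the generators' local rules (together with local validity of $c_0$) inside each piece of the split; nontriviality is then a single local comparison of $c_0$ and $c_k$ at the origin. This interleaving of the evaluation of the composition with the splitting recursion is the missing idea.

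Two further problems in your analysis. First, $\mathrm{PSPACE}=\mathrm{APSPACE}$ is false (alternating polynomial space equals EXPTIME); the identity needed, and the one the paper uses via Lemma~\ref{lem:AP}, is $\mathrm{PSPACE}=\mathrm{AP}$, alternating polynomial \emph{time}. Relatedly, the splitting recursion has depth $\Theta(\log_{1/\alpha}|B_m|)$, which is polynomial in $n$ (roughly linear in $m$, since $|B_m|$ is exponential in $m$), not $O(\log n)$ as you claim; polynomial depth with polynomial work per level still fits in AP, but your stated accounting is wrong. Second, the hypothesis only asserts the \emph{existence} of a polynomial splitting scheme; nothing makes it uniformly computable, so ``recomputing the cut set from the splitting-scheme description'' assumes data you do not have — you name this obstacle but do not resolve it. The paper sidesteps it by existentially guessing the cuts (and representatives of the resulting components, with connectivity decided by PSPACE subroutines via the word problem of $G$), observing that incorrect guesses never yield false positives, while the existence of the scheme guarantees that some sequence of guesses both shrinks fast enough and finds the witness when one exists.
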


\begin{proof}
We first consider a full shift $A^G$, and then explain how to extend this to PEPB subshifts. It suffices to show that the co-word problem is in alternating polynomial time AP, since co-PSPACE = PSPACE = AP (the first equality is trivial from the determinism of the PSPACE model, and the second is Lemma~\ref{lem:AP}).

Let $F$ be a finite set of automorphisms, and let $r$ be a bound on their radii. We may assume $r = 1$ by changing the generating set $S$ of $G$. Given a word $w$ of length $k$ over $F$, we now describe how to prove the nontriviality, equivalently that there exists a pattern $p \in A^{B_k}$ such that composing the local rules of the cellular automata $w$ in sequence and applying them to $p$ gives a symbol distinct from $p_{1_G}$ at $1_G$.

The witness can be taken to be a pattern in $A^{B_k \times [0, k]}$ such that the patterns $c_i = (g \mapsto A^{(g, i)}) \in A^{B_k}$ are obtained by successively applying the generators in the word $w$ to $c_0$ (and the symbols on the border not given by the local rule are filled arbitrarily). The contents of $B_{k-i}$ in $c_i$ are meaningful, while the rest is more or less meaningless, but makes indexing a little easier. At the end we should check that the contents of $1_G$ in $c_k$ differ from the contents of $1_G$ in $c_0$.

We show that we can figure out in alternating polynomial time whether such a pattern exists. To do this, we guess a polynomial sized subset $C_1$ of $B_k$. Then we compute its (polynomial-sized) boundary $C_1S \setminus C_1$. Note that we can check in polynomial space, and thus alternating polynomial time, whether two elements of the boundary are connected, and thus we can figure out a polynomial-size set of representatives of the boundary (or we can use $C_1S \setminus C_1$ directly and redo much of the computation).\footnote{There is a little subtlety here: In alternating polynomial time computations, we may use PSPACE computations in recursive calls, which means we have alternating access to PSPACE. It may seem that this implies that APTIME trivially contains APSPACE, which would be surprising since APSPACE = EXPTIME and EXPTIME $\overset{?}=$ PSPACE = APTIME is a major open problem. The point is that alternating access from a polynomial time computation to PSPACE does not mean we have access to APSPACE, as in the latter we can do exponentially long computation including alternations, only ``around'' these recursive calls.} We also guess a pattern $p \in A^{C_1 \times [0, k]}$ at this point.

The point is that we want to now check for each boundary element $g$, whether, letting $D$ be the connected component of the Cayley graph of $B_k \setminus C_1$ containing $g$, we can find a pattern $q \in A^{D \times [0,k]}$ which agrees with $p$ in $C_1 \times [0,k]$, everywhere follows the local rules (in the sense described two paragraphs above), and that the contents of $1_G$ in $c_k$ differ from the contents of $1_G$ in $c_0$ if $1_G \in D$. The algorithm may not be correct for all $C_1, g$,\footnote{In case we are not following a good sequence of splits, we may not shrink the set quickly enough, and the algorithm itself cannot easily check how much it shrinks the set at each point.} but its positive claims are always going to be correct.

To do this, we recursively check for each component $D$ of $B_k \setminus C_1$ (represented by $C_1$ and an element of $C_1S \setminus C_1$) that we can find a polynomial-sized cut $C_2$ of $D$ which is contained in that component (again, use polynomially many PSPACE computations to ensure that $C_2$ is indeed contained in $D$, by finding paths to the boundary element $g$). When stepping forward to a connected component, we guess an extension of the previous pattern $p$ to $C_2$, and check that the local rules are followed everywhere.

Continuing this way, if a polynomial splitting scheme exists, in about $\log_\alpha |B_k|$ steps of recursion (which amounts to a linear amount of steps in $k$ since $\alpha > 1$) we have figured out whether a witness exists: If one exists, we can use existential steps to ensure that the $C_i$ follow the polynomial splitting scheme, and that the patterns guessed on the $C_i$ come correspond to a pattern where $w$ acts nontrivially; when stepping to connected components, we use universal steps to ensure that all of the components admit such valid contents. On the other hand, the algorithm only makes correct positive claims whether or not polynomial splitting schemes exist.

To extend the result to PEPB subshifts, we find a locally valid pattern on $B_{n^t}$ where $t$ is the PEPB degree, and the algorithm in all branches checks local validity of the pattern $c^0$. Note that $((n^t)^d+d)$ is still polynomial in $n$, so the polynomial splitting scheme for the $n^t$-ball still has cuts of polynomial size. By the definition of PEPB, the central $B_k$-part is globally valid, so the algorithm correctly identifies valid patterns where automorphisms act nontrivially.
\end{proof}

Note that it is not enough in the above proof to assume that the subshift has PSPACE word problem, since we do not at any point directly check inclusion in the language, but only look locally for occurrences of finitely many forbidden patterns.

The main examples we have of groups with polynomial splitting schemes are free groups and surface groups.

\begin{lemma}
Free groups have polynomial splitting schemes.
\end{lemma}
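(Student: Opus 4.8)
The plan is to exhibit, for the free group $F_m$ with the standard generating set, a splitting scheme for the balls $B_n$ using cut sets of constant size (in fact size $1$ suffices, up to a constant depending on $m$), so that in particular the ``$n^d$'' in the definition can be taken with $d=0$. The key geometric fact is that the Cayley graph of $F_m$ with respect to the free generators is a tree (the $2m$-regular tree), so removing a single vertex $v$ from any finite connected subtree $A$ disconnects it into the subtrees hanging off the neighbors of $v$ inside $A$.

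First I would set up the recursion on an arbitrary finite subtree $A$ of the Cayley tree. Given such $A$ with $|A| \geq 2$, I would use a standard \emph{centroid} (tree median) argument: there exists a vertex $v \in A$ such that every connected component of $A \setminus \{v\}$ has at most $|A|/2$ vertices. (Proof sketch: start at any vertex; as long as some component of $A$ minus the current vertex has more than half the vertices, step into that component; this process strictly increases the size of the chosen component's complement-side, hence terminates, and the terminal vertex is a centroid.) The components $A \setminus \{v\} = A_1' \sqcup \cdots \sqcup A_r'$ ($r \leq 2m$) each have size $\leq |A|/2$ but there can be up to $2m$ of them, whereas the definition asks for a split into exactly two pieces. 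To reconcile this, I would group the components greedily into two blocks $A_1 = A_{i_1}' \cup \cdots$ and $A_2 = $ the rest, so that each block has size $\leq \tfrac{1}{2}|A| + \tfrac{1}{2}|A| \cdot$(something); more carefully, since each individual component is $\leq |A|/2$, a greedy bin-packing into two bins yields both bins of size $\leq \tfrac{3}{4}|A|$ (once a bin exceeds $|A|/4$ we stop adding to it, and the next component added is $\leq |A|/2$). So we may take $\alpha = 3/4$. Crucially, distinct components of $A \setminus \{v\}$ lie in disjoint ``branches'' through $v$: if $g \in A_i'$ and $h \in A_j'$ with $i \neq j$, then any path from $g$ to $h$ in the tree passes through $v$, so $d(g,h) \geq 2$, hence $A_1 S \cap A_2 S = \emptyset$ when $S$ is the standard symmetric generating set with $r=1$ — and for a general symmetric $S$ one absorbs the extra thickness into a larger constant cut (thicken $\{v\}$ to $vS$, or invoke Lemma~\ref{lem:QI}). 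Finally, $A_1$ and $A_2$ are themselves finite subtrees of the Cayley tree (unions of branches are connected through... wait: $A_1$ is a disjoint union of subtrees, not necessarily connected).

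Here is the one real wrinkle, and where I expect the only genuine care is needed: $A_1$ as defined is in general \emph{disconnected} (a union of several branch-subtrees), whereas the recursion wants to be applied to it, and the definition of ``$\alpha$-splitting scheme'' as stated requires at each stage a single cut set $C$ and a partition of $A \setminus C$ into two pieces $A_1, A_2$ each admitting a scheme — it does not require connectedness, so this is fine as written, but one should double-check that the bookkeeping still terminates. It does: each recursion step replaces a set by one of size $\leq \tfrac{3}{4}$ of it, so after $O(\log |B_n|) = O(n)$ steps every piece is empty. Alternatively, and perhaps cleaner, I would phrase the scheme to handle a \emph{forest} (disjoint union of subtrees) rather than a single subtree: if the forest has $\geq 2$ components, split it as two subforests of roughly balanced size with empty cut; if it is a single subtree of size $\geq 2$, apply the centroid cut as above. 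Either way the cut size is bounded by a constant $k = k(m,S)$ independent of $n$, so $B_n$ admits an $\alpha$-splitting scheme by $k$-cuts with $\alpha = 3/4 < 1$ and $k = (n^0 + 0)$-many... to match the literal statement ``$(n^d + d)$-cuts'' one simply takes $d$ large enough that $d \geq k$, and the scheme works for all $n$. The main obstacle is thus not mathematical depth but getting the definitional fit exactly right — in particular ensuring the two-way split with the separation condition $A_1 S \cap A_2 S = \emptyset$ survives a general generating set, which is handled by the quasi-isometry invariance (Lemma~\ref{lem:QI}) already cited, so one may as well prove it for the standard generators and quote that lemma.
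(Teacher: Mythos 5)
Your proposal is correct and takes essentially the same route as the paper: since the Cayley graph of a free group is a tree, one cuts at single vertices (found in your case by the centroid/median argument rather than by stepping along geodesics from $1_G$ as the paper does) and recurses with a constant shrink factor ($3/4$ versus the paper's $2/3$), which gives constant-size cuts and hence a polynomial splitting scheme. The only slip is the claim that $d(g,h)\geq 2$ across the cut already forces $A_1S\cap A_2S=\emptyset$ for the radius-one generating set --- the cut vertex $v$ itself lies in both $A_1S$ and $A_2S$ whenever both sides contain neighbors of $v$ --- but the thickening of $\{v\}$ to the constant-size set $vS$ that you yourself propose (keeping whole branches at $v$ on the same side) repairs this, in line with the paper's own remark after the definition that cuts may be thickened.
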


\begin{proof}
For a free group, the Cayley graph is a tree, and as the cuts one can use successive elements of geodesics: at each step, we have a single branch of the ball rooted on some $g$, and stepping forward by one step in the unique geodesic from $1_G$ to $g$ splits the branch in three sets of equal size, which gives a polynomial splitting scheme with $\alpha = 2/3$.
\end{proof}

In the case of surface groups, we assume that the reader is familiar with some basic notions from geometric group theory. A \emph{surface group} is the fundamental group of a closed surface with some genus $g$. In the case $g = 1$, the group is $\Z^2$ and word problems in automorphism groups are in co-NP. The surface groups with $g \geq 2$ are known to be commensurable, and nonamenable. For $g = 0$ the group is trivial and $g < 0$ reduces to $g > 0$ after taking a finite cover (giving a commensurable group).

Thus, it suffices to consider the genus $2$ surface group with presentation $\langle a_1, b_1, a_2, b_2 \;|\; [a_1, b_1][a_2, b_2] \rangle$.

\begin{definition}
The \emph{pentagon model} of the hyperbolic plane is: the graph with vertices $\Z^2$, and undirected edges $\{(x, y), (x+1,y)\}$, $\{(x, y), (2x, y+1)\}, \{(x, y), (2x+1, y+1)\}$.
\end{definition}

\begin{lemma}
\label{lem:Surface}
All nonamenable surface groups are quasi-isometric to the pentagon model.
\end{lemma}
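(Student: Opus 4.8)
The plan is to show that a genus-$g$ surface group $\Gamma_g$ (for $g \geq 2$) is quasi-isometric to the pentagon model $P$. Since all nonamenable surface groups are commensurable, and quasi-isometry is a commensurability invariant, it suffices to exhibit one such group quasi-isometric to $P$; I will use $\Gamma_2$, or more conveniently any group acting geometrically on the hyperbolic plane $\mathbb{H}^2$.

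First I would verify that the pentagon model $P$ is itself quasi-isometric to $\mathbb{H}^2$. The vertex set $\Z^2$ of $P$ should be thought of as a ``horocyclic-like'' coordinate system: the second coordinate $y$ plays the role of a (discretized) Busemann/height parameter, and for each fixed $y$ the horizontal edges $\{(x,y),(x+1,y)\}$ give a line, while the ``descending'' edges $\{(x,y),(2x,y+1)\},\{(x,y),(2x+1,y+1)\}$ encode the doubling of horocycle length per unit of depth — exactly the combinatorics of a binary tree of horocycles, which is the standard combinatorial model of $\mathbb{H}^2$ (this is the same picture used for the lamplighter/Baumslag–Solitar discussion earlier, minus the inversion). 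Concretely, I would map $P$ into $\mathbb{H}^2$ in the upper half-plane model by sending $(x,y)\mapsto (x\,2^{-y},\,2^{-y})$ (or a similar formula), check this is a quasi-isometric embedding by comparing graph distance in $P$ with hyperbolic distance — horizontal moves at height $2^{-y}$ cost $\Theta(1)$ in both metrics, and a descending move changes the height coordinate by a bounded hyperbolic amount — and check coarse surjectivity since the images of the horocycles at integer heights are $1$-dense in $\mathbb{H}^2$. This identifies $P$ with $\mathbb{H}^2$ up to quasi-isometry.

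Then I would invoke the Milnor–Švarc lemma: a genus-$g$ surface ($g\geq 2$) with its hyperbolic metric has universal cover $\mathbb{H}^2$, and $\Gamma_g$ acts on $\mathbb{H}^2$properly discontinuously, cocompactly and by isometries, hence $\Gamma_g$ is quasi-isometric to $\mathbb{H}^2$. Combining with the previous paragraph, $\Gamma_g$ is quasi-isometric to $P$. Finally, for the remaining cases mentioned in the surrounding text: genus $g<0$ (i.e.\ non-orientable surfaces) pass to an orientation double cover, landing in the $g\geq 2$ case up to commensurability; $g=1$ gives $\Z^2$ which is amenable and excluded; $g=0$ is trivial. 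So every nonamenable surface group is quasi-isometric to $P$, as claimed. (Alternatively, and perhaps cleaner for the write-up, one can skip $\mathbb{H}^2$ entirely and build a direct quasi-isometry: choose a ``pentagonal'' generating set for $\Gamma_g$ realizing the $\{5,4\}$ or $\{4g,4g\}$ tiling, and match up the Cayley graph's ascending/descending structure with $P$'s.)

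The main obstacle I expect is bookkeeping in the distance comparison for $P$ versus $\mathbb{H}^2$: one must be careful that the ``$2^{-y}$'' scaling interacts correctly with graph distance, in particular that a geodesic in $P$ between two points at the same large height does \emph{not} stay at that height but instead descends (reduces $y$) to exploit the shorter horocycles — this is the hyperbolic phenomenon and it needs the estimate that moving down by $\Delta y$ and back up costs $\Theta(\Delta y)$ while allowing the horizontal coordinate to be divided by $2^{\Delta y}$. Making this into clean two-sided inequalities $d_P \asymp d_{\mathbb{H}^2}$ (up to multiplicative and additive constants) is the one genuinely computational step; everything else is an appeal to standard facts (Milnor–Švarc, commensurability invariance of QI, the binary-tree-of-horocycles model of $\mathbb{H}^2$).
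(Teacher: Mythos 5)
Your proposal is correct and follows essentially the same route as the paper: reduce to $g \geq 2$ via commensurability, apply \v{S}varc--Milnor to get the surface group quasi-isometric to $\mathbb{H}^2$, and use the quasi-isometry between $\mathbb{H}^2$ and the pentagon model (which the paper simply cites as well-known, whereas you sketch the explicit map $(x,y)\mapsto(x\,2^{-y},2^{-y})$ and the distance comparison).
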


\begin{proof}
Nonamenable surface groups are quasi-isometric to the hyperbolic plane \cite{Ha00}. Specifically, it is classical that the universal cover of the surface is the hyperbolic plane $H^2$, and by \v{S}varc-Milnor the fundamental group is quasi-isometric to it. On the other hand, the hyperbolic plane is well-known to be quasi-isometric to the pentagon model.
\end{proof}

It should be clear how to generalize polynomial splitting schemes to any undirected graph, using the standard notion of cutsets (of vertices) in graphs.

\begin{lemma}
\label{lem:QI}
The existence of polynomial splitting schemes in a graph is invariant under quasi-isometry.
\end{lemma}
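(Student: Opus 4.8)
We work throughout with connected graphs of bounded degree, which covers every case where the notion is used (Cayley graphs and the pentagon model); let $\Delta$ be a common degree bound, so that $|N_r(Z)|\le\Delta^r|Z|$ for every vertex set $Z$. Being quasi-isometric is a symmetric relation, so it suffices to prove one implication: if $G$ admits polynomial splitting schemes and $f\colon G\to H$ is a quasi-isometry with quasi-inverse $\bar f\colon H\to G$ (common constants $(\lambda,\mu)$), then $H$ admits polynomial splitting schemes. Recall the standard coarse facts we will use: the fibres of $f$ and of $\bar f$ have size at most a constant $M$, and $\bar f(B^H_n)\subseteq B^G_m$ with $m:=\lambda n+2\mu=\Theta(n)$. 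The plan is to equip the set $A_0:=\bar f(B^H_n)$ --- a subset of the $m$-ball --- with a splitting scheme with $\mathrm{poly}(n)$ cuts, and then to pull that scheme back to $B^H_n$ along $\bar f$; since $\bar f^{-1}(A_0)\cap B^H_n=B^H_n$, the pulled-back object is a scheme of $B^H_n$ itself, with no restriction step at the end.

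\emph{Step 1 (subsets inherit schemes).} I first claim: if $B^G_m$ admits an $\alpha$-splitting scheme by $k$-cuts, then every $A\subseteq B^G_m$ admits an $\alpha'$-splitting scheme by $k'$-cuts, for some $\alpha'<1$ and $k'=\mathrm{poly}(m)$. This is the familiar passage from ``uniformly balanced separators'' to ``weighted balanced separators''. Let $\mathcal T$ be the recursion tree of the scheme of $B^G_m$; its cuts partition $B^G_m$ and it has depth $O(m)$. To split $A$, descend $\mathcal T$ always into the child holding the larger share of $A$, setting aside at each step the cut and the smaller-share child; the sets set aside along such a descent are pairwise non-adjacent (each later one lies in a heavy child, hence is cut off by an earlier cut), so they may be regrouped into two $A$-balanced halves with no extra cut, while the cuts collected along the descent total $O(m)\cdot k=\mathrm{poly}(m)$. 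Casting this in the ``halve at every step'' form of the definition forces a slightly worse constant $\alpha'<1$ and an interleaving of ``$A$-mass halving'' and ``ambient-ball shrinking'' moves, but involves no new idea. Applying this with $m=\lambda n+2\mu$ gives $A_0$ an $\alpha'$-splitting scheme by $k'$-cuts, $k'=\mathrm{poly}(n)$; note every node and every cut of it lies inside $A_0$.

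\emph{Step 2 (pulling back).} Fix a constant $\rho=\rho(\lambda,\mu)$ with $\rho>\lambda+\mu$, and first thicken the scheme of $A_0$: whenever a node is split as $A=C\sqcup A_1\sqcup A_2$, use instead $A=C^{(\rho)}\sqcup A_1'\sqcup A_2'$ with $C^{(\rho)}=A\cap N^G_\rho(C)$ and $A_i'=A_i\setminus N^G_\rho(C)$, recursing on $A_1',A_2'$ (still subsets of $B^G_m$, so Step 1 applies). This keeps cuts of size $\le\Delta^\rho k'=\mathrm{poly}(n)$, keeps $|A_i'|\le\alpha'|A|$ and all nodes inside $A_0$, and the new cuts still partition $A_0$. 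Now send each node $A$ to $\tilde A:=\bar f^{-1}(A)\cap B^H_n$ and each cut $C^{(\rho)}$ to $\tilde C:=\bar f^{-1}(C^{(\rho)})\cap B^H_n$; then the sets $\tilde C$ partition $B^H_n$ and $\tilde A=\tilde C\sqcup\tilde A_1\sqcup\tilde A_2$ at every node. We check: \emph{(i) cuts:} $|\tilde C|\le M|C^{(\rho)}|=\mathrm{poly}(n)$; \emph{(ii) separation:} an $H$-edge between $\tilde A_1$ and $\tilde A_2$ would yield points of $A_1'$ and $A_2'$ at $G$-distance $\le\lambda+\mu<\rho$, which is impossible, since any short $G$-path from $A_1'$ to $A_2'$ must pass through the removed neighbourhood $N^G_\rho(C)$ before it can cross from one side to the other; \emph{(iii) balance:} $\bar f$ maps $\tilde A$ onto $A$ (as $A\subseteq A_0=\bar f(B^H_n)$), so $|\tilde A|\ge|A|$, while $|\tilde A_i|\le M|A_i'|\le M\alpha'|A|\le M\alpha'|\tilde A|$; coarsening the recursion (performing $q$ steps at once --- the resulting at most $2^q$ pieces are pairwise non-adjacent by (ii), so no extra cut is needed, and each has size $\le M(\alpha')^q|\tilde A|$, whence a greedy grouping into two gives balance constant below $1$ once $q$ is large) leaves the cuts $\mathrm{poly}(n)$. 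Uniformity in $n$ then gives that $H$ admits polynomial splitting schemes.

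\emph{Main obstacle.} The step with real content is Step 1: since the definition demands the halving bound at \emph{every} step, a ball's scheme cannot be restricted verbatim to the sparse set $A_0$ but must be reorganised around its non-uniform distribution. The other delicate point is the separation in (ii): ``$A_1 S\cap A_2 S=\emptyset$'' is a fine-scale condition that quasi-isometries ignore, which is exactly why cuts must be thickened --- harmlessly for cut size, by bounded degree --- and why one needs a little slack in $\rho$ (and in the ball radii) to cope with short $G$-paths that temporarily leave $A_0$, using that their endpoints live in a ball of radius $\Theta(n)$. The rest --- the coarse facts about $f$ and $\bar f$, the fibre bounds, and the coarsening of the recursion --- is routine bookkeeping.
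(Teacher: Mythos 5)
Your overall strategy---pull $\bar f(B^H_n)$ into a $\Theta(n)$-ball of $G$, give that subset a scheme (Step 1), and pull it back along $\bar f$---is sensible, and Step 1 (the heavy-path, weighted-separator argument) is fine modulo routine bookkeeping; it is in fact more careful than the paper's own one-paragraph proof, which just pushes a ball's scheme through the quasi-isometry and restricts to a ball. The genuine gap is in your separation check (ii) of Step 2. The definition of a splitting scheme only guarantees $A_1S\cap A_2S=\emptyset$, i.e.\ that $A_1$ and $A_2$ are at some fixed bounded distance (at least $3$ in the group version) in the ambient graph; it does \emph{not} make $C$ a cutset of the ambient graph. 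So your claim that ``any short $G$-path from $A_1'$ to $A_2'$ must pass through $N^G_\rho(C)$'' is false: points $a_1\in A_1$ and $a_2\in A_2$ can be at $G$-distance $3$ with the connecting corridor consisting of vertices that do not lie in the current node $A$ at all---tracing the recursion, such a corridor vertex adjacent to $A_1$ must lie in some \emph{ancestor} cut or outside the root ball---and these vertices can be arbitrarily far from the current cut $C$. Hence thickening only $C$ by a constant $\rho$ does not give the scale-$(\lambda+\mu)$ separation you need, and the ``slack in $\rho$ and in the radii'' you invoke does not address this, since the offending corridors can avoid $N^G_\rho(C)$ entirely.

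The missing ingredient is to make each node coarsely separated from \emph{everything removed so far}: at each node, the cut you pull back should contain a $\rho$-thickening of the union of the cuts of all ancestors as well (equivalently, maintain as an invariant that each node is at $G$-distance more than $\rho$ from all previously removed material), and Step 1 should be run inside the slightly larger ball $B^G_{m+\rho}$ so that corridors of length at most $\rho$ cannot exit the root ball. This is still polynomial: in a bounded-degree graph the recursion depth is $O(m)$, so the accumulated cut has size $O(m)\cdot\mathrm{poly}(m)$, and thickening and taking $\bar f$-preimages only multiply by constants; balance is unaffected because enlarging the cut only shrinks the children. With that change your (ii) holds, and with it the coarsening step in (iii), which also relies on pairwise separation of the $2^q$ pieces; as written, however, the separation---which is exactly where the real content of the lemma sits, since the fine-scale condition $A_1S\cap A_2S=\emptyset$ is not quasi-isometry invariant---is not established.
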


\begin{proof}
Balls of one graph are sandwiched between balls of the other graph by the quasi-isometry, so the quasi-isometry maps polynomial splitting schemes of a ball to polynomial splitting schemes of some shape that contains a large ball of some smaller radius. By restricting it to the ball, we obtain a polynomial splitting scheme for it. The radii change only by multiplication with the QI constants, so being of polynomial size retains its meaning.
\end{proof}

\begin{lemma}
Surface groups have polynomial splitting schemes.
\end{lemma}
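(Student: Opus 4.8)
The plan is to reduce to a single model graph and then exhibit explicit cuts there. By Lemma~\ref{lem:QI}, admitting polynomial splitting schemes is a quasi-isometry invariant, so it suffices to verify it for one graph in each quasi-isometry class of surface groups. The genus-$1$ group is $\Z^2$, where $B_n$ is a diamond of $\Theta(n^2)$ vertices that can be cut by a bounded-width vertical strip (of $O(n)$ vertices) into two roughly equal halves at distance $\ge 3$; an easy sweeping argument then shows that every finite $A \subseteq \Z^2$ of diameter $\le D$ has a $\tfrac23$-balanced cut of size $O(D)$. The trivial and negative-genus cases are handled by passing to a commensurable, hence (by Lemma~\ref{lem:QI}) quasi-isometric, group. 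The remaining case is the nonamenable surface groups ($g \ge 2$), which by Lemma~\ref{lem:Surface} are all quasi-isometric to the pentagon model $P$; so the task reduces to showing that $P$ admits polynomial splitting schemes.

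For $P$ I would prove the following uniform statement: there is $\alpha < 1$ such that every finite nonempty $A \subseteq V(P)$ of diameter $D$ admits a cutset $C$ with $|C| = O(D)$ and $A \setminus C = A_1 \sqcup A_2$ with $d(A_1, A_2) \ge 3$ (equivalently $A_1 S \cap A_2 S = \emptyset$) and $|A_i| \le \alpha|A|$. Granting this, $B_n$ has diameter $2n$, and every set arising in the recursion is a subset of $B_n$, hence also has diameter $\le 2n$; thus all cuts in the whole recursion tree have size $O(n) \le n^d + d$ for, say, $d = 2$, and the recursion bottoms out on any set $A$ with $|A| \le n^d + d$ by taking $C = A$ (so $A_1 = A_2 = \emptyset$). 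This yields an $\alpha$-splitting scheme of $B_n$ by $(n^d+d)$-cuts for all large $n$, and the finitely many small $n$ are absorbed by enlarging $d$.

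To build the cut in $P$, I would use its horocyclic (``tree of lines'') structure: the $y$-coordinate is a level function, each level is a copy of $\Z$, and $(x,y)$ has the unique parent $(\lfloor x/2\rfloor, y-1)$ and two children $(2x,y+1),(2x+1,y+1)$. Given $A$, let $r$ be a common ancestor of all vertices of $A$ in this binary tree; since $A$ has diameter $D$, $r$ sits at level at least $y_{\min}(A) - O(D)$, so the ``tree above $A$'' has level-height $O(D)$. Run the heavy-path rule from $r$: always continue into the child whose subtree contains more elements of $A$, obtaining a vertical geodesic path $P^{\ast}$ from $r$ to a leaf, of $O(D)$ vertices. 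Let $C$ be $P^{\ast}$ together with its down-ray from $r$, each thickened by a constant number of horizontal columns. The key combinatorial check is that in $P$ an up-move, a down-move and a horizontal move all preserve the side of a vertical geodesic (essentially the ``pentagon'' inequalities $2x+\epsilon < 2\xi + \epsilon'$ when $x < \xi$, and the mirror versions), and that $A$ has no vertices below level $y_{\min}(A)$, so there is no ``escape route'' underneath $r$; hence $A \setminus C$ splits into $A_1$ (left of $P^{\ast}$) and $A_2$ (right of $P^{\ast}$) with $A_1 S \cap A_2 S = \emptyset$ once enough columns are included. For the balance, observe that $A \setminus P^{\ast}$ is the disjoint union of the subtrees hanging off the off-path children along $P^{\ast}$, each of which — by the heavy-path choice — carries at most half the weight of the then-remaining subtree, hence at most $\tfrac12|A|$; assigning these hanging subtrees greedily to $A_1$ or $A_2$ (always to the currently lighter side) keeps each side at most $\tfrac34|A|$ by a standard bin-packing estimate, so $\alpha = \tfrac34$ works.

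The main obstacle I anticipate is the bookkeeping in the separator check that the thickened vertical path really gives $d(A_1, A_2) \ge 3$: one must verify that no short path leaves and re-enters a side, including paths that dip a few levels below $r$, which is exactly why the down-ray of $r$ also has to be thickened. A secondary point requiring care is that the ``tree above $A$'' and the heavy path must be defined purely combinatorially (without reference to any basepoint-dependent geodesic), so that the weights used in the bin-packing step genuinely partition $A \setminus P^{\ast}$. Once these are pinned down, the rest is routine, and transporting the conclusion back along the quasi-isometries of Lemma~\ref{lem:Surface} — which only rescale radii by multiplicative constants, preserving polynomiality — completes the proof.
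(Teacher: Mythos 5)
Your overall skeleton (reduce via Lemma~\ref{lem:QI} and Lemma~\ref{lem:Surface} to the pentagon model, cut along thickened vertical geodesics, recurse) matches the paper, but the two quantitative steps of your uniform claim have genuine gaps. First, the assertion that a binary-tree common ancestor $r$ of $A$ sits within $O(D)$ levels of $A$ is false: the set $A=\{(2^k-1,y),(2^k,y)\}$ has diameter $1$ in the pentagon model, yet the least common ancestor of the two vertices lies $k+1$ levels above, so the vertical path from $r$ is not of length $O(D)$ and the ``cut of size $O(D)$'' statement is not what your construction proves. This particular defect is survivable in the application, because every set in the recursion lies in $B_n$ and any vertical path meeting the shadow of $B_n$ has only $O(n)$ vertices inside $B_n$ (and only $C\cap A$ matters), so you should phrase the bound relative to the ambient ball rather than the diameter of $A$.

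The more serious gap is the balance step. The side of the separator on which each hanging subtree lies is forced by geometry: if the heavy path descends into the left child, the off-path subtree is the right child's subtree and sits to the right of the path, and vice versa. So the ``assign each hanging subtree greedily to the currently lighter side'' move is not available, and with the heavy-child rule all off-path weight can land on one side: take $A$ to consist of a rightward branch together with a comparable blob hanging to the left at each level; heavy-child descent follows the branch, every blob hangs left, and the split is roughly $(|A|-O(n),\,0)$, violating $|A_i|\le\alpha|A|$. To repair this you can either (i) do what the paper does --- restrict attention to the structured triangle regions $P_k$ over dyadic intervals and always cut at the geodesic to the dyadic midpoint, where balance is a direct count and the recursion never meets arbitrary subsets; or (ii) keep your ``arbitrary finite subset of $B_n$'' statement but choose the cutting geodesic by an intermediate-value sweep over vertical geodesics parametrized by their boundary point: moving the geodesic past a dyadic point changes each side by at most $O(n)$ points of $B_n$ (one per level), so either $|A|=O(n^2)$ and you absorb $A$ into $C$, or some geodesic gives both sides at most $\tfrac12|A|+O(n)\le\tfrac23|A|$. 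With one of these fixes (and the observation that the genus-$0,1$ cases are trivial because polynomial growth lets you take $C=A$ once $|A|\le n^d+d$), your route goes through and is in fact a slightly stronger statement than the paper needs.
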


\begin{proof}
It suffices to show that these exist in the pentagon model. Furthermore, it is easy to reduce the problem to finding a polynomial splitting scheme for ``one-sided balls'' $P_k = \{(x, y) \;|\; y \in [0, k], x \in [0, 2^y-1]\}$, i.e.\ vertices reachable from $(0, 0)$ by only moves that increment the second coordinate. Namely, these sets contain balls of the pentagon model, of roughly the same radius.

For the set $P_k$, a polynomial splitting scheme is obtained from geodesics between $(0,0)$ and vertices of the form $(x, k)$ with $x \in [0, 2^y-1]$. Specifically, identify $(x, k)$ with $x/2^k$ and repeatedly cut the real interval in half.
\end{proof}

\begin{lemma}
If $G$ has a polynomial splitting scheme, so does $G \times H$ whenever $H$ has polynomial growth.
\end{lemma}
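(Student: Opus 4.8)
The plan is to realize a ball of $G \times H$ as a Cartesian product of a ball of $G$ with a ball of $H$, and then lift the splitting scheme of $G$ to this product coordinate-wise, exploiting that balls of $H$ have only polynomially many elements.

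First I would fix symmetric generating sets $S_G \ni 1_G$ and $S_H \ni 1_H$ for $G$ and $H$, and take $S_G \times S_H$ as the (finite, symmetric, identity-containing) generating set of $G \times H$. With this choice, padding words with identity generators shows that $(g,h) \in B_n^{G \times H}$ precisely when $d_G(1_G, g) \le n$ and $d_H(1_H, h) \le n$, i.e.\ $B_n^{G \times H} = B_n^G \times B_n^H$. Since the existence of polynomial splitting schemes is independent of the generating set (the remark following the definition, or Lemma~\ref{lem:QI}), it suffices to build a scheme for this one. I would also record that polynomial growth of $H$ means $|B_n^H| \le c\,n^e$ for suitable constants $c,e$, and fix $\alpha < 1$ and $d$ witnessing that $B_n^G$ admits an $\alpha$-splitting scheme by $(n^d+d)$-cuts.

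The key step is the lifting. The splitting scheme of $B_n^G$ is a finite recursive structure; at each node of its recursion one has a set $A \subseteq B_n^G$ together with a cut $C \subseteq A$ of size at most $n^d+d$ and a partition $A \setminus C = A_1 \sqcup A_2$ with $A_1 S_G \cap A_2 S_G = \emptyset$ and $|A_i| \le \alpha|A|$. I would define the corresponding scheme for $B_n^G \times B_n^H$ by replacing, at every node, the set $A$ with $A \times B_n^H$ and the cut $C$ with $C \times B_n^H$. One then checks routinely that $(A \times B_n^H) \setminus (C \times B_n^H) = (A_1 \times B_n^H) \sqcup (A_2 \times B_n^H)$; that the thickened pieces stay disjoint because $(A_i \times B_n^H)(S_G \times S_H) = (A_i S_G) \times (B_n^H S_H)$ and $A_1 S_G \cap A_2 S_G = \emptyset$; that $|A_i \times B_n^H| = |A_i|\,|B_n^H| \le \alpha\,|A \times B_n^H|$; that the empty sets at the leaves stay empty and the recursion still terminates (it descends exactly as on the $G$-side); and that every cut used has size at most $(n^d+d)\cdot c\,n^e$, which is polynomial in $n$. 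Choosing $d'$ large enough that $(n^d+d)\cdot c\,n^e \le n^{d'}+d'$ for all $n \ge 1$ then exhibits an $\alpha$-splitting scheme for $B_n^{G \times H}$ by $(n^{d'}+d')$-cuts, which is exactly what is required.

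There is no serious obstacle here; the one point to be careful about is that the cut-size bound stays \emph{uniform} along the recursion. This is precisely why one wants the $G$-side bound $n^d+d$ to be independent of the node $A$: multiplying everything through by the single polynomial-in-$n$ factor $|B_n^H| \le c\,n^e$ then preserves uniformity and introduces no dependence on the depth of the recursion. The remaining verifications (disjointness of the thickened halves, the $\alpha$-bound on sizes, termination) are all immediate from the product structure $B_n^{G \times H} = B_n^G \times B_n^H$ together with the corresponding properties on the $G$-side.
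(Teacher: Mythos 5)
Your proof is correct and follows essentially the same route as the paper's: take the product generating set (using the generating-set/QI invariance), identify the ball of $G \times H$ with the product of balls, and replace each cut $C$ of the $G$-scheme by $C \times B_n^H$, which stays of polynomial size by the polynomial growth of $H$. The additional verifications you spell out (disjointness of the thickened halves, the $\alpha$-bound, termination, uniformity of the cut-size bound) are exactly the routine checks the paper leaves implicit.
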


\begin{proof}
Taking $S \times T$ as a generating set (which by Lemma~\ref{lem:QI} does not affect the existence of polynomial splitting schemes), a ball of $G \times H$ is a product of balls of $G$ and $H$. Balls $B_k^H$ are polynomial size in $H$, so the cutsets $C$ for balls of $G$ appearing in the definition of a polynomial splitting schemes can simply be replaced by $C \times B_k^H$.
\end{proof}

\begin{theorem}
For free groups, surface groups and their products with virtually nilpotent groups, the automorphism group of any PEPB subshift has PSPACE word problem. Furthermore, in the nonamenable case, automorphism groups of full shifts have f.g.\ subgroups whose word problems are PSPACE hard.
\end{theorem}

\begin{proof}
These groups have polynomial splitting schemes, so the word problem of the automorphism group of any PEPB subshift is in PSPACE by Theorem~\ref{thm:PSPACEUpperBound}. The second claim follows from Theorem~\ref{thm:MonoidCase}, since nonamenable free groups and surface groups have free submonoids.
\end{proof}

\subsection{The co-NEXPTIME upper bound}

We now give the upper bound for the word problem that ``always'' holds; of course, we need some technical assumptions on the complexity of the group $G$ itself, and the complexity of the subshift.

\begin{theorem}
\label{thm:CoNEXPTIMEUpperBound}
Let $G$ be an f.g.\ group. The automorphism group of any subshift $X \subset A^G$ has co-NEXPTIME word problem under either of the following conditions:
\begin{itemize}
\item $G$ has co-NEXPTIME word problem and $X$ has PEPB;
\item $G$ has EXPTIME word problem and $X$ has NP language.
\end{itemize}
\end{theorem}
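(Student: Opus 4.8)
The plan is to give a nondeterministic algorithm for the \emph{co}-word problem, i.e.\ one that, given a word $w$ of length $k$ over a fixed generating set of an f.g.\ subgroup $H \leq \Aut(X)$, guesses a witnessing pattern showing that the composite automorphism acts nontrivially, and verifies it within the stated resource bound. As in the proof of Theorem~\ref{thm:PSPACEUpperBound}, fix a bound $r$ on the radii of the generators; then the composite $w$ has radius at most $rk$, so a nontrivial $w$ acts nontrivially on some pattern $p \in A^{B_{rk}}$. The key quantitative point is that $|B_{rk}|$ is at most exponential in $k$ whenever $G$ has at most exponential growth, which is automatic when $G$ has EXPTIME or co-NEXPTIME word problem: one can enumerate $B_{rk}$ by a breadth-first search using the word problem oracle to test equality and membership, and since a group cannot have more than exponential growth relative to a finite generating set, $|B_{rk}| = 2^{O(k)}$. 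Hence a witness pattern, together with the intermediate patterns $c_0, c_1, \ldots, c_{|w|}$ obtained by applying the generators of $w$ one at a time (each $c_i \in A^{B_{r(k-i)}}$, padding the boundary arbitrarily), has total size $2^{O(k)}$ and can be guessed by a NEXPTIME machine.

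For the first case, $G$ has co-NEXPTIME word problem and $X$ has PEPB with degree $t$. Here the guessed object should be a locally valid pattern $q \in A^{B_{(rk)^t}}$ (still of size $2^{O(k)}$ since $(rk)^t$ is polynomial in $k$), whose restriction to $B_{rk}$ plays the role of $c_0$. Local validity is checked by scanning all translates of the finitely many forbidden patterns inside $B_{(rk)^t}$, which is a $2^{O(k)}$-time computation once the ball is enumerated. By the definition of PEPB, local validity of $q$ on $B_{(rk)^t}$ guarantees global validity of $c_0 = q|B_{rk}$, so $c_0$ genuinely extends to a point of $X$. Applying the local rules of the generators in sequence to compute $c_1, \ldots, c_{|w|}$ is again $2^{O(k)}$ work, and at the end we check $(c_{|w|})_{1_G} \neq (c_0)_{1_G}$. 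The only place the word problem of $G$ enters nontrivially is in identifying group elements and in computing neighborhoods $gB_r$; since $G$ has co-NEXPTIME word problem and we only need to perform $2^{O(k)}$ such queries, all of which are over group elements of word length $2^{O(k)}$ — wait, here is the subtlety.

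The main obstacle, and where care is needed, is exactly this: elements of $B_{rk}$ have word length at most $rk$, which is \emph{polynomial} in $k$, so queries to the word problem of $G$ are on inputs of size polynomial in $k$, and there are only $2^{O(k)}$ of them; thus even a co-NEXPTIME word problem for $G$ is being invoked on short inputs, contributing only $2^{O(k)}$ time overall, which is fine — but one must phrase the nondeterminism correctly, since the co-word problem of $G$ is itself a \emph{co}-nondeterministic predicate. The clean way is: the nontriviality witness for $w$ (pattern plus intermediate tableaux) is guessed existentially, and every appeal to ``$g = h$ in $G$'' or ``$gN$ is such-and-such'' that the verifier makes is a \emph{negative} instance of the word problem or is decided by the EXPTIME machine; because NEXPTIME is closed under polynomial-time (indeed $2^{O(k)}$-time) truth-table reductions to EXPTIME and to co-NEXPTIME languages queried on polynomially-short inputs, the whole verification stays in NEXPTIME, hence the co-word problem of $\Aut(X)$ is in co-NEXPTIME. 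For the second case, $G$ has EXPTIME word problem and $X$ has NP language: now one does not need PEPB; instead, after guessing $c_0 \in A^{B_{rk}}$ and a polynomial-size NP-witness that $c_0$ is globally valid (the NP language of $X$ is, by hypothesis, the set of globally valid patterns), the verifier checks the NP-witness in $2^{O(k)}$ time, computes the $c_i$ using the EXPTIME word problem of $G$ to resolve group elements, and compares endpoints. In both cases the resource bound is nondeterministic exponential time for the co-problem, so the word problem of $\Aut(X)$ is in co-NEXPTIME, completing the proof.
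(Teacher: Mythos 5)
Your second case (EXPTIME word problem, NP language) is essentially the paper's argument and is fine: compute the ball deterministically in time exponential in $k$ using the EXPTIME word problem, guess the pattern together with the NP-witness for membership in the language (both of size $2^{O(k)}$), apply the local rules, and compare at the origin.

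The first case, however, has a genuine gap, and it is exactly at the point you flagged as ``the subtlety'' and then resolved incorrectly. Your algorithm presupposes that the verifier can enumerate and identify the ball $B_{(rk)^t}$ (``once the ball is enumerated'', ``identifying group elements and computing neighborhoods $gB_r$''). Building the ball requires deciding, for words of length polynomial in $k$, whether they represent \emph{equal} group elements. With only a co-NEXPTIME word problem, equality is a co-NEXPTIME predicate even on these short inputs: it is a universal statement over certificates of length $2^{\mathrm{poly}(k)}$, so it is neither decidable in time $2^{O(k)}$ (your claim that short queries ``contribute only $2^{O(k)}$ time'' is false) nor certifiable existentially by the NEXPTIME verifier. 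Likewise, the closure you invoke --- that NEXPTIME is closed under exponential-time truth-table reductions to co-NEXPTIME languages on polynomially short inputs --- is not a known closure and would in particular place co-NEXPTIME inside NEXPTIME; it cannot be used. The only group-theoretic facts an NEXPTIME verifier can certify are \emph{inequalities} $g \neq h$ (negative instances), and you have not shown how to run your algorithm using inequalities alone: treating all distinct words as distinct cells breaks soundness, since a ``witness'' pattern could assign different symbols to words that are actually equal in $G$.

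The paper's proof supplies precisely the missing mechanism. Instead of computing the ball, it \emph{guesses} a good $S$-labeled graph $H$ (a candidate for $B_{(nr)^t}$), and verifies only that all pairs of distinct vertices represent distinct group elements --- each such check is an NEXPTIME fact and can be folded into the existential guess. An accepted $H$ need not be the true ball, but the checks guarantee that the true ball maps onto $H$ by a label-preserving homomorphism (over-identification is the only remaining error). Soundness is then rescued by pulling the guessed locally valid pattern on $H$ back along this covering to a locally valid pattern on the genuine ball, where PEPB upgrades local to global validity of the central $B_{nr}$-pattern and the nontrivial action at the origin transfers; completeness uses $H$ equal to the true ball. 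Without this graph-guessing and covering/lifting step (or some substitute that avoids all positive word-problem queries), your first case does not go through.
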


In particular, if $G$ has word problem in P (as natural groups tend to do), then it has EXPTIME word problem. EXPTIME is known to be strictly larger than P, and there are also groups with word problem in EXPTIME $\setminus$ P.


\begin{proof}
We start with the first item, i.e.\ assume $G$ has co-NEXPTIME word problem and $X$ has PEPB with degree $t$. Let $S$ be the generating set of $G$. We show that the co-word problem (nontriviality of a given element) is in NEXPTIME. Given $w$ over a generating set of automorphisms with radius $r$, guess a good $S$-labeled directed graph $H$ (which is supposed to be the actual ball $B_{(nr)^t}(1)$ in $G$). Check (deterministically) that it is equal to the $(nr)^t$-ball $B_{(nr)^t}(h_0)$ around some $h_0 \in H$ in $H$.

For every pair of distinct elements $h, h' \in B_{(nr)^t}(h_0)$ in the graph $H$, check that $h^{-1} h'$ is nontrivial in $G$. This check can be done in NEXPTIME for each individual pair (it takes at most $e^{O(((nr)^t)^d)} = e^{O(n^{td})}$ steps for some $d$), and since there are at most $|S|^{{nr}^t}$ pairs it can be done in $|S|^{(nr)^t} e^{O(n^{td})} = e^{O(n^{t(d+1)})}$ time in total. If the check succeeds, we do not necessarily know that $H$ is equal to the actual ball $B_{(nr)^t}$, but we can be sure that $B_{(nr)^t}$ has the graph $H$ as an image under an edge label preserving graph homomorphism (or in more group-theoretic terms, $B_{(nr)^t}$ covers $H$). 

Now guess a pattern $p \in A^H$. Check that it has no forbidden patterns (i.e.\ that it is locally valid), and that applying the local rules of the automorphisms in $w$ produce a difference at the origin $h$. If such $p$ exists, $w$ is nontrivial, as we can lift $p$ to $q \in A^{B_{(nr)^t}}$ without introducing forbidden patterns, and then $w$ acts nontrivially on the central $B_{nr}$-pattern of $q$, which by PEPB is a pattern appearing in $X$. On the other hand, if $w$ is nontrivial, then the algorithm succeeds in procing nontriviality since we can use $H = B_{(nr)^t}$ and any pattern $p$ where $w$ acts nontrivially.

Now suppose $G$ has EXPTIME word problem and $X$ has NP language. In this case, we can actually compute $B_{nr}$, and we can check whether a guessed pattern $p \in A^{B_{nr}}$ is in the language of $X$ by giving it as an input to the assumed NP algorithm. Since the pattern is of exponential size, the runtime is at most polynomial in $O(e^n)$, thus we can do it in NEXPTIME.
\end{proof}

\section{Open questions}

While our approach shows that $A^{\Z^D}$ does not embed in $B^{\Z^d}$ for $D > d$, it seems not to be helpful for comparing $A^{\Z^d}$ and $B^{\Z^d}$ for distinct alphabets. (This is another embeddability question of Hochman.)

Also, while we obtain several nonembeddability results (and we ask several technical results concerning the extensions of these results), the following general problem stays open:

\begin{question}
Are there nontrivial embeddings of $\Aut(A^G)$ into $\Aut(B^H)$, for some nontrivial alphabets $A, B$, and f.g.\ infinite groups $G, H$, such that $G$ is not a free group?
\end{question}

Of course, embeddings exist when $G$ is a subgroup of $H$, and $|A| \leq |B|$. However, we do not know any other examples.

We showed in Theorem~\ref{thm:Lamplighter} that the automorphism group of a full shift on the lamplighter group has co-NEXPTIME complete word problem, which as we showed in Theorem~\ref{thm:CoNEXPTIMEUpperBound} is in a sense the maximal possible. It is an interesting question whether we can reach this upper bound in a large class of groups. 

\begin{question}
For which groups $G$ does $\Aut(A^G)$ have co-NEXPTIME-complete word problem?
\end{question}

Some interesting cases to look at would be polycyclic groups (that are not virtually cyclic), hyperbolic groups and $F_2 \times F_2$.

If one can Lipschitz embed a copy of the lampligher group in a group $G$, then one should be able to obtain the hardness result with the usual method of Section~\ref{sec:Arboreous}, at least with some alphabets. However, at present we do not know how common this is.

In any case, it seems likely that this method is suboptimal, and one should instead consider a generalized notion of ``biarboreousness'' (the lamplighter group is a horocyclic product of trees, and this is precisely what allows us to perform computation on an exponential sized grid), allowing for contracting paths instead of strict trees. But again, we do not know how commonly such structures are found in groups.

There are many questions left about simplistic growth. We showed in Theorem~\ref{thm:SimplisticLowerBoundGeneral} that for groups with stretched exponential growth, one has a certain lower bound (far surpassing the corresponding upper bound on free abelian groups) on simplistic growth. This shows that such groups do not embed in $\Aut(A^{\Z^d})$ (or other groups with polynomial systolic growth). However, we do not immediately obtain nonembeddability results between other $\Aut(A^G)$ groups, among groups $G$ of stretched exponential growth.

By optimizing the degree $\ell$, which relates to both the growth rate of the group and the efficiency of implementation of the scheme, we would possibly allow proving many more noninclusions between $\Aut(A^G)$ groups. However, at present it is not clear what the optimal approach is.

\begin{question}
Can we in general attain simplistic growth in $\Aut(A^G)$, which is closely driven by the growth $f$ of the group $G$? For example, can we always (or in a large class of groups) reach $e^{f(n)/\poly\log(n)}!$ simplistic growth?
\end{question}

In the case of groups of polynomial growth (equivalently, virtually nilpotent groups), it seems likely that one can use normal forms for group elements, and implement a variant of Lemma~\ref{lem:LowerBoundTechnical} to always attain roughly the (factorial of the exponential of the) growth rate, as we do in the case of abelian groups.

We note, however, that this approach is fundamentally limited, and the maximal simplistic growths we can attain with such an approach are of the form $e^{2^{\sqrt[d]{n}}}!$. It is certainly possible to have much higher simplistic growths in some groups. For example, if the group $G$ contains an element with prime order $p$ and word norm $n$, then the corresponding right shift shows that the simplistic growth satisfies $f(n) = p$.

There is in fact \emph{no limit} to how quickly elements with high (but finite) prime order can be reached in $2$-generated groups, by (the proof of) the embedding theorem in \cite{HiNeNe49}. Specifically, this proof shows that for each $n$, we can freely choose the order of a single element with word norm $O(n)$. In the residually finite case, we have an upper bound on simplistic growth in terms of the systolic growth, so it would be natural to try to show that the systolic growth (rather than the growth of balls) can be reached, or at least approached, in a large class of groups.

\begin{question}
Can we in general attain simplistic growth in $\Aut(A^G)$, which is closely driven by the systolic growth of the group $G$?
\end{question}

Finally, the technique of Section~\ref{sec:PSPACEUpperBound} is based on ``polynomial splitting schemes'', whose existence we show for free and surface groups (and slightly beyond). 

\begin{question}
Which groups admit polynomial splitting schemes?
\end{question}

We leave it as an open question what groups admit such schemes. One interesting question is whether they exist for small cancellation groups or hyperbolic groups with one-dimensional boundary.

\bibliographystyle{plain}
\bibliography{../../../bib/bib}{}

\def\ocirc#1{\ifmmode\setbox0=\hbox{$#1$}\dimen0=\ht0 \advance\dimen0
  by1pt\rlap{\hbox to\wd0{\hss\raise\dimen0
  \hbox{\hskip.2em$\br\circ$}\hss}}#1\else {\accent"17 #1}\fi}\def\cprime{$'$}
\begin{thebibliography}{10}

\bibitem{ArBa09}
Sanjeev Arora and Boaz Barak.
\newblock {\em Computational complexity: a modern approach}.
\newblock Cambridge University Press, 2009.

\bibitem{BaCaRi25}
Sebastián Barbieri, Nicanor Carrasco-Vargas, and Paola Rivera-Burgos.
\newblock The automorphism group of a strongly irreducible subshift on a group,
  2025.

\bibitem{Ba89}
David~A. Barrington.
\newblock Bounded-width polynomial-size branching programs recognize exactly
  those languages in {NC1}.
\newblock {\em Journal of Computer and System Sciences}, 38(1):150 -- 164,
  1989.

\bibitem{BaSa24}
Laurent Bartholdi and Ville Salo.
\newblock Shifts on the lamplighter group, 2024.

\bibitem{Be72}
Edward Bertram.
\newblock Even permutations as a product of two conjugate cycles.
\newblock {\em Journal of Combinatorial Theory, Series A}, 12(3):368--380,
  1972.

\bibitem{Bi20}
J.~C. Birget.
\newblock The word problem of the {B}rin-{T}hompson group is conp-complete.
\newblock {\em Journal of Algebra}, 553:268 -- 318, 2020.

\bibitem{Bo88a}
Ronald~V. Book.
\newblock Dehn's algorithm and the complexity of word problems.
\newblock {\em The American Mathematical Monthly}, 95(10):919--925, 1988.

\bibitem{BoCo16}
Khalid Bou-Rabee and Yves Cornulier.
\newblock Systolic growth of linear groups.
\newblock {\em Proceedings of the American Mathematical Society},
  144(2):529--533, 2016.

\bibitem{BoMc11}
Khalid Bou-Rabee and David~Ben McReynolds.
\newblock Asymptotic growth and least common multiples in groups.
\newblock {\em Bulletin of the London Mathematical Society}, 43(6):1059--1068,
  2011.

\bibitem{BoLiRu88}
Mike Boyle, Douglas Lind, and Daniel Rudolph.
\newblock The automorphism group of a shift of finite type.
\newblock {\em Transactions of the American Mathematical Society}, 306(1):pp.
  71--114, 1988.

\bibitem{CaSa22}
Antonin Callard and Ville Salo.
\newblock Distortion element in the automorphism group of a full shift.
\newblock {\em Ergodic Theory and Dynamical Systems}, pages 1--61, 2022.

\bibitem{ChKoSt81}
Ashok~K. Chandra, Dexter~C. Kozen, and Larry~J. Stockmeyer.
\newblock Alternation.
\newblock {\em J. ACM}, 28:114--133, January 1981.

\bibitem{Ha00}
Pierre de~La~Harpe.
\newblock {\em Topics in geometric group theory}.
\newblock University of Chicago Press, 2000.

\bibitem{De11}
Max Dehn.
\newblock {\"U}ber unendliche diskontinuierliche {G}ruppen.
\newblock {\em Mathematische Annalen}, 71(1):116--144, 1911.

\bibitem{EpSc24}
Jeremias Epperlein and Scott Schmieding.
\newblock Hyperspatiality for isomorphisms of stabilized automorphism groups of
  shifts of finite type, 2024.

\bibitem{ErZh20}
Anna Erschler and Tianyi Zheng.
\newblock Growth of periodic grigorchuk groups.
\newblock {\em Inventiones mathematicae}, 219(3):1069--1155, 2020.

\bibitem{Gr14}
Rostislav Grigorchuk.
\newblock On the gap conjecture concerning group growth.
\newblock {\em Bulletin of Mathematical Sciences}, 4(1):113--128, 2014.

\bibitem{Gr84}
Rostislav~I Grigorchuk.
\newblock Degrees of growth of finitely generated groups, and the theory of
  invariant means.
\newblock {\em Mathematics of the USSR-Izvestiya}, 25(2):259, 1985.

\bibitem{Gr81a}
Michael Gromov.
\newblock Groups of polynomial growth and expanding maps (with an appendix by
  jacques tits).
\newblock {\em Publications Math{\'e}matiques de l'IH{\'E}S}, 53:53--78, 1981.

\bibitem{HiNeNe49}
Graham Higman, Bernhard~Hermann Neumann, and Hanna Neumann.
\newblock Embedding theorems for groups.
\newblock {\em Journal of the London Mathematical Society}, 1(4):247--254,
  1949.

\bibitem{Ho10}
Michael Hochman.
\newblock On the automorphism groups of multidimensional shifts of finite type.
\newblock {\em Ergodic Theory Dynam. Systems}, 30(3):809--840, 2010.

\bibitem{It51}
Noboru Ito.
\newblock A theorem on the alternating group an (n> 5).
\newblock {\em Math. Japon}, 2(2):59--60, 1951.

\bibitem{KhMySa17}
Olga Kharlampovich, Alexei Myasnikov, and Mark Sapir.
\newblock Algorithmically complex residually finite groups.
\newblock {\em Bulletin of Mathematical Sciences}, 7:309--352, 2017.

\bibitem{KiRo90}
K.~H. Kim and F.~W. Roush.
\newblock On the automorphism groups of subshifts.
\newblock {\em Pure Mathematics and Applications}, 1(4):203--230, 1990.

\bibitem{Or51}
Oystein Ore.
\newblock Some remarks on commutators.
\newblock {\em Proc. Amer. Math. Soc.}, 2:307--314, 1951.

\bibitem{Ro71}
Raphael~M. Robinson.
\newblock Undecidability and nonperiodicity for tilings of the plane.
\newblock {\em Invent. Math.}, 12:177--209, 1971.

\bibitem{Ry72}
J.~Patrick Ryan.
\newblock The shift and commutativity.
\newblock {\em Mathematical systems theory}, 6(1-2):82--85, 1972.

\bibitem{Sa20e}
Ville Salo.
\newblock Conjugacy of reversible cellular automata and one-head machines,
  2020.

\bibitem{Sa21e}
Ville Salo.
\newblock Trees in positive entropy subshifts.
\newblock {\em Axioms}, 10(2):77, 2021.

\bibitem{Sa22b}
Ville Salo.
\newblock Universal groups of cellular automata.
\newblock In {\em Colloquium Mathematicum}, volume 169, pages 39--77. Instytut
  Matematyczny Polskiej Akademii Nauk, 2022.

\bibitem{SaSc25}
Ville Salo and Scott Schmieding.
\newblock Finitary {R}yan's and local $\mathcal{Q}$ entropy for
  $\mathbb{Z}^{d}$ subshifts, 2025.

\bibitem{SaTo14c}
Ville Salo and Ilkka T{\"{o}}rm{\"{a}}.
\newblock Playing with subshifts.
\newblock {\em Fundam. Inform.}, 132(1):131--152, 2014.

\bibitem{Sc22}
Scott Schmieding.
\newblock Local {$\mathcal{P}$} entropy and stabilized automorphism groups of
  subshifts.
\newblock {\em Invent. Math.}, 227(3):963--995, 2022.

\bibitem{St15}
Benjamin Steinberg.
\newblock On some algorithmic properties of finite state automorphisms of
  rooted trees.
\newblock {\em Contemporary Mathematics}, 633:115--123, 2015.

\bibitem{WaWe23}
Jan~Philipp W{\"a}chter and Armin Wei{\ss}.
\newblock An automaton group with pspace-complete word problem.
\newblock {\em Theory of Computing Systems}, 67(1):178--218, 2023.

\end{thebibliography}

\newpage
\appendix

\section{Index}
\label{sec:Index}

\begin{tabular}{ll}
$A$ & an alphabet, often $B \times C$ in case of $\PAut$ \\
$\Aut(X)$ & automorphism group of a subshift $X$ \\
$B$ & the control part of an alphabet (for controlled permutations), or coordinate part (for ripple catching) \\
$\beta_{\pi, \ell}$ & applies $\pi$ on the $C$-component of the color of a node $u$ if the $B$-component of $u$ contains $b$ such that $\rho(b) = \ell$. \\
$C$ & the permuted part of an alphabet \\
$d, D$ & dimensions; or $d$ can also be a distance \\
$\eta_\pi \in \PAut(A)$ & Symbol permutation $\pi \in \Sym(A)$ \\
$f, g$ & functions \\
$G$ & the shift group, acting on $A^G$ \\
$\gamma_{\pi}$ & apply $\pi$ at a node (in the $C$-component) if that node is successful \\
$\mathcal{G}, \mathcal{H}$ & a subgroup of $\Aut(X)$ \\
$\mathcal{G}_{m, C, B}$ & the group of controlled permutations  \\
$\mathcal{G}_{S, C, B}$ & the ripple group \\
$i, j, k, m, n, p$ & numbers, various uses \\
$\mathcal{Q}_{S, C, B}$ & good $S$-labeled $B \times C$ colored graphs  \\
$\PAut(A), \PAut[B; C]$ & groups generated by partial shifts and symbol permutations \\
$\pi$ & permutation, usually $\pi \in \Sym(C)$ \\
$\pi_{b,i} \in \mathcal{G}_{m,C,B}$ & the generator that applies $\pi$ if the $i$th bit of the $B$ component contains $b$ \\
$\phi_{\pi, c} \in \mathcal{G}_{S, C, B}$ & apply $\pi$ in $C$-component of node $u$ if $S$-neighbors of $u$ (that exist) have symbol $c$ \\
$\phi_{\pi, c, s} \in \PAut(A)$ & $\phi_{\pi, c}$-analog (for a single $s \in S$) in automorphism group \\
$\phi_{\pi, c, S} \in \PAut(A)$ & $\phi_{\pi, c}$-analog in automorphism group \\
$\psi_{\pi, \ell, c}$ & apply $\pi$ in $C$-component of node $u$ if rank of $u$ is $\ell$ and $S$-neighbors of $u$ have symbol $c$ \\
$\rho$ & rank function of a good ranked graph \\
$\sigma_{g, i}$ & partial (right!) shift by $g$ on track $i$ \\
$s$ & edge-label, or generator of a free group \\
$S$ & finite set of edge-labels, or generators of a free group; often acts on labels \\
$t$ & word over edge-labels \\
$u, v$ & nodes of an abstract graph; or finite words \\
$\vec{u}, \vec{v}$ & vectors in $\Z^d$ or $\Z^D$ \\
$w$ & finite word \\
$x, y, z$ & configurations in a subshifts \\
\end{tabular}

\section{Complexity theory}
\label{sec:CT}

\subsection{Basic complexity theoretic definitions}
\label{sec:BasicCT}

In this section, we give standard complexity theoretic definitions of Turing machines, P, NP, PSPACE, NEXPSPACE and NC$^1$, as well as some terminology we use with them. 

A \emph{complexity class} is a set of languages $L \subset \Sigma^*$ for finite alphabets $\Sigma$.

A \emph{nondeterministic Turing machine} $T$ is a tuple $(Q, \Sigma, \Delta, q_{\mathrm{init}}, F, \square)$, where $Q$ is a finite set (of \emph{states}), $\Sigma$ is an alphabet (the \emph{tape alphabet}), $F \subset Q$ is the set of \emph{final states}, $q_{\mathrm{init}}$ is the \emph{initial state}, $\Delta \subset Q\times \Sigma \times Q \times \Sigma \times \{-1, 0, 1\}$, and $\square \in \Sigma$ is the \emph{blank symbol}. An \emph{instantaneous description} or \emph{ID} is any element of $(\Sigma \cup (Q \times \Sigma))^+$ or $(\Sigma \cup (Q \times \Sigma))^\omega$ where exactly one symbol from $Q \times \Sigma$ appears. In the case of $\omega$, we require all but finitely many symbols are $\square$. The \emph{head position} of an ID $w$ is the length of the unique word $u$ such that $w = u(q, s) v$ for some $q \in Q, s \in \Sigma$, and $q$ is the \emph{head state}. It is sometimes useful to use another coding of IDs where we use words in $\Sigma^*Q \Sigma^+$, by using a word from $Q\Sigma$ in place of the $(Q, \Sigma)$-symbol.

An instantaneous description is \emph{initial} if the head position is $0$, and \emph{final} if the head state is in $F$. Next we define \emph{computation steps (of type $(q, s, q', s', d) \in \Delta$)}. If $(q, s, q', s', 0) \in \Delta$, then there is a computation step (of this type) from $u(q, s)v$ to $u(q', s')v$ for all (possibly infinite) words $u, v$ over $\Sigma$. If $(q, s, q', s', 1) \in \Delta$, then there is a computation step from $u(q, s)av$ to $us'(q', a)v$ for all $u, v$, and any symbol $a \in \Sigma$. If $(q, s, q', s', -1) \in \Delta$, then there is a transition from $ua(q, s)v$ to $u(q', a)s'v$ for all $u, v$, and any symbol $a \in \Sigma$. A \emph{computation} is a sequence of IDs where all adjacent IDs are computation steps. A computation is \emph{finite} if it has finitely many IDs. The \emph{length} of a finite computation is the number of IDs in it.

We say $T$ is \emph{deterministic} if the implication $(q, s, q', s', d), (q, s, q'', s'', d') \in \Delta \implies q' = q'', s' = s'', d = d'$ holds for all $(q, s) \in Q \times S$. For $w = av \in \Sigma^*$ (with $a \in \Sigma$) write $q_{\init} w = (q_{\init}, a) v$, and $q_{\init}\epsilon = q_{\init}\square$. We say $T$ \emph{accepts} $w \in \Sigma^*$ \emph{in time $x \in \N$ and space $y \in \N$} if there is a computation whose initial ID is $q_{\init}w\square^{y - |w|}$ and length at most $x$. We may omit the time or space requirement separately.

Another less obvious coding we use is by Wang tiles. Specifically, there is a standard way to construct a finite set of Wang tiles $W \subset C^{\{\mathrm{north}, \mathrm{south}, \mathrm{west}, \mathrm{east}\}}$ from a Turing machine, so that tilings of rectangles (satisfying the usual Wang tiling rule that the left (west) color of a tile must match the right (east) color of its left (west neighbor)) correspond to computations of the Turing machine. We omit the detailed construction (see e.g.\ \cite{Ro71}), but the idea is simply to use arrows to transmit the Turing machine head correctly (the colors ensure that the arrows match, and the head moves to the correct tile.

The complexity class P (resp.\ NP) contains the languages $L$ which are accepted by a deterministic (resp.\ nondeterministic) Turing machine in polynomial time. Specifically, $L \subset \Sigma^*$ is in NP if there exists a Turing machine $T$ and $d$ such that for all $w \in \Sigma^*$, $T$ accepts $w$ in polynomial time. The class PSPACE contains the languages accepted by a deterministic Turing machine with polynomial space (by a theorem of Savitch, one may equivalently use nondeterministic Turing machines). NEXPTIME contains the languages accepted in time $2^{O(n^d)}$ for some $d$. 

If $\mathcal{C}$ is a complexity class, co-$\mathcal{C}$ is the set of complements of languages in $\mathcal{C}$. We say a language $L \subset A^*$ is \emph{$\mathcal{C}$-hard} if for each $L' \in \mathcal{C}$ with $L' \subset B^*$, there is a deterministic Turing machine which in polynomial time computes (in the sense that after it enters a final state, some specific track of its output contains the output word) from $w \in B^*$ a word $u \in A^*$ such that $u \in L$ if and only if $w \in L'$. We say $L$ is $\mathcal{C}$-complete if it is in $\mathcal{C}$ and $\mathcal{C}$-hard.

A \emph{circuit} is a directed graph with vertices $V(G)$ and edges $E(G) \subset V(G)^2$ which is acyclic (meaning no forward cycles), such that each vertex a label from $\{\i, o, \wedge, \vee, \neg\}$, a vertex with label from $\{\wedge, \vee\}$ has in-degree $2$, a vertex with label $\i$ has in-degree $0$, and others have in-degree $1$. Furthermore, vertices with label $o$ have out-degree $0$. If there are $m$ vertices with label $\i$ and $n$ with label $o$, the circuit computes a function $\{0,1\}^m \to \{0,1\}^n$ in an obvious way, by (w.r.t.\ some ordering of vertices) writes the bits on the vertices with label $\i$, computes a bit on the $\{\wedge, \vee, \neg\}$-nodes by using the labeling Boolean function on the input bits, and finally reading off the bits on the $o$-nodes. The \emph{depth} of a circuit is the maximal length of a forward path from an $\i$-node to an $o$-node.

Now, \emph{(nonuniform) NC$^1$} (where NC stands for ``Nick's class'') is the class of languages $L \subset \{0,1\}^*$ such that for some $C$, for each $n$, there is a circuit of depth $C \log n$ that computes the characteristic function of $L \cap \{0,1\}^n$. (Often, polynomial size is also required, since it is relevant in many other circuit-based complexity classes, but in the present case, this is automatic from logarithmic depth.)

\begin{lemma}
In the definition of NC$^1$, one may assume only $\i$-labeled vertices of the circuit may have out-degree greater than $1$.
\end{lemma}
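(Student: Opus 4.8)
The plan is to use the standard trick of unfolding a bounded-depth circuit into a formula (a tree), and to observe that logarithmic depth keeps the size blow-up polynomial, so that membership in NC$^1$ is preserved. Since NC$^1$ as defined here is nonuniform, I only need, for each $n$, the \emph{existence} of a circuit of the desired shape, so no uniformity issues arise.

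First I would take a language $L \in$ NC$^1$ together with, for each $n$, a circuit $E_n$ of depth at most $C\log n$ computing the characteristic function of $L \cap \{0,1\}^n$; by definition such a circuit has a single $o$-labelled vertex $o_n$. Then I would define the unfolded circuit $E_n'$ whose vertices are the backward paths in $E_n$ issuing from $o_n$, that is, sequences $(v_0,v_1,\ldots,v_k)$ with $v_0 = o_n$ and $(v_{i+1},v_i) \in E(E_n)$ for all $i$. The label of such a vertex is the label of its last entry $v_k$; its parent is the path with $v_k$ deleted; and if $v_k$ is not an $\i$-vertex, its children are the one-step extensions $(v_0,\ldots,v_k,u)$ for $(u,v_k)\in E(E_n)$. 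Acyclicity of $E_n$ forces a backward path to have no repeated vertex (a repetition would close a directed cycle), so $E_n'$ is a finite rooted tree; the in-degree constraints on $\wedge,\vee,\neg,\i,o$ are inherited entry-by-entry from $E_n$, so $E_n'$ is a legal circuit in the sense of the paper, and an easy induction on the length of the path shows the value computed at $(v_0,\ldots,v_k)$ in $E_n'$ equals the value of $v_k$ in $E_n$ on the same input; hence $E_n'$ computes the same function as $E_n$.

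Next I would verify the two quantitative points. The depth of $E_n'$ — the longest forward path from an $\i$-vertex to $o_n$ — equals the length of the longest backward path from $o_n$ to an $\i$-vertex in $E_n$, which is exactly the depth of $E_n$, hence at most $C\log n$; so $(E_n')_n$ witnesses $L \in$ NC$^1$. (One also gets, since every non-$\i$ vertex of $E_n$ has in-degree at most $2$, that $E_n'$ has at most $2^{C\log n+1} = 2n^{C}$ vertices, so the size stays polynomial, although size is not part of the definition used here.) Finally, every vertex of $E_n'$ has out-degree exactly $1$, except its root $o_n$, which has out-degree $0$; in particular \emph{no} vertex of $E_n'$ has out-degree greater than $1$, so a fortiori no non-$\i$ vertex does, which is exactly the claimed normal form (indeed it gives the stronger statement that the circuit may be taken to be a tree, input-sharing not even being needed).

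The only real obstacle is bookkeeping: checking that $E_n'$ literally meets the paper's definition of a circuit (acyclicity, the prescribed in-degrees for $\wedge,\vee,\neg,\i$, in-degree $1$ for $o$, and out-degree $0$ for $o$-vertices) and that ``depth'' is preserved on the nose rather than merely up to a constant. There is no genuine difficulty, since the tree construction is arranged precisely so that these properties can be read off edge-by-edge from $E_n$.
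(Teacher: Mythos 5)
Your unfolding construction is the same idea as the paper's proof (traverse backward from the $o$-vertex and replace the DAG by a tree of backward paths, noting that only depth matters so the duplication is harmless), but there is one genuine defect: you unfold \emph{through} the input vertices as well, so each $\i$-labeled vertex of $E_n$ is replaced by many leaves of $E_n'$. Under the paper's definition, the arity of the function computed by a circuit is the number of its $\i$-labeled vertices; your $E_n'$ therefore computes a function $\{0,1\}^m \to \{0,1\}$ with $m$ typically much larger than $n$, and so it does not compute the characteristic function of $L \cap \{0,1\}^n$ in the formal sense required. Your correctness induction silently assumes that all copies of the same original input vertex receive the same bit, i.e.\ it presupposes input sharing at the semantic level, which in this formalism must be realized inside the circuit itself. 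For the same reason your concluding parenthetical --- that one gets the stronger statement that the circuit may be taken to be a tree, with no input sharing at all --- is false as stated; it is precisely why the lemma exempts $\i$-labeled vertices from the out-degree-one requirement.

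The repair is immediate and is exactly what the paper does: stop the unfolding at the inputs, i.e.\ identify all backward paths that end at the same $\i$-vertex (equivalently, keep one copy of each input vertex and route its outgoing edges to all the tree nodes that need it). Then the number of $\i$-vertices is unchanged, the depth is unchanged, every non-$\i$ vertex has out-degree at most one, and your induction goes through verbatim. With that one-line correction your argument coincides with the paper's proof.
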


\begin{proof}
The only requirement is logarithmic depth, so we may make as many copies of the nodes as we well, and there is no need to use the same node twice. Specifically, start from the output (nodes labeled by $o$), and traverse backward to the input, unfolding the acyclic graph to a tree (apart from incoming edges from the input nodes labeled by $\i$).
\end{proof}

One may equivalently think of NC$^1$ as the languages where for each length $n$, there is a Boolean formula describing the words of length $\{0,1\}^n$ (in terms of $n$ basic Boolean variables), and which has depth $O(\log n)$. Internal nodes with out-degree greater than $1$ correspond to variables being reused in this point of view, but as shown this is irrelevant for the strength of the model.

We recall a weak version of the \emph{Master Theorem} from algorithm analysis: if a function $T(n)$ satisfies $T(n) \leq aT(n/b) + f(n)$, then writing $c = \log_b a$ and assuming that $f(n)$ is sandwiched between two polynomials, we have $T(n) = O(n^{c+1} + f(n))$. (We will not need a function $f(n)$.)

\begin{proposition}
The following is known about inclusions between the classes:
\begin{itemize}
\item NC$^1$ $\subset$ P $\subset$ NP $\subset$ PSPACE $\subset$ NEXPSPACE;
\item NC$^1$ $\subsetneq$ PSPACE;
\item NP $\subsetneq$ NEXPSPACE.
\end{itemize}
\end{proposition}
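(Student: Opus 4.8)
The plan is to prove the non-strict chain first, and then obtain each of the two strict separations from a single classical external input: Savitch's theorem together with the deterministic space hierarchy theorem for $\mathrm{NP}\subsetneq\mathrm{NEXPSPACE}$, and a space-bounded diagonalization against small circuits for $\mathrm{NC}^1\subsetneq\mathrm{PSPACE}$.

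\textbf{The non-strict chain.} For $\mathrm{NC}^1\subseteq\mathrm{P}$: if $L\in\mathrm{NC}^1$ has depth bound $C\log n$, the lemma preceding this proposition lets us take the circuit to be a tree apart from fan-out at the input gates, so a depth-$C\log n$ fan-in-$2$ circuit has at most $2^{C\log n}=n^C$ gates; evaluating it gate by gate after a topological sort runs in polynomial time, so $L\in\mathrm{P}$. The inclusion $\mathrm{P}\subseteq\mathrm{NP}$ is immediate. For $\mathrm{NP}\subseteq\mathrm{PSPACE}$: a nondeterministic polynomial-time machine uses at most polynomial space along any one computation branch, and a deterministic machine can enumerate the branches one at a time, reusing this space and storing only a polynomial-length description of the current branch. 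Finally $\mathrm{PSPACE}\subseteq\mathrm{NEXPSPACE}$ is trivial, polynomial space being in particular space $2^{O(n)}$.

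\textbf{$\mathrm{NP}\subsetneq\mathrm{NEXPSPACE}$.} First $\mathrm{NEXPSPACE}=\mathrm{EXPSPACE}$: Savitch's theorem gives $\mathrm{NSPACE}(2^{n^d})\subseteq\mathrm{DSPACE}(2^{2n^d})\subseteq\mathrm{EXPSPACE}$ for every $d$, and the reverse inclusion is trivial. Next $\mathrm{PSPACE}\subsetneq\mathrm{EXPSPACE}$: the space hierarchy theorem, applied with the space-constructible bound $2^n$, yields a language in $\mathrm{DSPACE}(2^n)$ lying in no $\mathrm{DSPACE}(n^k)$ (since $n^k=o(2^n)$), hence outside $\mathrm{PSPACE}=\bigcup_k\mathrm{DSPACE}(n^k)$. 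Combining with $\mathrm{NP}\subseteq\mathrm{PSPACE}$ from the previous paragraph gives $\mathrm{NP}\subseteq\mathrm{PSPACE}\subsetneq\mathrm{EXPSPACE}=\mathrm{NEXPSPACE}$.

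\textbf{$\mathrm{NC}^1\subsetneq\mathrm{PSPACE}$.} Since $\mathrm{NC}^1\subseteq\mathrm{P}\subseteq\mathrm{PSPACE}$, it suffices to exhibit a language in $\mathrm{PSPACE}\setminus\mathrm{NC}^1$, and I would do this by diagonalization against small circuits, which is robust even for the non-uniform $\mathrm{NC}^1$ used here. For a fixed depth constant $C$ the first paragraph shows that depth-$C\log n$ circuits on $n$ inputs (in tree form) have size $O(n^C)$, hence there are only $2^{\mathrm{poly}_C(n)}$ of them. Fix a fast-growing, $\mathrm{PSPACE}$-computable sequence $n_1<n_2<\cdots$ with $2^{\mathrm{poly}_C(n_C)}<2^{2^{n_C}}$, and at length $n_C$ choose, greedily over the $2^{n_C}$ output positions, bit values each equal to the minority vote of the circuits still consistent with the prefix; each such choice kills at least half of them, so after $\mathrm{poly}_C(n_C)$ positions all are killed and the remaining positions can be set to $0$. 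Letting $L$ realize this table at each $n_C$ (and be arbitrary elsewhere) gives $L\notin\mathrm{NC}^1$, since for every $C$ the length $n_C$ already defeats the bound $C\log n$; and $L\in\mathrm{PSPACE}$, because on input $x$ one checks whether $|x|=n_C$ for some $C$ and then runs the greedy process for the first $|x|+1$ positions, storing only the polynomially long decided prefix and, at each step, looping over the $2^{\mathrm{poly}_C(n)}$ circuits (each a polynomial-size object) to take the minority vote.

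\textbf{Main obstacle.} The only genuinely non-routine ingredients are the two cited classical theorems (Savitch and the deterministic space hierarchy theorem), and, in the $\mathrm{NC}^1$ separation, the space-efficiency bookkeeping: the key point is that one never stores a full $2^{n}$-bit truth table, only the polynomially many positions actually needed to eliminate the circuits, each of which is itself described by polynomially many bits. The counting estimates and the $2^{C\log n}=n^C$ size bound are routine given the tree form supplied by the preceding lemma.
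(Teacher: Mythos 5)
The paper itself gives no proof of this proposition; it is stated as classical background. Your non-strict chain (modulo the caveat below) and your proof of $\mathrm{NP} \subsetneq \mathrm{NEXPSPACE}$ via Savitch's theorem plus the deterministic space hierarchy theorem are the standard arguments and are correct. The genuine problem is the third part, $\mathrm{NC}^1 \subsetneq \mathrm{PSPACE}$: the space analysis of your diagonalization fails. At length $n_C$ you must loop over the roughly $2^{\Theta(n_C^{\,C}\log n_C)}$ circuits of depth $C\log n_C$ and maintain a decided prefix of comparable length, so the machine uses on the order of $n_C^{\,C}$ space at that length; since $C$ is unbounded along the sequence, no fixed polynomial bounds this, and you have only placed $L$ in $\mathrm{DSPACE}(n^{g(n)})$ for some unbounded $g$, not in $\mathrm{PSPACE}$. (For each \emph{fixed} $C$ the greedy minority-vote argument is fine; it is the union over all $C$ that breaks the polynomial space bound.) This is not a bookkeeping issue that can be patched: for the nonuniform $\mathrm{NC}^1$ defined in the appendix, $\mathrm{PSPACE} \not\subseteq \mathrm{NC}^1$ is equivalent to a superpolynomial formula-size lower bound for an explicit function, which is a well-known open problem, so no straightforward diagonalization can establish it.

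Relatedly, with the nonuniform definition the class $\mathrm{NC}^1$ contains undecidable (e.g.\ unary) languages, so the literal inclusions $\mathrm{NC}^1 \subset \mathrm{P}$ and $\mathrm{NC}^1 \subsetneq \mathrm{PSPACE}$ must be read with a uniform version of $\mathrm{NC}^1$; your evaluation argument for $\mathrm{NC}^1 \subseteq \mathrm{P}$ already tacitly assumes the machine can generate the circuit for length $n$, which is exactly the uniformity hypothesis. For the uniform class the intended standard route is shorter and avoids your construction entirely: uniform $\mathrm{NC}^1 \subseteq \mathrm{L}$ by depth-first evaluation of a logarithmic-depth formula with a logarithmic-size stack, and $\mathrm{L} \subsetneq \mathrm{PSPACE}$ by the space hierarchy theorem. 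Note that one cannot instead route the separation through $\mathrm{P}$, since $\mathrm{P}$ versus $\mathrm{PSPACE}$ is itself open.
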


\subsection{Polynomial space and alternating polynomial time}
\label{sec:AP}


Here, PSPACE is the class of problems that can be solved by a Turing machine in polynomial space (in the input length). 

AP is the set of problems that can be solved by \emph{alternating Turing machines} in polynomial time, meaning the Turing machine can take existential transitions (all states of a nondeterministic Turing machine can be thought of as being existential, and the word is accepted at a particular step if some transition leads to an accepting computation under the stated resource bounds) and universal transitions (in which case the word is accepted if all outgoing transitions lead to an accepting computation). We omit the precise definition, which is analogous to the definition of NP (or Lemma~\ref{lem:PSPACEByTrees} could be taken as a definition).

We will make strong use of the following classical result of Chandra, Kozen and Stockmeyer.

\begin{lemma}[\cite{ChKoSt81}]
\label{lem:AP}
PSPACE = AP
\end{lemma}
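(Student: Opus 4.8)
The plan is to prove the two inclusions $\mathrm{AP} \subseteq \mathrm{PSPACE}$ and $\mathrm{PSPACE} \subseteq \mathrm{AP}$ separately, each by a standard simulation.

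For $\mathrm{AP} \subseteq \mathrm{PSPACE}$: given an alternating Turing machine $M$ running in time $p(n)$ for a polynomial $p$, I would simulate it deterministically by a depth-first traversal of its computation tree. On an input of length $n$, every reachable configuration has size $O(p(n))$ (the head cannot travel further than $p(n)$ cells), and the tree has depth at most $p(n)$. A recursive procedure $\mathrm{Eval}(C)$ decides whether the subtree rooted at configuration $C$ is accepting: at a leaf this is read off from the state; at an existential node it returns the disjunction of $\mathrm{Eval}(C')$ over successors $C'$; at a universal node, the conjunction. The recursion stack holds at most $p(n)$ configurations, each of polynomial size, plus a bounded amount of bookkeeping recording which successor is currently being explored at each level, so the simulation runs in polynomial space. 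Hence the language of $M$ is in $\mathrm{PSPACE}$.

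For $\mathrm{PSPACE} \subseteq \mathrm{AP}$: let $M$ be a deterministic Turing machine using space $p(n)$. I may assume $M$ has a unique accepting configuration $C_{\mathrm{acc}}$ (erase the tape, return the head to position $0$, enter the accept state), and that the number of configurations on an input of length $n$ is at most $2^{q(n)}$ for some polynomial $q$. The key is the Savitch-style ``midpoint'' recursion, but realized with alternation instead of recomputation. Consider the predicate $\mathrm{Reach}(C_1, C_2, t)$ meaning ``$C_2$ is reachable from $C_1$ in at most $2^t$ steps of $M$''. An alternating machine evaluates $\mathrm{Reach}(C_1, C_2, 0)$ directly by checking $C_1 = C_2$ or $C_1 \vdash_M C_2$ in one step. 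For $t \geq 1$ it makes an existential move guessing a configuration $C_m$ of size $O(p(n))$, then a universal move over a bit $b \in \{0,1\}$, recursing into $\mathrm{Reach}(C_1, C_m, t-1)$ when $b = 0$ and into $\mathrm{Reach}(C_m, C_2, t-1)$ when $b = 1$. The recursion depth is $t$, and each level costs $O(p(n))$ time to write down and compare configurations; starting the machine on $\mathrm{Reach}(C_{\init}(w), C_{\mathrm{acc}}, q(n))$, the total running time is $O(p(n)\,q(n))$, a polynomial. This alternating machine accepts exactly the language of $M$, so $\mathrm{PSPACE} \subseteq \mathrm{AP}$.

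The main obstacle — such as it is — is the $\mathrm{PSPACE} \subseteq \mathrm{AP}$ direction: one must see that alternation lets one ``verify a midpoint'' in a single existential/universal pair of moves, trading Savitch's quadratic blow-up in \emph{space} for a linear blow-up in \emph{time}, and one must check that the parameter $t$ in $\mathrm{Reach}$ stays only polynomially large so that the recursion has polynomial depth. The $\mathrm{AP} \subseteq \mathrm{PSPACE}$ direction is a routine depth-first search. Finally, as used later in the paper, one may also record that by determinism $\mathrm{PSPACE}$ is closed under complement, and $\mathrm{co}\text{-}\mathrm{AP} = \mathrm{AP}$ by exchanging existential and universal moves, so the lemma yields $\mathrm{co}\text{-}\mathrm{PSPACE} = \mathrm{PSPACE} = \mathrm{AP}$.
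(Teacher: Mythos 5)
Your proposal is correct: the depth-first simulation gives $\mathrm{AP} \subseteq \mathrm{PSPACE}$, and the alternating midpoint recursion on $\mathrm{Reach}(C_1,C_2,t)$ with polynomial recursion depth gives $\mathrm{PSPACE} \subseteq \mathrm{AP}$, which is exactly the classical Chandra--Kozen--Stockmeyer argument. The paper does not prove this lemma at all --- it simply cites \cite{ChKoSt81} --- so your write-up is just the standard proof of the cited result, and it is sound as stated.
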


Alternating polynomial time has a natural complete problem: Given a word $u$ of length $n$, does a fixed universal Turing machine $T$ accept $u$ in $n$ steps of computation?

Here, the universal machine $T$ is an alternating universal Turing machine that takes a description of any alternating Turing machine, and then simulates its computation, using the same alternation behavior as the simulated machine.

It is helpful in our applications to be a little more explicit about the universal quantification, as we will explicitly lay the tree of universal choices on the configuration, while nondeterministic steps will be nondeterministic (they will be coded in the configuration).

\begin{lemma}
\label{lem:PSPACEByTrees}
There exist two nondeterministic Turing machines $T_0, T_1$ sharing their initial state $q_0$, tape alphabet $S$ and state set $Q$ (but having possibly distinct nondeterministic rules), such that the following problem is PSPACE-hard: Given a word $u = av \in S^n$ with first letter $a \in S$ and with $n$ a power of $2$, does there exist a binary tree of Turing machine configurations $(c_w)_{w \in \{0,1\}^{\leq n}}$ where $c_{\epsilon} = (a \times q_0) v$, $(c_w, c_{w0})$ is a step of $T_0$ and $(c_w, c_{w1})$ is a step of $T_1$, and all $c_w$ with $|w| = n$ are accepting.
\end{lemma}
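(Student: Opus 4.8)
The statement is essentially a reformulation of the standard fact that PSPACE = AP (Lemma~\ref{lem:AP}), together with the observation that an alternating polynomial-time computation can be recorded as a (doubly) labeled binary tree of bounded depth. The plan is to start from the canonical PSPACE-complete (equivalently, AP-complete) problem: whether a fixed universal alternating Turing machine $U$ accepts a given input $u$ in $|u|$ steps. I would first normalize $U$ so that it strictly alternates between universal and existential moves, and so that each nondeterministic/universal step has fan-out exactly two — this is a routine preprocessing step (introduce dummy states to break wider branching into binary branching, and to pad so the total number of steps is a power of two; padding the input with blanks and slowing the machine down by a constant factor keeps us polynomial-time complete). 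After this normalization, a universal move of $U$ corresponds to ``for both successors $c_0, c_1$ obtained by the two universal transitions, the subcomputation accepts'', and an existential move corresponds to ``for at least one of the two successors the subcomputation accepts''.

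Next I would encode the universal branching directly into the tree structure and push the existential branching into the \emph{contents} of the configurations $c_w$ — this is the point the lemma emphasizes (``we will explicitly lay the tree of universal choices on the configuration, while nondeterministic steps will be nondeterministic''). Concretely, set $T_1 = T_0 =: T'$ to be the nondeterministic Turing machine obtained from $U$ as follows: at a universal-move state of $U$, $T'$ has two transitions, and we route transition $i$ into the edge labeled $i$ of the tree (so $(c_w, c_{w0})$ uses the ``$0$'' transition of $U$'s universal step and $(c_w, c_{w1})$ uses the ``$1$'' transition); at an existential-move state of $U$, both $T_0$ and $T_1$ simply make the \emph{same} nondeterministic guess (a transition of $U$'s existential step), so the existential choice is resolved identically along both outgoing edges. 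Since the machine strictly alternates, a single ``macro-step'' of $U$ is realized by two consecutive tree levels (one encoding the universal fan-out, one carrying the resolved existential guess), so a depth-$n$ binary tree records $\Theta(n)$ macro-steps; replacing $n$ by the appropriate polynomial (and padding to a power of two) gives exactly the resource bound of the AP-complete problem. We take $q_0$ to be the common initial state and fix $c_\epsilon = (a\times q_0)v$ as the encoding of the initial ID on input $u = av$.

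I would then argue the two directions of correctness. If $U$ accepts $u$ within the allotted time, the accepting computation tree of $U$ — by the normalization, a binary tree of depth $n$ — directly yields the family $(c_w)_{w\in\{0,1\}^{\le n}}$: universal nodes dictate which transition labels each of the two children, existential nodes fix a winning guess that we copy to both children, and every leaf at depth $n$ is an accepting ID of $U$, hence accepting for $T_0$ and $T_1$ (which share $U$'s final states). Conversely, given any tree $(c_w)$ satisfying the conditions, an induction from the leaves upward shows that the subtree rooted at each $c_w$ witnesses acceptance of the corresponding ID by the alternating machine $U$: at a level coding a universal step, both children accept by the inductive hypothesis, which is exactly what a universal ID needs; at a level coding an existential step, the (common) guess on the children exhibits one accepting successor, which is what an existential ID needs. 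Hence $c_\epsilon$ witnesses $U$-acceptance of $u$. This reduction is clearly computable in polynomial (indeed linear) time — it just writes down $c_\epsilon = (a\times q_0)v$ — so the tree problem is PSPACE-hard.

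\textbf{Main obstacle.} The only genuinely delicate point is the bookkeeping in the normalization step: one must simultaneously (i) make $U$ strictly alternate, (ii) make every branching binary while (iii) keeping the correspondence between ``one macro-step of $U$'' and ``two levels of the tree'' uniform, so that the tree depth $n$ matches the time bound of the AP-complete problem after polynomial padding, and (iv) ensure acceptance is detected cleanly at depth exactly $n$ (e.g.\ by having $U$ enter a self-looping accepting state so that ``accepting within time $t$'' becomes ``the ID at time $t$ is accepting''). None of this is conceptually hard, but it is the part where a careless construction could break the clean tree picture; everything after that is the standard AP = PSPACE argument repackaged.
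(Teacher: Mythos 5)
Your overall route---reduce from the AP-complete problem via Lemma~\ref{lem:AP}, lay the universal branching on the tree edges, keep the existential guesses inside the configurations, and pad so the time bound becomes linear and a power of two (waiting in an accepting state if a branch finishes early)---is exactly the intended argument. But there is one genuine error: you set $T_1 = T_0 =: T'$. With a single machine, the tree condition ``$(c_w,c_{w0})$ is a step of $T_0$ and $(c_w,c_{w1})$ is a step of $T_1$'' gives you no mechanism to ``route transition $i$ into the edge labeled $i$'': if $T'$ has both universal transitions available as nondeterministic options, nothing prevents a tree from applying the \emph{same} universal transition on both outgoing edges, so a valid tree may explore only one of the two universal successors (duplicated on both subtrees), and its existence no longer implies that $U$ accepts. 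Indeed, with $T_0=T_1$ the tree problem degenerates: a valid tree exists if and only if $T'$ has some single accepting computation of length $n$ (copy one accepting path onto every branch), so you would only be certifying nondeterministic, not alternating, acceptance, and PSPACE-hardness is lost unless $\mathrm{NP}=\mathrm{PSPACE}$. Your backward induction tacitly assumes that at a universal level the two children are the two \emph{distinct} $U$-successors, which is precisely what $T_0=T_1$ fails to enforce. This is why the lemma insists the two machines share states, tape alphabet and initial state but may have distinct nondeterministic rules: the correct construction (and the one you in fact describe in words) takes $T_0$ and $T_1$ to agree with the alternating machine at non-universal steps, while at a universal state $T_0$ carries only the transition realizing the first universal choice and $T_1$ only the one realizing the second (larger fan-out being simulated by a bounded number of extra branchings).

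A second, minor point: in the converse direction you refer to the ``(common)'' existential guess on the two children, but the tree constraints do not force the two children of an existential level to use the same guess. This does no harm---either child already exhibits an accepting successor, which is all an existential ID needs---but the phrasing suggests a constraint that is not there. With $T_0 \neq T_1$ repaired as above, and your padding conventions, the rest of your argument goes through and coincides with the paper's.
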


There are a few notes to make. Note that we are only using $|u|$ time in the computation, and the length of $u$ is a power of $2$. This is nothing to worry about, as in the definition of AP-hardness we ask for a polynomial time computable many-one reduction, and such a reduction is allowed to include a polynomial padding, meaning a polynomial time computation becomes linear. (If the computation does not end at the correct power of $2$ on some branch, the Turing machine can simply wait in the accepting state.)

The point of the two Turing machines is that we can use the steps of $T_0$ and $T_1$ as a universal quantification, as both branches must accept in all leaves. Of course, typically we do not want to universally quantify at every step, but we can simply have $T_0$ and $T_1$ behave the same, i.e.\ according to some alternating Turing machine $T$, except when $T$ makes a universal step, in which case $T_0$ and $T_1$ follow the two choices (or if there are more choices, this can be simulated by a bounded number of branchings).

\subsection{(A version of) Barrington's theorem}
\label{sec:Barrington}

Barrington's theorem \cite{Ba89} relates efficiently parallelizable computation (NC$^1$) and computation with finite memory:

\begin{theorem}[Barrington's theorem]
nonuniform NC$^1$ = BWBP
\end{theorem}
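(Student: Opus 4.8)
The statement to prove is Barrington's theorem: nonuniform $\mathrm{NC}^1 = \mathrm{BWBP}$ (bounded-width branching programs of polynomial length). The plan is to prove both inclusions, with the forward direction $\mathrm{NC}^1 \subseteq \mathrm{BWBP}$ being the substantial one and the reverse being essentially routine.

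\textbf{The easy inclusion.} First I would observe that $\mathrm{BWBP} \subseteq \mathrm{NC}^1$. A branching program of bounded width $w$ and polynomial length $\ell(n)$ can be simulated by a circuit that, at each layer, computes the transition map on the $w$ states; composing $\ell(n)$ such maps is a product of $\ell(n)$ elements of the transformation monoid on $w$ points, and by a balanced binary tree of compositions (each composition being computable by a constant-size circuit since $w$ is constant) this has depth $O(\log \ell(n)) = O(\log n)$. This gives a logarithmic-depth circuit, hence membership in $\mathrm{NC}^1$. I would spell out just enough of the layered-circuit bookkeeping to make the depth bound visible.

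\textbf{The hard inclusion.} The core is $\mathrm{NC}^1 \subseteq \mathrm{BWBP}$, and here I would use the standard argument via $\Alt(5)$, the group in which the paper's $3$-rotations live (indeed the paper's earlier emphasis on $3$-rotations and commutator formulas is exactly this toolkit, so I would lean on it). The key algebraic fact is that in $\Alt(5)$ there exist $5$-cycles $\sigma, \tau$ whose commutator $[\sigma,\tau] = \sigma^{-1}\tau^{-1}\sigma\tau$ is again a $5$-cycle; more generally, for any nontrivial $\gamma \in \Alt(5)$ one can write $\gamma$ as a commutator $[\alpha,\beta]$ with $\alpha,\beta$ nontrivial, and one can conjugate any $5$-cycle to any other. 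Say a branching program over $\Sym(5)$ (width $5$) \emph{$\gamma$-computes} a circuit $C$ if on input $x$ it outputs the permutation $\gamma$ when $C(x)=1$ and the identity when $C(x)=0$. I would then induct on circuit depth: a depth-$0$ node (input literal $x_i$ or its negation) is $\gamma$-computed by a length-$1$ program for any $5$-cycle $\gamma$ (a single instruction reading $x_i$). For a $\neg$-gate: if the subcircuit is $\gamma$-computed by program $P$ of length $\ell$, then post-composing $P$ with the fixed permutation $\gamma^{-1}$ and pre-composing suitably $\gamma$-computes the negation with length $\ell + O(1)$. For an $\wedge$-gate with children $C_1, C_2$: pick $5$-cycles $\sigma, \tau$ with $[\sigma,\tau] = \gamma$; by induction $\gamma$-compute using programs $P_1$ that $\sigma$-computes $C_1$, $P_2$ that $\tau$-computes $C_2$, and their "inverse-element" versions $P_1'$ that $\sigma^{-1}$-computes $C_1$ and $P_2'$ that $\tau^{-1}$-computes $C_2$ (obtained from $P_1, P_2$ by changing only the final output-permutation relabeling, not the structure, so no length blow-up beyond constants); then the concatenation $P_1 P_2 P_1' P_2'$ computes $[\sigma,\tau] = \gamma$ exactly when both $C_1(x) = C_2(x) = 1$, and the identity otherwise (because if either child outputs the identity, the product telescopes to the identity). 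A $\vee$-gate is handled by De Morgan via the $\neg$ and $\wedge$ cases. This is the main obstacle: getting the commutator-telescoping bookkeeping exactly right, and in particular ensuring that the "$\sigma$-version'' and "$\sigma^{-1}$-version'' of a subprogram can be produced with only an additive constant length cost (by conjugating the whole program by a fixed permutation, which amounts to relabeling the $5$ states throughout — a length-preserving operation — followed by one cleanup instruction).

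\textbf{Length bound and conclusion.} I would track the length: letting $L(d)$ bound the program length for a depth-$d$ circuit, the $\wedge$ case gives $L(d) \le 4 L(d-1) + O(1)$, so $L(d) = O(4^d)$, and for $d = C\log n$ this is $O(n^{2C})$, polynomial in $n$. Since the top gate is $\gamma$-computed for some fixed $5$-cycle $\gamma$, reading off whether the final permutation equals $\gamma$ or the identity decides membership; this is a width-$5$ (hence bounded-width) branching program of polynomial length, so the language is in $\mathrm{BWBP}$. Combined with the easy inclusion, this yields $\mathrm{NC}^1 = \mathrm{BWBP}$. Throughout, everything is nonuniform — we only need, for each $n$, the existence of the circuit and hence of the program — so there is nothing to say about uniformity.
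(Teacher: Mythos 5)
Your proof is correct, and it is the standard Barrington argument; but note that the paper itself does not prove this statement at all. It cites \cite{Ba89} for it, explicitly declines even to define BWBP, and instead proves only the variant it needs, Theorem~\ref{thm:BarringtonVariant}: nonuniform NC$^1 \subset$ BWBP$'_B$ for $|B| \geq 5$, phrased in the group $G_{n,A,B}$ of controlled permutations (with the shift generator and cylinder controls), since that is the formulation reused later inside automorphism groups. The core of your hard inclusion is the same commutator-telescoping idea as the paper's variant proof: the paper encodes $\wedge$ via $[\pi_1|\phi_1,\pi_2|\phi_2]=[\pi_1,\pi_2]|\phi_1\wedge\phi_2$ (Lemma~\ref{lem:Intersection}), uses Ore/Ito's commutator-width-one result for $\Alt(B)$ (Lemma~\ref{lem:Ore}) where you use the explicit fact that some commutator of $5$-cycles is a $5$-cycle, and handles $\vee$ directly via Lemma~\ref{lem:ComplementUnion} (recursion constant $6$) where you use De Morgan (constant $4$); both give a polynomial bound from logarithmic depth. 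What your write-up adds relative to the paper is the converse inclusion BWBP $\subseteq$ NC$^1$ (the balanced composition tree), which the paper never addresses; what the paper's formulation buys is that the ``programs'' are words in a fixed generating set of a permutation group, which is exactly what its later word-norm arguments need. Two small points to tidy: your stated convention $[\sigma,\tau]=\sigma^{-1}\tau^{-1}\sigma\tau$ does not match the product $P_1P_2P_1'P_2'$, which yields $\sigma\tau\sigma^{-1}\tau^{-1}$ (harmless, since either is a commutator of $5$-cycles, but fix one convention); and since NC$^1$ circuits may have fan-out greater than one, you should say explicitly (as the paper does in its unfolding lemma) that the induction is on depth and effectively unfolds the circuit into a formula, the length bound depending only on depth.
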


We do not define BWBP, but we now define a ``reversible'' version BWBP', for which a similar theorem holds.

First, we define a permutation group that essentially captures the type of computation performed by BWBP, namely the group of \emph{controlled permutations}. 

\begin{definition}
Let $C, B$ be finite sets and $n \in \N$. Define $G_{n,C,B}$ to be the permutation group acting on $C \times B^n$, generated by the following permutations: for each permutation $\pi$ of $C$ and $b \in B$ and $i \in \{0, \ldots, n-1\}$, the map $\pi_{b,i}$ defined for $|u| = i, b \in B, |v| = n - i - 1, c \in C$ by
\[ \pi_{b,i}(c, ub'v) = \begin{cases}
(\pi(c), ub'v) & \mbox{ if} b' = b, \\
(b, ub'v) & \mbox{ otherwise} \\
\end{cases} \]
We endow $G_{n, A, B}$ with the word norm coming from these generators.
\end{definition}

In English, $\pi_{a,i}$ performs permutation $\pi$ on the $B$-component if the $A^n$-component has symbol $a$ at $i$, and we read it as ``$\pi$ controlled by $a$ at $i$''.

We set up a calculus of controlled permutations. 

For any set $X \subset A^n$ and $\pi \in \Alt(B)$ (even permutations of $B$), define 
\[ \pi|U(b, x) = \begin{cases}
(\pi(b), x) & \mbox{ if} x \in X, \\
(b, x) & \mbox{ otherwise} \\
\end{cases}. \]
We refer to $A^n$-components $u$ of pairs $(b, u) \in B \times A^n$, or sets $X$ used in generators, as the \emph{control}. Note that $\pi_{a, i} = \pi|[a]_i$ where $[a]_i$ is the cylinder $[a]_i = \{uav \;|\; u \in A^i, v \in A^{n-i-1}\}$.





\begin{lemma}
For all $X \subset A^n$ we have $\pi_1|X \circ \pi_2|X = (\pi_1 \circ \pi_2)|X$.
\end{lemma}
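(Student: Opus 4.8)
The plan is a direct pointwise verification on elements $(b,x) \in B \times A^n$. The key observation is that neither $\pi_1|X$ nor $\pi_2|X$ modifies the $A^n$-component (the \emph{control} coordinate), so membership of $x$ in $X$ is never affected by applying either map. Hence I would split into the two cases $x \in X$ and $x \notin X$ and check equality of the two sides on each.

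In the case $x \in X$: applying $\pi_2|X$ to $(b,x)$ gives $(\pi_2(b), x)$, whose control is still $x \in X$, so a further application of $\pi_1|X$ yields $(\pi_1(\pi_2(b)), x) = ((\pi_1 \circ \pi_2)(b), x)$, which is exactly $(\pi_1 \circ \pi_2)|X$ evaluated at $(b,x)$. In the case $x \notin X$: all three of $\pi_1|X$, $\pi_2|X$ and $(\pi_1\circ\pi_2)|X$ fix $(b,x)$ by definition, so the two sides agree here as well. Since they agree on every input, the two permutations coincide.

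The only piece of bookkeeping is to observe that $\pi_1 \circ \pi_2$ is again an even permutation of $B$ (a product of even permutations), so the right-hand side $(\pi_1\circ\pi_2)|X$ is a legitimate instance of the $\pi|X$ notation. There is no genuine obstacle here; the statement is purely definitional, and the ``invariance of the control'' remark is the whole content of the argument (and is precisely the intuition flagged in the paragraph preceding the lemma).
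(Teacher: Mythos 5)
Your proof is correct and is essentially identical to the paper's own argument: a pointwise case split on whether the control $x$ lies in $X$, using the observation that $\pi|X$-type maps never alter the $A^n$-component. Nothing further is needed.
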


\begin{proof}
The $A^n$-component is fixed by elements of the form $\pi|Y$, so the checks $u \in X$ preserve their truth value between applications. Thus if $u \in X$, a direct calculation shows
\[ (\pi_1|X \circ \pi_2|X)(b, u) = (\pi_1(\pi_2(b), u) = (\pi_1 \circ \pi_2)|X(b, u), \]
and if $u \notin X$, then all of $\pi_1|X, \pi_2|X, (\pi_1 \circ \pi_2)|X$ act trivially.
\end{proof}

\begin{lemma}
For all $X \subset A^n$ we have $\pi^{-1}|X = (\pi|X)^{-1}$
\end{lemma}

\begin{proof}
Since $X$ is finite, it suffices to check that $\pi^{-1}|X$ is a one-sided inverse of $\pi|X$. Indeed we have $\pi^{-1}|X \circ \pi|X = (\pi^{-1} \circ \pi)|X = \id|X = \id$ by the previous lemma. 
\end{proof}

We will often use the following lemma about the alternating group (it expresses that the commutator width of the group is one).

\begin{lemma}
\label{lem:Ore}
Let $|B| \geq 5$ be a finite set. Then for all $\pi \in \Alt(B)$ we have $\pi = [\pi_1, \pi_2]$ for some $\pi_1, \pi_2 \in \Alt(B)$.
\end{lemma}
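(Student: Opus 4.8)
This is the classical fact that the alternating group $\Alt(B)$ on at least five letters has commutator width one — indeed it is a consequence of the much stronger Ore conjecture (now a theorem), but for $\Alt(B)$ a direct elementary argument suffices, and that is what I would give here. The strategy is to reduce an arbitrary even permutation to a product of $3$-cycles, handle a single $3$-cycle by an explicit commutator identity, and then bootstrap to arbitrary products of $3$-cycles by a conjugacy/support-separation trick, using that $|B|\geq 5$ gives enough room.

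First I would record the base case: a single $3$-rotation is a commutator in $\Alt(B)$ when $|B|\ge 5$. Concretely, with distinct points $a,b,c,d,e\in B$ one has the identity
\[
(a\;b\;c) = [(a\;b\;d),\,(a\;c\;e)] = (a\;b\;d)^{-1}(a\;c\;e)^{-1}(a\;b\;d)(a\;c\;e),
\]
which one verifies by tracking the images of $a,b,c,d,e$ (all other points are fixed by all four factors). This is exactly the place where the hypothesis $|B|\ge 5$ is used, since we need two ``fresh'' points $d,e$ distinct from $a,b,c$. For the inductive step I would write a general $\pi\in\Alt(B)$ as a product of $3$-cycles $\pi = \rho_1\rho_2\cdots\rho_m$ (possible since the $3$-cycles generate $\Alt(B)$), and then argue that a product of commutators in $\Alt(B)$ is again a single commutator: if $\pi_1,\pi_2,\pi_3,\pi_4\in\Alt(B)$ then $[\pi_1,\pi_2][\pi_3,\pi_4]$ can be rewritten as a single commutator after conjugating one of the pairs so that the two pairs act on disjoint subsets of $B$ of size $\le 5$ each, provided $|B|\ge 10$; for $5\le |B| < 10$ one instead appeals to the known (small, finite) computation that $\Alt(B)$ has commutator width one, or simply invokes Ore's theorem for finite simple groups.

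The cleanest writeup, and the one I would actually use given that this is a ``recalled'' lemma, is to cite the Ore conjecture / its verification for alternating groups and state the $3$-cycle identity above as the self-contained core, remarking that the general case follows since every commutator-width-one statement for finite simple groups is known (Liebeck--O'Brien--Shalev--Tiep), and for $\Alt(B)$ it is classical and elementary. The main obstacle is purely expository: deciding how much of the reduction from ``product of $3$-cycles'' to ``single commutator'' to spell out versus cite. Since the paper only needs this for the specific alternating groups arising in its constructions (and in fact mostly for $\Alt(\Z_7)$, where one could even hard-code the decompositions), I expect the honest move is to give the explicit $3$-cycle commutator identity, note that $3$-cycles generate $\Alt(B)$, and cite Ore for the passage to a single commutator, so the ``hard part'' never actually needs to be carried out in detail.
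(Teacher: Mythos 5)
The paper does not actually prove this lemma: it is stated as a recalled fact, followed by a remark citing Ore \cite{Or51} for the statement and Ito \cite{It51} (see also \cite{Be72}) for a proof, observing that a computer check would suffice for the fixed values of $|B|$ actually used, and that one can bypass the lemma altogether by working with $3$-rotations via $[(a\;b\;c),(c\;d\;e)] = (c\;b\;e)$, which is Barrington's original route. Your final recommendation --- display the explicit $3$-cycle commutator identity and cite Ore / the commutator-width-one literature for the general case --- is therefore essentially the same move the paper makes.

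Two caveats about the self-contained portion of your sketch. First, with the paper's conventions ($(a\;b\;c)$ maps $a \mapsto b \mapsto c \mapsto a$, and $[x,y] = x^{-1}y^{-1}xy$), the displayed identity is off: $[(a\;b\;d),(a\;c\;e)]$ evaluates to $(a\;d\;e)$ (or to $(a\;c\;b)$ if products are read left-to-right), not to $(a\;b\;c)$. This is harmless for the qualitative claim, since $d,e$ are arbitrary fresh points and any $3$-cycle is obtained after renaming, but the identity should be corrected or verified before being displayed. Second, and more substantively, the proposed passage from a product of commutators to a single commutator does not work as stated: conjugating one pair $(\pi_3,\pi_4)$ replaces $[\pi_3,\pi_4]$ by its conjugate, so the product you are trying to represent changes; and even granting disjointly supported pairs (where $[\alpha_1\alpha_2,\beta_1\beta_2] = [\alpha_1,\beta_1][\alpha_2,\beta_2]$ does hold), a general even permutation decomposes into many $3$-cycles whose supports jointly cover $B$, so the pairs cannot be separated inside $B$. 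Hence the elementary induction you outline does not go through --- consistent with the paper's own comment that it knows of no proof avoiding case distinctions --- and the honest content of your proposal, like the paper's, is the citation together with the $3$-cycle identity (or, as the paper notes, a finite computer check for the few cardinalities $|B|$ that are needed).
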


\begin{remark}
The statement of this lemma is nice and clean, but a few remarks on its necessity may be in order, since we are not aware of a proof that avoids case distinctions. The lemma was stated by Ore in \cite{Or51}, and proved by Ito in \cite{It51}. Ito's paper seems difficult to find, but a proof (of a stronger result about even permutations) is found also in \cite{Be72}. It is even known that an analogous result holds (i.e.\ the commutator width is one) in all simple groups. We need the lemma only for a fixed set of cardinalities $|B|$, so one can easily give an ad hoc proof. It even suffices that the commutator width if bounded (over all $|B|$); this changes the degrees of polynomial bounds but none of our main statements. One can also phrase all our arguments directly in terms of $3$-rotations to avoid using the lemma entirely by using simply $[(a \; b \; c), (c \; d \; e)] = (c, b, e)$; indeed this is a common way to prove Barrington's theorem. We also sometimes use only $3$-rotations in the present paper, namely in some cases we need the support to be non-full.
\end{remark}

\begin{lemma}
\label{lem:Intersection}
For all $A, B, n$ and $X, Y \subset A^n$ we have the equality $[\pi_1|X, \pi_2|Y] = [\pi_1, \pi_2]|X \cap Y$.
\end{lemma}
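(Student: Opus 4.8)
The plan is to verify the identity pointwise on $B \times A^n$, relying on the structural fact already used in the preceding lemmas: every generator of the form $\pi|Z$ leaves the $A^n$-component untouched, hence so does every product of such generators. In particular, the control coordinate of a pair $(b,u) \in B \times A^n$ is invariant throughout the evaluation of $[\pi_1|X, \pi_2|Y]$, so the first thing to record is that the truth values of the two membership tests ``$u \in X$'' and ``$u \in Y$'' are constant along the four-fold composition defining the commutator.

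With that observation in hand, I would fix a pair $(b,u)$ and split into the four exhaustive cases according to whether $u$ lies in $X \cap Y$, in $X \setminus Y$, in $Y \setminus X$, or in neither. Writing $[\pi_1|X, \pi_2|Y] = (\pi_1|X)^{-1} \circ (\pi_2|Y)^{-1} \circ (\pi_1|X) \circ (\pi_2|Y)$ and using $(\pi_i|Z)^{-1} = \pi_i^{-1}|Z$ from the earlier lemma: if $u \in X \cap Y$, then each of the four factors acts on the $B$-coordinate by its respective permutation, so the net effect on $b$ is $\pi_1^{-1} \circ \pi_2^{-1} \circ \pi_1 \circ \pi_2 = [\pi_1,\pi_2]$, which is exactly what $[\pi_1,\pi_2]|X\cap Y$ does on this fibre. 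If $u \notin X$, then the two $X$-controlled factors act as the identity on the fibre over $u$, and the remaining $(\pi_2|Y)^{-1} \circ (\pi_2|Y)$ cancels, leaving $\id$; symmetrically if $u \notin Y$. In all three of these latter cases $u \notin X \cap Y$, so $[\pi_1,\pi_2]|X\cap Y$ also fixes $(b,u)$. Hence the two permutations agree on every pair, which is the claimed equality.

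I do not expect a genuine obstacle here: the argument is a short bookkeeping computation, and the only point requiring care is keeping the composition order and the commutator-sign conventions of this section consistent. It is precisely the invariance of the control coordinate that makes the bookkeeping trivial, since it collapses what might otherwise look like a conjugation-type argument into a plain cancellation. (If one wished to avoid the case split, one could instead combine the already-proved composition and inversion lemmas for the operators $\pi|Z$ with the evident multiplicativity of $(\,\cdot\,)|{-}$ in the control, but the four-case check is the most direct route.)
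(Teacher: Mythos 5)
Your proposal is correct and follows essentially the same route as the paper's proof: a pointwise case analysis on whether the control $u$ lies in $X$, $Y$, both, or neither, using the invariance of the $A^n$-component to see that the membership tests keep a constant truth value, so that the factors either all apply (giving $[\pi_1,\pi_2]$ on the $B$-coordinate) or cancel in pairs. The only cosmetic difference is that you split explicitly into four cases where the paper treats $u \notin X$ and $u \notin Y$ by symmetry; the substance is identical.
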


\begin{proof}
If $u \notin X$ then
\begin{align*}
[\pi_1|X, \pi_2|Y](b, u) &= (\pi_1^{-1}|X \circ \pi_2^{-1}|Y \circ \pi_1|X \circ \pi_2|Y)(b, u) \\
&= (\pi_2^{-1}|Y \circ \pi_2|Y)(b, u) \\
&= (b, u) = [\pi, \pi']|X \cap Y(b, u)
\end{align*}
where the second equality follows because the $A^n$-component $u$ is never modified ($\sigma$ is only applied in intermediate steps). A similar calculation works if $u \notin Y$. If $u \in X \cap Y$ then clearly
\[ [\pi_1|X, \pi_2|Y](b, u) = ([\pi_1, \pi_2](b), u) = ([\pi_1, \pi_2]|X \cap Y)(b, u) \]
since all of $u \in X, u \in Y, u \in X \cap Y$ hold at every step.
\end{proof}

The following is an immediate induction.

\begin{lemma}
\label{lem:BigIntersection}
$ [\pi_1|X_1, \pi_2|X_2, \ldots, \pi_k|X_k] = [\pi_1, \pi_2, \ldots, \pi_k]|\bigcap_i X_i $
\end{lemma}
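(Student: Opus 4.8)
The plan is a routine induction on $k$, with Lemma~\ref{lem:Intersection} doing all the work and the recursive definition $[\pi_1, \pi_2, \ldots, \pi_k] = [\pi_1, [\pi_2, \ldots, \pi_k]]$ of iterated commutators providing the bookkeeping. The base case is $k = 2$, which is precisely Lemma~\ref{lem:Intersection}.

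For the inductive step, assume $k \geq 3$ and that the identity holds for tuples of length $k-1$. Set $Y = \bigcap_{i=2}^k X_i$. The induction hypothesis gives $[\pi_2|X_2, \ldots, \pi_k|X_k] = [\pi_2, \ldots, \pi_k]|Y$. I would first observe that $[\pi_2, \ldots, \pi_k] \in \Alt(B)$: each $\pi_i$ lies in $\Alt(B)$, and $\Alt(B)$ is closed under inverses and products, hence under commutators; so the right-hand side is a bona fide controlled permutation in the sense of the $\pi|U$ notation, and the hypothesis is being applied legitimately. Now apply Lemma~\ref{lem:Intersection} to the two controlled permutations $\pi_1|X_1$ and $[\pi_2, \ldots, \pi_k]|Y$, obtaining $[\pi_1|X_1, [\pi_2, \ldots, \pi_k]|Y] = [\pi_1, [\pi_2, \ldots, \pi_k]]|X_1 \cap Y$. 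By the definition of iterated commutators the controlling permutation on the right is $[\pi_1, \pi_2, \ldots, \pi_k]$, and $X_1 \cap Y = \bigcap_{i=1}^k X_i$, which is exactly the claimed equality for $k$.

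There is no real obstacle here; the only point worth a moment's care is the remark that the inner iterated commutator stays in $\Alt(B)$ throughout the induction (so that every $\pi|U$-expression appearing is well-defined), together with checking that the left-nesting associativity convention for $[\,\cdot\,]$ lines up with the way Lemma~\ref{lem:Intersection} is invoked.
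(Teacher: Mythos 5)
Your proof is correct and matches the paper's approach: the paper simply notes the lemma is "an immediate induction" from Lemma~\ref{lem:Intersection}, which is exactly the induction on $k$ you carry out (using the left-nested convention $[a_1,\ldots,a_k]=[a_1,[a_2,\ldots,a_k]]$). Your extra remark that the inner iterated commutator remains in $\Alt(B)$ is a fine sanity check but not a substantive addition.
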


For complexity-theoretic purposes, this is an inefficient definition, and one should balance the brackets instead. However, we only do this for commutators of bounded length, and this simple definition sufficies.

\begin{lemma}
\label{lem:ComplementUnion}
For all $A, B, n$ and $X, Y \subset A^n$ we have 
\[ \pi|\bar{X} = \pi \circ \pi^{-1}|X \]
and
\[ \pi|X \cup Y = \pi|X \circ \pi|Y \circ \pi^{-1}|X \cap Y. \]
\end{lemma}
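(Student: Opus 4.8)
The plan is to verify both identities by a direct pointwise computation on $B \times A^n$, in the same spirit as Lemma~\ref{lem:Intersection}. The one structural observation that trivialises everything is that none of the maps $\rho|Z$ (for $\rho \in \Alt(B)$ and $Z \subseteq A^n$), nor the unconditional permutation $\pi$ (which here means $\pi|A^n$), ever alters the $A^n$-component $u$ of a pair $(b,u)$; consequently the truth values of the membership tests $u \in X$, $u \in Y$, $u \in X \cap Y$, $u \in \bar X$ stay constant along any composition of such maps, so each factor either fires its $B$-permutation at every step or at none. Throughout I use the parsing conventions of the paper: $|$ binds tighter than $\circ$ and set operations bind tighter than $|$, so $\pi \circ \pi^{-1}|X$ is $\pi \circ (\pi^{-1}|X)$ and $\pi|X \circ \pi|Y \circ \pi^{-1}|X \cap Y$ is $(\pi|X) \circ (\pi|Y) \circ (\pi^{-1}|(X \cap Y))$; and $f \circ g$ applies $g$ first.

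For the first identity $\pi|\bar X = \pi \circ \pi^{-1}|X$, I would split on whether $u \in X$. If $u \in X$ then $\pi^{-1}|X$ sends $(b,u)$ to $(\pi^{-1}(b),u)$, after which the unconditional $\pi$ restores it to $(b,u)$, while $\pi|\bar X$ fixes $(b,u)$ since $u \notin \bar X$. If $u \notin X$ then $\pi^{-1}|X$ fixes $(b,u)$ and $\pi$ sends it to $(\pi(b),u)$, matching $\pi|\bar X(b,u) = (\pi(b),u)$. (Since $\pi = \pi|A^n = \prod_{a \in A}\pi|[a]_0$ is a product of commuting generators of $G_{n,A,B}$, and $\pi^{-1}|X = (\pi|X)^{-1}$ by the preceding lemma, this identity also records that $\pi|\bar X \in G_{n,A,B}$.)

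For the second identity $\pi|X \cup Y = \pi|X \circ \pi|Y \circ \pi^{-1}|X \cap Y$, I would partition $A^n$ into $\overline{X\cup Y}$, $X\setminus Y$, $Y\setminus X$, and $X\cap Y$. On $\overline{X\cup Y}$ all four maps act trivially. On $X\setminus Y$ only $\pi|X$ fires (the other two see a point outside their control), and on $Y\setminus X$ only $\pi|Y$ fires, so in both cases the net effect on $(b,u)$ is $(\pi(b),u) = \pi|X\cup Y(b,u)$. On $X\cap Y$ all three right-hand factors fire, giving $(b,u) \mapsto (\pi^{-1}(b),u) \mapsto (\pi\pi^{-1}(b),u) = (b,u) \mapsto (\pi(b),u)$, once more equal to $\pi|X\cup Y(b,u)$. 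This exhausts the cases.

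I do not expect any real obstacle: the proof is a bookkeeping exercise, and the only points demanding care are the composition/parsing conventions above and the reading of the bare symbol $\pi$ as the unconditional permutation $\pi|A^n$. Iterating these two identities together with Lemma~\ref{lem:Intersection} (and the earlier inverse and product lemmas) expresses $\rho|Z$ for any Boolean combination $Z$ of cylinders $[a]_i$ as a bounded-length word over the generators of $G_{n,A,B}$, which is the use to which the lemma will be put.
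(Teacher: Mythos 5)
Your proof is correct and follows essentially the same route as the paper: the paper's argument is the same semantic observation (the control component never changes, so each conditioned permutation fires either always or never), stated more briefly, while you spell out the pointwise case analysis. No gaps; the parsing conventions and the reading of the bare $\pi$ as the unconditional permutation are exactly as intended.
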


\begin{proof}
We give semantic proofs. In the first formula, on the right side $\pi$ gets applied anyway, but this cancels when also $\pi^{-1}|X$ applies. Thus, $\pi$ is ultimately applied if and only if the control is not in $X$.

In the second formula, $\pi|X \circ \pi|Y$ applies $\pi$ if the control is in $X$ or $Y$. The only problem is it applies it twice if it is in $X \cap Y$, and this is fixed by also applying $\pi^{-1}|X \cap Y$.
\end{proof}

\begin{lemma}
For all $A, n$ and $X \subset A^n$, if $|B| \geq 5$ then $\pi|X \in G_{n,A,B}$.
\end{lemma}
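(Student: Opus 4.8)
The plan is to reduce an arbitrary control set $X$ first to single words and then to the single-coordinate cylinders $[a]_i$, which the earlier lemma already places in $G_{n,A,B}$, using throughout the calculus of controlled permutations built up above. (Since the definitions of the generators only make sense for $n\geq 1$, we assume $n\geq 1$; for $n=0$ the statement is vacuous.)

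\emph{Step 1: reduce to singletons.} Since $A$ is finite and $n$ is fixed, $X\subset A^n$ is a finite set, say $X=\{u^{(1)},\dots,u^{(m)}\}$; if $X=\emptyset$ then $\pi|X=\id\in G_{n,A,B}$. For distinct $u,v\in A^n$ the maps $\pi|\{u\}$ and $\pi|\{v\}$ alter the $B$-component only on the disjoint cylinders $\{u\}$ and $\{v\}$ of the control, hence commute, and applying Lemma~\ref{lem:ComplementUnion} inductively (all the pairwise intersections of the singletons being empty) gives
\[ \pi|X = \pi|\{u^{(1)}\}\circ\pi|\{u^{(2)}\}\circ\cdots\circ\pi|\{u^{(m)}\}. \]
So it suffices to prove $\pi|\{u\}\in G_{n,A,B}$ for an arbitrary word $u=u_0u_1\cdots u_{n-1}\in A^n$.

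\emph{Step 2: reduce a singleton to coordinate cylinders.} Observe that $\{u\}=\bigcap_{i=0}^{n-1}[u_i]_i$. By repeatedly applying Lemma~\ref{lem:Ore} (whose hypothesis $|B|\geq 5$ is exactly what we assume), write $\pi$ as a left-normed iterated commutator $\pi=[\pi_1,\pi_2,\dots,\pi_n]$ with each $\pi_j\in\Alt(B)$: set $\rho:=\pi$, use Ore to get $\pi_1,\rho'\in\Alt(B)$ with $\rho=[\pi_1,\rho']$, repeat on $\rho'$, and continue $n-1$ times, the ``remainder'' being at each stage again an arbitrary element of $\Alt(B)$ to which Ore applies. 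Now Lemma~\ref{lem:BigIntersection} yields
\[ \bigl[\pi_1|[u_0]_0,\ \pi_2|[u_1]_1,\ \dots,\ \pi_n|[u_{n-1}]_{n-1}\bigr] = [\pi_1,\dots,\pi_n]\Bigm|\bigcap_{i=0}^{n-1}[u_i]_i = \pi\bigm|\{u\}. \]
Each factor $\pi_j|[u_{j-1}]_{j-1}$ lies in $G_{n,A,B}$ by the lemma stating $\pi|[a]_i\in G_{n,A,B}$, and since $G_{n,A,B}$ is a group the iterated commutator of these elements is again in $G_{n,A,B}$. Hence $\pi|\{u\}\in G_{n,A,B}$, and by Step~1, $\pi|X\in G_{n,A,B}$.

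The only genuinely delicate point is the iterated-commutator decomposition in Step~2: we need every even permutation of $B$ to be an $n$-fold left-normed commutator of even permutations, for $n$ potentially large. This is not a subtlety of the depth of the commutator but merely an iteration of Lemma~\ref{lem:Ore}, and no hypothesis beyond $|B|\geq 5$ is ever invoked. (Alternatively one could sidestep Ore entirely and argue directly with $3$-rotations via $[(a\,b\,c),(c\,d\,e)]=(c\,b\,e)$ as in Barrington's original proof, since here the support of $\pi$ is irrelevant; but the route through Lemma~\ref{lem:Ore} is cleaner.) Everything else is routine bookkeeping with the already-established identities for controlled permutations.
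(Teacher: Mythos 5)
Your proof is correct and follows essentially the same route as the paper's: enumerate $X$ as a disjoint union of singleton cylinders, compose the corresponding controlled permutations, and handle each singleton $\{u\}=\bigcap_i[u_i]_i$ by writing $\pi$ as an iterated commutator (via Lemma~\ref{lem:Ore}, where $|B|\geq 5$ enters) and invoking Lemma~\ref{lem:BigIntersection} together with $\pi|[a]_i\in G_{n,A,B}$. You merely make explicit two steps the paper leaves implicit (the product over disjoint cylinders and the repeated use of Ore's lemma), which is fine; the only nitpick is that the nested commutator $[\pi_1,[\pi_2,[\dots]]]$ you build is right-normed in the paper's convention, not left-normed.
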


\begin{proof}
Let $X \subset A^n$ and enumerate $X = \{u^1, u^2, \ldots, u^k\}$ so that $X = \bigcup_{i=1}^k [u^i]_0$. Then $\pi|X = \prod_i \pi|[u^i]_0$. On the other hand for each $u = u_0 u_1 \cdots u_{n-1} \in A^n$, letting $\pi = [\pi_0, \pi_1, \ldots, \pi_{n-1}]$ we have
\begin{align*}
\pi|[u]_0 &= [\pi_0, \pi_1, \ldots, \pi_{n-1}]|\bigcap_{i=0}^{n-1} [u_i]_i \\
&=  [\pi_0|[u_0]_0, \pi_1|[u_1]_1, \ldots, \pi_{n-1}|[u_{n-1}]_{n-1}] \in G_{n,A, B}
\end{align*}
by Lemma~\ref{lem:BigIntersection}.
\end{proof}

The lengths of some controlled permutations are necessarily very long, since the group is very large. BWBP' is defined through controlled permutations with small word norm.

\begin{definition}
A language $L \subset A^*$ is in BWBP'$_B$ if for some $d$, for all $n$ and $\pi \in \Alt(B)$, the norm of $\pi|L \cap A^n$ in $G_{n,A,B}$ is at most $n^d + d$. Write
\[ \mathrm{BWBP'} = \bigcup_{B\; \mathrm{ finite}} \mathrm{BWBP'}_B. \]
\end{definition}

(Writing $n^d + d$ here is equivalent to writing $O(n^d)$.)

We now prove the relevant part of Barrington's theorem.

\begin{theorem}[variant of Barrington's theorem]
\label{thm:BarringtonVariant}
nonuniform NC$^1$ $\subset$ BWBP'$_B$ for all $|B| \geq 5$
\end{theorem}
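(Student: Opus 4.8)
The plan is to prove the inclusion $\mathrm{NC}^1 \subset \mathrm{BWBP}'_B$ for any fixed $B$ with $|B| \geq 5$ by a straightforward induction on the depth of the circuit (equivalently, the Boolean formula) computing the characteristic function of $L \cap \{0,1\}^n$. By the earlier lemma on $\mathrm{NC}^1$, we may assume the formula is a tree: only the $\i$-labeled leaves have out-degree greater than $1$, so every gate is the root of its own subformula, and the depth of the whole tree is $C\log n$ for some constant $C$. Fix $n$ and the tree $\phi_n$ of depth $D = C\log n$. For each subformula $\phi'$ of $\phi_n$ (computing some Boolean function of the $n$ input bits) and each $\pi \in \Alt(B)$, I want to produce a word $W(\phi', \pi)$ over the generators of $G_{n, \{0,1\}, B}$ whose length is bounded by $c^{\mathrm{depth}(\phi')}$ for a global constant $c$, and which realizes the controlled permutation $\pi|\{x \in \{0,1\}^n : \phi'(x) = 1\}$. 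Applying this to the root gives $\pi|(L \cap \{0,1\}^n)$ with norm at most $c^{C\log n} = n^{C\log c} = O(n^d)$ for $d = \lceil C\log c\rceil$, which is exactly the BWBP$'_B$ bound (after padding $n^d$ to $n^d + d$).

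The induction itself proceeds by cases on the top gate of $\phi'$. The base case is an input literal: if $\phi' = x_i$, then $\pi|\{x : x_i = 1\} = \pi|[1]_i$, which by the lemma $\pi|[a]_i = \pi|[a]\car\sigma^i$ lies in $G_{n,\{0,1\},B}$ with word norm $O(i) = O(n)$ — this costs us a polynomial factor at the leaves, which is harmless. For a $\neg$-gate with child $\psi$, use $\pi|\bar X = \pi \circ \pi^{-1}|X$ from Lemma~\ref{lem:ComplementUnion}: $W(\neg\psi, \pi)$ is a single symbol $\eta_\pi$ concatenated with $W(\psi, \pi^{-1})$, so the length at most doubles. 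For a $\vee$-gate with children $\psi_1, \psi_2$, use the union formula $\pi|X\cup Y = \pi|X \circ \pi|Y \circ \pi^{-1}|X\cap Y$; the first two factors are $W(\psi_1,\pi)$ and $W(\psi_2,\pi)$, and for the third factor I express $X \cap Y$ via Lemma~\ref{lem:Intersection}, writing $\pi^{-1} = [\pi_1,\pi_2]$ by Ore's Lemma~\ref{lem:Ore} (possible since $|B|\geq 5$), so that $\pi^{-1}|X\cap Y = [\pi_1|X, \pi_2|Y]$, whose realizing word is a bounded product of $W(\psi_1, \pi_1^{\pm})$ and $W(\psi_2, \pi_2^{\pm})$. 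An $\wedge$-gate is handled directly by Lemma~\ref{lem:Intersection}: $\pi|X\cap Y = [\pi_1|X, \pi_2|Y]$ with $\pi = [\pi_1,\pi_2]$, again a bounded product of words for the two children. In every case, $W(\phi',\pi)$ is a product of a bounded number (say at most $8$) of words $W(\phi'', \rho)$ where $\phi''$ is a child and $\rho$ ranges over $\Alt(B)$; so if $N_h$ bounds the length over all subformulas of depth $h$ and all $\pi$, then $N_h \leq 8 N_{h-1}$, giving $N_D \leq 8^D N_0 = 8^{C\log n} \cdot O(n) = O(n^{d'})$ for a suitable $d'$.

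The one genuine subtlety — and the step I expect to require the most care — is making sure the controls are not disturbed during intermediate steps of these words. This is exactly the content of the semantic lemmas already proved (Lemmas~\ref{lem:Intersection}, \ref{lem:BigIntersection}, \ref{lem:ComplementUnion}): the generator $\sigma$ only permutes the $A^n$-component in intermediate positions and is always used in the combination $\pi|[a]\car\sigma^i$, which leaves the $A^n$-component globally fixed, so the truth value of ``$x \in X$'' is the same at every step of the computation. Hence composing words that individually realize $\pi|X$, $\pi|Y$, etc. genuinely composes the corresponding controlled permutations, and the commutator and union identities go through verbatim at the word level. The other point to check is uniformity of the constant $c$ (equivalently $d$) across all $n$: since the formula depth is $C\log n$ with $C$ independent of $n$, and the branching factor $8$ and the choice of $\pi_1,\pi_2$ from Ore's lemma are independent of $n$, the exponent $d$ depends only on $C$, $c$, and $|B|$, not on $n$ — so $L \in \mathrm{BWBP}'_B$ as required. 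Finally, taking the union over all $|B| \geq 5$ and noting the theorem statement only asks for the inclusion for each such $B$, we are done; one may also record that $\mathrm{NC}^1 \subset \mathrm{BWBP}'$ follows immediately.
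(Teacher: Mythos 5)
Your proposal is correct and follows essentially the same route as the paper's proof: induction on formula depth, handling $\wedge$ via the commutator identity with Ore's lemma, $\vee$ via the union formula with a commutator correction, and $\neg$ via the complement formula, yielding a constant branching factor per level (your $8$ versus the paper's $6$) and hence norm $O(n^d)$ from the $O(\log n)$ depth, with the $O(n)$ cost of the base generators $\pi|[a]_i$ absorbed into the polynomial. The only differences are presentational (explicit tree-unfolding and the bookkeeping of $W(\phi',\pi)$ for all $\pi$ simultaneously), which the paper leaves implicit.
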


\begin{proof}
Let $A = \{0, 1\}$. Let $B$ be any alphabet with cardinality at least $5$. Let $s = (s_1, s_2) : \Alt(B) \to \Alt(B) \times \Alt(B)$ be any function such that $\pi = [s_1(\pi), s_2(\pi)$ for all $\pi$, which exists by Lemma~\ref{lem:Ore}. Identify Boolean formulas $\phi$ with the subsets of $A^n$ they describe. Then by Lemma~\ref{lem:Intersection} and Lemma~\ref{lem:ComplementUnion} we have
\[ \pi|\phi_1 \wedge \phi_2 = [s_1(\pi)|\phi_1, s_2(\pi)|\phi_2] \]
\[ \pi|\phi_1 \vee \phi_2 = \pi|\phi_1 \circ \pi|\phi_2 \circ [s_1(\pi)^{-1}|\phi_1, s_2(\pi)^{-1}|\phi_2] \]
\[ \pi|\neq \phi = \pi \circ \pi^{-1}|\phi \]

Let $T_d$ be the maximal word norm of $\pi|\phi$ for $\phi$ of depth $k = O(\log n)$. The formulas show $T_i \leq 6T_{i-1}$ for all $i$, so $T_k = O(6^k) = O(6^{\log n}) = O(n^d)$ for some $d$.
\end{proof}




We now give the main examples of languages are in BWBP', by showing they are in NC$^1$. 

For the next lemma, let $S$ be the tape alphabet of a Turing machine $T$, and $Q$ its state set. Let $C_{S, Q} \subset ((S \times Q) \cup S)^*$ be the language of words where a single $S \times Q$-symbol appears. In other words, the language of Turing machine configurations.

\begin{lemma}
\label{lem:TuringStep}
The language $L = \{u \# v \;|\; (u, v) \mbox{ is a computation step }\} \subset ((S \times Q) \cup S \cup \{\#\})^*$ is in BWBP', for any Turing machine $T$ with tape alphabet $S$ and state set $Q$.
\end{lemma}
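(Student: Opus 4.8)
The plan is to show that $L$ lies in nonuniform $\mathrm{NC}^1$ and then invoke Theorem~\ref{thm:BarringtonVariant}. Write $\Gamma = (S \times Q) \cup S \cup \{\#\}$ for the alphabet of $L$, fix a length $m$, and exhibit a Boolean formula of depth $O(\log m)$, with literals of the form ``$w_k = a$'' for positions $k \in [0, m-1]$ and symbols $a \in \Gamma$, computing the characteristic function of $L \cap \Gamma^m$. If $m$ is even the formula is the constant $0$; if $m = 2n+1$, put $u = w_0 \cdots w_{n-1}$ and $v = w_{n+1} \cdots w_{2n}$, and take the conjunction of: (i) $w_n = \#$ and $w_k \neq \#$ for all $k \neq n$; (ii) exactly one of $w_0, \ldots, w_{n-1}$ and exactly one of $w_{n+1}, \ldots, w_{2n}$ lies in $S \times Q$; and (iii) the existence of a head position $i \in [0, n-1]$ and a transition $(q, s, q', s', d) \in \Delta$ with $i + d \in [0, n-1]$ such that $u_i$ records state $q$ and symbol $s$, and the pair $(u,v)$ is related by a computation step of type $(q,s,q',s',d)$ with head at $i$ — which, unfolding the definition of a computation step from Section~\ref{sec:BasicCT}, amounts to a bounded number of local conditions at positions $i-1, i, i+1$ (namely that the cell written is $s'$ and the cell at $i+d$ carries state $q'$) together with $\bigwedge_{k \notin \{i, i+d\}} (u_k = v_k)$.

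Each of (i)--(iii) unfolds into a formula of depth $O(\log m)$ over the literals ``$w_k = a$'': conditions (i) and (ii) are conjunctions/disjunctions of $O(m)$ or $O(m^2)$ constant-size subformulas (and an AND/OR of $N$ inputs has depth $O(\log N)$); in (iii) the head position is chosen by a disjunction over the $O(n)$ values of $i$ and the transition by a disjunction over the constantly many elements of $\Delta$, after which the one unbounded ingredient is $\bigwedge_{k \notin \{i, i+d\}} (u_k = v_k)$, itself a conjunction of $O(n)$ subformulas of the shape $\bigvee_{a \in S} (w_k = a \wedge w_{k'} = a)$. Since the nesting depth of $(\mathrm{i}) \wedge (\mathrm{ii}) \wedge (\mathrm{iii})$ is bounded, the whole predicate has depth $O(\log m)$, so $L \in \mathrm{NC}^1$. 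To match the binary-alphabet setting of Theorem~\ref{thm:BarringtonVariant} one may either fix a fixed-length binary encoding of $\Gamma$ (which preserves logarithmic depth and inflates word norms only polynomially), or rerun the argument of Theorem~\ref{thm:BarringtonVariant} verbatim over $\Gamma$, using the conditioning generators $\pi|[a]_k$ (which, as established in Section~\ref{sec:Barrington}, lie in $G_{n,\Gamma,B}$ with word norm $O(n)$) in place of the two binary literals; either way, by Theorem~\ref{thm:BarringtonVariant}, $\mathrm{NC}^1 \subset \mathrm{BWBP'}_B$ for every $|B| \geq 5$, so $L \in \mathrm{BWBP'}$.

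The only point that needs care is item (iii): one must phrase it so that the head position and the applied transition are both chosen by top-level existential quantifiers, so that everything beneath is a clean conjunction of ``position $k$ of $u$ and $v$ agree'' constraints plus a bounded local check near the head — this is precisely what spares us from separately enforcing that overlapping windows around the head agree on which transition was applied (the usual subtlety with nondeterministic Turing machines). The remaining fiddly cases — the head sitting at an endpoint of $u$, so that $i+d$ falls out of range and no step of that type exists, and the exclusion of words where $u$ or $v$ has zero or two head symbols — are handled respectively by the range condition $i+d \in [0,n-1]$ and by condition (ii), and are routine.
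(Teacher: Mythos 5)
Your proposal is correct and takes essentially the same route as the paper: show $L$ is in nonuniform NC$^1$ by a depth-$O(\log m)$ formula that existentially guesses the head position and the transition, checks a constant-size window around the head, and checks coordinatewise equality of $u$ and $v$ elsewhere (plus the $\#$ and well-formedness conditions), then feed this into Theorem~\ref{thm:BarringtonVariant} using the conditioning generators $\pi|[a]_k$ over the larger alphabet, which is exactly what the paper does. The only differences are cosmetic: the paper organizes the logarithmic depth via explicit index-set divide-and-conquer formulas (\texttt{EQ}, \texttt{SUBALPHABET}, \texttt{BASICSTEP}) rather than balanced fan-in reduction, and your condition (ii) enforcing a unique head symbol in each half makes the formula match the stated language even a bit more literally than the paper's.
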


\begin{proof}
Let $A = ((S \times Q) \cup S \cup \{\#\})^*$. We show that a logarithmic-depth formula exists for inputs $w = uav$ of length $2n+1$ with $u \in A^n, a \in A, v \in A^n$, i.e.\ that the language is in NC$^1$. Note that Boolean formulas can only talk about bits, but we can take the basic Boolean variables to be statements of the form ``$w_i = a$'' for $i \in [0, 2n]$. For $I \subset \{0,\ldots, 2n+1\}$, we use names of the form $\texttt{FORMULA}_I(x_1, \ldots, x_k)$ for Boolean formulas. Formally, this is just a name, but the label \texttt{FORMULA} and the parameters $(x_1, \ldots, x_k)$ explain what the formula is describing, and the subscript $I$ means that the formula ultimately only refers to coordinates $I$ of the input $w$. Omitting the subscript means $I = [0, 2n]$.

The point of $I$ is that logarithmic depth is obvious as soon as we always roughly cut the size of $I$ in half. We will describe the bottom of the recursion, and only give the formulas that apply for large $I$, since for small $I$ (say $|I| \leq 4$, there exists a formula of constant size by the simple fact that Boolean formulas can describe any set.  

First, we define formulas checking $a = \#$ (i.e.\ the $n$th coordinate of $w$ contains $\#$) and $\# \not\sqsubset uv$. For this, define $\texttt{SUBALPHABET}_I(B)$ to be the check that $w_i \in B$ for all $i \in I$. The formula $\texttt{SUBALPHABET}_I(B) = \texttt{SUBALPHABET}_J(B) \wedge \texttt{SUBALPHABET}_K(B)$ shows that this formula always has logarithmic depth, by using cuts $I = J \sqcup K$ with $|J| \approx |K| \approx |I|/2$. Now the desired formula is
\[ \texttt{HASHCHECK}() = \texttt{ALPHABET}_{\{n\}}(\{\#\}) \wedge \texttt{ALPHABET}_{[0,n-1] \cup [n+1,2n]}(A \setminus \{\#\}). \]

Next, we check the main part, i.e.\ that a single Turing machine step is being described. We write $\texttt{STEP}()$ for this formula, where $I$. It suffices to show for each $(q, s, q', s', d) \in \Delta$ we have a logarithmic depth formula for $\texttt{STEP}(q, s, q', s', d)$, which accepts precisely the steps of type $(q, s, q', s', d)$. Namely then
\[ \texttt{STEP}() = \bigvee_{(q, s, q', s', d) \in \Delta} \texttt{STEP}(q, s, q', s', d). \]

We only show how to define $\texttt{STEP}(q, s, q', s', 1)$, as $\texttt{STEP}(q, s, q', s', 0)$ is strictly easier and $\texttt{STEP}(q, s, q', s', -1)$ is completely symmetric. 

We need some auxiliary firmulas. As a first auxiliary formula, we define
$\texttt{BASICSTEPAT}_{\{i, i+1, i+n+1, i+n+2\}}(i, q, s, q', s', 1)$ as the check that 
\[ w_iw_{i+1}w_{i+n+1}w_{i+n+2} = ((q, s), a, s', (q',a')) \]
for some $a, a' \in S$. Since this formula talks about only $4$ positions of the input, it has a formula of constant size.

As another auxiliary formula, for $J, K \subset [0,2n+1]$ with $|J| = |K|$, define $\texttt{EQ}_{J \sqcup K}(J, K)$ as the check that $w_j = w_k$ when $j$ and $k$ are the $i$th smallest element of $J,K$ respectively (i.e.\ the subsequences along $J$ and $K$ are equal). We have
\[ \texttt{EQ}_{J \cup K}(J, K) = \texttt{EQ}_{J' \cup K'}(J', K') \wedge \texttt{EQ}_{J'' \cup K''}(J'', K''), \]
where $J = J' \sqcup J''$ and $K = K' \sqcup K''$ are roughly equal cuts at the middle with $|J'| = |K'|$. Then clearly $\texttt{EQ}_{J \cup K}(J, K)$ always has a logarithmic depth formula.

We now observe that
\begin{align*} \texttt{STEP}(q, s, q', s', 1) = \bigvee_{i \in [0, n-2]} (&\texttt{BASICSTEP}_{{i, i+1, i+n+1, i+n+2}}(i, q, s, q', s', 1) \\
&\wedge \texttt{EQ}_{J_i \cup K_i}(J_i, K_i))
\end{align*}
where $J_i = [0, n-1] \setminus \{i,i+1\}, K_i = [n+1, 2n] \setminus \{i+n+1,i+n+2\}$.
Finally, a logarithmic depth formula for a polynomial-sized disjunction $\bigvee_{i \in [0, n-2]} \phi_i$ is obtained by associating it in a balanced way.

We conclude that there is a formula
\[ \texttt{L}() = \texttt{STEP}() \wedge \texttt{HASHCHECK}() \]
of logarithmic depth in $n$, which describes $L$.

By Theorem~\ref{thm:BarringtonVariant}, $\pi|L$ has polynomial norm in $G_{n, A, B}$ for $|B| \geq 5$.
\end{proof}

Having written one proof in detail, we will from now on not explicitly write the formulas, and only explain why a divide-and-conquer approach is possible. In each case, we can split the problem into a bounded number of subproblems, where the number of bits being looked at becomes roughly cut in half.

\begin{lemma}
\label{lem:FixedCheck}
For any sequence of words $u_i \in \Sigma^i$, the language $\{u_i \;|\; i \in \N\}$ is in BWBP'.
\end{lemma}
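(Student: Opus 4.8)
The plan is to exploit the fact that $L \cap \Sigma^n$ is a singleton and then synthesize the corresponding controlled permutation by a balanced recursion, exactly in the divide-and-conquer spirit of Lemma~\ref{lem:TuringStep}.

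First, since $u_i \in \Sigma^i$, for each $n \geq 1$ we have $L \cap \Sigma^n = \{u_n\}$, a single word of length exactly $n$ (the case $n = 0$ being trivial). Writing $u_n = (u_n)_0 (u_n)_1 \cdots (u_n)_{n-1}$, the full-length cylinder $[u_n]_0$ equals the point $\{u_n\}$, and moreover $\{u_n\} = \bigcap_{i=0}^{n-1} [(u_n)_i]_i$. So it suffices to fix $|B| \geq 5$ and bound, uniformly in $n$ and in $\pi \in \Alt(B)$, the word norm of $\pi\big|\bigcap_{i=0}^{n-1}[(u_n)_i]_i$ in $G_{n,\Sigma,B}$.

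Next I would set up the recursion. For a subinterval $I \subseteq [0, n-1]$ put $C_I = \bigcap_{i \in I}[(u_n)_i]_i \subseteq \Sigma^n$, and let $N(k)$ be the maximum, over $\pi \in \Alt(B)$ and over intervals $I$ with $|I| = k$, of the norm of $\pi|C_I$. If $|I| = 1$, say $I = \{i\}$, then $\pi|C_I = \pi|[(u_n)_i]_i = \pi|[(u_n)_i] \car \sigma^i$, which has norm $O(n)$. If $|I| = k > 1$, split $I = I_1 \sqcup I_2$ into two halves of size at most $\lceil k/2 \rceil$; by Lemma~\ref{lem:Ore} write $\pi = [\pi_1, \pi_2]$ with $\pi_1, \pi_2 \in \Alt(B)$, and since $C_I = C_{I_1} \cap C_{I_2}$, Lemma~\ref{lem:Intersection} gives
\[ \pi|C_I = [\,\pi_1|C_{I_1},\; \pi_2|C_{I_2}\,], \]
so $N(k) \leq 4\,N(\lceil k/2\rceil)$ (the commutator of two words of length $\leq m$ has length $\leq 4m$). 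The recursion has depth $\lceil \log_2 n \rceil$ with base value $N(1) = O(n)$, hence $N(n) = O(4^{\log_2 n}\cdot n) = O(n^3)$ (equivalently, apply the Master Theorem to $N(n) \leq 4N(n/2) + O(1)$). Therefore $\pi|(L \cap \Sigma^n)$ has norm $O(n^3)$ for every $n$ and every $\pi \in \Alt(B)$, so $L \in \mathrm{BWBP'}_B$ for any $|B| \geq 5$, and thus $L \in \mathrm{BWBP'}$.

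The one thing to be careful about — and essentially the only place the argument could go wrong — is to \emph{not} use the naive left-nested expansion $\pi|[u_n]_0 = [\pi_0|[(u_n)_0]_0, \dots, \pi_{n-1}|[(u_n)_{n-1}]_{n-1}]$ coming from the proof that $\pi|X \in G_{n,A,B}$: that $n$-fold nested commutator has norm exponential in $n$. Balancing the intersection as a binary tree, and invoking Lemma~\ref{lem:Ore} afresh at each internal node, is precisely what keeps the norm polynomial. Everything else is a routine application of the controlled-permutation calculus from Section~\ref{sec:Barrington}.
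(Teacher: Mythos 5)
Your proof is correct, and the underlying idea — split the position set in half and recurse in a balanced way — is exactly the divide-and-conquer the paper uses. The only difference is packaging: the paper's one-line proof builds a logarithmic-depth formula (``compare the left and right halves recursively''), i.e.\ places the language in nonuniform NC$^1$, and then quotes Theorem~\ref{thm:BarringtonVariant}; you instead inline that translation, working directly in $G_{n,\Sigma,B}$ by writing $\{u_n\}=\bigcap_i [(u_n)_i]_i$, splitting the index interval in halves, and applying Lemma~\ref{lem:Ore} together with Lemma~\ref{lem:Intersection} at each internal node. The two routes are equivalent here because the language of length $n$ is a single cylinder, so the formula is a pure conjunction of literals and your recursion is precisely what the proof of Theorem~\ref{thm:BarringtonVariant} would do to it; your version has the small bonus of an explicit $O(n^3)$ norm bound, while the paper's is shorter because it reuses the general theorem. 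One cosmetic remark: your appeal to the Master Theorem with recurrence $N(n)\leq 4N(n/2)+O(1)$ glosses over the fact that the base case $N(1)=O(n)$ depends on $n$ (through the norm of $\sigma^i$-conjugation), but the direct computation $N(n)\leq 4^{\log_2 n}N(1)=O(n^3)$ that you also give is the right way to state it, so nothing is lost.
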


\begin{proof}
To compare $w \in \Sigma^i$ with $u_i$, split $w$ in half, and recursively compare the left and right halves.
\end{proof}

For the next lemma for $n \in [0, m]$, denote by $\unary_m(n) = 0^n 1^{m-n}$ the length-$m$ unary representation of $n$.

\begin{lemma}
\label{lem:Increment}
The language $\{\unary_m(n) \# \unary_m(n+1) \;|\; n \in [0, m) \} \subset \{0,1,\#\}^*$ is in BWBP'. Similarly, the relation $\{u2^n \# ua2^{n-1} \;|\; u \in \{0,1\}^*, n \in \N\}$ is in BWBP' for each $a \in \{0,1\}$.
\end{lemma}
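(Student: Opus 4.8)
The statement to prove is Lemma~\ref{lem:Increment}: two languages/relations are in BWBP', namely the unary increment $\{\unary_m(n)\#\unary_m(n+1)\}$ and the ``binary increment with padding'' $\{u2^n\#ua2^n\}$.

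\textbf{Plan.} The strategy is the same as in Lemma~\ref{lem:TuringStep} and Lemma~\ref{lem:FixedCheck}: by Theorem~\ref{thm:BarringtonVariant} it suffices to exhibit, for each input length, a Boolean formula of depth $O(\log n)$ describing the language, i.e.\ to show membership in nonuniform NC$^1$, and then invoke the theorem to conclude that $\pi|L\cap A^n$ has polynomial word norm in $G_{n,A,B}$ for $|B|\ge 5$. As in the previous proofs, the basic Boolean variables are the atomic statements ``$w_i=a$'' for positions $i$ of the input word $w$ and symbols $a$ in the relevant alphabet, and the only thing to check is that each required condition decomposes into a bounded number of subconditions each looking at roughly half as many coordinates, so that balanced recursion yields logarithmic depth.

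\textbf{Unary increment.} For $w=u\#v$ of length $2m+1$ with $|u|=|v|=m$, membership in $\{\unary_m(n)\#\unary_m(n+1) : n\in[0,m)\}$ amounts to: (i) the $(m)$th coordinate carries $\#$ and no other coordinate does (this is a \texttt{SUBALPHABET}-style check, already handled with logarithmic-depth divide-and-conquer in Lemma~\ref{lem:TuringStep}); (ii) $u$ has the form $0^n1^{m-n}$ and $v$ has the form $0^{n'}1^{m-n'}$ with $n'=n+1$. The ``sortedness'' check that a block is in $0^*1^*$ is logarithmic-depth by recursion: a block is sorted iff its left half is sorted, its right half is sorted, and (last symbol of left half, first symbol of right half) is not $(1,0)$ --- the gluing condition touches only two coordinates. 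To pin down $n$ and relate the two blocks, note $n = |\{i : u_i = 0\}|$ and the condition $n' = n+1$ is equivalent to: there is exactly one position $i\in[0,m-1]$ with $u_i=1$ and $v_i=0$, and for all other $i$ we have $u_i = v_i$ (since both blocks are sorted, the increment just flips the first $1$ of $u$ to a $0$ in $v$). ``For all $i$ outside a singleton, $u_i=v_i$'' is exactly an \texttt{EQ}-type check over a subsequence, logarithmic-depth as in Lemma~\ref{lem:TuringStep}; ``exactly one $i$ with $(u_i,v_i)=(1,0)$'' is ``at least one'' (a balanced disjunction) conjoined with ``no two distinct such $i$'' (a balanced conjunction over pairs, or equivalently: to the left of the flip all coordinates of $u$ agree with $v$, to the right all agree --- again EQ-checks on two subsequences determined by the guessed flip position). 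Conjoining these logarithmic-depth formulas over the $O(m)$ choices of flip position in a balanced tree keeps depth $O(\log m)$.

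\textbf{Binary increment with padding.} For $w=p\#q$ with $|p|=|q|=m$, we want $p=u2^k$, $q=ua2^k$ for some $u\in\{0,1\}^*$, $k\ge 0$ and the fixed $a\in\{0,1\}$. First, a \texttt{SUBALPHABET} check locates the single $\#$. Then we must find the boundary index $\ell$ between the $\{0,1\}$-part and the $2$-part of $p$: this is where $p_{\ell-1}\in\{0,1\}$ (or $\ell=0$) and $p_j=2$ for all $j\ge\ell$, i.e.\ ``$p$ restricted to $[\ell,m-1]$ is all $2$'s'' (a \texttt{SUBALPHABET} check) and ``$p$ restricted to $[0,\ell-1]$ contains no $2$'' (another one). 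Given $\ell$, the requirements are: $q_j=2$ for $j\ge \ell+1$; $q_\ell=a$; and $q_j=p_j$ for $j\in[0,\ell-1]$. Each of these is a \texttt{SUBALPHABET} or \texttt{EQ} check over a subsequence determined by $\ell$, hence logarithmic depth, and we take a balanced disjunction over the $O(m)$ possible values of $\ell$. (Here $u$ itself is completely unconstrained, which only makes things easier; the alphabet is the fixed finite set $\{0,1,2,\#\}$, independent of the input, so ``$w_i=a$''-atoms suffice.)

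\textbf{Main obstacle.} There is no serious obstacle: the proof is a routine instance of the ``NC$^1$ via balanced divide-and-conquer, then Barrington'' pattern already executed in detail in Lemma~\ref{lem:TuringStep}. The only point requiring a moment's care is that the padding boundary $\ell$ (resp.\ the flip position $i$ in the unary case) is \emph{not} a fixed coordinate but must be existentially guessed, so one forms a disjunction over all $O(m)$ candidate positions; one must check that this disjunction can be associated as a balanced binary tree so that its depth is $O(\log m)$ rather than $O(m)$, exactly as was done for $\bigvee_{i\in[0,n-2]}$ in the proof of Lemma~\ref{lem:TuringStep}. Having made this observation, one concludes by Theorem~\ref{thm:BarringtonVariant} that for every $\pi\in\Alt(B)$ with $|B|\ge 5$, the controlled permutation $\pi|L\cap A^n$ has word norm $O(n^d)$ in $G_{n,A,B}$, so both languages are in BWBP'.
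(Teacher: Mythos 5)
Your argument is correct and lives in the same overall framework as the paper (exhibit an $O(\log m)$-depth Boolean formula, then invoke Theorem~\ref{thm:BarringtonVariant}), but the decomposition is genuinely different from the paper's. For the unary increment the paper never isolates a ``flip position'': it recurses on the relation itself, splitting both blocks in half and observing that the pair is an increment iff either both left halves are all $0$ and the right halves form an increment of half the length, or both right halves are all $1$ and the left halves form an increment of half the length; this is a two-line self-similar recursion with no $O(m)$-ary disjunction. For the padded relation the paper reduces to the first part: apply the projection $0,1 \mapsto 0$, $2 \mapsto 1$ and require the image to be a unary increment, then check pointwise that every pair of corresponding symbols lies in the fixed set $\{(0,0),(1,1),(2,a),(2,2)\}$ --- again no guessed boundary. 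Your route instead mimics the proof of Lemma~\ref{lem:TuringStep}: existentially guess the distinguished position (the single $1\to 0$ flip, resp.\ the padding boundary $\ell$) via a balanced disjunction over $O(m)$ candidates, and verify with \texttt{EQ}- and \texttt{SUBALPHABET}-style checks on the subsequences that position determines, plus a halving recursion for sortedness in the unary case. Both constructions give logarithmic depth and hence polynomial word norm in $G_{n,A,B}$, so both are valid; the paper's version is more compact and reuses part one to dispatch part two, while yours is more uniform with the Turing-step lemma at the cost of the extra (but balanced, hence harmless) disjunction over positions. One harmless discrepancy: you take the two sides to have equal length $m$, whereas the relation as printed has sides whose lengths differ by one; but the paper's own proof (which reduces to the equal-length unary increment) and the way the lemma is used in Theorem~\ref{thm:MonoidCase} (indices of a fixed common length, with the number of $2$'s dropping by one) show the equal-length reading is the intended one, so this does not affect correctness.
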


\begin{proof}
First, consider th first language. For simplicity assume $m$ is a power of $2$. For the divide-and-conquer, observe that checking the input to be in the form $\unary_m(n) \# \unary_m(n+1)$ can be split in two cases after splitting the input in half:
\begin{itemize}
\item the left halves are all zero, and the right halves are in the increment relation;
\item the right halves are all one, and the left halves are in the increment relation.
\end{itemize}

Consider now the second language. It is in BWBP', as it suffices to first check that the projection $0,1 \mapsto 0; 2 \mapsto 1$ give a word in the first language, then check that binary symbols appear in the same positions in the words on each side of $\#$, or in case only the second word carries a bit, this bit is $a$. (All but the first check are a matter of checking that all pairs of corresponding positions in the two parts belong to a particular set.)
\end{proof}

The increment relation for binary numbers is also in BWBP'. Say numbers are written least significant bit first and $0$-indexed, i.e.\ define $\val(u) = \sum_{i=0}^{|u|-1} 2^i u_i$ for a word $u \in \{0,1\}^*$. Let $\bin_m : [0, 2^m) \to \{0,1\}^m$ be the inverse of this function, i.e.\ it gives the binary representation of a natural number.

\begin{lemma}
\label{lem:Increment}
The language $\{\bin_m(n) \bin_m(n+1) \;|\; n \in [0, 2^m) \} \subset \{0,1\}^*$ is in BWBP'.
\end{lemma}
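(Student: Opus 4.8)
The plan is to reduce the binary increment relation to problems already shown to be in BWBP', namely the unary increment relation from the previous Lemma~\ref{lem:Increment} and the ``append a bit'' relation $\{u2^n \# ua2^n\}$, together with the elementary closure properties of BWBP' that follow from Theorem~\ref{thm:BarringtonVariant} (i.e.\ it suffices to exhibit a logarithmic-depth Boolean formula checking membership). So the real content is: \emph{checking $\val(v) = \val(u)+1$ for $u,v \in \{0,1\}^m$ admits an $O(\log m)$-depth formula}. I would therefore phrase the proof as ``by Theorem~\ref{thm:BarringtonVariant} it suffices to show the language is in nonuniform NC$^1$; we give a divide-and-conquer formula'' and then describe the recursion, exactly in the style already adopted after Lemma~\ref{lem:TuringStep} where explicit formulas are no longer written out.

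The key observation driving the recursion is the standard carry structure of binary increment with LSB-first indexing: writing $u = u_0 u_1 \cdots u_{m-1}$ and $v = v_0 \cdots v_{m-1}$, we have $\val(v) = \val(u)+1$ if and only if there is some prefix-length $j \in [0,m]$ such that $u_0 = u_1 = \cdots = u_{j-1} = 1$ (the trailing run of $1$s in $u$), $u_j = 0$ if $j < m$, and correspondingly $v_0 = \cdots = v_{j-1} = 0$, $v_j = 1$ if $j < m$, and $u_i = v_i$ for all $i > j$. (The case $j = m$ would be $u = 1^m$, for which no valid $v$ exists, so we may ignore it, or allow it and let the formula reject.) This is the exact binary analogue of the two-case split used for the unary increment in the previous lemma. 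Concretely, I would split the index set $[0,m)$ in half as $[0,m/2) \sqcup [m/2, m)$ and split into two cases: either the carry has already ``stopped'' within the low half — meaning $u$ restricted to the low half, paired with $v$ restricted to the low half, is in the increment relation (recursively), and $u = v$ on the high half — or the low halves of $u$ and $v$ are $1^{m/2}$ and $0^{m/2}$ respectively (a \texttt{SUBALPHABET}-style check of logarithmic depth, as in Lemma~\ref{lem:TuringStep}), and the high halves of $u$ and $v$ are in the increment relation (recursively). Checking equality of the two high halves is the \texttt{EQ} formula already built in Lemma~\ref{lem:TuringStep}, and checking a block is constantly $0$ or constantly $1$ is \texttt{SUBALPHABET}; both have logarithmic depth, and the recursion halves the index set each time, so the total depth is $O(\log m)$. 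Handling the $\#$ separator and the ``not $\#$ elsewhere'' condition is done once at the top by a \texttt{HASHCHECK}-type conjunct exactly as in Lemma~\ref{lem:TuringStep}.

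Alternatively — and this may be the cleaner write-up — I would avoid rebuilding the recursion and instead reduce directly to the two earlier BWBP' languages: membership of $\bin_m(n)\#\bin_m(n+1)$ can be checked by guessing (existentially, inside the formula, i.e.\ taking a disjunction over) the carry-stop position $j$, and verifying (i) $u$'s length-$j$ prefix is $1^j$ and $u_j = 0$, (ii) $v$'s length-$j$ prefix is $0^j$ and $v_j = 1$, (iii) the suffixes $u_{j+1}\cdots u_{m-1}$ and $v_{j+1}\cdots v_{m-1}$ are equal — but since a disjunction over the $m$ values of $j$ only adds $O(\log m)$ depth when associated in a balanced binary tree (as already noted for $\bigvee_{i}\phi_i$ in Lemma~\ref{lem:TuringStep}), and each disjunct is a constant-or-logarithmic-depth conjunction of \texttt{SUBALPHABET} and \texttt{EQ} formulas, the whole thing is logarithmic depth. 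Either route works; I expect the main obstacle to be purely expository — making the carry-propagation case analysis precise enough to be convincing while matching the level of detail of the surrounding lemmas (which, by this point in the paper, means: explain the divide-and-conquer split and why each piece has logarithmic depth, then invoke Theorem~\ref{thm:BarringtonVariant}). There is no genuine mathematical difficulty; the binary increment relation is a textbook NC$^1$ (indeed AC$^0$) predicate, and the only thing to be careful about is the LSB-first convention, so that ``carry propagates from low indices to high indices'' is stated in the right direction.
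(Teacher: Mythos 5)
Your first route is exactly the paper's proof: the same two-case divide-and-conquer on halves of the index set (carry stops in the low half versus low halves being $1^{m/2}$ and $0^{m/2}$ with the increment recursing into the high halves), combined with the \texttt{EQ}/\texttt{SUBALPHABET}-style checks and Theorem~\ref{thm:BarringtonVariant}. Your alternative disjunction-over-the-carry-position argument is also valid but is extra; the main argument matches the paper, and you even handle the LSB-first orientation more carefully than the paper's own two bullet points do.
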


\begin{proof}
Again for simplicity assume $m$ is a power of $2$. For the divide-and-conquer, observe that $\bin_m(n) \# \bin_m(n+1)$ can be split in finitely many cases:
\begin{itemize}
\item the left halves are equal, and the right halves are in the increment relation
\item the right halves are respectively $1^{m/2}$ and $0^{m/2}$, and the left halves are in the increment relation. \qedhere
\end{itemize}
\end{proof}

More generally, checking for a fixed constant sum can be performed in polynomial word norm.

\begin{lemma}
\label{lem:Sum}
Fix $d$, $|C| \geq 5$ and any $\pi \in \Alt(C)$. Then for any $\ell$, the permutation $\pi|\{\bin_m(n_1) \bin_m(n_2) \cdots \bin_m(n_d) \;|\; \sum_i n_i = \ell \}$ has word norm polynomial in $m$ in $\mathcal{G}_{dm+d-1,C,B}$.
\end{lemma}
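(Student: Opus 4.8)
The plan is to follow the template of Lemma~\ref{lem:TuringStep}, Lemma~\ref{lem:FixedCheck} and Lemma~\ref{lem:Increment} and reduce to the variant of Barrington's theorem, Theorem~\ref{thm:BarringtonVariant}. Write $A=\{0,1,\#\}$, $n=dm+d-1$, and for $\ell\le d(2^m-1)$ let $L_{m,\ell}=\{\bin_m(n_1)\#\cdots\#\bin_m(n_d)\mid n_i\in[0,2^m),\ \sum_i n_i=\ell\}\subseteq A^{n}$ (if $\ell>d(2^m-1)$ the set is empty and the permutation is the identity, of norm $0$). It suffices to produce, for every $m$ and every such $\ell$, a Boolean formula $\phi_{m,\ell}$ over the atomic predicates ``$w_i=a$'' ($i\in[0,n-1]$, $a\in A$) that describes $L_{m,\ell}$ and has depth at most $C\log m$ with $C$ \emph{independent of $\ell$}. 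This independence is the whole point: since $\ell$ can be as large as $d(2^m-1)$, a depth growing with $\ell$ (e.g.\ like $\log\ell$) would yield only an exponential bound, so $\ell$ must enter $\phi_{m,\ell}$ exclusively through the choice of atoms at its leaves. Granting such $\phi_{m,\ell}$, the proof of Theorem~\ref{thm:BarringtonVariant} bounds the word norm of $\pi|\phi_{m,\ell}$ in $G_{n,A,B}$ by $O(6^{\operatorname{depth}(\phi_{m,\ell})}\cdot n)=O(6^{C\log m}\cdot n)=O(m^{C'})$ for a constant $C'$ depending only on $d$ (the controlled-permutation calculus of Section~\ref{sec:Barrington} applies verbatim over the three-letter alphabet $A$ once the atomic variables are taken to be the predicates ``$w_i=a$'', exactly as in Lemma~\ref{lem:TuringStep}). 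This is polynomial in $m$, uniformly in $\ell$, which is the assertion.

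It remains to construct $\phi_{m,\ell}$; assume $m$ is a power of two. Take $\phi_{m,\ell}=\texttt{SHAPE}\wedge\texttt{SUM}_{[0,m)^d}(0,\ell)$. The conjunct $\texttt{SHAPE}$ checks that $\#$ occurs exactly at the $d-1$ separator positions and that every other position carries a symbol from $\{0,1\}$; as with $\texttt{HASHCHECK}$ in Lemma~\ref{lem:TuringStep} this is a routine divide-and-conquer on the index set, of depth $O(\log m)$ and manifestly independent of $\ell$. The conjunct $\texttt{SUM}$ belongs to a family $\texttt{SUM}_{I_1\times\cdots\times I_d}(c,k)$ of formulas indexed by $d$ aligned intervals $I_1,\dots,I_d$ of a common length $t$ (inside the respective binary blocks, $I_i$ carrying the bits of the $i$-th number in the usual LSB-first order), a \emph{carry-in} $c\in[0,d-1]$, and a \emph{target} $k$ with at most $t+\lceil\log_2 d\rceil$ bits; its intended meaning is the statement
\[
c+\textstyle\sum_{i=1}^{d}\val\!\big(w|I_i\big)\;=\;k .
\]

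For $\texttt{SUM}$ I use divide-and-conquer on bit positions, in the spirit of the proof of Lemma~\ref{lem:Increment}. Split each $I_i$ into its less significant half $I_i^{\mathrm{lo}}$ and more significant half $I_i^{\mathrm{hi}}$, each of length $t/2$, and split $k=k^{\mathrm{lo}}+2^{t/2}k^{\mathrm{hi}}$ with $k^{\mathrm{lo}}\in[0,2^{t/2})$. The quantity $c+\sum_i\val(w|I_i^{\mathrm{lo}})$ lies in $[0,(d{+}1)2^{t/2})$, so it equals $r+2^{t/2}c'$ for unique $r\in[0,2^{t/2})$ and $c'\in[0,d]$, and the target equation holds iff $r=k^{\mathrm{lo}}$ and $c'+\sum_i\val(w|I_i^{\mathrm{hi}})=k^{\mathrm{hi}}$. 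Since ``$c+\sum_i\val(w|I_i^{\mathrm{lo}})=k^{\mathrm{lo}}+2^{t/2}c'$'' is again an instance of $\texttt{SUM}$, we obtain
\[
\texttt{SUM}_{I_1\times\cdots\times I_d}(c,k)\;=\;\bigvee_{c'=0}^{d}\ \texttt{SUM}_{I_1^{\mathrm{lo}}\times\cdots\times I_d^{\mathrm{lo}}}\!\big(c,\ k^{\mathrm{lo}}+2^{t/2}c'\big)\ \wedge\ \texttt{SUM}_{I_1^{\mathrm{hi}}\times\cdots\times I_d^{\mathrm{hi}}}\!\big(c',\ k^{\mathrm{hi}}\big),
\]
with base case $t=O(1)$, where the predicate depends on boundedly many bits of $w$ and so has a formula of constant size. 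Each level halves $t$ and adds only $O(\log d)+O(1)=O(1)$ to the depth, and the targets at the children are bounded-bit pieces/shifts of the parent target; hence the recursion terminates after $O(\log m)$ levels, $\texttt{SUM}_{[0,m)^d}(0,\ell)$ has depth $O(\log m)$ with a constant depending only on $d$ and polynomial size, and $\ell$ affects only which constants $k^{\mathrm{lo}},k^{\mathrm{hi}},\dots$ occur, i.e.\ only which atoms are selected at the leaves. (Equivalently, one may invoke the standard fact that adding $d$ many $m$-bit integers via carry-lookahead and a balanced addition tree of depth $\lceil\log_2 d\rceil$, then comparing the result bitwise to a fixed number, is a nonuniform $\mathrm{NC}^1$ computation whose depth constant is independent of that number.)

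This produces $\phi_{m,\ell}$ of depth $O(\log m)$ with the required $\ell$-uniformity, which by the first paragraph completes the proof. The only genuinely delicate point — and the reason the addition must be organised as a divide-and-conquer transmitting a bounded carry between the two halves rather than being processed bit by bit — is precisely keeping $\operatorname{depth}(\phi_{m,\ell})\le C\log m$ with $C$ independent of $\ell$; everything else is routine.
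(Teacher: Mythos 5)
Your proof is correct, but it takes a visibly different route from the paper's. The paper proves Lemma~\ref{lem:Sum} by citing the carry-lookahead adder (Example 6.19 of the Arora--Barak reference): it builds an $\mathrm{NC}^1$ circuit computing the sum of two $m$-bit numbers, chains $d-1$ copies to compute $\sum_i n_i$, compares the output bitwise to the fixed constant $\ell$ in the spirit of Lemma~\ref{lem:FixedCheck}, and then feeds the (unfolded) logarithmic-depth circuit into Theorem~\ref{thm:BarringtonVariant}. You never compute the sum at all: you verify the equation $c+\sum_i \val(w|I_i)=k$ directly by a divide-and-conquer recursion on bit positions, resolving the carry across the midpoint by a bounded disjunction over $c'\in[0,d]$ --- which is exactly the style of the paper's own proofs of Lemma~\ref{lem:TuringStep} and Lemma~\ref{lem:Increment}, just with a carry parameter added to the recursion. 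Both routes yield a formula of depth $O(\log m)$ with constant depending only on $d$ and then invoke the Barrington variant identically. What each buys: the paper's argument is shorter because it delegates the addition to a standard reference; yours is self-contained, avoids the circuit-to-formula unfolding step (you build a formula from the start), and, importantly, makes explicit the one point the paper leaves implicit, namely that the depth constant must be independent of $\ell$ (since in the applications $\ell$ is exponential in $m$), which your construction guarantees because $\ell$ only selects targets at the leaves. One bookkeeping slip, with no consequence: you declare the carry-in range as $[0,d-1]$ but quantify $c'$ over $[0,d]$; in fact carries in $[0,d-1]$ (or $[0,d]$) are closed under your recursion, and any extra unsatisfiable disjunct is harmless since every satisfied disjunct implies the parent equation.
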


\begin{proof}
It suffices to construct a circuit whose output is the sum of two numbers, as one can chain $d-1$ of these together and finally compare the sum to a fixed number using Lemma~\ref{lem:FixedCheck}.

This is \cite[Example 6.19]{ArBa09}. The bits of the sum are determined by the added bits at that position, as well as the carry signal coming from the left (from the left, since binary numbers are written least significant bit first). It is easy to show that the carry signal at a particular position can in turn be ``predicted'' by a constant-depth circuit, when we allow conjunctions and disjunctions (circuit nodes labeled $\wedge$ and $\vee$) with arbitrarily many inputs. Associating these in a balanced way gives a logarithmic depth circuit where all nodes have in-degree $2$.
\end{proof}


\end{document}